\newcommand{\from}{\colon}
\newcommand{\into}{\hookrightarrow}
\newcommand{\implica}{\rightarrow}
\newcommand{\coimplica}{\leftrightarrow}
\renewcommand{\phi}{\varphi}
\renewcommand{\epsilon}{\varepsilon}
\renewcommand{\models}{\vDash}
\newcommand{\proves}{\vdash}
\newcommand{\pfin}{{\mathscr P}_{\mathrm{fin}}}
\newcommand{\restr}{\upharpoonright}
\newcommand{\monster}{\mathfrak U}
\newcommand{\smallsubset}{\mathrel{\subset^+}}
\newcommand{\smallprec}{\mathrel{\prec^+}}
\newcommand{\satext}{\mathrel{^+\!\!\succ}}
\newcommand{\invtypes}{S^{\mathrm{inv}}}
\newcommand{\invext}{\mid}
\newcommand{\bla}[4]{{#1}_{#2}#3\ldots#3{#1}_{#4}}
\newcommand{\wort}{\mathrel{\perp^{\!\!\mathrm{w}}}}
\newcommand{\nwort}{\mathrel{{\centernot\perp}^{\!\!\mathrm{w}}}}
\newcommand{\invtilde}{\operatorname{\widetilde{Inv}}(\monster)}
\newcommand{\invtildeof}[1]{\operatorname{\widetilde{Inv}}({#1})}
\newcommand{\invbar}{\operatorname{\overline{Inv}}(\monster)}
\newcommand{\invbarof}[1]{\operatorname{\overline{Inv}}({#1})}
\newcommand{\equidom}{\mathrel{\equiv_\mathrm{D}}}
\newcommand{\nequidom}{\mathrel{{\centernot\equiv}_{\mathrm{D}}}}
\newcommand{\ndomeq}{\mathrel{{\centernot\sim}_{\mathrm{D}}}}
\newcommand{\domeq}{\mathrel{\sim_\mathrm{D}}}
\newcommand{\doms}{\mathrel{\ge_\mathrm{D}}}
\newcommand{\ndoms}{\mathrel{{\centernot\ge}_{\mathrm{D}}}}
\newcommand{\pow}[2]{#1^{(#2)}}
\newcommand{\inverse}{^{-1}}
\newcommand{\allora}{\Rightarrow}
\newcommand{\sse}{\Leftrightarrow}
\newcommand{\eq}{\mathrm{eq}}
\newcommand{\cldoms}{\mathrel{\triangleright}}
\newcommand{\cldomeq}{\mathrel{\bowtie}}
\newcommand{\opsc}[1]{\operatorname{\textsc{#1}}}
\newcommand{\actson}{\curvearrowright }
\DeclareMathOperator{\ct}{ct}
\DeclareMathOperator{\tp}{tp}
\DeclareMathOperator{\dcl}{dcl}
\DeclareMathOperator{\aut}{Aut}
\DeclarePairedDelimiter{\set}{\{}{\}}
\DeclarePairedDelimiter{\abs}{\lvert}{\rvert}
\DeclarePairedDelimiter{\class}{\llbracket}{\rrbracket}
\DeclarePairedDelimiter{\seq}{(}{)}
\theoremstyle{definition}
\newtheorem{defin}{Definition}[section]
\newtheorem{thm}[defin]{Theorem}
\newtheorem{pr}[defin]{Proposition}
\newtheorem{co}[defin]{Corollary}
\newtheorem{lemma}[defin]{Lemma}
\newtheorem{notation}[defin]{Notation}
\newtheorem{eg}[defin]{Example}
\newtheorem{rem}[defin]{Remark}
\newtheorem{fact}[defin]{Fact}
\newtheorem{ass}[defin]{Assumption}
\newtheorem{property}[defin]{Property}
\newtheorem{assdrp}[defin]{Assumption Drop}
\newtheorem{question}[defin]{Question}
\newtheorem{prob}[defin]{Problem}
\newtheorem{cntrex}[defin]{Counterexample}
\newtheorem*{claim}{Claim}
\newtheorem{alphthm}{Theorem}
\let\oldqed\qedsymbol
\newcommand{\qedclaim}{\mbox{$\underset{\textsc{claim}}{\oldqed}$}}
\newenvironment{claimproof}[1][\it Proof of Claim]{
\let\qedsymbol\qedclaim
  \par
  \pushQED{\qed}%
  \normalfont \topsep6\p@\@plus6\p@\relax
  \trivlist
\item[\hskip\labelsep
  \upshape
  #1\@addpunct{.}]\ignorespaces
}{%
  \popQED\endtrivlist\@endpefalse
}
\let\qedsymbol\oldqed
\newcommand{\subjclass}[2][2020]{%
  \let\@oldtitle\@title%
  \gdef\@title{\@oldtitle\footnotetext{\hspace*{-2em}#1 \emph{Mathematics subject classification.} #2}}%
}
\newcommand{\keywords}[1]{%
  \let\@@oldtitle\@title%
  \gdef\@title{\@@oldtitle\footnotetext{\hspace*{-2em}\emph{Keywords.} #1.}}%
}
\author{Rosario Mennuni%
  \thanks{email: \url{R.Mennuni@posteo.net} \textsc{orcid}: \url{https://orcid.org/0000-0003-2282-680X}}}
\affil{University of Leeds}
\title{The domination monoid in o-minimal theories}
\keywords{Archimedean valuation, domination monoid, invariant types, o-minimality}
\subjclass{Primary: 03C45. Secondary: 03C64, 03C60, 12J10}
\begin{document}
\maketitle
\begin{abstract}
We study the monoid of global invariant types modulo domination-equivalence in the context of o-minimal theories. We reduce its computation to the problem of proving that it is generated by classes of $1$-types. We show this to hold in Real Closed Fields, where generators of this monoid correspond to invariant convex subrings of the monster model. Combined with~\cite{ehm}, this allows us to compute the domination monoid in the weakly o-minimal theory of Real Closed Valued Fields.
\end{abstract}
One of the major areas of contemporary model-theoretic research concerns o-minimal structures, a class of ordered structures introduced in~\cite{ominI} to which  techniques  from stability theory can be generalised. In this work we further this generalisation program by developing, under the assumption of o-minimality, the theory of \emph{domination}, a notion originally arising in the stable context.

Fix a complete first-order theory $T$ with infinite models, a monster model $\monster$ of $T$, and consider the space $S(\monster)$ of \emph{global types}, that is, types over $\monster$, in any finite number of variables. The preorder $\doms$ of \emph{domination} on $S(\monster)$ is defined by declaring that $p(x)\doms q(y)$ iff there is a small type $r(x,y)$ consistent with $p(x)\cup q(y)$ such that $p(x)\cup r(x,y)\proves q(y)$. The induced equivalence relation $\domeq$, \emph{domination-equivalence}, is at the center of a deep classical result of stability theory: that in superstable theories, every element of $S(\monster)$ is domination-equivalent to a finite \emph{product} of regular types, where a realisation of the product $p\otimes q$ consists of a pair of forking-independent realisations of $p$ and $q$. This product is associative and, in stable theories, domination-equivalence is a congruence with respect to it, hence one may consider the quotient semigroup, the \emph{domination monoid} $\invtilde$. By the aforementioned  result, together with the properties of regular types and the theory of ``a-models'', in superstable theories $\invtilde$ is a free commutative monoid, and parameterises ``a-prime'' extensions of $\monster$ over finite tuples; that is, initial objects in a certain category of sufficiently saturated elementary extensions of $\monster$ which are finitely generated as such. Domination between types describes how a-prime extensions over their realisations embed in each other, and the product determines how they can be amalgamated independently.

The domination monoid can  be defined in a broader framework, with some caveats.  First,  the product $\otimes$ is in general only defined on the (dense) subspace $\invtypes(\monster)$ of \emph{invariant} types: the fixed points, under the natural action $\aut(\monster)\actson S(\monster)$, of the pointwise stabiliser $\aut(\monster/A)$ of some small  $A\subset \monster$. Further downsides are that the definition is slightly more involved than in the stable case (Definition~\ref{defin:product}),  and that the  characterisation via nonforking is lost; on the positive side, this approach allows us to define a semigroup  $(\invtypes(\monster), \otimes)$  in arbitrary first-order theories.  The general theory of its interaction with domination was developed in~\cite{invbartheory}, which isolated certain sufficient conditions ensuring domination-equivalence to be a congruence with respect to the product of invariant types, hence allowing us to define the domination monoid $\invtilde$ as $(\invtypes(\monster), \otimes)/\domeq$. Unfortunately, in~\cite{invbartheory} it was also shown that  $\invtilde$ is not well-defined in general: there are theories where domination-equivalence is not a congruence with respect to $\otimes$. Such theories must be unstable, and while the counterexample from~\cite{invbartheory} is supersimple, it is currently unknown whether $\invtilde$ is well-defined in every $\mathsf{NIP}$ theory. Its study in o-minimal theories is a first step towards a solution of this problem.

But what are the reasons for looking at $\invtilde$ in the first place? The original motivation from~\cite{hhm} was to prove the following Ax--Kochen--Er\v sov-type result. If $\monster$ is a model of the theory $\mathsf{ACVF}$ of algebraically closed valued fields, $k(\monster)$ its residue field, and $\Gamma(\monster)$ its value group, respectively  an algebraically closed field and a divisible ordered abelian group,  then $\invtilde\cong\invtildeof{k(\monster)}\oplus \invtildeof{\Gamma(\monster)}$. While it is easy to see that the domination monoid of any saturated algebraically closed field is isomorphic to the natural numbers with the usual sum, some further work is required to understand $\invtilde$ in the o-minimal theory $\mathsf{DOAG}$ of divisible ordered abelian groups. Again in \cite{hhm}, it was shown that if $\monster\models\mathsf{DOAG}$ then $\invtilde$ is a free commutative idempotent monoid.\footnote{To be precise, \cite{hhm} works with $\invbar$, an object of which $\invtilde$ is a quotient. The two happen to coincide in  $\mathsf{ACVF}$, in $\mathsf{DOAG}$, as we will later show, and in  algebraically closed fields, as can be proven by using the notion of ``weight'' from stability theory.} Our first main theorem is a generalisation of this result. While the general strategy of proof is inspired by the~\cite{hhm} treatment of $\mathsf{DOAG}$, our methods are more general, and allow for a uniform treatment of different o-minimal theories.
\begin{alphthm}[Theorem~\ref{thm:omincharmodgen}]\label{thm:red1tp}
  Let $T$ be an o-minimal theory and assume that every global invariant type is domination-equivalent to a product of $1$-types. Then $\invtilde$ is a well-defined, free commutative idempotent monoid.  Its generators may be identified with any maximal set of pairwise weakly orthogonal global invariant $1$-types.
\end{alphthm}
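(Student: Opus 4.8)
The plan is to separate out a single, o-minimality-specific ``absorption'' lemma about invariant $1$-types and then run a purely formal assembly of the monoid, the latter powered by the hypothesis together with the machinery of \cite{invbartheory}.

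\textbf{The key lemma.} I would first establish: for global invariant $1$-types $p,q$ one has $p\wort q$ or $p\domeq q$, and a non-realised $1$-type $p$ always satisfies $p\nwort p$, whence $p\otimes p\domeq p$. The strategy is geometric. A witness of $p\nwort q$ is an $\monster$-formula $\phi(x,y)$ with $\phi$ and $\lnot\phi$ both consistent with $p(x)\cup q(y)$; by o-minimality, for $a\models p$ the set $\phi(a,\monster)$ is, near the cut determined by $q$, a finite union of intervals with endpoints $f_i(a)$ for $\monster$-definable functions $f_i$, and the monotonicity and cell-decomposition theorems (together with definable Skolem functions) let one distil from the $f_i$ a piecewise-monotone $\monster$-definable map carrying the cut of $p$ onto the cut of $q$; being locally invertible, it yields the small types realising $p\doms q$ and $q\doms p$ simultaneously. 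Idempotency is the same argument with $q=p$, using a definable ``fast'' family of functions to pin down a realisation of $p\mid\monster a$ over $\monster a$ by a countable (hence small) type, even though no single definable function will do. \emph{This lemma, and the companion statement about products needed below, is the main obstacle}: the real work is bookkeeping — confining the parameters of the small types to a small subset of $\monster$, treating separately bounded cuts versus $\pm\infty$ and the two sides of a cut, and checking that the transfer map genuinely moves one complete global type onto another rather than matching them only locally.

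\textbf{Well-definedness and the generating set.} Granting the hypothesis that every global invariant type is $\domeq$-equivalent to a product of invariant $1$-types, I would verify that the sufficient conditions of \cite{invbartheory} hold, so that $\domeq$ is a congruence for $\otimes$ and $\invtilde=(\invtypes(\monster),\otimes)/{\domeq}$ is a well-defined commutative monoid. By the key lemma each invariant $1$-type is idempotent, so, using the congruence and commutativity, any product of invariant $1$-types is $\domeq$-equivalent to a product of pairwise distinct classes; with the hypothesis this already shows $\invtilde$ is idempotent and generated by the classes of invariant $1$-types. Moreover, once the congruence is available, the dichotomy of the key lemma becomes an exclusive alternative: if $p\domeq q$ were weakly orthogonal with $p,q$ non-realised, then $p\otimes q\domeq p\otimes p\domeq p$, contradicting the fact (again from the o-minimal analysis) that a product of two weakly orthogonal non-realised $1$-types is dominated by neither factor on its own.

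\textbf{Identifying the generators and freeness.} Let $G$ be any maximal set of pairwise weakly orthogonal global invariant $1$-types (the realised types lie in the identity class, so they add nothing). Maximality plus the key lemma shows every invariant $1$-type is $\domeq$-equivalent to some member of $G$, and by the exclusivity that member is unique; combined with the previous step, every class of $\invtilde$ is represented by $\bigotimes_{g\in S}g$ for a finite $S\subseteq G$, so $S\mapsto[\bigotimes_{g\in S}g]$ is a surjective homomorphism from the free commutative idempotent monoid on $G$ (the finite subsets of $G$ under union) onto $\invtilde$. To see it is injective it suffices to prove, for $g\in G$ and finite $S\subseteq G$, that $\bigl(\bigotimes_{g'\in S}g'\bigr)\doms g$ forces $g\in S$: the converse is the trivial domination of a factor by its product, while if $g\notin S$ then $g\wort g'$ for every $g'\in S$, and one argues — by the same o-minimal analysis as in the key lemma, now applied to a product of $1$-types all weakly orthogonal to $g$ — that no such product can dominate $g$. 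Hence the homomorphism is an isomorphism, $\invtilde$ is the free commutative idempotent monoid on the classes of $G$, and since every maximal pairwise weakly orthogonal set serves equally well, the identification is independent of the choice of $G$.
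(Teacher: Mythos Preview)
Your overall architecture matches the paper's: establish a dichotomy for invariant $1$-types ($p\wort q$ or $p\domeq q$), prove idempotency of $1$-types, then assemble the monoid using the hypothesis. The dichotomy is Lemma~\ref{lemma:omin1typeequidom}, and your sketch of it via monotonicity and a definable bijection is correct.

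The genuine gap is in your treatment of idempotency. You write that it is ``the same argument with $q=p$, using a definable `fast' family of functions to pin down a realisation of $p\mid\monster a$ over $\monster a$ by a countable (hence small) type'', and you characterise the remaining work as ``bookkeeping''. This underestimates the difficulty. What is actually needed is that for $a\models p$ and some small $N$, the set $p(\dcl(Na))$ is cofinal and coinitial in $p(\dcl(\monster a))$; this is the Idempotency Lemma~\ref{lemma:idempotent}, and no such ``fast family'' is available a priori---the whole point is that $N$-definable functions of $a$ reach as far into the cut as $\monster$-definable ones. The paper's proof is not bookkeeping: one supposes some $\monster$-definable $f(\cdot,d)$ pushes $a$ strictly past $p(\dcl(Na))$, iterates $f$ to produce $b^0=a,b^1,\ldots$ with each $b^{\ell+1}$ realising $p\invext\dcl(Nb^0\cdots b^\ell)$ (a nontrivial inductive claim using monotonicity and invariance), and then invokes exchange: after $\abs d$ steps $d\in\dcl(Mb^0\cdots b^{\abs d})$, forcing $b^{\abs d+1}\in\dcl(Mb^0\cdots b^{\abs d})$, a contradiction. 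Your sketch contains no trace of this argument.

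A secondary point: your well-definedness step risks circularity. Writing $p\domeq\bigotimes p_i$, $q\domeq\bigotimes q_j$ and then ``using the congruence'' to rearrange presupposes what you want to prove, since Fact~\ref{fact:atleastontheleft} only lets you replace the \emph{left} factor. The paper avoids this by first proving (Proposition~\ref{pr:omindomchar}) that $p\domeq p'$ iff $I_p=I_{p'}$, exploiting that domination-equivalent $1$-types are related by a definable \emph{bijection}, so that Fact~\ref{fact:pushforward} handles right-factor replacement without needing the congruence in advance; well-definedness is then read off from $I_{p\otimes q}=I_p\cup I_q$.
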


After recalling some preliminaries in Section~\ref{sec:prelim}, we will prove Theorem~\ref{thm:red1tp} in Section~\ref{sec:reduction}, together with some auxiliary results which will allow us to show that certain concrete o-minimal theories satisfy its hypothesis. We  will prove this to be the case in o-minimal theories where all types are simple (in the sense of~\cite{mayer}) in Section~\ref{sec:dloeco}, while in Section~\ref{sec:doag} we close a gap in the~\cite{hhm}  $\mathsf{DOAG}$ proof, and we see how in this theory $\invtilde$ is isomorphic to the domination monoid of the rank of the (group-theoretic) Archimedean valuation on $\monster$. Next, we consider the theory $\mathsf{RCF}$ of real closed fields in Section~\ref{sec:rcf} and achieve a similar reduction with the help of another Archimedean valuation, this time the field-theoretic one. 
\begin{alphthm}[Theorem~\ref{thm:rcf}]\label{thm:rcfintro}
In $\mathsf{RCF}$, the monoid $\invtilde$ is well-defined and isomorphic to the domination monoid of the Archimedean rank of the Archimedean value group of $\monster$. In other words, it is isomorphic to the free commutative idempotent monoid with generators the invariant convex subrings of $\monster$.
\end{alphthm}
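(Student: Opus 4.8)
The plan is to verify the hypothesis of Theorem~\ref{thm:red1tp} for $T = \mathsf{RCF}$ — namely that every global invariant type is domination-equivalent to a product of $1$-types — and then to extract the structure of the domination monoid from that theorem, finally matching its generators with the invariant convex subrings of $\monster$. Fix $\monster \models \mathsf{RCF}$ and let $v_0$ be its natural (finest Archimedean, i.e.\ field-theoretic) valuation, with valuation ring $\mathcal O_0$ equal to the convex hull of $\mathbb Z$, value group $\Gamma_0 = \Gamma(\monster)$ a divisible ordered abelian group, and residue field $k_0$. I would begin by recording two structural facts: \emph{(i)} $k_0 \cong \mathbb R$, since $k_0$ is an Archimedean real closed field and hence order-embeds into $\mathbb R$, while conversely the saturation of $\monster$ forces every real to be the residue of a finite element; and \emph{(ii)} the same computation shows that for any finite tuple $\bar a$ in a monster-saturated elementary extension, the residue field of $N := \dcl(\monster\bar a)$ under the natural valuation $v$ extending $v_0$ is again $\mathbb R$, so by Abhyankar's inequality the valued-field extension $N/\monster$ grows only in the value-group direction, by rational rank at most $\abs{\bar a}$. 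I would also verify that saturation and strong homogeneity transfer from $\monster$ to $\Gamma_0$, so that the latter is a legitimate monster model of $\mathsf{DOAG}$ and the results of Section~\ref{sec:doag} apply to it.

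The reduction itself is the heart of the argument. Given $p \in \invtypes(\monster)$ realised by $\bar a$ in a sufficiently saturated extension, with $N$ and $v$ as above, I would use o-minimal cell decomposition together with \emph{(ii)} to show that $\tp(\bar a/\monster)$ is domination-equivalent over $\monster$ to the pushforward under $v$ of the type of a suitable derived tuple in $\Gamma(N)$; more precisely, I would build a monoid isomorphism $\invtildeof{\monster} \cong \invtildeof{\Gamma_0}$ — the left-hand side computed in $\mathsf{RCF}$, the right-hand side in $\mathsf{DOAG}$ — which is an Ax--Kochen--Er\v{s}ov-type transfer for domination monoids in which, by \emph{(i)}--\emph{(ii)}, the residue field contributes nothing. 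Since Section~\ref{sec:doag} (through Theorem~\ref{thm:red1tp} applied to $\mathsf{DOAG}$) exhibits $\invtildeof{\Gamma_0}$ as a free commutative idempotent monoid generated by classes of $1$-types, and these pull back along the isomorphism to $1$-types of $\mathsf{RCF}$, it follows that $p$ is domination-equivalent to a product of $1$-types. Hence Theorem~\ref{thm:red1tp} applies and $\invtilde$ is a well-defined free commutative idempotent monoid.

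It remains to identify the generators. By Theorem~\ref{thm:red1tp} they form a maximal set of pairwise weakly orthogonal invariant $1$-types, corresponding under the isomorphism above to the generators of $\invtildeof{\Gamma_0}$ in $\mathsf{DOAG}$; by Section~\ref{sec:doag} the latter biject with the invariant convex subgroups of $\Gamma_0$, so that $\invtildeof{\Gamma_0}$ is the domination monoid of the Archimedean rank of $\Gamma_0$ — i.e.\ of the Archimedean value group of $\monster$. To pass back to $\monster$, note that every convex subring of $\monster$ contains $1$, hence $\mathbb Z$, hence (being convex) the convex hull $\mathcal O_0$, and that an overring of a valuation ring is again a valuation ring; thus the convex subrings of $\monster$ are exactly the coarsenings of $v_0$, which correspond order-reversingly and invariance-preservingly to the convex subgroups of $\Gamma_0$. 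Chaining these identifications yields the theorem. I expect the main obstacle to be the Ax--Kochen--Er\v{s}ov-type transfer of the second paragraph: because $v_0$ is only type-definable in $\mathsf{RCF}$, one cannot invoke a clean definable-quotient argument, and instead one must show by hand that over the saturated base every invariant $1$-type is, up to domination, governed by a single element of $\Gamma(N)$ — dealing in particular with ``non-valuational'' cuts and with nested infinitesimal neighbourhoods — and that $\otimes$ and $\doms$ are faithfully transported in both directions.
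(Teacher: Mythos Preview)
Your high-level plan—verify the hypothesis of Theorem~\ref{thm:omincharmodgen} and then identify the generators—matches the paper's, and your third paragraph (the correspondence between invariant convex subrings of $\monster$ and invariant convex subgroups of $\Gamma_0$) is essentially the content of Remark~\ref{rem:rank_follow_up}. The divergence, and the gap, is in how you propose to verify $1$-type generation. You want to establish it by first building a monoid isomorphism $\invtilde\cong\invtildeof{\Gamma_0}$ and then pulling back the $\mathsf{DOAG}$ result; but even stating that $\invtilde$ is a monoid presupposes that $\otimes$ respects $\domeq$, which is precisely what Theorem~\ref{thm:omincharmodgen} delivers \emph{after} $1$-type generation is known, so the order of dependencies is inverted. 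More seriously, the step you yourself flag as the main obstacle—that $\tp(\bar a/\monster)$ is governed up to domination by value-group data—is the entire content of Proposition~\ref{pr:rcfgen}, and neither cell decomposition nor your fact~(ii) gets you there. Since $v_0$ is only type-definable there is no pushforward in the sense of Example~\ref{eg:deqpushf}; and since $\monster$ is not maximally complete, the Abhyankar-type rank bound on $\Gamma(N)/\Gamma_0$ does not by itself decide the sign of an arbitrary polynomial $f(c,d')$ with coefficients $d'\in\monster$.

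The paper does not attempt an AKE transfer. Instead it verifies Property~\ref{property:theominimalthing} directly by passing to a \emph{small} maximally complete $M\smallprec\monster$ (Fact~\ref{fact:folk}), invoking separated bases over such $M$ (Fact~\ref{fact:sepbas}), and proving that a basis separated over $M$ remains separated over $\monster$ by $M$-invariance of $p$. This lets one rewrite any polynomial $f(c,d')$ in terms of the separated basis and read off its sign from a real standard part, so that the $1$-dimensional data $\pi_M$ already decides it (Proposition~\ref{pr:rcfgen}). Property~\ref{property:1tpgen} then follows via Corollary~\ref{co:assgac}, Theorem~\ref{thm:omincharmodgen} applies, and the isomorphism $\invtilde\cong\invtildeof{\Gamma_0}$ emerges as a corollary (Remark~\ref{rem:rank_follow_up}), not as an input. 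The identification of generators with invariant convex subrings is handled separately via Proposition~\ref{pr:convexringsswort} and Corollary~\ref{co:subrings}.
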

 Theorem~\ref{thm:rcfintro} can be used to complete the picture, largely painted in~\cite{ehm}, of the domination monoid in the  (weakly o-minimal)  theory $\mathsf{RCVF}$ of real closed nontrivially valued fields with convex valuation ring.  The only missing step, which we carry out in Section~\ref{sec:arcvf}, is to show that $\invtilde$ is well-defined. Combining this with~\cite[Corollary~2.8]{ehm}, which provides an isomorphism $\invtilde\cong\invtildeof{k(\monster)}\oplus \invtildeof{\Gamma(\monster)}$, we obtain the following characterisation.
 \begin{alphthm}[Theorem~\ref{thm:rcvf}]
   In $\mathsf{RCVF}$, the monoid $\invtilde$ is well-defined and isomorphic to the free commutative idempotent monoid generated by the disjoint union of the set of invariant convex subgroups of the value group with the set of invariant convex subrings of the residue field.
 \end{alphthm}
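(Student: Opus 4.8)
The plan is to combine the isomorphism of \cite[Corollary~2.8]{ehm}, which reduces $\invtilde(\monster)$ to the domination monoids of the residue field and the value group, with our explicit computations of those two monoids, once the well-definedness of $\invtilde(\monster)$ has been disposed of. Concretely, if $\monster\models\mathsf{RCVF}$ then $k(\monster)$ is real closed and $\Gamma(\monster)$ is a divisible ordered abelian group, and both are suitably saturated, hence serve as monster models of $\mathsf{RCF}$ and $\mathsf{DOAG}$ respectively. By Theorem~\ref{thm:rcfintro}, $\invtildeof{k(\monster)}$ is well-defined and is the free commutative idempotent monoid on the set of invariant convex subrings of $k(\monster)$; by the $\mathsf{DOAG}$ analysis of Section~\ref{sec:doag} (and \cite{hhm}), $\invtildeof{\Gamma(\monster)}$ is well-defined and is the free commutative idempotent monoid on the set of invariant convex subgroups of $\Gamma(\monster)$. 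In particular $\invtilde$ and $\invbar$ coincide for each of these two structures. Finally, a direct sum $M\oplus N$ of free commutative idempotent monoids on generating sets $X$ and $Y$ is again free commutative idempotent, on $X\sqcup Y$: this is elementary, since the free commutative idempotent monoid on a set $Z$ is $(\pfin(Z),\cup)$ and $\pfin(X)\times\pfin(Y)\cong\pfin(X\sqcup Y)$. Hence, granting well-definedness of $\invtilde(\monster)$, the stated description follows at once.

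It therefore remains to show that $\invtilde(\monster)$ is well-defined, i.e.\ that $\domeq$ is a congruence with respect to $\otimes$ on $\invtypes(\monster)$; equivalently, that the canonical surjection $\invbar(\monster)\twoheadrightarrow\invtypes(\monster)/{\domeq}$ is injective, i.e.\ that $p\domeq q$ implies $p\equidom q$. This is the content of Section~\ref{sec:arcvf} and is where the real work lies. The strategy is to push $\domeq$ through the reduction to residue field and value group underlying the \cite{ehm} isomorphism: that isomorphism is induced by the operations of pushing an invariant type of $\monster$ forward to the residue sort and to the value-group sort, which are monotone for $\doms$, so that $p\domeq q$ in $\monster$ yields simultaneous dominations of the respective parts in $k(\monster)$ and in $\Gamma(\monster)$. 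Since $\domeq$ and $\equidom$ agree in $\mathsf{RCF}$ and in $\mathsf{DOAG}$ by the results quoted above, the residue parts are already $\equidom$-equivalent in $\invbarof{k(\monster)}$ and the value-group parts in $\invbarof{\Gamma(\monster)}$; transporting this back along the isomorphism $\invbar(\monster)\cong\invbarof{k(\monster)}\oplus\invbarof{\Gamma(\monster)}$ gives $p\equidom q$. As $\equidom$ always refines $\domeq$, the two relations coincide, $\domeq$ is a congruence, and $\invtilde(\monster)=\invbar(\monster)$.

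The main obstacle is making the previous paragraph precise: one must check that the \cite{ehm} isomorphism genuinely intertwines the domination preorders — that the pushforwards to the residue and value-group sorts are monotone for $\doms$, and that a class in $\invbar(\monster)$ is determined by the pair of its images — so that a domination in $\monster$ really does descend to, and be recovered from, dominations in $k(\monster)$ and $\Gamma(\monster)$. Should working directly with the isomorphism prove awkward, an alternative is to verify instead that every global invariant type of $\monster$ is domination-equivalent to a product of $1$-types — which again reduces, via \cite{ehm}, to the corresponding statements for $\mathsf{RCF}$ and $\mathsf{DOAG}$, each of which follows from our earlier work — and then to rerun the argument of Theorem~\ref{thm:red1tp} in the weakly o-minimal setting of $\mathsf{RCVF}$, the point being that that argument uses only that the monoid is generated by classes of pairwise weakly orthogonal invariant $1$-types. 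Either way, once well-definedness is secured, the final identification is immediate from the two summand computations together with the elementary fact about direct sums of free commutative idempotent monoids recalled above.
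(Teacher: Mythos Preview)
There is a genuine logical gap in your argument. You write that well-definedness of $\invtilde$ is ``equivalently, that the canonical surjection $\invbar(\monster)\twoheadrightarrow\invtypes(\monster)/{\domeq}$ is injective, i.e.\ that $p\domeq q$ implies $p\equidom q$.'' This equivalence is false in general: even if $\domeq$ and $\equidom$ coincide, neither need be a congruence for $\otimes$. To deduce well-definedness of $\invtilde$ from $\domeq=\equidom$ you would additionally need that $\invbar$ is already a well-defined monoid, which you tacitly assume by invoking ``the isomorphism $\invbar(\monster)\cong\invbarof{k(\monster)}\oplus\invbarof{\Gamma(\monster)}$''. But what \cite[Corollary~2.8]{ehm} actually provides is the domination statement recorded here as Fact~\ref{fact:rcvfgen}; the monoid isomorphism is something the paper \emph{derives}, not something it imports. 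So your argument is circular: you use the monoid isomorphism to establish the very well-definedness needed to state it.

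The paper's route is the reverse of yours. From Fact~\ref{fact:rcvfgen} one first proves the decomposition $p\equidom p_k\otimes p_\Gamma$ for every invariant $p$. This is then used, together with Fact~\ref{fact:dclsorts} and the commutativity of $\otimes$ modulo $\equidom$ already known in $\mathsf{RCF}$ and $\mathsf{DOAG}$, to show directly that $p\otimes q\equidom q\otimes p$ for all invariant $p,q$ in $\mathsf{RCVF}$ (the Claim inside the proof of Theorem~\ref{thm:rcvf}; note the care taken with imaginary coordinates). Commutativity modulo $\equidom$ then yields well-definedness of both $\invbar$ and $\invtilde$ via Corollary~\ref{co:commthenwd}. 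Only \emph{after} this does the paper assemble the monoid isomorphism, and $\domeq=\equidom$ is obtained last, as a corollary. Your alternative plan of rerunning Theorem~\ref{thm:red1tp} is also problematic: that argument relies on o-minimal machinery (the Idempotency Lemma, Lemma~\ref{lemma:omin1typeequidom}, distality as used there) which is not available verbatim in the weakly o-minimal $\mathsf{RCVF}$.
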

We conclude by leaving a small list of open questions in Section~\ref{sec:questions}.

\paragraph{Acknowledgements}
This research is part of the author's PhD thesis~\cite{mythesis},  funded by a Leeds Anniversary Research Scholarship and supervised by Dugald Macpherson and Vincenzo Mantova. To them, this manuscript and I are deeply indebted: virtually every word you are going to read has been influenced by their guidance, suggestions, and feedback. In particular, thanks to Macpherson for pointing out Fact~\ref{fact:sepbas}, and to Mantova for pointing out Remarks~\ref{rem:rank} and~\ref{rem:rank_follow_up}.
This paper, and in particular Section~\ref{sec:rcf}, has also benefited from several discussions with Francesco Gallinaro, to whom goes my gratitude.

\section{Preliminaries}\label{sec:prelim}
\subsection{The fine print}\label{subsec:fineprint}
In this subsection we fix some (mostly standard) conventions, notations, and abuses thereof.

As usual, the letter $T$ denotes a consistent, complete first-order theory, in a possibly multi-sorted  language $L$,  with infinite models. 
Definable  closure is denoted by $\dcl$. The set of finite subsets of $X$ is denoted by $\pfin(X)$. The set of natural numbers is denoted by $\omega$ and always contains $0$.

Symbols such as $\monster$, $\monster_1$, etc.~denote    \emph{monster} models of $T$, in the sense specified hereafter.  The reader who is happy to assume that arbitrarily large strongly inaccessible cardinals exist, may take  $\monster$ to denote a model of strongly inaccessible size $\kappa(\monster)>\abs T$ which is $\kappa(\monster)$-saturated, and let \emph{small} mean ``of size strictly less than $\kappa(\monster)$''. 
In the absence of large cardinals, formally speaking, monster models  will come with cardinals. That is, we consider pairs $(\monster, \kappa(\monster))$ such that $\monster$ is   $\kappa(\monster)$-saturated and $\kappa(\monster)$-strongly homogeneous for a strong limit  $\kappa(\monster)\ge\beth_\omega(\abs T)$, and when we say  that $A$ is \emph{small}  we mean  $\abs A<\kappa(\monster)$.   So $\monster$ may be for instance $\lambda^+$-saturated and $\lambda^+$-strongly homogeneous for some $\lambda^+>\kappa(\monster)$, but sets of size $\lambda$ are not considered small. 
In practice, we will  not mention $\kappa(\monster)$, in order not to burden our notation excessively.

The letter $A$ usually represents a small subset of $\monster$, the letters   $M$, $N$  small elementary substructures. If $A$ is small and included in $\monster$ we denote this by $A\smallsubset \monster$,  or $A\smallprec \monster$  if additionally $A\prec \monster$. 
If a model is denoted e.g.~by $N$, and not by $\monster$ or variations,    the notation $A\smallsubset N$ means that $N$ is $\abs A^+$-saturated and $\abs A^+$-strongly homogeneous, and similarly for  $M\smallprec N$.

Parameters and variables are tacitly allowed to be finite tuples unless otherwise specified.  Concatenation of tuples is denoted by juxtaposition, and so is union of sets, e.g.~$AB=A\cup B$.
Coordinates of a tuple are indicated with subscripts, starting with $0$, so for instance $a=(a_0,\ldots, a_{\abs a-1})$,  where  $\abs a$  denotes the length of $a$. When $a$ is a tuple and $B$ is a set, we may write $a\in B$ in place of $a\in B^{\abs a}$. The notation $\abs a$ will be overloaded and, if $a$ is an element of an ordered group, $\abs a$ will also denote the absolute value of $a$. 
A \emph{sequence} is a function with domain a totally ordered set, not necessarily $\omega$. To avoid confusion when dealing with a sequence of tuples, indices are written as superscripts, as in $(a^i)_{i\in I}$.  Tuples and sequences may be sometimes treated as sets, as in $a_0^0\in a^0\in (a^i)_{i\in I}$. Lowercase Latin letters towards the end of the alphabet, e.g.~$x,y,z$, usually denote tuples of variables, while  letters such as $a,b,c$ usually denote tuples of elements of a model.  We  write e.g.~$x=a$ instead of $\bigwedge_{i<\abs x} x_i=a_i$; ``definable functions'' $f$ may be tuples $(\bla f0,n)$ thereof, and we write $y=f(x)$ for $\bigwedge_{i<\abs y} y_i=f_i(x)$. If $\abs x=1$ we write $x$ instead of $x_0$.

Formulas are denoted by lowercase Greek letters. When we say \emph{$L$-formula}, we mean without parameters;  we sometimes write ``$L(\emptyset)$-formula'' for emphasis. On the contrary,  \emph{definable}  means ``$\monster$-definable''; if we  only allow parameters from $A$, we say ``$A$-definable'', ``definable over $A$'', etc.  
A \emph{partial type}, denoted  by a letter such as $\pi$ or $\Phi$, is a filter on a Boolean algebra of definable subsets of a fixed finite product of sorts. We say that $\pi$ is a partial type \emph{in variables $x$}, and write $\pi(x)$, to mean that such product of sorts is given by the sorts of the variables in $x$.  Partial types, even if we describe them with sets of formulas, are not assigned predetermined variables, in the sense that if $\pi$ is a partial type in $x$, and $y$ is of length $\abs x$ and such that for each $i<n$ the sorts of $x_i$ and $y_i$ coincide, then we do not distinguish between $\pi(x)$ and $\pi(y)$. This identification, of course, will not be used if we write for instance $\Phi(x,y)=\pi(x)\cup \pi(y)$.  
  \emph{Type over $B$} means ``complete type over $B$ in finitely many variables''.  Types are denoted by letters like $p,q,r$. We stress that, as a special case of our convention on partial types, we do not distinguish between $p(x)$ and $p(y)$ when taken in isolation, but we do in statements such as $p(x)\otimes p(y)=p(y)\otimes p(x)$.  We sometimes write e.g.~$p_x$ in place of $p(x)$, and denote with $S_x(B)$ the space of types over $B$ in variables $x$.  If $Y$ is a sort, we write $S_{Y^n}(B)$ for the space of types over $B$ in $n$ variables, each of sort $Y$; in the single-sorted case, we write $S_n(B)$. A \emph{global type} is a complete type over $\monster$. The distinction between a partial type $\Phi$ and the set it defines is sometimes blurred. Its  set of realisations in $C$ is denoted by $\Phi(C)$. When manipulating types $p(x)$, $q(y)$, say, we assume without loss of generality that the tuples of variables $x$ and $y$ are disjoint.

Realisations of global types and supersets of $\monster$, are though of as living inside a bigger monster model, which usually goes unnamed but is sometimes denoted by $\monster_1$.  Implications, definable closure, etc.~are to be understood modulo the elementary diagram $\opsc{ed}(\monster_1)$. For example $\models \phi(a)$ means $\monster_1\models \phi(a)$, and if $c\in \monster_1$ and $p\in S_x(\monster c)$ then $(p\restr \monster)\proves p$ is a shorthand for $(p\restr \monster)\cup \opsc{ed}(\monster_1)\proves p$. Deductive closures may be implicitly taken, as in ``$\set{x=a}\in S_x(\monster)$''.

\subsection{The domination monoid}
We  recall how the \emph{domination monoid} $\invtilde$ is defined, together with some of its basic properties. See~\cite{invbartheory} or~\cite{mythesis} for a more thorough treatment.

 \begin{defin}\label{defin:invtp}
  \begin{enumerate}
  \item Let $A\subseteq B$.  A  type $p(x)\in S(B)$ is \emph{$A$-invariant}  iff for all $\phi(x,y)\in L(A)$ (equivalently, for all $\phi(x,y)\in L(\emptyset)$) and $a\equiv_A b$ in $B^{\abs y}$ we have $p(x)\proves \phi(x,a)\coimplica \phi(x,b)$. A global type $p(x)\in S(\monster)$ is \emph{invariant} iff it is $A$-invariant for some $A\smallsubset \monster$. Such an $A$ is called a \emph{base} for $p$.
\item We denote by $\invtypes_x(\monster, A)$ the space of global $A$-invariant types in variables $x$, with $A$ small, and by $\invtypes_x(\monster)$ the union of all $\invtypes_x(\monster, A)$ as $A$ ranges among small subsets of $\monster$.  Denote by  $S(B)$ the union of all spaces of types over $B$ in a finite tuple of variables; similarly for, say,  $\invtypes(\monster)$.
  \item  If $p(x)\in \invtypes(\monster, A)$ and $\phi(x,y)\in L(A)$, write 
\[
(d_p\phi(x,y))(y)\coloneqq\set{\tp_y(b/A)\mid \phi(x,b)\in p, b\in \monster}
\]
The map $\phi\mapsto d_p\phi$ is called the \emph{defining scheme} of $p$ over $A$.
  \end{enumerate}
\end{defin}

If we  say that a type $p$ is invariant, and its domain is not specified and not clear from context, it is usually a safe bet to assume that $p\in S(\monster)$. Similarly if we say that a tuple has invariant type without specifying over which set.
\begin{rem}\label{rem:invariantisinvariant}
By $\abs A^+$-strong homogeneity of $\monster$, a global $p\in S_x(\monster)$ is $A$-invariant if and only if it is a fixed point of the pointwise stabiliser  $\aut(\monster/A)$ of $A$  under the usual action of $\aut(\monster)$ on $S_x(\monster)$, defined by \[f\cdot p\coloneqq\set{\phi(x, f(d))\mid \phi(x, y)\in L(\emptyset), \phi(x,d)\in p}\] If $a\in \monster_1\succ \monster$ and $a\models p$, then for every $f_1\in \aut(\monster_1/A)$ extending $f$  we have $\tp(f_1(a)/\monster)=f\cdot p$. 
\end{rem}

\begin{fact}
Let $A\smallsubset  \monster\subseteq B$ and  $p\in \invtypes_x(\monster, A)$. There is a unique $p\invext B$ extending $p$ to an $A$-invariant type over $B$,  given by requiring, for each  $\phi(x,y)\in L(A)$  and $b\in B^{\abs y}$, 
 \[
 \phi(x,b)\in p\invext B\iff \tp(b/A)\in (d_p\phi(x,y))(y)
\]  
\end{fact}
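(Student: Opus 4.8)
The plan is the standard one for ``unique invariant extension'' statements: first prove uniqueness, since the argument for it will simultaneously force the recipe for $p\invext B$ written in the statement, and then verify that this forced recipe really does define a complete, consistent, $A$-invariant type over $B$ whose restriction to $\monster$ is $p$. Throughout, the one non-formal ingredient is that $\monster$, being $\abs A^+$-saturated (as $A$ is small), lets me realise any type over $A$ by a tuple from $\monster$.

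For uniqueness, suppose $q\in S_x(B)$ is $A$-invariant with $q\restr\monster=p$, and fix $\phi(x,y)\in L(A)$ and $b\in B^{\abs y}$. Using $\abs A^+$-saturation of $\monster$ I pick $b'\in\monster^{\abs y}$ with $b'\equiv_A b$. Then $A$-invariance of $q$ gives $\phi(x,b)\in q$ iff $\phi(x,b')\in q$, and since $b'\in\monster$ and $q$ extends $p$, the latter says $\phi(x,b')\in p$, i.e.\ $\tp(b'/A)\in(d_p\phi)(y)$, i.e.\ (as $b\equiv_A b'$) $\tp(b/A)\in(d_p\phi)(y)$. Hence any such $q$ is forced to be the object described in the statement, so there is at most one, and it remains to produce it.

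For existence I would let $p\invext B$ be defined by the displayed equivalence and check the needed properties one by one, all routine. \emph{Well-posedness}: every $L(B)$-formula in $x$ is of the form $\phi(x,b)$ with $\phi\in L(\emptyset)\subseteq L(A)$ and $b\in B$, and the right-hand side of the equivalence depends only on the resulting formula and not the presentation. \emph{Completeness}: since $p$ is complete and $A$-invariant, $(d_p\phi)(y)$ and $(d_p\neg\phi)(y)$ partition $S_y(A)$, so exactly one of $\phi(x,b),\neg\phi(x,b)$ is declared to lie in $p\invext B$. \emph{Consistency}, for which finite satisfiability suffices: given $\phi_i(x,b_i)\in p\invext B$ for $i<n$ with $\phi_i\in L(A)$, I use $\abs A^+$-saturation of $\monster$ to pull the concatenated tuple $(b_i)_{i<n}$ back to some $(b'_i)_{i<n}$ in $\monster$ with the same type over $A$; then $\tp(b'_i/A)=\tp(b_i/A)\in(d_p\phi_i)(y)$, so $\phi_i(x,b'_i)\in p$ for each $i$, whence $\models\exists x\bigwedge_{i<n}\phi_i(x,b'_i)$ by consistency of $p$, and since $\exists x\bigwedge_{i<n}\phi_i(x,y_0,\dots,y_{n-1})\in L(A)$ and $(b'_i)_{i<n}\equiv_A(b_i)_{i<n}$ this transfers to $\models\exists x\bigwedge_{i<n}\phi_i(x,b_i)$. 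Finally, $A$-invariance of $p\invext B$ is immediate because the right-hand side of the equivalence factors through $\tp(b/A)$, and $p\invext B\restr\monster=p$ because for $c\in\monster$ the equivalence unwinds, via $A$-invariance of $p$ and the definition of its defining scheme, to $\phi(x,c)\in p$.

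I do not anticipate any genuine obstacle: this is the familiar unique-invariant-extension lemma, and the only points asking for a little care are the bookkeeping of rewriting $L(B)$-formulas as $L(A)$-formulas with parameters from $B$, and the repeated appeal to $\abs A^+$-saturation of $\monster$ to relocate finite parameter tuples of $B$ inside $\monster$.
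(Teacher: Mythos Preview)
Your argument is correct and is the standard proof of this well-known lemma. Note that the paper does not actually prove this statement: it is recorded as a \emph{Fact} without proof, so there is nothing to compare against beyond observing that your proposal supplies exactly the routine verification the paper omits.
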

If $B$ contains both $\monster$ and $C$, we  use the notation $p\invext C$ to denote $(p\invext B)\restr C$.

It is an easy exercise to show that $p\invext B$ does not depend on the choice of a base $A$ of invariance for $p$. Similarly, given any $q(y)\in S(\monster)$ and $\phi(x,y)\in L(\monster)$, it is easy to see that for all $b$, $b'$ both realising $q$ we have $\phi(x,b)\in p\invext \monster b\iff \phi(x,b')\in p\invext \monster b'$. It follows that the notion below is well-defined.
\begin{defin}\label{defin:product}
Let   $p\in \invtypes_x(\monster,A)$ and  $q\in S_y(\monster)$. The \emph{tensor product} $p\otimes q$ is defined as follows.  Fix $b\models q$; for each $\phi(x,y)\in L(\monster)$, define\footnote{Some authors denote by $q(y)\otimes p(x)$ what we denote by $p(x)\otimes q(y)$.}
 \[
 \phi(x,y)\in p(x)\otimes q(y)\iff \phi(x,b)\in p\invext \monster b
\]
We  define inductively $\pow p1\coloneqq p(x^0)$ and $\pow{p}{n+1}\coloneqq p(x^n)\otimes\pow pn(x^{n-1},\ldots, x^0)$.
\end{defin}

\begin{fact}\label{fact:productbasics}
If $p_0,p_1,p_2\in \invtypes(\monster, A)$, then $p_0\otimes p_1\in \invtypes(\monster, A)$, and moreover $(p_0(x)\otimes p_1(y))\otimes p_2(z)=p_0(x)\otimes (p_1(y)\otimes p_2(z))$.
\end{fact}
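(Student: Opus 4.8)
The plan is to deduce both assertions from two ingredients: a description of $\otimes$ in terms of realisations, and the \emph{transitivity of invariance}. For the first, observe that if $p\in\invtypes_x(\monster,A)$, $q\in S_y(\monster)$, and we choose, inside a suitable monster extension of $\monster$, a tuple $b\models q$ followed by $a\models p\invext\monster b$, then for every $\phi(x,y)\in L(\monster)$ Definition~\ref{defin:product} gives $\phi\in p\otimes q$ iff $\phi(x,b)\in p\invext\monster b$ iff $\models\phi(a,b)$; in other words $\tp(ab/\monster)=p\otimes q$. For the second ingredient I would prove the following routine claim directly from Definition~\ref{defin:invtp}: if $A\smallsubset\monster\subseteq B$, $\tp(b/B)$ is $A$-invariant and $\tp(a/Bb)$ is $A$-invariant, then $\tp(ab/B)$ is $A$-invariant. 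Indeed, given $\phi(x,y,z)\in L(A)$ and $c\equiv_A c'$ in $B^{\abs z}$, invariance of $\tp(b/B)$ yields $bc\equiv_A bc'$, and then invariance of $\tp(a/Bb)$ applied to the formula $\phi$ and this pair yields $\models\phi(a,b,c)\coimplica\phi(a,b,c')$, as needed.

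With these in hand, the first assertion is immediate: realise $b\models p_1$ and then $a\models p_0\invext\monster b$; by the realisation description $\tp(ab/\monster)=p_0\otimes p_1$, while $\tp(b/\monster)=p_1$ and $\tp(a/\monster b)=p_0\invext\monster b$ are $A$-invariant, so transitivity of invariance gives that $p_0\otimes p_1$ is $A$-invariant. For associativity, fix (in successive monster extensions) $c\models p_2$, then $b\models p_1\invext\monster c$, then $a\models p_0\invext\monster bc$. By the realisation description applied to $p_1,p_2$ we get $\tp(bc/\monster)=p_1\otimes p_2$, and applying it again to $p_0$ and $q\coloneqq p_1\otimes p_2$ we get $\tp(abc/\monster)=p_0\otimes(p_1\otimes p_2)$; hence $\phi(x,y,z)\in p_0\otimes(p_1\otimes p_2)$ iff $\models\phi(a,b,c)$.

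For the other side, note $\tp(ab/\monster)=p_0\otimes p_1$ by the realisation description, and by transitivity of invariance with $B=\monster c$ the type $\tp(ab/\monster c)$ is $A$-invariant and extends $p_0\otimes p_1$; by the uniqueness of the $A$-invariant extension (the Fact stated just before Definition~\ref{defin:product}) it therefore equals $(p_0\otimes p_1)\invext\monster c$. Consequently $\phi(x,y,z)\in(p_0\otimes p_1)\otimes p_2$ iff $\phi(x,y,c)\in(p_0\otimes p_1)\invext\monster c$ iff $\models\phi(a,b,c)$. The two memberships coincide, so $(p_0\otimes p_1)\otimes p_2=p_0\otimes(p_1\otimes p_2)$. (That none of this depends on the particular choice of the realisations $a,b,c$ is exactly the well-definedness already recorded before Definition~\ref{defin:product}.)

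I do not expect a genuine obstacle here: the content is entirely bookkeeping. The only place requiring care is keeping the tower of monster extensions and the iterated $\invext$-restrictions consistent, and invoking transitivity of invariance with the right choice of base set ($\monster c$) and variable tuples inside the associativity computation.
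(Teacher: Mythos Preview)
Your proof is correct and is the standard argument. Note that the paper itself does not supply a proof of this statement: it is recorded as a Fact, a well-known property of the tensor product of invariant types, so there is no paper proof to compare against. What you have written is exactly the customary justification one finds in the literature (realise the types in a tower, use transitivity of invariance, and invoke uniqueness of invariant extensions), and the bookkeeping you flag at the end is handled correctly.
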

We are thus in the presence of a semigroup $(\invtypes(\monster), \otimes)$. This semigroup is a monoid, with neutral element the unique global $0$-type, namely the elementary diagram $\opsc{ed}(\monster)$ of $\monster$. We are interested in its quotients by two equivalence relations. Before introducing them, let us clarify some standard abuse of notation. Recall the conventions on types and variables stipulated in Subsection~\ref{subsec:fineprint}.
\begin{defin}\label{defin:commute}
  We say that two global invariant types $p,q$ \emph{commute}, and write $p\otimes q=q\otimes p$, iff $p(x)\otimes q(y)=q(y)\otimes p(x)$.
\end{defin}
  Saying that $p$ and $q$ commute, in the sense above, is \emph{not} the same as saying that they commute as elements of the semigroup $(\invtypes(\monster), \otimes)$. For example, every element of every semigroup commutes with itself in the standard sense. On the other hand, in the theory $\mathsf{DLO}$ of dense linear orders without endpoints, let for example $p$ be the ($\emptyset$-invariant) $1$-type $\tp(+\infty/\monster)$ of an element larger than $\monster$. Then $p(x)\otimes p(y)\proves x>y$ and $p(y)\otimes p(x)\proves x<y$, therefore $p$ does not commute with itself in the sense of Definition~\ref{defin:commute}. Even worse, in an arbitrary theory, let $p$, $q$ be any two distinct realised global $1$-types. Since $p$ and $q$ are realised, they are easily seen to commute in the sense of Definition~\ref{defin:commute}. Nevertheless, $p\otimes q$ and $q\otimes p$ are distinct as elements of the semigroup $(\invtypes(\monster), \otimes)$, since for example they do not agree on their respective first coordinates.

  In what follows, the notation $p\otimes q=q\otimes p$ will \emph{always} mean that $p$ and $q$ commute as in Definition~\ref{defin:commute}. Similarly, if for example $\sigma$ is a permutation of $\set{0,\ldots, n}$ and we write $p_0\otimes\ldots \otimes p_n=p_{\sigma(0)}\otimes \ldots\otimes p_{\sigma(n)}$, this has to be read as $p_0(x^0)\otimes\ldots \otimes p_n(x^n)=p_{\sigma(0)}(x^{\sigma(0)})\otimes\ldots\otimes p_{\sigma(n)}(x^{\sigma(n)})$.

\begin{defin} \label{defin:domination}
Let $p\in S_x(\monster)$ and $q\in S_y(\monster)$. We say that $p$ \emph{dominates} $q$,  and write $p\doms q$, iff there are some small $A$ and some $r\in S_{xy}(A)$ such that
\begin{itemize}
\item $r\in S_{pq}(A)\coloneqq\set{r\in S_{xy}(A)\mid r\supseteq (p\restr A)\cup (q\restr A)}$, and
\item $p(x)\cup r(x,y)\proves q(y)$.
\end{itemize}
In this case, we say that $r$ is a \emph{witness} to, or \emph{witnesses} $p\doms q$. We say that $p$ and $q$ are \emph{domination-equivalent}, and write $p\domeq q$, iff $p\doms q$ and $q\doms p$, possibly witnessed by different small types $r$. We say that $p$ and $q$ are \emph{equidominant}, denoted by $p\equidom q$, iff there are some small $A$ and some $r\in S_{pq}(A)$ witnessing $p\doms q$ and $q\doms p$ simultaneously.
  \end{defin}

  \begin{eg}
     In $\mathsf{DLO}$, if $p(x)\coloneqq\tp(+\infty/\monster)$, then  $p(x)\equidom p(y)\otimes p(z)$,  easily seen to be witnessed by the unique  $r\in S_{p,\pow p2}(\emptyset)$ containing the formula $x=z$. 
  \end{eg}
\begin{eg}\label{eg:deqpushf}
In an arbitrary theory, let $f$ be a definable function with domain $\phi(x)$ and codomain $\psi(y)$, or rather a tuple of definable functions if $\abs y>1$. If $p(x)\in S(\monster)$ is such that $p(x)\proves \phi(x)$, the \emph{pushforward} $(f_*p)(y)$ is the global type $\set{\theta(y)\in \monster\mid p\proves\theta(f(x))}$. For all such $p$ and $f$, we have $p\doms f_*p$, witnessed by any small type containing $y=f(x)$. If $f$ is a bijection, then $p\equidom f_*p$.
\end{eg}

In Definition~\ref{defin:domination} we are not requiring $p\cup r$ to be a complete global type in variables $xy$; that this may not be the case for any $r$, even if $p\domeq q$, can be see in~\cite[Example~3.3]{treespaper}. See~\cite[Example~1.11.3]{invbartheory} for a theory where $\domeq$ differs from $\equidom$.

It can be shown that $\doms$ is a preorder, hence $\domeq$ is an equivalence relation.
Let $\invtilde$ be the quotient of $\invtypes(\monster)$ by $\domeq$. The partial order induced by $\doms$ on $\invtilde$ will, with abuse of notation, still be denoted by $\doms$, and we call $(\invtilde, \doms)$ the \emph{domination poset}. 
This poset has a minimum, the (unique) class of \emph{realised types}, i.e.~global types realised in $\monster$, denoted by $\class 0$.

If $T$ is  such that  $(\invtypes(\monster), \otimes, \doms)$ is a preordered semigroup, we say that $\otimes$ \emph{respects} $\doms$. In particular, then domination-equivalence is a congruence with respect to the tensor product, which induces a well-defined operation on $\invtilde$, still denoted by $\otimes$ and easily seen to have neutral element $\class 0$. We call  the  structure $(\invtilde, \otimes, \class 0, \doms)$ the  \emph{domination monoid}. We usually denote it simply by $\invtilde$, and say that \emph{$\invtilde$ is well-defined} to mean that $\otimes$ respects $\doms$; this should cause no confusion since  $\invtilde$ is \emph{always} well-defined as a poset.

Similarly, $\equidom$ is an equivalence relation, and we define the \emph{equidominance quotient} $\invbar$ to be $\invtypes(\monster)/\equidom$.  Realised types form a unique class under equidominance too, which we still denote by $\class 0$, and if $p$ is arbitrary and $q$ is realised it is easy to see that $p\otimes q=q\otimes p\equidom p$.
If $\equidom$ is a congruence with respect to $\otimes$,  we call $(\invbar, \otimes, \class 0)$ the \emph{equidominance monoid}, and say that $\otimes$ \emph{respects} $\equidom$, or that \emph{$\invbar$ is well defined}.

Unfortunately, neither $\invtilde$  nor $\invbar$ need be well-defined in general. See~\cite{invbartheory} for a counterexample, together with some domination-equivalence invariants, a number of sufficient conditions ensuring compatibility of $\otimes$ with $\doms$ and $\equidom$, and a proof of the following facts.
\begin{fact}[\!\!{\cite[Lemma~1.8]{invbartheory}}]\label{fact:invariancepreserved}
  If $p\in \invtypes_x(\monster, A)$ and $r\in S_{xy}(B)$ are such that $p\cup r$ is consistent and  $p\cup r\proves q\in S_y(\monster)$, then $q$ is invariant over $AB$. In particular, if $p\doms q$ and $p$ is invariant, then so is $q$.
\end{fact}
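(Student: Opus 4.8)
The plan is to reduce to the group-action description of invariance recalled in Remark~\ref{rem:invariantisinvariant}: since $AB$ is small, a global type is invariant over $AB$ if and only if it is fixed by the pointwise stabiliser $\aut(\monster/AB)$. So I would fix an arbitrary $f\in\aut(\monster/AB)$ and show $f\cdot q=q$.

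To do so, work inside a bigger monster $\monster_1\succ\monster$ and choose $(a,b)\models p\cup r$ there, which is possible because $p\cup r$ is consistent. Then $\tp(a/\monster)=p$, $\tp(ab/B)=r$, and, since $p\cup r\proves q$, also $\tp(b/\monster)=q$. Extend $f$ to some $f_1\in\aut(\monster_1)$ --- possible since $\monster_1$ is strongly homogeneous and $\monster$ is small in it --- and chase types. On the one hand $\tp(f_1(a)/\monster)=f_1\cdot p=f\cdot p=p$, using that the action on $p$ depends only on $f_1\restr\monster=f$, that $f$ fixes $A$, and that $p$ is $A$-invariant. On the other hand $\tp(f_1(a)f_1(b)/B)=\tp(ab/B)=r$, since $f_1$ fixes $B$ pointwise. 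Hence $(f_1(a),f_1(b))\models p\cup r$, so $f_1(b)\models q$; that is, $q=\tp(f_1(b)/\monster)=f_1\cdot\tp(b/\monster)=f\cdot q$, as required.

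If one prefers to avoid $\monster_1$, the same thing can be done syntactically: given $\phi(y,c)\in q$ with $c\equiv_{AB}c'$, pick $g\in\aut(\monster/AB)$ sending $c$ to $c'$; by compactness there are $\pi(x)\in p$ and $\rho(x,y)\in r$ with $\pi\wedge\rho\proves\phi(y,c)$, and applying $g$ --- which fixes the parameters of $\rho$, all of which lie in $B$, and moves the parameters of $\pi$, which lie in $\monster$, within their type over $A$, so that $g(\pi)\in p$ by $A$-invariance of $p$ --- gives $\pi'\wedge\rho\proves\phi(y,c')$ for some $\pi'\in p$; as $p\cup r$ is consistent and $p\cup r\proves q$ with $q$ complete, this forces $\phi(y,c')\in q$, giving the required biconditional (apply this also to $\neg\phi$).

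Finally, the ``in particular'' clause is immediate: if $p\doms q$ is witnessed by $r\in S_{pq}(A')$ and $p\in\invtypes_x(\monster,A)$, then applying the statement with $B:=A'$ shows $q$ is invariant over $AA'$, hence invariant. I do not expect a real obstacle in any of this; the only care needed is the bookkeeping of parameters --- formulas from $p$ carry parameters from $\monster$, formulas from $r$ only parameters from the small set $B$ --- and, in the first argument, that $\monster_1$ is taken homogeneous enough for $f$ to extend.
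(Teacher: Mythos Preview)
Your proof is correct. Note, however, that the paper does not prove this statement at all: it is recorded as a Fact with a citation to~\cite[Lemma~1.8]{invbartheory}, so there is no proof in the paper to compare against. Both of your arguments --- the semantic one via an extension of the automorphism to $\monster_1$, and the syntactic one via compactness and pushing formulas through $g\in\aut(\monster/AB)$ --- are standard and go through without issue; the bookkeeping you flag (parameters of $r$ lie in $B$, parameters of formulas in $p$ lie in $\monster$ but are moved within their $A$-type) is exactly the point, and you handle it correctly.
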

\begin{fact}[\!\!{\cite[Lemma~1.14]{invbartheory}}]\label{fact:atleastontheleft}
Let $p_0\in\invtypes_x(\monster)$,  suppose $p_0(x)\cup r(x,y)\proves p_1(y)$, and let $s\coloneqq r(x,y)\cup \set{z=w}$.
  Then $(p_0(x)\otimes q(z))\cup s\proves p_1(y)\otimes q(w)$.
  In particular if $p_0\doms p_1$ then $p_0\otimes q\doms p_1\otimes q$, and the same holds replacing $\doms$ with $\equidom$.
\end{fact}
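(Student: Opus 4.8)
First I would note that the two ``in particular'' clauses follow formally from the main statement. Suppose $r$ witnesses $p_0\doms p_1$; fix a realisation $(a,b,c,c)$ of $(p_0(x)\otimes q(z))\cup s$ in a monster $\monster_1\succ\monster$, pick any small $A'\smallsubset\monster$ with $B\subseteq A'$, and set $r'\coloneqq\tp(abcc/A')$. As $s$ lives over $B\subseteq A'$ we have $s\subseteq r'$, hence $(p_0\otimes q)\cup r'\proves p_1\otimes q$ by the main statement; that statement also gives $(b,c)\models p_1\otimes q$, so $\tp(bc/A')=(p_1\otimes q)\restr A'$ and therefore $r'\in S_{(p_0\otimes q)(p_1\otimes q)}(A')$, i.e.\ $r'$ witnesses $p_0\otimes q\doms p_1\otimes q$. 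If instead $r$ witnesses $p_0\equidom p_1$, the same $r'$ witnesses both inequalities at once, applying the main statement once to $p_0(x)\cup r(x,y)\proves p_1(y)$ and once to $p_1(y)\cup r(x,y)\proves p_0(x)$.

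For the main statement itself I would unwind Definition~\ref{defin:product}: a realisation of the left-hand side is a tuple $(a,b,c,c)$ with $c\models q$, $\tp(a/\monster c)=p_0\invext\monster c$, and $(a,b)\models r$; since $c\models q$ already forces the $w$-coordinate, the only thing to prove is $\tp(b/\monster c)=p_1\invext\monster c$. By Fact~\ref{fact:invariancepreserved} the type $p_1$ is invariant, so I fix a small $M$ with $B\subseteq M$ over which both $p_0$ and $p_1$ are invariant. The key move is to replace $a$ by a realisation $a^{*}$ of the \emph{full} invariant extension $p_0\invext\monster_1$: then $\tp(a^{*}/\monster c)=(p_0\invext\monster_1)\restr\monster c=p_0\invext\monster c=\tp(a/\monster c)$, so conjugating by an automorphism of a larger monster fixing $\monster c$ turns $(a,b)$ into some $(a^{*},b^{*})$ with $b^{*}\equiv_{\monster c}b$ and $(a^{*},b^{*})\models r$; it therefore suffices to prove $(p_0\invext\monster_1)(x)\cup r(x,y)\proves (p_1\invext\monster_1)(y)$, since this yields $b^{*}\models p_1\invext\monster_1$ and hence $\tp(b/\monster c)=\tp(b^{*}/\monster c)=p_1\invext\monster c$. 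I would establish this implication by compactness: given $\psi(y)=\psi_0(y,\bar m_1)\in p_1\invext\monster_1$ with $\psi_0\in L$ and $\bar m_1\in\monster_1$, the defining scheme of $p_1$ produces $\bar m^{*}\in\monster$ with $\bar m^{*}\equiv_M\bar m_1$ and $\psi_0(y,\bar m^{*})\in p_1$; applying compactness to $p_0\cup r\proves p_1$ gives $\pi_0(x,\bar n^{*})\in p_0$ with $\bar n^{*}\in\monster$ and $\rho(x,y)\in r$ such that $\monster_1\models\forall x\,y\,\bigl(\pi_0(x,\bar n^{*})\wedge\rho(x,y)\rightarrow\psi_0(y,\bar m^{*})\bigr)$; and finally an automorphism $\sigma\in\aut(\monster_1/M)$ with $\sigma(\bar m^{*})=\bar m_1$ fixes $\rho$ (whose parameters lie in $B\subseteq M$) and sends $\bar n^{*}$ to some $\bar n_1\in\monster_1$ with $\bar n_1\equiv_M\bar n^{*}$, so that $\pi_0(x,\bar n_1)\in p_0\invext\monster_1$ and $\monster_1\models\forall x\,y\,\bigl(\pi_0(x,\bar n_1)\wedge\rho(x,y)\rightarrow\psi(y)\bigr)$, which is exactly what is needed.

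The step I expect to cause trouble is this very passage to $\monster_1$. Working directly over $\monster c$ does not go through: an automorphism sending the $\monster$-tuple $\bar m^{*}$ to a tuple built from a realisation of $q$ necessarily moves $\monster$ off itself, so the conjugate $\pi_0(x,\bar n_1)$ of a formula of $p_0$ would in general acquire parameters outside $\monster c$ and fail to lie in $p_0\invext\monster c$; realising $a$ over all of $\monster_1$ makes $\sigma$ a genuine automorphism of $\monster_1$, fixing it setwise, which rescues the argument. The loose ends — consistency of $(p_0\invext\monster_1)(x)\cup r(x,y)$, the fact that restricting an invariant extension gives the invariant extension of the restriction, and the bookkeeping with $z=w$ in $s$ — I would treat as routine.
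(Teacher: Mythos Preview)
The paper does not include its own proof of this statement; it is recorded as a Fact with a citation to \cite[Lemma~1.14]{invbartheory}, so there is nothing in the present paper to compare against directly.

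That said, your argument is correct. The reduction to showing $(p_0\invext\monster_1)\cup r\proves p_1\invext\monster_1$ via compactness and an automorphism $\sigma\in\aut(\monster_1/M)$, followed by the pull-back along an automorphism of a larger monster fixing $\monster c$, is exactly the right shape, and your diagnosis of why one must pass to $\monster_1$ rather than work directly over $\monster c$ is on point: the conjugate parameters $\sigma(\bar n^{*})$ land in $\monster_1$ but need not lie in $\monster c$, so one needs $a^{*}$ to realise the full extension $p_0\invext\monster_1$. The derivation of the ``in particular'' clauses from the main statement is also handled correctly, including the verification that $r'=\tp(abcc/A')$ lies in $S_{(p_0\otimes q)(p_1\otimes q)}(A')$. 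The loose ends you flag (consistency, compatibility of restriction with invariant extension, bookkeeping with $z=w$) are indeed routine.
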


\begin{co}\label{co:commthenwd}
Suppose that $q_0\doms q_1$ and, for $i<2$, we have    $p\otimes q_i\domeq  q_i\otimes p$. Then $p\otimes q_0\doms p\otimes q_1$. The same holds replacing both $\doms$ and $\domeq$ with $\equidom$. In particular, if for all $p,q\in \invtypes(\monster)$ we have $p\otimes q\domeq q\otimes p$ [resp.~$p\otimes q\equidom q\otimes p$], then  $\otimes$ respects $\doms$ [resp.~$\equidom$].
\end{co}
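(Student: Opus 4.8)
The plan is to deduce everything formally from Fact~\ref{fact:atleastontheleft}, using that $\doms$ is a preorder (so $r\domeq s$ entails both $r\doms s$ and $s\doms r$) and that $\equidom$ is an equivalence relation.

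First I would prove the main statement. From $q_0\doms q_1$, Fact~\ref{fact:atleastontheleft} — applied so as to multiply by $p$ on the right — gives $q_0\otimes p\doms q_1\otimes p$. Now feed in the two commutativity hypotheses $p\otimes q_0\domeq q_0\otimes p$ and $q_1\otimes p\domeq p\otimes q_1$ and compose, using transitivity of $\doms$:
\[
p\otimes q_0 \;\doms\; q_0\otimes p \;\doms\; q_1\otimes p \;\doms\; p\otimes q_1,
\]
whence $p\otimes q_0\doms p\otimes q_1$. The $\equidom$ version is the same argument verbatim: replace $\doms$ by $\equidom$ everywhere, use the $\equidom$ clause of Fact~\ref{fact:atleastontheleft} for the middle link, and use transitivity of $\equidom$ to compose the three links.

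For the ``in particular'' clause, assume $p\otimes q\domeq q\otimes p$ for all $p,q\in\invtypes(\monster)$. Right-multiplication preserves $\doms$ by Fact~\ref{fact:atleastontheleft}; the first part of the corollary, whose commutativity hypothesis now holds unconditionally, says left-multiplication preserves $\doms$ as well. Hence, given $a\doms b$ and $c\doms d$, we get $a\otimes c\doms b\otimes c\doms b\otimes d$ and so $a\otimes c\doms b\otimes d$ by transitivity; that is, $(\invtypes(\monster),\otimes,\doms)$ is a preordered semigroup, i.e.\ $\otimes$ respects $\doms$. The $\equidom$ case is identical.

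I do not expect any genuine obstacle here: the only points requiring a little care are that Fact~\ref{fact:atleastontheleft} is being invoked to multiply on the side \emph{opposite} to where the domination sits, and that $\domeq$ (resp.\ $\equidom$) is strong enough to chain with the transitivity of $\doms$ (resp.\ $\equidom$) — both being immediate from the definitions.
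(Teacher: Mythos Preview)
Your argument is correct and is exactly the intended one: the paper states this corollary without proof, as an immediate consequence of Fact~\ref{fact:atleastontheleft}, and your chain $p\otimes q_0 \doms q_0\otimes p \doms q_1\otimes p \doms p\otimes q_1$ together with the obvious symmetrisation for the ``in particular'' clause is precisely how one unpacks it.
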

\begin{fact}[\!\!{\cite[Corollary~1.24]{invbartheory}}]\label{fact:pushforward}
  Suppose that $q_1$ is the pushforward  $f_*q_0$ of $q_0$, for some definable function   $f$. Then, for all $p\in \invtypes(\monster)$, we have $p\otimes q_0\doms p\otimes q_1$. Assume moreover that $f$ is a bijection, so $q_1\equidom q_0$. Then, for all $p\in \invtypes(\monster)$, we have $p\otimes q_0\equidom p\otimes q_1$.
\end{fact}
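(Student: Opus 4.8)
The plan is to deduce this from the basic pushforward domination of Example~\ref{eg:deqpushf}, by exhibiting $p\otimes q_1$ itself as a pushforward of $p\otimes q_0$. Fix a small base $A$ with $p\in\invtypes_x(\monster,A)$, write $q_0\in S_y(\monster)$ and $q_1=f_*q_0\in S_{y'}(\monster)$, and let $g$ be the definable function $(x,y)\mapsto(x,f(y))$, i.e.\ the tuple consisting of the coordinate projections onto $x$ together with the components of $f$. Since $q_0\proves\dom f$ we have $p\otimes q_0\proves\dom g$, so the pushforward $g_*(p\otimes q_0)$ is defined, and the crux is the identity
\[
p\otimes q_1=g_*(p\otimes q_0).
\]

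To verify it, fix $b\models q_0$ in the ambient monster $\monster_1$; then $f(b)$ is defined and realises $q_1$. For $\phi(x,y')\in L(\monster)$ we have $\phi(x,y')\in p\otimes q_1$ iff $\phi(x,f(b))\in p\invext\monster f(b)$, whereas $\phi(x,y')\in g_*(p\otimes q_0)$ iff $\phi(x,f(y))\in p\otimes q_0$ iff $\phi(x,f(b))\in p\invext\monster b$. As $f(b)\in\dcl(\monster b)$, the formula $\phi(x,f(b))$ lies over $\monster f(b)$ and equivalently over $\monster b$, so its membership in $p\invext\monster f(b)$ and in $p\invext\monster b$ coincide, both amounting to membership in $p\invext(\monster b f(b))$; this yields the identity. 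Now Example~\ref{eg:deqpushf}, applied to the type $p\otimes q_0$ and the definable function $g$, gives $p\otimes q_0\doms g_*(p\otimes q_0)=p\otimes q_1$, which is the first assertion. If moreover $f$ is a bijection then so is $g$ (with inverse $(x,y')\mapsto(x,f^{-1}(y'))$ on the relevant definable sets), and the bijective case of Example~\ref{eg:deqpushf} upgrades this to $p\otimes q_0\equidom p\otimes q_1$.

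I do not anticipate a real obstacle: the argument is essentially the observation that taking $\otimes$ with $p$ on the left commutes with pushing the second coordinate forward. The only slightly delicate point is the bookkeeping with definable closure in the displayed identity — checking that restricting an $A$-invariant extension from $\monster b$ down to $\monster f(b)$ does not change whether a given formula with parameters in $\monster f(b)$ belongs to it — which is precisely where one invokes that $p\invext C$ depends neither on the choice of $C\supseteq\monster$ (containing the relevant parameters) nor on the chosen base of invariance for $p$.
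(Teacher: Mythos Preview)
Your argument is correct. The paper does not supply its own proof of this statement: it is recorded as a Fact and cited from~\cite[Corollary~1.24]{invbartheory}, so there is nothing here to compare against directly. That said, your route---observing that $p\otimes q_1=g_*(p\otimes q_0)$ for $g(x,y)=(x,f(y))$ and then invoking Example~\ref{eg:deqpushf}---is exactly the natural one, and is the argument one would expect to find in the cited reference. The delicate point you flag, that $\phi(x,f(b))$ lies in $p\invext\monster f(b)$ iff it lies in $p\invext\monster b$, is handled correctly by passing to $p\invext\monster b f(b)$ and using coherence of invariant extensions under restriction; this is precisely the content of the sentence in the paper immediately following the Fact introducing $p\invext B$.
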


Throughout the paper, an important role will be played by the binary relation of \emph{weak orthogonality}, defined below.
\begin{defin}
  Two types $p(x),q(y)\in S(B)$ are \emph{weakly orthogonal}, denoted by $p\wort q$, iff $p(x)\cup q(y)$ implies a complete type in $S_{xy}(B)$.
\end{defin}

  \begin{rem}\label{rem:wortobviousfacts}
If $p,q\in S(\monster)$ and $p$ is invariant, then  $p\wort q$ is equivalent to $p\cup q\proves p\otimes q$, or in other words to the fact that for any $c\models q$  we have  $p\proves p\invext \monster c$. Moreover, if $p$ and $q$ are both invariant, since $p(x)\otimes q(y)$ and $q(y)\otimes p(x)$ are both completions of $p(x)\cup q(y)$, then $p\wort q$ implies that $p$ and $q$ commute.
\end{rem}

\begin{fact}[\!\!{\cite[Proposition~3.13]{invbartheory}}]\label{fact:wortpreserved}
  Suppose that $p_0, p_1\in \invtypes(\monster)$ and $q\in S(\monster)$ are such that $p_0\doms p_1$ and $p_0\wort q$. Then $p_1\wort q$. In particular, if $p\doms q$ and $p\wort q$, then $q$ is realised.
\end{fact}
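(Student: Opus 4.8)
The plan is to reduce the statement to a fact about tensor products and then feed the domination witness into Fact~\ref{fact:atleastontheleft}. By Fact~\ref{fact:invariancepreserved}, the invariance of $p_0$ together with $p_0\doms p_1$ makes $p_1$ invariant as well, so by Remark~\ref{rem:wortobviousfacts} proving $p_1\wort q$ is the same as proving $p_1(y)\cup q(z)\proves p_1(y)\otimes q(z)$. I therefore fix an arbitrary realisation $(b,c)\models p_1(y)\cup q(z)$ inside $\monster_1$, with the aim of showing $\tp(b/\monster c)=p_1\invext\monster c$, that is, $(b,c)\models p_1(y)\otimes q(z)$.

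The crucial observation is that every realisation of $p_1$ occurs as the second coordinate of some realisation of $p_0(x)\cup r(x,y)$, where $r\in S_{xy}(A)$, with $A\smallsubset\monster$, is a witness to $p_0\doms p_1$. Indeed, since $r\supseteq (p_0\restr A)\cup(p_1\restr A)$ and $p_0\restr A$ is complete over $A$, for any $a_0\models p_0$ the partial type $r(a_0,y)$ over $Aa_0$ is consistent; picking $b_0$ realising it yields $(a_0,b_0)\models p_0(x)\cup r(x,y)$, hence $b_0\models p_1$ by the choice of $r$. Now $b_0\equiv_\monster b$, so by strong homogeneity of $\monster_1$ there is $\tau\in\aut(\monster_1/\monster)$ with $\tau(b_0)=b$; since $p_0$ is a type over $\monster$ and $r$ is a type over $A\subseteq\monster$, the element $a\coloneqq\tau(a_0)$ satisfies $a\models p_0$ and $(a,b)\models r$.

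Now I bring in the hypothesis $p_0\wort q$: since $a\models p_0$ and $c\models q$, Remark~\ref{rem:wortobviousfacts} gives $\tp(a/\monster c)=p_0\invext\monster c$, i.e.~$(a,c)\models p_0(x)\otimes q(z)$. Consequently the tuple $(a,b,c,c)$ realises $(p_0(x)\otimes q(z))\cup r(x,y)\cup\set{z=w}$, so Fact~\ref{fact:atleastontheleft} gives $(a,b,c,c)\models p_1(y)\otimes q(w)$, i.e.~$(b,c)\models p_1(y)\otimes q(z)$. As $(b,c)$ was arbitrary, $p_1(y)\cup q(z)\proves p_1(y)\otimes q(z)$, that is, $p_1\wort q$. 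The final assertion follows by applying this with $p_0\coloneqq p$ and $p_1\coloneqq q$ (legitimate, since $p\doms q$ forces $q$ invariant by Fact~\ref{fact:invariancepreserved}): one obtains $q\wort q$, and a non-realised invariant type cannot be weakly orthogonal to itself, because if $c\models q$ and $b\models q\invext\monster c$ then $(b,c)\models q(y)\otimes q(z)$, whereas $(c,c)\models q(y)\cup q(z)$ forces the formula $y=z$ into the completion $q(y)\otimes q(z)$, which is unique by $q\wort q$; hence $b=c$, and unwinding the defining scheme this puts $(y=m)\in q$ for some $m\in\monster$, i.e.~$q$ is realised.

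I expect the second paragraph to be the real obstacle: it is not obvious from the definition of domination that \emph{every} realisation of $p_1$ factors through the witness $r$, and making this precise uses both that $r$ extends $p_0\restr A$ (to manufacture one such realisation) and strong homogeneity of $\monster_1$ (to transport it onto the prescribed $b$). Granting this, the remainder is routine manipulation of $\otimes$ via Remark~\ref{rem:wortobviousfacts} and Fact~\ref{fact:atleastontheleft}.
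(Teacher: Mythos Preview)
Your proof is correct. The paper itself does not prove this statement: it is recorded as a Fact with a citation to~\cite{invbartheory}, so there is no in-paper argument to compare against. Your approach---lifting an arbitrary realisation $b$ of $p_1$ to a realisation $(a,b)$ of $p_0\cup r$ via strong homogeneity of $\monster_1$, then feeding this into Fact~\ref{fact:atleastontheleft}---is essentially the argument one finds in the cited source, and your handling of the ``in particular'' clause via $q\wort q$ is also standard. One minor remark: the phrase ``unwinding the defining scheme this puts $(y=m)\in q$ for some $m\in\monster$'' could be spelled out more explicitly. Concretely, if $c\models q\invext\monster c$ then $(y=c)\in q\invext\monster c$, so $\tp(c/A)\in d_q(y{=}w)$; but $d_q(y{=}w)=\set{\tp(m/A)\mid (y=m)\in q}$, which is nonempty only if $q$ is realised.
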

As a consequence of the fact above, we may endow both $\invtilde$ and $\invbar$ with the relation induced by $\wort$, which we denote by the same symbol.

\subsection{Some o-minimal facts}
For most of this  subsection $T$ will denote an arbitrary o-minimal theory.  We will assume throughout the paper that the reader is acquainted with dcl-independence and its properties, as well as with cornerstone results such as the Monotonicity Theorem. A standard reference is the book~\cite{ttaos}.

Some of the technical facts we will use are standard, follow from lemmas scattered across the literature, or are variations thereof. In order to be as self-contained as possible, we recall them here and include proofs. We begin by defining, for technical convenience, a slightly nonstandard notion of \emph{cut}.

\begin{defin}   A  \emph{cut} in a linearly ordered set $A$ is a pair of subsets $(L, R)$ of $A$ such that $A=L\cup R$, $L< R$, and $\abs {L\cap R}\le 1$.  
\end{defin}

\begin{defin}
Let $T$ be o-minimal.    To each $p\in S_1(\monster)$ we may associate a cut $(L_p, R_p)$ in $\monster$ by setting $L_p\coloneqq\set{d\in \monster\mid p(x)\proves x\ge d}$ and  $R_p\coloneqq\set{d\in \monster\mid p(x)\proves x\le  d}$. If the cofinality of $L_p$ is small we will say that $p$ has \emph{small cofinality on the left}, and if the coinitiality of $R_p$ is small that $p$ has \emph{small cofinality on the right}.
  \end{defin}
    By saturation of $\monster$, a global $1$-type has  small cofinality simultaneously on the left and on the right if and only if it is realised, and it has neither if and only if  it is not invariant. Moreover, $p\in S_1(\monster)$ is $M$-invariant  if and only if  $M$ contains a set cofinal in $L_p$, or a set    coinitial in $R_p$.

  \begin{rem}\label{rem:omintypes}
Assume that $T$ is o-minimal. Every   $1$-type $p(x)\in S_1(B)$ is determined by a  cut in $\dcl(B)$, since it is enough to specify to which $B$-definable sets $x$ belongs, and  by o-minimality these are unions of points of $\dcl(B)$ and intervals with extremes in $\dcl(B)\cup\set{\pm \infty}$.
\end{rem}

The following lemma and corollary are essentially~\cite[Lemma~6.1]{petpil}.
\begin{lemma}\label{lemma:increasingonp}
Let $T$ be o-minimal,  $f$  an $A$-definable function, and $p\in S_1(A)$. If $f_*p=p$, then $f$ is strictly increasing on $p$, i.e.~$p(x)\cup p(y)\cup \set{x<y}\proves f(x)< f(y)$.
\end{lemma}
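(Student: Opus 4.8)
The plan is to use o-minimality to reduce to a finite case analysis of the possible behaviours of the $A$-definable function $f$ on the set defined by $p$, and rule out all but the ``strictly increasing'' one. First I would invoke the Monotonicity Theorem: since $T$ is o-minimal, there is an $A$-definable partition of the line into finitely many points and open intervals on each of which $f$ is either constant or strictly monotone (and continuous). Because $p\in S_1(A)$ is a complete type, exactly one piece of this partition, say an $A$-definable interval $I$ (it cannot be a point, else $p$ would be algebraic over $A$ and $f_*p=p$ could still be considered, but one checks the interval case is the substantive one; if $p$ is realised in $\dcl(A)$ the statement is vacuous or trivial), lies in $p$, i.e.\ $p(x)\proves x\in I$, and on $I$ the function $f$ is constant, strictly increasing, or strictly decreasing.

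If $f$ is constant on $I$ with value $c\in\dcl(A)$, then $f_*p$ is the realised type $\tp(c/A)$, which cannot equal $p$ unless $p$ itself is that realised type; in the genuinely nonrealised (or even just nonconstant) case this contradicts $f_*p=p$. So I would record that, under the hypothesis $f_*p=p$ together with $p$ nonalgebraic — and if $p$ is algebraic the lemma is immediate — $f$ is strictly monotone on $I$. It remains to exclude the strictly decreasing case. Suppose $f$ is strictly decreasing on $I$. Then $f\circ f$ is strictly increasing on a suitable $A$-definable subinterval $I'\in p$, and more usefully, consider for $a\models p$ the comparison of $a$ and $f(a)$: since $f$ is strictly decreasing on $I$ and $f_*p=p$ means $f(a)\models p$ as well, the cut of $p$ forces a definite order relation. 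Concretely, the set $\{x\in I : x<f(x)\}$ and the set $\{x\in I : x>f(x)\}$ are both $A$-definable, so one of them is in $p$ (the equality set $x=f(x)$ would make $f\restr(\text{that piece})=\mathrm{id}$, contradicting strict decrease on an interval).

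The main obstacle — and the heart of the argument — is deriving a contradiction from strict monotone decrease. The clean way: if $p(x)\proves x<f(x)$, apply $f$ (order-reversing on $I$, and $f(x)\in I$ since $f_*p=p$) to get $p(x)\proves f(x)>f(f(x))$, i.e.\ $f_*p\proves y>f(y)$; but $f_*p=p\proves y<f(y)$, a contradiction. Symmetrically if $p(x)\proves x>f(x)$. Hence $f$ is strictly increasing on $I$. Finally I would upgrade this to the two-variable statement: given $a,b\models p$ with $a<b$, both lie in $I$ (as $p\proves x\in I$), and $f$ strictly increasing on the interval $I$ gives $f(a)<f(b)$; since this holds for all such $a,b$ in the monster, $p(x)\cup p(y)\cup\{x<y\}\proves f(x)<f(y)$, as required. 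The only real care needed is the bookkeeping to make sure all the intervals and inequality-sets invoked are $A$-definable so that completeness of $p$ over $A$ decides them, and to handle the degenerate algebraic case separately at the start.
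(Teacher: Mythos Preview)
Your proposal is correct and follows essentially the same strategy as the paper: invoke the Monotonicity Theorem, dispose of the constant case because it forces $p$ to be realised, and rule out the strictly decreasing case by iterating the function to obtain an order contradiction. The only difference is cosmetic: your contradiction uses $f(f(a))$ where the paper uses $f^{-1}(a)$, arguably a slightly cleaner route since it avoids the paper's preliminary reduction from $f$ to $f^{-1}$.
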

\begin{proof}
 By the Monotonicity Theorem, $f$ is either strictly increasing, strictly decreasing, or constant on $p$. If $f$ is constant on $p$ then $p$ is realised in $\dcl(A)$, and  $f_*p=p$ yields  $p(x)\proves f(x)=x$.  Hence, we only need to exclude that $f$ is strictly decreasing on $p$. Assume towards a contradiction this is the case.
  
Let $a\models p$, and suppose $f(a)\le a$. By assumption $f\inverse (a)$ satisfies $p$, which proves $f(x)\le x$, and so $a=f(f\inverse(a))\le f\inverse(a)$. Since $f$ is strictly decreasing on $p$ if and only if $f\inverse$ is,  by replacing $f$ with $f\inverse$ we may assume $f(a)\ge a$.

Since $f\inverse$ is strictly decreasing on $p$,  from $f(a)\ge a$ we get $a\le f\inverse (a)$. But, similarly to what we did in the previous paragraph, $f\inverse(a)\models p(x)\proves f(x)\ge x$, so $a=f(f\inverse(a))\ge f\inverse(a)\ge a$. But then $p(x)\proves f(x)=x$,  contradicting that $f$ is decreasing on $p$.
\end{proof}
Recall that $p(C)$ denotes the set of realisations of $p$ in $C$.
\begin{co}\label{co:nosandwich}
Let $T$ be o-minimal and  suppose that $a,b\models p\in S_1(B)$. Then either $p(\dcl(B a))$ and $p(\dcl(B b))$ are cofinal and coinitial in each other, or one of them lies entirely to the left of the other.
\end{co}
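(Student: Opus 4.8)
The plan is to reduce the whole question to the behaviour of $B$-definable functions on $p$ and then invoke Lemma~\ref{lemma:increasingonp}. First I would name the parameters of $B$, so that one may work with $\emptyset$-definable functions; since $p\in S_1(B)$ the realisations $a,b$ are single elements, and we may assume $a<b$, the case $a=b$ being trivial as then $\dcl(Ba)=\dcl(Bb)$. Call a (partial) $B$-definable function $f$ a \emph{$p$-function} if $f_*p=p$, equivalently if $p(x)\proves p(f(x))$; then $p(\dcl(Ba))=\set{f(a)\mid f\text{ a }p\text{-function}}$, and similarly over $b$. By Lemma~\ref{lemma:increasingonp} every $p$-function is strictly increasing on $p$, hence injective there, so its inverse branch is again a $B$-definable $p$-function, and $p$-functions are closed under composition. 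The crucial observation is that, since $f$ is strictly increasing on $p$, for $p$-functions $f,g$ one has $f(a)<g(b)\iff a<(f\inverse g)(b)$, and likewise for $=$ and $>$; so the mutual position of $p(\dcl(Ba))$ and $p(\dcl(Bb))$ is entirely controlled by the position of $a$ relative to $p(\dcl(Bb))=\set{h(b)\mid h\text{ a }p\text{-function}}$.

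Then I would split into two cases. If $a<h(b)$ for every $p$-function $h$, then taking $h=f\inverse g$ in the observation gives $f(a)<g(b)$ for all $p$-functions $f,g$, i.e.\ $p(\dcl(Ba))$ lies entirely to the left of $p(\dcl(Bb))$ --- the second alternative. Otherwise fix a $p$-function $h$ with $h(b)\le a$; I claim the first alternative holds, by checking the four relevant cofinality/coinitiality statements. Two are automatic from $a<b$ and strict monotonicity: for any $p$-function $f$ we have $f(a)<f(b)$, so $f(b)$ shows $p(\dcl(Bb))$ cofinal in $p(\dcl(Ba))$ and $f(a)$ shows $p(\dcl(Ba))$ coinitial in $p(\dcl(Bb))$. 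For the other two I use $h$: given $g(b)\in p(\dcl(Bb))$, the $p$-function $f\coloneqq g h\inverse$ satisfies $f(a)\ge g(b)$ --- from $h(b)\le a$ we get $b\le h\inverse(a)$, hence $g(b)\le g(h\inverse(a))=f(a)$ --- so $p(\dcl(Ba))$ is cofinal in $p(\dcl(Bb))$; and given $f(a)\in p(\dcl(Ba))$, the $p$-function $g\coloneqq f h$ satisfies $g(b)=f(h(b))\le f(a)$, so $p(\dcl(Bb))$ is coinitial in $p(\dcl(Ba))$. Together these four give that the two sets are cofinal and coinitial in each other.

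The one point requiring care --- and the closest thing to an obstacle --- is verifying that $p$-functions form a sufficiently robust family: closed under composition and under taking inverse branches as genuine $B$-definable functions, with $f_*p=p$ preserved throughout. This rests on Lemma~\ref{lemma:increasingonp} together with a little bookkeeping via the Monotonicity Theorem (to isolate a $B$-definable interval on which $f$ is a strictly increasing bijection, whose image again lies in $p$) and the identity $(f\inverse)_*f_*p=p$. Once that is in place, everything else is a two-line computation, so I do not anticipate further difficulties; the conceptual weight of the corollary lies entirely in this reduction and in Lemma~\ref{lemma:increasingonp}.
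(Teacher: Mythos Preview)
Your proof is correct and follows essentially the same approach as the paper: both rely on Lemma~\ref{lemma:increasingonp} to conclude that $B$-definable functions with $f_*p=p$ are strictly increasing on $p$, and then exploit this monotonicity to propagate inequalities between elements of $p(\dcl(Ba))$ and $p(\dcl(Bb))$. The paper's version is organised slightly differently---it starts from a sandwich $a^0\le b^0\le a^1$ and uses exchange to produce the relevant increasing function---but your reformulation via the group of ``$p$-functions'' is a clean equivalent.
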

\begin{proof}
If none lies entirely to the left of the other, we can find without loss of generality $a^0\le b^0\le a^1$, where $b^0\in p(\dcl(B b))$ and $a^i\in p(\dcl(B a))$.  If $p$ is realised in $\dcl(B)$ we are done, so assume  both $a^i$ are in $\dcl(Ba)\setminus\dcl(B)$.  By exchange, there is an $B$-definable $f$ such that $f(a^0)=a^1$, which is increasing by Lemma~\ref{lemma:increasingonp}, so $f(b^0)\ge f(a^0)=a^1$.  Since this argument works with arbitrarily large $a^1\in p(\dcl(B a))$, this proves cofinality of $p(\dcl(B b))$ in $p(\dcl(B a))$. For coinitiality, argue symmetrically. Since $b_0\le a_1\le f(b_0)$, the same argument yields cofinality and coinitiality of $p(\dcl(Ba))$ in $p(\dcl(Bb))$.
 \end{proof}

 \begin{lemma}\label{lemma:nomoreMinv1tps}
Let $T$ be o-minimal and $M\smallprec N\smallprec \monster$.   Suppose that $p\in \invtypes_n(\monster, M)$ and that $b$ is a tuple such that  $\tp(b/\monster)$ is $M$-invariant. If $p$ is realised in $\dcl(\monster b)$, then it is realised in $\dcl(N b)$ as well. 
\end{lemma}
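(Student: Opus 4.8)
The plan is to produce a realisation of $p$ in $\dcl(Nb)$ out of one in $\dcl(\monster b)$ by transporting everything along an automorphism. Suppose $a\models p$ with $a\in\dcl(\monster b)$; writing the relevant dcl-witness as a $\emptyset$-definable (tuple of) function(s), we have $a=g(b,m)$ for some finite tuple $m\in\monster$. Since $M\smallprec N$, the model $N$ is $\abs M^+$-saturated, so we may pick $m^*\in N$ with $m^*\equiv_M m$. The claim to aim for is that $g(b,m^*)$ — which lies in $\dcl(Nb)$ — again realises $p$.

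To establish this claim, choose $\sigma\in\aut(\monster/M)$ with $\sigma(m)=m^*$ using strong homogeneity of $\monster$, and extend it to $\sigma_1\in\aut(\monster_1)$; note that $\sigma_1\restr\monster=\sigma$ maps $\monster$ onto $\monster$. Set $b^*\coloneqq\sigma_1(b)$, so that $a^*\coloneqq\sigma_1(a)=g(b^*,m^*)$. Because $\tp(b/\monster)$ and $p$ are both $M$-invariant, they are fixed by $\aut(\monster/M)$ (Remark~\ref{rem:invariantisinvariant}), and applying $\sigma_1$ therefore yields $\tp(b^*/\monster)=\tp(b/\monster)$ and $\tp(a^*/\monster)=p$; in particular $a^*\models p$, $a^*\in\dcl(Nb^*)$, and $b^*\equiv_\monster b$. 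Since $b^*\equiv_\monster b$, homogeneity of $\monster_1$ gives $\tau\in\aut(\monster_1/\monster)$ with $\tau(b^*)=b$; as $\tau$ fixes $\monster\supseteq N$ pointwise it fixes both $p$ and $m^*$, so $\tau(a^*)=g(\tau(b^*),\tau(m^*))=g(b,m^*)$ realises $p$. Since $g(b,m^*)\in\dcl(Nb)$, this is the realisation we wanted. (If $p$ is itself realised the statement is immediate, since an $M$-invariant realised type has the form $\set{x=e}$ with $e\in\dcl(M)\subseteq\dcl(Nb)$ — and in fact the displayed argument covers this case as well.)

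The step I expect to require care is ensuring that the automorphism used to move $m$ to $m^*$ preserves $\monster$ setwise: an arbitrary element of $\aut(\monster_1/Mb)$ sending $m$ to $m^*$ need not do so, and then $M$-invariance of $p$ and of $\tp(b/\monster)$ would give no control over the type of the image over $\monster$. This is precisely why the argument first builds $\sigma$ inside $\aut(\monster/M)$ (moving $m$ while keeping $\monster$ fixed as a set) and only afterwards ``rotates $b^*$ back onto $b$'' by an automorphism fixing all of $\monster$ pointwise. The remaining ingredients — existence of $m^*$, extendability of $\sigma$ to $\monster_1$, and existence of $\tau$ — are routine consequences of the saturation and homogeneity built into the monster models.
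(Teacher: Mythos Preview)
Your proof is correct and follows essentially the same approach as the paper's: both pick $m^*\in N$ with $m^*\equiv_M m$, take an automorphism in $\aut(\monster/M)$ sending $m$ to $m^*$, and use $M$-invariance of both $p$ and $\tp(b/\monster)$ to conclude that $g(b,m^*)\models p$. The only cosmetic difference is that the paper argues formula by formula inside $\monster$ (applying the automorphism to the parameters of each $\phi(z,e)\in p$ and invoking $M$-invariance of $\tp(b/\monster)$ directly), whereas you lift the automorphism to $\monster_1$ and then compose with a second automorphism $\tau\in\aut(\monster_1/\monster)$ to rotate $b^*$ back onto $b$; this buys a slightly more global presentation at the cost of invoking homogeneity of $\monster_1$.
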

\begin{proof}
Suppose that for some $M$-definable function $f(y,u)$ and $d\in \monster$ we have $f(b,d)\models p$. Let $\tilde d\in N$ be such that $\tilde d\equiv_M d$. Let $\phi(z, w)\in L(M)$ and $e\in \monster$ be such that  $\phi(z,e)\in p$.  We want to show that $f(b, \tilde d)\models \phi(z,e)$. Let $h\in \aut(\monster/M)$ be such that $h(d)=\tilde d$. By $M$-invariance, $\phi(z, h\inverse(e))\in p$. Therefore $f(b,d)\models \phi(z, h\inverse(e))$, hence $b\models \phi(f(y, d), h\inverse (e))$. By applying $h$ and using that $\tp(b/\monster)$ is $M$-invariant, it follows that $b\models \phi(f(y,\tilde d), e)$, hence $f(b, \tilde d)\models \phi(z,e)$.
\end{proof}
This can be improved for points that are actually named. We will use the following lemma tacitly to assume independence of a tuple without changing the invariance base. The notation $p(x^0,x^1)\proves x^0\in \dcl(B x^1)$ means ``there is a $B$-definable function $f$ such that $p(x^0,x^1)\proves x^0=f(x^1)$''.
\begin{lemma}\label{lemma:Minvindep}
Assume that $T$ is o-minimal and let $p(x)\in\invtypes_n(\monster, M)$. If $p(x)\proves x_0\in \dcl(\monster \bla x1,{n-1})$, then $p(x)\proves x_0\in \dcl(M \bla x1,{n-1})$.
\end{lemma}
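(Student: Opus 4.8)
The plan is to reduce the statement about the definable function $f$ witnessing $x_0 = f(\bla x1,{n-1})$ from $p$ to the setting of Lemma~\ref{lemma:nomoreMinv1tps}, applied to a cleverly chosen auxiliary invariant $1$-type and an auxiliary tuple $b$. The idea is that $p(x)$ restricted to the last $n-1$ variables $y \coloneqq (\bla x1,{n-1})$ is itself an $M$-invariant global type $q \coloneqq p \restr y$ in $S_{n-1}(\monster)$, and $p$ realises $x_0$ inside $\dcl(\monster b)$ once we let $b \models q$: indeed if $a \models p$ then $a = a_0 b$ with $b \models q$ and $a_0 = f(b) \in \dcl(\monster b)$. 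The subtlety is that this is a statement about a tuple $b$ whose type over $\monster$ is $M$-invariant, realising a named $M$-definable $1$-type-like object $\dcl$-internally — but here we want the conclusion that $a_0 \in \dcl(M b)$, i.e. that $f$ can be chosen over $M$, which is exactly the kind of "points that are actually named" improvement the remark preceding the lemma alludes to.

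First I would fix $a \models p$, write $a = a_0 b$ with $b = (\bla a1,{n-1})$, and fix an $M$-definable function $f$ with $a_0 = f(b)$. The goal is to produce an $M$-definable $g$ with $p(x) \proves x_0 = g(\bla x1,{n-1})$. Consider the global type $a_0$ realises over $\monster$; but it is cleaner to argue directly about which $M$-definable "slice" captures $a_0$. Run the argument of Lemma~\ref{lemma:nomoreMinv1tps} with the roles: $q \coloneqq p \restr (\bla x1,{n-1}) \in \invtypes_{n-1}(\monster, M)$ plays the role of "$\tp(b/\monster)$ is $M$-invariant"; and the $M$-definable function $f$ together with parameters from $\monster$ that are needed to pin down $a_0$ inside $\dcl(\monster b)$ — wait, there are none, $f$ is already over $M$. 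So the point $a_0 = f(b)$ already lies in $\dcl(M b)$ once we know $b \models q$ and $f$ is over $M$. Hence the only thing to check is that the formula defining $a_0$ as $f(\bla x1,{n-1})$ is actually \emph{implied by $p$}, not merely satisfied by our chosen realisation: we need $p(x) \proves x_0 = f(\bla x1,{n-1})$, for possibly a different $M$-definable $f$ than the one given by "$p(x)\proves x_0 \in \dcl(\monster\, \bla x1,{n-1})$".

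So here is the actual mechanism I expect to use. The hypothesis gives a $\monster$-definable function $f(y, c)$, $c \in \monster$, with $p(x) \proves x_0 = f(\bla x1,{n-1}, c)$. Pick $M$ large enough that $p$ is $M$-invariant; let $\tilde c \in M$ with $\tilde c \equiv_M c$ (using $|M|^+$-saturation, or rather just that $M$ is a model — here we need $M$ to already contain a representative of $\tp(c/M)$, which requires care: $M$ need not contain such a $\tilde c$! This is the main obstacle, see below). Assuming we have $\tilde c$, set $g(y) \coloneqq f(y, \tilde c)$, an $M$-definable function. I claim $p(x) \proves x_0 = g(\bla x1,{n-1})$. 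Let $h \in \aut(\monster / M)$ with $h(c) = \tilde c$; since $h$ fixes $p$ setwise (as $p$ is $M$-invariant), applying $h$ to the formula $x_0 = f(\bla x1,{n-1}, c) \in p$ gives $x_0 = f(\bla x1,{n-1}, \tilde c) = g(\bla x1,{n-1}) \in p$, as desired. This is exactly the $h$-twisting trick from the proof of Lemma~\ref{lemma:nomoreMinv1tps}.

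\textbf{The main obstacle}, then, is the existence of $\tilde c \in M$ with $\tilde c \equiv_M c$: a general $M \smallprec \monster$ need not realise $\tp(c/M)$. The resolution is that we do not need $M$ fixed in advance — the statement only asserts existence of \emph{some} base $M$ over which $p$ is invariant such that the $\dcl$ witness lives over $M$, and in fact the lemma as stated fixes $M$ and asserts the conclusion over that same $M$. So the real content must be that o-minimality forces $c$ to be \emph{redundant}: by o-minimality and the Monotonicity Theorem, applied fibrewise, the function $y \mapsto f(y, c)$ that is constantly-in-$p$ equal to the first coordinate is rigid enough that its value is already $M$-definable. Concretely: the condition $p(x) \proves x_0 = f(\bla x1,{n-1}, c)$ says the pushforward of $p$ along $y \mapsto (y, f(y,c))$ is $p$ itself; varying $c$ within its $M$-type and using Lemma~\ref{lemma:increasingonp}-style monotonicity (or Corollary~\ref{co:nosandwich} applied coordinate by coordinate after reducing to the $n=2$ case by an induction on $n$), two such parameters $c, \tilde c$ must give the same function on $p$, because the graph is pinned down by a cut in $\dcl(M)$. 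Then a compactness/type-counting argument shows the $M$-type of $c$ already determines $f(\cdot, c)\restr p$, so we may replace $c$ by any $\tilde c \equiv_M c$; and crucially we can choose such $\tilde c$ \emph{inside $\dcl(M)$}-worth of data, i.e. inside a single definable set over $M$ that has an $M$-point, using o-minimality (every nonempty $M$-definable set has an $M$-point since $M$ is a model). I expect the write-up to proceed by induction on $n$: for $n = 2$, reduce to understanding $M$-definable functions that fix the $1$-type $p \restr x_1$, invoke Lemma~\ref{lemma:increasingonp} to see they are strictly increasing, and use Corollary~\ref{co:nosandwich} to argue the value $f(a_1, c)$ is squeezed between $M$-definable points of $p$, forcing it to be $M$-definable; then the inductive step peels off one variable at a time.
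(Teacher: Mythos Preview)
Your proposal correctly isolates the crux: by $M$-invariance of $p$, if $p\proves x_0=f(x_{>0},c)$ then for any $\tilde c\equiv_M c$ we also have $p\proves x_0=f(x_{>0},\tilde c)$, so the function $c\mapsto f(b_{>0},c)$ is constant on the realisation locus of $\tp(c/M)$. But neither of your two suggested ways to finish actually works.

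First, the ``find $\tilde c\in M$'' route fails for exactly the reason you name, and your attempted repair does not fix it: the remark that every nonempty $M$-definable set has an $M$-point is irrelevant, because $\tp(c/M)$ need not be isolated. There is no compactness/type-counting manoeuvre that produces a point of $M$ here.

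Second, the fallback induction on $n$ using Lemma~\ref{lemma:increasingonp} and Corollary~\ref{co:nosandwich} is aimed at the wrong variables. Those results concern realisations of a single $1$-type and monotone self-maps thereof; in your $n=2$ case the map $f(\cdot,c)$ sends $p\restr x_1$ to $p\restr x_0$, which are a priori different types, so Lemma~\ref{lemma:increasingonp} does not apply. Even after fixing that via a definable bijection, Corollary~\ref{co:nosandwich} only tells you how two copies of $p(\dcl(\ldots))$ sit relative to each other, not that $a_0$ is $M$-definable from $a_1$.

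The paper's argument inducts on the length of the parameter tuple $c$, not on $n$, and applies the Monotonicity Theorem in the \emph{parameter} variable. Assume $c$ is $M$-independent and pick $\tilde c_k\ne c_k$ with $\tilde c_k\equiv_{Mc_{<k}} c_k$; by $M$-invariance $f(b_{>0},c)=f(b_{>0},c_{<k},\tilde c_k)$, and since $\tilde c_k\equiv_{Mb_{>0}c_{<k}}c_k$ (again by invariance) the Monotonicity Theorem forces $w_k\mapsto f(b_{>0},c_{<k},w_k)$ to be constant on the $Mb_{>0}c_{<k}$-definable interval containing $c_k$. Quantifying out $w_k$ over this interval yields a formula in $p$ with parameters only $c_{<k}$, reducing $\abs c$ by one. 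This is the missing mechanism.
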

\begin{proof}
  Let $p(x)\proves x_0=f(\bla x1,{n-1}, d)$, where $f(\bla x1,{n-1}, w)$ is an $M$-definable function. Up to changing $f$, we may assume that $d$ is $M$-independent. If $\abs d=0$ we are done. Inductively, assume that the conclusion holds for $\abs d\le k$,  let $\abs d=k+1$,  
  and let $\tilde d$ be $d$ with $d_k$ replaced by some different $\tilde d_k\equiv_{Md_{<k}} d_k$. Let $b\models p$; by $M$-invariance of $p$ we have $\tilde d_k\equiv_{M b d_{<k}} d_k$. Again by $M$-invariance, $p(x)\proves x_0=f(\bla x1,{n-1}, \tilde d)$, therefore $f(\bla b1,{n-1}, \tilde d)=b_0=f(\bla b1,{n-1}, d)$. Since $\tilde d_k\equiv_{M b d_{<k}} d_k$,  by the Monotonicity Theorem there is an $M  b_{>0} d_{<k}$-definable set, say defined by $\phi(b_{>0}, d_{<k}, w_k)$, which contains $d_k$, hence also $\tilde d_k$, and where   $w_k\mapsto f(b_{>0}, d_{<k}, w_{k})$ is constant. Therefore
  \[
    p(x)\proves (\exists w_k\; \phi(x_{>0}, d_{<k}, w_{k}))\land (\forall w_k\; \phi(x_{>0}, d_{<k}, w_{k})\implica x_0=f(x_{>0}, d_{<k}, w_{k}))
  \]
It follows that $p(x)\proves x_0\in \dcl(M \bla x1,{n-1}, \bla d0,{k-1})$, and we conclude by  applying the inductive hypothesis.
\end{proof}

We  record a standard characterisation of weak orthogonality in o-minimal theories for later reference. We will need two similar-looking statements.

\begin{lemma}\label{lemma:nwortpushf}
Let $T$ be o-minimal and  $p,q\in S_1(M)$ be nonrealised, Then $p\nwort q$ if and only if  there is an $M$-definable function $f$ such that $q=f_*p$.  Moreover, every such $f$ must be a bijection on $p$.
\end{lemma}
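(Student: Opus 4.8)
The plan is to prove both directions, using the cut description of $1$-types in o-minimal theories and Lemma~\ref{lemma:increasingonp}.

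First I would handle the easy direction: if $q = f_*p$ for some $M$-definable $f$, then $p \cup q$ implies the formula $y = f(x)$ (after replacing $f$ by its restriction to the $M$-definable set on which it is defined, which $p$ contains since $p$ is nonrealised and hence concentrates on that set), and since $p$ is complete over $M$, the type $p(x) \cup \set{y = f(x)}$ is complete over $M$ in $xy$. Hence $p \cup q$ implies a complete type and $p \nwort q$. (Strictly this gives $p \not\wort q$; one should note $p \cup q$ is consistent because $q = f_*p$.)

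For the converse, suppose $p \nwort q$, i.e.\ $p(x) \cup q(y)$ does \emph{not} imply a complete type over $M$. Then there is an $L(M)$-formula $\phi(x,y)$ and realisations $(a,b), (a',b') \models p \cup q$ with $\models \phi(a,b)$ and $\models \neg\phi(a',b')$. For fixed $a \models p$, the set $\set{y : \phi(a,y)}$ is $M a$-definable, hence (by o-minimality) a finite union of points and intervals with endpoints in $\dcl(Ma)$; since it must contain some but not all realisations of $q$, its \emph{boundary} meets the cut defining $q$, so there is an element of $\dcl(Ma)$ realising $q$. Thus for every $a\models p$ there is an $M$-definable function $g$ with $g(a)\models q$, and one can take a single such $g$ working for all $a\models p$ by a compactness/type-counting argument applied inside $S_{xy}(M)$ (or: the relevant instance is forced by $p$). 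So $q = g_*p$ for some $M$-definable $g$.

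It remains to show every such $g$ (equivalently, that $g$) is a bijection on $p$. By the Monotonicity Theorem, $g$ is strictly monotone or constant on $p$; constancy would make $q = g_*p$ realised in $\dcl(M)$, contradicting nonrealisedness of $q$, so $g$ is strictly monotone on $p$ and in particular injective on the $M$-definable set cut out by the relevant monotonicity piece, which $p$ contains. For surjectivity onto a neighbourhood of the cut of $q$: since $q = g_*p$ is nonrealised, the same argument applied in the other direction yields an $M$-definable $h$ with $p = h_*q$ (running the $p \nwort q \Rightarrow q \nwort p$ symmetry, weak orthogonality being symmetric); then $h_* (g_* p) = p$, so $(h\circ g)_* p = p$, and Lemma~\ref{lemma:increasingonp} forces $h\circ g$ to be strictly increasing on $p$ with $p(x) \proves h(g(x)) = x$ only if it is the identity — more precisely $p(x)\proves h(g(x))=x$ would follow were $h\circ g$ is forced to fix $p$; combined with the symmetric statement $q(y)\proves g(h(y))=y$, this exhibits $g$ and $h$ as mutually inverse on $p$ and $q$ respectively, so $g$ is a bijection on $p$ (i.e.\ $p(x)\cup q(y)$ together with $y=g(x)$ implies $x=h(y)$).

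\textbf{Main obstacle.} The delicate point is passing from ``for each $a\models p$ there is \emph{some} $M$-definable $g$ with $g(a)\models q$'' to ``there is a \emph{single} $M$-definable $g$ witnessing $q=g_*p$'', and dually extracting the inverse $h$; this requires care with the cut of $q$ in $\dcl(Ma)$ (Corollary~\ref{co:nosandwich} and Remark~\ref{rem:omintypes} are the relevant tools) to see that $q$'s cut is actually \emph{realised} in $\dcl(Ma)$ rather than merely approached, and then a compactness argument to get uniformity in $a$.
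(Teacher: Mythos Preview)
Your ``easy direction'' is backwards. You claim that if $q=f_*p$ then $p(x)\cup q(y)\proves y=f(x)$, hence $p\cup q$ implies a complete type. But that conclusion is precisely $p\wort q$, not $p\nwort q$; and the premise is false anyway: if $a\models p$ then $f(a)\models q$, but so does any other $b$ in the same cut over $M$, so $(a,b)\models p\cup q$ with $b\ne f(a)$. The correct argument (which the paper gives) is that $y=f(x)$ and $y\ne f(x)$ are \emph{both} consistent with $p\cup q$ --- the latter because $q$ is nonrealised --- so $p\cup q$ has at least two completions.

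For the converse, your direct approach works, but the ``main obstacle'' you flag is illusory: once you have an $M$-definable $g$ with $g(a)\models q$ for \emph{one} $a\models p$, the type $g_*p=\tp(g(a)/M)$ depends only on $\tp(a/M)=p$, so $g_*p=q$ automatically; no compactness is needed. The paper avoids even this much by arguing the contrapositive: if no $M$-definable function sends $p$ to $q$, then $q(\dcl(Ma))=q(M)=\emptyset$, so the cut of any $b\models q$ in $\dcl(Ma)$ is determined by its cut in $M$, and $p\cup q$ is complete by Remark~\ref{rem:omintypes}.

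For the ``moreover'' part, your appeal to Lemma~\ref{lemma:increasingonp} is misplaced: that lemma assumes $f_*p=p$, not $f_*p=q$. You do not need to construct an inverse $h$ separately. The Monotonicity Theorem alone suffices: $f$ is constant or strictly monotone on $p$; constancy would make $q$ realised, and strict monotonicity on an interval containing the cut of $p$ already gives a bijection onto an interval containing the cut of $q$.
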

\begin{proof}
  If there is such an $f$, since $q$ is not realised, the formulas $y=f(x)$ and $y\ne f(x)$ witness that $p(x)\cup q(y)$ has more than one completion, hence  $p\nwort q$. If instead there is no such $f$,  fix $a\models p$ and $b\models q$. By assumption $q(M)=q(\dcl(Ma))$, so the cut of $b$ in $M$ determines the cut of $b$ in $\dcl(M  a)$, hence $p(x)\cup q(y)$ is complete by Remark~\ref{rem:omintypes}, so $p\wort q$. For the ``moreover'' part note that, since $q$ is nonrealised, $f$ must be a bijection on $p$ by the Monotonicity Theorem. 
\end{proof}

\begin{lemma}\label{lemma:omin1typeequidom}
Let $T$ be o-minimal and $M\smallprec N\smallprec \monster$.  For every pair of  nonrealised $p,q\in \invtypes_1(\monster, M)$ the following are equivalent.
  \begin{enumerate}
  \item\label{point:ominnwort} $p\nwort q$.
  \item \label{point:ominequidom} $p\equidom q$.
  \item \label{point:omindomeq} $p\domeq q$.
  \item \label{point:omindefbij} There is a $\monster$-definable function $f$ such that $q=f_*p$.
  \item \label{point:ominNdefbij} There is an $N$-definable bijection $f$ on $p$ such that $q=f_*p$.
  \end{enumerate}
\end{lemma}
\begin{proof}
  As remarked in Example~\ref{eg:deqpushf} we have  $\ref{point:ominNdefbij}\allora \ref{point:ominequidom}$, while $\ref{point:ominequidom}\allora \ref{point:omindomeq}$  is trivial. Fact~\ref{fact:wortpreserved} implies  $\ref{point:omindomeq}\allora\ref{point:ominnwort}$, and 
  Lemma~\ref{lemma:nwortpushf} implies  $\ref{point:ominnwort}\sse \ref{point:omindefbij}$. The implication  $\ref{point:ominNdefbij}\allora \ref{point:omindefbij}$ is also trivial;  to see that $\ref{point:omindefbij}\allora \ref{point:ominNdefbij}$,  use Lemma~\ref{lemma:nomoreMinv1tps} to show that  $f$ may be chosen $N$-definable, and  observe that $f$ must be a bijection on $p$ by Lemma~\ref{lemma:nwortpushf}.  
\end{proof}
\begin{co}
  In every o-minimal theory, the poset $\invtilde$ is infinite.
\end{co}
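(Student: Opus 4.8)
The plan is to exhibit, for each infinite regular cardinal $\lambda$ that is small (there are infinitely many such, e.g.\ $\lambda=\aleph_n$ for $n<\omega$, each below $\aleph_\omega\le\beth_\omega(\abs{T})\le\kappa(\monster)$), a nonrealised global invariant $1$-type $p_\lambda$ whose left cut $(L_{p_\lambda},R_{p_\lambda})$ in $\monster$ satisfies $\operatorname{cf}(L_{p_\lambda})=\lambda$ while $R_{p_\lambda}$ has coinitiality $\ge\kappa(\monster)$, and then to show that these lie in pairwise distinct classes of $\invtilde$, by using Lemma~\ref{lemma:omin1typeequidom} to turn $p_\lambda\domeq p_\mu$ into the existence of a $\monster$-definable bijection and then observing that such a bijection is monotone near the cut, hence cofinality-preserving.

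First I would construct the types. Since $T$ is o-minimal, $\monster$ is a dense linear order; fix a strictly increasing $\lambda$-sequence $(d^\lambda_\alpha)_{\alpha<\lambda}$ in $\monster$ (possible as $\abs\monster\ge\kappa(\monster)>\lambda$), put $A_\lambda:=\set{d^\lambda_\alpha:\alpha<\lambda}$, and let $p_\lambda\in S_1(\monster)$ be generated by $\set{x>d:d\in A_\lambda}\cup\set{x<e:e\in\monster,\ e>d^\lambda_\alpha\text{ for all }\alpha}$ — ``$x$ lies immediately above $A_\lambda$''. Consistency is just finite satisfiability in the dense order $\monster$, and by Remark~\ref{rem:omintypes} this partial type is already complete, since it decides the position of $x$ relative to every element of $\monster$. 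Its left cut has $A_\lambda$ cofinal in it, so $p_\lambda$ is invariant over any small model containing $A_\lambda$, and $\operatorname{cf}(L_{p_\lambda})=\lambda$ because $\lambda$ is regular. For the right-hand side one argues by saturation: were $S$ a small subset coinitial in $R_{p_\lambda}$, the partial type $\set{x>d:d\in A_\lambda}\cup\set{x<e:e\in S}$ over the small set $A_\lambda\cup S$ would be consistent, hence realised in $\monster$ by some $b$, but then $b\in R_{p_\lambda}$ with $b<S$, contradicting coinitiality; the same style of argument applied to $\monster$ itself shows $\monster$ has no small cofinal sequence, so $R_{p_\lambda}\ne\emptyset$. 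In particular $p_\lambda$ has small cofinality on the left but not on the right, hence is nonrealised.

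Next I would show $p_\lambda\ndomeq p_\mu$ whenever $\lambda\ne\mu$. Choosing a small model $M\supseteq A_\lambda\cup A_\mu$ we have $p_\lambda,p_\mu\in\invtypes_1(\monster,M)$, both nonrealised, so by Lemma~\ref{lemma:omin1typeequidom} a domination-equivalence $p_\lambda\domeq p_\mu$ would yield an $\monster$-definable $f$ with $p_\mu=f_*p_\lambda$. By the Monotonicity Theorem $f$ is, on some $\monster$-definable interval $I$ with $p_\lambda\proves x\in I$, either constant — which would make $p_\mu$ realised — or a continuous strictly monotone bijection onto an $\monster$-definable interval $I'$ with $p_\mu\proves y\in I'$. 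As $I$ straddles the cut of $p_\lambda$ and $I'=f(I)$ straddles the cut of $p_\mu$, restricting to $I$ and $I'$ changes neither cofinality nor coinitiality of these cuts, and the bijection $f\colon I\to I'$ carries the cut of $p_\lambda$ onto the cut of $p_\mu$ either preserving or reversing the two sides. Comparing them gives either $\lambda=\operatorname{cf}(L_{p_\lambda})=\operatorname{cf}(L_{p_\mu})=\mu$ or $\lambda=\operatorname{cf}(L_{p_\lambda})=\operatorname{coinit}(R_{p_\mu})\ge\kappa(\monster)$, both absurd. Hence $\class{p_{\aleph_n}}$, $n<\omega$, are pairwise distinct in $\invtilde$, which is therefore infinite.

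The only points needing care — none of them deep — are the saturation argument pinning down $\operatorname{coinit}(R_{p_\lambda})\ge\kappa(\monster)$ together with $R_{p_\lambda}\ne\emptyset$, and the bookkeeping that passing from the global cuts to their traces on $I,I'$ leaves cofinality and coinitiality unchanged. The conceptual content is entirely Lemma~\ref{lemma:omin1typeequidom} plus the elementary observation that a definable function is monotone near a cut and therefore transports cofinal/coinitial sequences.
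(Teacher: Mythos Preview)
Your proof is correct and follows essentially the same approach as the paper's: construct invariant $1$-types $p_\lambda$ indexed by small regular cardinals $\lambda$ via the cofinality of the left cut, then invoke Lemma~\ref{lemma:omin1typeequidom} and the Monotonicity Theorem to argue that a definable bijection witnessing $p_\lambda\domeq p_\mu$ would force the left cofinalities (or a left cofinality and a large right coinitiality) to coincide. One cosmetic quibble: $\monster$ need not be a dense linear order in an arbitrary o-minimal theory, but this is harmless, since finite satisfiability of your partial type already follows from $A_\lambda$ having no maximum together with saturation.
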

\begin{proof}
  Let $\kappa_0, \kappa_1$ be two small, infinite regular cardinals. For $i<2$, let $A_i\smallsubset \monster$ have order type $\kappa_i$, and define global $1$-types $p_i(x)\coloneqq\set{x> d\mid d\in A_i}\cup \set{x<d\mid d\in \monster, d> A_i}$. Note immediately that each $p_i$ has  cofinality $\kappa_i$ on the left and, by saturation, large cofinality on the right. By our assumptions on $\monster$, we may embed in $\monster$ infinitely many infinite regular cardinals. Hence, by Fact~\ref{fact:wortpreserved}, it is enough to show that $p_0\wort p_1$. If this is not the case, by Lemma~\ref{lemma:omin1typeequidom} there is a definable bijection $f$ on $p_0$ such that $p_1=f_*p_0$, which must be continuous and monotone on $p_0$ by the Monotonicity Theorem. Since both the cofinality of $L_{p_0}$ and the coinitiality of $R_{p_0}$ are infinite, $f$ is defined on an interval $[d_0, d_1]$ such that $p_0(x)\proves d_0<x<d_1$. We may furthermore assume that $f$ is strictly monotone and continuous on $[d_0,d_1]$, up to shortening the latter.  The image of $[d_0, d_1]$ under $f$ is an interval $[d_2, d_3]$ such that $p_1(y)\proves d_2<y<d_3$. Since both $p_i$ have large cofinality on the right, but not on the left, $f$ must be increasing on $[d_0,d_1]$ and map $B_0\coloneqq\set{d\in \monster \mid d>d_0, p_0(x)\proves x>d}$ to $B_1\coloneqq\set{d\in \monster \mid d>d_2, p_1(y)\proves y>d_2}$. However, by construction, each $B_i$ has cofinality $\kappa_i$, yielding a contradiction.
\end{proof}
We conclude our preliminaries with some facts involving \emph{distality}, for which we refer the reader to~\cite{simdnd} or~\cite[Chapter~9]{simon}.
 The definition below is not the original one, but is equivalent to it by~\cite[Lemma~2.18]{simdnd}.  Note that  right to left holds in every theory by Remark~\ref{rem:wortobviousfacts}.
\begin{defin}
  A theory is \emph{distal} iff it is $\mathsf{NIP}$ and whenever $p,q\in \invtypes(\monster)$ we have $p\otimes q=q\otimes p\iff p\wort q$.
\end{defin}

\begin{fact}[\!\!{\cite[Corollary~2.30]{simdnd}}]
   O-minimal theories are distal.\label{fact:omindist}
\end{fact}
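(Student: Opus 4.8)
The plan is to check the two clauses of the definition of distality in turn. That $T$ is $\mathsf{NIP}$ is standard and I would settle it in a line: by o-minimality and compactness, for every formula $\phi(x;y)$ with $\abs x=1$ there is $N<\omega$ such that $\phi(\monster;b)$ is a union of at most $N$ points and open intervals for all $b$, so $\phi$ has bounded alternation and is hence $\mathsf{NIP}$; and a theory is $\mathsf{NIP}$ as soon as all formulas $\phi(x;y)$ with $\abs x=1$ are. The content of the statement is therefore the forward implication of the displayed equivalence, namely that commuting global invariant types are weakly orthogonal, the converse being free by Remark~\ref{rem:wortobviousfacts}.

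For that implication I would argue contrapositively and by induction on $\abs x+\abs y$, where $p\in\invtypes_x(\monster,A)$ and $q\in\invtypes_y(\monster,A)$: from $p\nwort q$, produce a formula on which $p\otimes q$ and $q\otimes p$ disagree. The base case $\abs x=\abs y=1$ is essentially contained in the preliminaries. Discarding the case where one of the types is realised, Lemma~\ref{lemma:nwortpushf} supplies an $\monster$-definable $f$, a monotone bijection on $p$, with $q=f_*p$. A nonrealised invariant $1$-type has small cofinality on exactly one side, and from the cuts $(L_p,R_p)$ one reads off that $p\invext\monster b\proves x<b$ when that side is the left and $p\invext\monster b\proves x>b$ when it is the right, for any $b\models p$; transporting this through $f$, which preserves or reverses sides according as it is increasing or decreasing, shows that $p(x)\otimes q(y)$ pins down the position of $x$ relative to $f^{-1}(y)$ while $q(y)\otimes p(x)$ pins down the position of $y$ relative to $f(x)$, and the two prescriptions are incompatible --- this is the computation underlying Lemma~\ref{lemma:omin1typeequidom}. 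For the inductive step I would peel off one coordinate of $q$, passing to its pushforward and then, over a realisation of the latter, to the fibre, using that in an o-minimal theory an $n$-type over a set is determined by the successive cuts of its coordinates (Remark~\ref{rem:omintypes}) and that Lemmas~\ref{lemma:nomoreMinv1tps} and~\ref{lemma:Minvindep} allow one to keep invariance bases small throughout. A shorter but less self-contained alternative is to quote the characterisation of distality in $\mathsf{NIP}$ theories via indiscernible sequences from \cite{simdnd}, together with the observation that each coordinate of an indiscernible sequence in an o-minimal theory is monotone or constant, so that inserting a point at a cut of such a sequence is controlled entirely by o-minimality (Remark~\ref{rem:omintypes}).

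The delicate point, and the one I expect to be the main obstacle, is the inductive step of the first route: it requires controlling the definable closure of an amalgam of realisations --- concretely, checking that whatever new elements $\dcl(\monster c)$ (for $c\models q$) contributes inside the relevant cut of $p$ are already accounted for by the defining scheme of $p$, so that $p$ extends uniquely over $\monster c$. This is exactly the sort of bookkeeping that Corollary~\ref{co:nosandwich} and Lemmas~\ref{lemma:nomoreMinv1tps} and~\ref{lemma:Minvindep} are built for, so I would lean on them; the indiscernible-sequence route trades this bookkeeping for a heavier appeal to \cite{simdnd}.
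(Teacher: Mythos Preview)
Your base-case computation is exactly what the paper does, but the paper applies it in one stroke for arbitrary $\abs x$ and $\abs y$, with no induction. It simply asserts that from $p\nwort q$ one obtains $a\models p$, $b\models q$, and $\monster$-definable functions $f(x)$, $g(y)$ (in the full tuples of variables) with $f(a)$ and $g(b)$ filling the same nonrealised cut in $\monster$; the corresponding $1$-type is invariant by Fact~\ref{fact:invariancepreserved}, and the small-cofinality analysis you describe then shows that $p(x)\otimes q(y)$ and $q(y)\otimes p(x)$ disagree on the formula $f(x)>g(y)$. So the paper's argument is your base case, promoted verbatim to higher arities via that single assertion about $f$ and $g$.

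Your induction would be one way to \emph{justify} that assertion (the paper itself does not), but the step you sketch is incomplete. After peeling off a coordinate of $q$ and passing to the fibre you hold a type over $\monster b'$ rather than over $\monster$, so the inductive hypothesis does not literally apply; Lemmas~\ref{lemma:nomoreMinv1tps} and~\ref{lemma:Minvindep} are about pushing definable-closure witnesses down to smaller models and do not address this. The case that actually needs work is $p\wort q'$: there one must show that the point separating the two completions can be taken in $\dcl(\monster b)$, not merely in $\dcl(\monster a b')$---and unwinding that is precisely the paper's unproven claim again. Your alternative route via the indiscernible-sequence characterisation from~\cite{simdnd} is valid and is in fact how the cited result is proven there.
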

\begin{proof}
It is well-known that o-minimal theories are $\mathsf{NIP}$. If $p(x)\nwort q(y)$,  there must be $a\models p$, $b\models q$, and a nonrealised cut in $\monster$ filled by both an element of $\dcl(\monster a)$, say $f(a)$, and an element of $\dcl(\monster b)$, say $g(b)$, where $f(x)$ and $g(y)$ are definable functions. The type corresponding to this cut must be invariant by Fact~\ref{fact:invariancepreserved}. If it has small cofinality on the right, then $p(x)\otimes q(y)\proves f(x)>g(y)$, while $q(y)\otimes p(x)\proves f(x)<g(y)$. If it has small cofinality on the left, then $p(x)\otimes q(y)\proves f(x)<g(y)$, while $q(y)\otimes p(x)\proves f(x)>g(y)$
\end{proof}
\begin{lemma}\label{lemma:distalperpotimes}
Let $T$ be distal. If $q_0\wort p$ and $q_1\wort p$, then $q_0\otimes q_1\wort p$. In particular, if $p\wort q$ and $n, m\in \omega$, then $\pow pn\wort \pow q m$.
\end{lemma}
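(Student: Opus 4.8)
The plan is to use distality to turn the weak-orthogonality hypotheses into commutation statements for $\otimes$, push these through the associativity of $\otimes$, and then translate back via distality. Since $q_0$ and $q_1$ are invariant, so is $q_0\otimes q_1$ by Fact~\ref{fact:productbasics}, and as $p$ is invariant as well, the pair $(q_0\otimes q_1,p)$ consists of invariant types; hence, by the definition of a distal theory (whose nontrivial direction says that commuting invariant types are weakly orthogonal), it suffices to prove that $q_0\otimes q_1$ and $p$ commute in the sense of Definition~\ref{defin:commute}.

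To establish commutation I would first apply Remark~\ref{rem:wortobviousfacts}: since $q_i$ and $p$ are both invariant and $q_i\wort p$, we get $q_i\otimes p=p\otimes q_i$ for $i<2$. Then a short chain of equalities, each an instance either of associativity (Fact~\ref{fact:productbasics}) or of one of these two commutations, gives
\begin{align*}
(q_0\otimes q_1)\otimes p
&=q_0\otimes(q_1\otimes p)=q_0\otimes(p\otimes q_1)\\
&=(q_0\otimes p)\otimes q_1=(p\otimes q_0)\otimes q_1=p\otimes(q_0\otimes q_1),
\end{align*}
which is exactly the required commutation. Invoking the nontrivial direction of distality for the invariant pair $(q_0\otimes q_1,p)$ then yields $q_0\otimes q_1\wort p$.

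For the ``in particular'' clause, the cases $n=0$ or $m=0$ are trivial (reading $\pow p0$ as the global $0$-type $\opsc{ed}(\monster)$, which is weakly orthogonal to every type), so assume $n,m\ge 1$ and argue by a double induction. Using $p\wort q$ and the implication just proved, with $q_0=\pow pn$, $q_1=p$, and the lemma's ``$p$'' taken to be $q$, an induction on $n$ gives $\pow pn\wort q$ for all $n\ge1$; at the inductive step one uses $\pow p{n+1}=\pow pn\otimes p$, which follows from the recursive definition of $\pow p{n+1}$ together with associativity. Since $\wort$ is symmetric this gives $q\wort\pow pn$, and a second induction, now on $m$, applying the implication with $q_0=\pow qm$, $q_1=q$, and ``$p$'' $=\pow pn$, yields $\pow qm\wort\pow pn$, hence $\pow pn\wort\pow qm$.

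I do not expect a genuine obstacle: the content is entirely carried by the distality hypothesis and the associativity of $\otimes$. The only points requiring a little care are bookkeeping ones — that replacing a factor of a product by a type equal to it leaves the product unchanged (so that, e.g., the equality $q_1\otimes p=p\otimes q_1$ may legitimately be substituted inside $q_0\otimes(-)$, the relevant factor being invariant in each case), and the identification $\pow pn\otimes p=\pow p{n+1}$ — both immediate from associativity and the variable conventions of Subsection~\ref{subsec:fineprint}.
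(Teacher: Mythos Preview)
Your proof is correct and follows the same route as the paper: convert weak orthogonality to commutation via distality, use associativity to show $q_0\otimes q_1$ commutes with $p$, and convert back; the final clause is then handled by induction. The paper's proof is simply a terse two-line version of what you spelled out.
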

\begin{proof}
If both $q_i$ commute with $p$  then  $q_0\otimes q_1$ commutes with $p$. The last statement follows by induction.
\end{proof}

\begin{fact}[\!\!{\cite[Corollary~4.7]{invnip}}]\label{fact:wortnipinv} Let $T$ be $\mathsf{NIP}$ and  $\set{p_i\mid i\in I}$ be a family of types $p_i\in \invtypes(\monster)$ such that if $i\ne j$ then $p_i\wort p_j$. Then the partial type $\bigcup_{i\in I} p_i(x^i)$ is complete. 
\end{fact}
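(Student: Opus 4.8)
The plan is to reduce, in two steps, to a statement about only two invariant types, which is where the hypothesis of $\mathsf{NIP}$ is really used. First, since every $L(\monster)$-formula involves only finitely many variables, by compactness $\bigcup_{i\in I}p_i(x^i)$ is complete as soon as $\bigcup_{i\in F}p_i(x^i)$ is complete for each finite $F\subseteq I$, so I may assume $I=\set{0,\dots,n}$. In any theory, and for any bracketing, the product $r\coloneqq p_0(x^0)\otimes\dots\otimes p_n(x^n)$ extends $\bigcup_{i\le n}p_i(x^i)$ --- restricting $p(x)\otimes q(y)$ to $x$ returns $p$ and to $y$ returns $q$ --- and by Fact~\ref{fact:productbasics} it is an invariant, complete type. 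So it suffices to show that $r$ is the \emph{unique} completion of $\bigcup_{i\le n}p_i(x^i)$, which I would prove by induction on $n$, the case $n=0$ being trivial. For the step, put $q\coloneqq p_0(x^0)\otimes\dots\otimes p_{n-1}(x^{n-1})$, which by the inductive hypothesis is the unique completion of $\bigcup_{i<n}p_i(x^i)$, hence invariant. Any completion of $\bigcup_{i\le n}p_i(x^i)$ restricts to a completion of $\bigcup_{i<n}p_i(x^i)$ on the variables $x^0,\dots,x^{n-1}$, hence to $q$, and to $p_n$ on $x^n$, so it extends $q(x^{<n})\cup p_n(x^n)$; thus it is enough to show $q\wort p_n$. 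Since $\wort$ is symmetric, iterating reduces everything to the following claim, applied with $q_0$ a partial product $p_0\otimes\dots\otimes p_{k-1}$ and $q_1=p_k$: \emph{if $p,q_0,q_1\in\invtypes(\monster)$ satisfy $p\wort q_0$ and $p\wort q_1$, then $p\wort q_0\otimes q_1$}.

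To prove this claim I would fix a small $M$ over which $p,q_0,q_1$ are all invariant and pick $(c_0,c_1)\models q_0\otimes q_1$, so $c_1\models q_1$ and $c_0\models q_0\invext\monster c_1$; by Remark~\ref{rem:wortobviousfacts} it is then enough to check that $\tp(b/\monster c_0c_1)$ does not depend on the choice of $b\models p$. Now $p\wort q_1$ already forces $\tp(b/\monster c_1)=p\invext\monster c_1$ for every $b\models p$, so it remains only to control the type of $b$ over the larger base $\monster c_0c_1$; since invariant extensions compose, this would follow from the key sub-statement that \emph{weak orthogonality of global invariant types survives extending the base by a realisation of a global invariant type}, i.e.\ that $p\wort q_0$ together with $c_1\models q_1$ yields $p\invext\monster c_1\wort q_0\invext\monster c_1$ over $\monster c_1$. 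Granting this, $c_0\models q_0\invext\monster c_1$ pins down $\tp(b/\monster c_0c_1)=p\invext\monster c_0c_1$, uniformly in $b$.

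I expect this sub-statement to be the main obstacle, and it is exactly the point at which $\mathsf{NIP}$ is needed: without it, nothing rules out a ``higher-arity'' interaction between $p$ and the pair $(c_0,c_1)$ that is invisible at the level of $c_0$ and of $c_1$ separately. The natural route is to take a Morley sequence of $q_1$ realising $\pow{q_1}{\omega}$, note that it is $M$-indiscernible, and then use $\mathsf{NIP}$ --- via the boundedness of alternation ranks, that is, a shrinking-of-indiscernibles argument applied to the defining schemes of the types involved --- to show that any formula over $\monster c_1$ witnessing $p\invext\monster c_1\nwort q_0\invext\monster c_1$ could already be decided over $\monster$ alone, contradicting $p\wort q_0$.
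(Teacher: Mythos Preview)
The paper does not supply its own proof of this fact; it is quoted from~\cite[Corollary~4.7]{invnip}, with the remark that under distality it also follows from~\cite{distrk}. So there is no in-paper argument to compare against, only the question of whether your proposal stands on its own.

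Your reduction to finite $I$ and the inductive scheme are correct, but the crux of your argument --- the ``claim'' that $p\wort q_0$ and $p\wort q_1$ imply $p\wort q_0\otimes q_1$ for global invariant types in an $\mathsf{NIP}$ theory --- is precisely the first open question recorded in Section~\ref{sec:questions} of this very paper. The author explicitly notes there that this implication is known under distality (this is Lemma~\ref{lemma:distalperpotimes}) and under stability, but asks whether it holds assuming only $\mathsf{NIP}$. Your ``sub-statement'' (that weak orthogonality of two invariant types survives passing to their invariant extensions over a realisation of a third invariant type) would immediately yield the claim, so it is at least as hard; the Morley-sequence-and-shrinking sketch you offer is too vague to constitute a proof, and if it worked as written it would resolve the open question.

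So you have correctly located the obstruction but not removed it. Simon's argument in~\cite{invnip} must proceed differently, establishing completeness of $\bigcup_i p_i$ by a route that does \emph{not} pass through preservation of $\wort$ under $\otimes$ --- otherwise that question would not be listed as open.
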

Under distality, this also follows from~\cite[Proposition~3.25]{distrk}.

\section{Reducing to generation by 1-types}\label{sec:reduction}
\subsection{The Idempotency Lemma}

\begin{ass}
  Until further notice, all theories we consider are o-minimal.
\end{ass}

This subsection is dedicated to the proof of this section's main lemma, namely the Idempotency Lemma~\ref{lemma:idempotent}. As its name suggests, its principal consequence is that every $1$-type is idempotent modulo equidominance. Nevertheless, this lemma will also find some technical use in certain proofs. A precursor of this result, dealing with definable types only, is~\cite[Claim~2.4]{starnotdol}, itself using~\cite[Lemma]{tresslvaluation}.
 \begin{notation}
For sets $X,R$, let $X_{<R}\coloneqq\set{x\in X\mid \forall r\in R\; x<r}$.
\end{notation}
\begin{lemma}[Idempotency Lemma]\label{lemma:idempotent}
  Let $M\smallprec N\preceq \monster$. For all $p(x)\in \invtypes_1(\monster, M)$ and $b^0\models p$ the set $p(\dcl(N b^0))$ is cofinal and coinitial in $p(\dcl(\monster b^0))$.
\end{lemma}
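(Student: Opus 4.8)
The plan is to reduce, by means of the Monotonicity Theorem and the $M$-invariance of $p=\tp(b^0/\monster)$, to a cofinality statement about a parameter and its $N$-conjugates, and then to prove that statement. First the reductions: if $p$ is realised then, by $\abs M^+$-strong homogeneity as in Remark~\ref{rem:invariantisinvariant}, $p=\{x=e\}$ for some $e\in\dcl(M)\subseteq\dcl(N)$, so $p(\dcl(Nb^0))=\{e\}=p(\dcl(\monster b^0))$ and there is nothing to prove. So assume $p$ nonrealised; then it has small cofinality on exactly one side, and after possibly reversing the order — the whole argument being mirror-symmetric — we may assume it is the left side, and fix a small regular $\lambda$ and a sequence $(a_i)_{i<\lambda}$ in $M$ cofinal in $L_p$, hence also in $L_{p\restr N}$. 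Since $b^0\in p(\dcl(Nb^0))$, it remains to show that each $c\in p(\dcl(\monster b^0))$ has members of $p(\dcl(Nb^0))$ both below and above it.

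The first ingredient is that every realisation of $p\restr N$ lying in $\dcl(Nb^0)$ in fact realises the \emph{global} type $p$ (an analogue of Lemma~\ref{lemma:nomoreMinv1tps}). Such an element is $h(b^0)$ for an $N$-definable $h$, and it is not in $\dcl(N)$ (else so would be $b^0$), so by the Monotonicity Theorem $h$ is strictly monotone on an $N$-definable interval $J$ with $p(x)\proves x\in J$. It cannot be decreasing there: otherwise $h$ would carry $(a_i)_{i<\lambda}$ to a sequence coinitial in $R_{p\restr N}$, so $p\restr N$ would have small cofinality on the left and small coinitiality on the right and hence, $N$ being $\abs M^+$-saturated, be realised in $\dcl(N)$, forcing $h(b^0)$ and then $b^0$ into $\dcl(N)$. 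So $h$ is increasing on $J$, and then a direct check of $h(b^0)$ against the elements of $\monster$ — using that $(a_i)_{i<\lambda}$ is cofinal in $L_p$ and that $h(a_i)\in N$ for $i$ large — shows $h(b^0)\models p$. Hence $p(\dcl(Nb^0))=(p\restr N)(\dcl(Nb^0))$, and likewise with any realisation of $p$ in place of $b^0$; so it is enough to prove that $(p\restr N)(\dcl(Nb^0))$ is cofinal and coinitial in $p(\dcl(\monster b^0))$.

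Now fix $c\in p(\dcl(\monster b^0))$. Both $b^0$ and $c$ realise $p\restr N$, so Corollary~\ref{co:nosandwich} applies. If $(p\restr N)(\dcl(Nb^0))$ and $(p\restr N)(\dcl(Nc))$ are cofinal and coinitial in each other, then $c\in(p\restr N)(\dcl(Nc))$ is sandwiched between elements of $(p\restr N)(\dcl(Nb^0))=p(\dcl(Nb^0))$ and we are done. Otherwise one of these sets lies entirely on one side of the other, and we must reach a contradiction; say $(p\restr N)(\dcl(Nc))$ lies entirely to the left of $(p\restr N)(\dcl(Nb^0))$ (the opposite configuration, which would contradict cofinality rather than coinitiality, is symmetric). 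Write $c=f(b^0,d)$ with $d\in\monster$ and $f$ an $L(\emptyset)$-function; by the argument of Lemma~\ref{lemma:increasingonp} (the constant and decreasing alternatives being ruled out because $p$ is nonrealised) the function $g_d:=f(\cdot,d)$ is strictly increasing on an $\monster$-definable interval containing the cut, so $b^0=g_d^{-1}(c)\in p(\dcl(\monster c))$. Moreover, since the inverse $g_w^{-1}(z)$ is defined without parameters, the argument in the proof of Lemma~\ref{lemma:nomoreMinv1tps} — run with $c$ in place of its ``$b$'' — shows that $g_{d'}^{-1}(c)\models p$ for every $d'\in\monster$ with $d'\equiv_M d$. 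In particular, choosing (as we may, $N$ being $\abs M^+$-saturated) some $\tilde d\in N$ with $\tilde d\equiv_M d$, we get $g_{\tilde d}^{-1}(c)\in p(\dcl(Nc))$, whence $g_{\tilde d}^{-1}(c)<b^0$ by the case hypothesis. A contradiction therefore follows once one shows
\[ \{\,g_{\tilde d}^{-1}(c)\ :\ \tilde d\in N,\ \tilde d\equiv_M d\,\}\quad\text{is cofinal in}\quad\{\,g_{d'}^{-1}(c)\ :\ d'\in\monster,\ d'\equiv_M d\,\}, \]
since the right-hand set contains $g_d^{-1}(c)=b^0$.

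The displayed cofinality is the heart of the matter and the step I expect to be the main obstacle. I would prove it by induction on the length of $d$, transferring its coordinates into $N$ one at a time as in the proof of Lemma~\ref{lemma:Minvindep}: at each stage one realises in $N$ the type over $M$ of the coordinate being moved, uses the $M$-invariance of $p$ exactly as in Lemma~\ref{lemma:nomoreMinv1tps} to keep the new point a realisation of $p$, and uses the Monotonicity Theorem to control the finitely many breakpoints of the relevant one-variable definable function — arranging at each stage that the value does not decrease (and, for the coinitiality direction, that it does not increase). The delicate part, and the reason this is harder than Lemma~\ref{lemma:Minvindep}, is precisely this order-bookkeeping: one must ensure that every individual replacement moves the value only in the permitted direction, which calls for a careful case analysis by the monotonicity type of the pieces involved and the relative position of the cuts.
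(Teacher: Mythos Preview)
Your reduction is sound up to the displayed cofinality claim, but that claim is where the real content lies, and the induction you sketch cannot establish it. The difficulty is not merely bookkeeping: the displayed cofinality is \emph{false} outside the bad case, so any purported proof of it must use the case hypothesis in an essential way, and your coordinate-by-coordinate replacement does not. Concretely, in $\mathsf{RCF}$ take $p$ the type of a positive infinitesimal, let $d\in\monster$ satisfy $d>N$, and set $f(x,w)=x/w$, so $c=b^0/d$. Then $g_{\tilde d}^{-1}(c)=c\tilde d=b^0\tilde d/d<b^0$ for every $\tilde d\in N$ with $\tilde d\equiv_M d$ (since all such $\tilde d$ satisfy $\tilde d<d$), while $b^0$ itself is in the right-hand set; so the left-hand set is bounded strictly below $b^0$. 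In this example one is \emph{not} in the bad case (e.g.\ $c^{1/2}\in p(\dcl(Nc))$ exceeds $b^0$), which is exactly the point: your inductive argument, which does not invoke the bad-case hypothesis, would have to succeed here too, and it does not. More generally, for a single coordinate $d_k$ there is no reason any $\tilde d_k\in N$ with the correct type over $M$ should make the value $g_{(d_{<k},\tilde d_k)}^{-1}(c)$ at least $b^0$; the Monotonicity Theorem gives you control over the \emph{shape} of the dependence on $d_k$, but not over whether the required range is attained within $N$.

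The paper's argument is structurally different and supplies the missing idea. Rather than trying to pull the parameter $d$ into $N$, one \emph{iterates} the offending function: set $b^{i+1}\coloneqq f(b^i,d)$ and prove, by an induction on $\ell$ whose step is a three-way Monotonicity case analysis exploiting $M$-invariance, that $(b^0,\ldots,b^{\ell+1})\models \pow p{\ell+2}\restr N$. Once $\ell=\abs d$, exchange forces $d\in\dcl(Mb^0\ldots b^\ell)$, whence $b^{\ell+1}\in\dcl(Mb^0\ldots b^\ell)$, contradicting the independence just established. The iteration is what converts the single parameter tuple $d$ into $\abs d$ many new realisations of $p$, and it is this dimension-counting contradiction---absent from your outline---that makes the proof go through.
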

\begin{proof}
Without loss of generality, $p$ is not realised. We deal with the case where $p$ has small cofinality on the right, the other case being symmetrical. The bulk of this proof consists in showing that $p(\dcl(N b^0))$ is cofinal in $p(\monster b^0)$. Let $R\subseteq M$ be coinitial in  $R_p$. 

  Assume  towards a contradiction that there are an $M$-definable function $f(t,w)$    and  a tuple  $d\in \monster$ such that $p(\dcl(Nb^0))<f(b^0, d)<R$. Note  that $p(x)\proves\text{``$f(x,d)\models p$''}$, and in particular by Lemma~\ref{lemma:increasingonp} the function $f(t, d)$ is strictly increasing on $p$. Moreover, up to changing $f(t,w)$, we may assume that $d$ is $M$-independent, hence so is $b^0d$.  For $i\ge 0$, define inductively     $b^{i+1}\coloneqq f(b^i,d)$. The core of the proof consists in justifying the claim below.
  \begin{claim}
For every $\ell\in \omega$, we have $b^{\ell+1}\models p\invext \dcl(N b^0\ldots b^{\ell})$. 
\end{claim}
    Note that, by Remark~\ref{rem:omintypes} and the definitions of invariant extension and  $\otimes$,  the Claim is equivalent to saying that $p(\dcl(N b^0\ldots b^{\ell}))<b^{\ell+1}<R$, or that $(b^0,\ldots, b^{\ell+1})\models \pow p{\ell+2}\restr N$.
  \begin{claimproof}
    We argue by induction, the case $\ell=0$ holding by assumption. Assume that the Claim holds for $\ell-1$, i.e.~$\dcl(Nb^0\ldots b^{\ell-1})_{<R}<b^{\ell}<R$.  Since $b^1$ satisfies $p$ as well, if we apply the  inductive hypothesis starting with $b^1$ instead of $b^0$, we obtain that $\dcl(Nb^1\ldots b^\ell)_{<R}<b^{\ell+1}<R$. What we need to show is that $\dcl(Nb^0\ldots b^\ell)_{<R}<b^{\ell+1}<R$. Let $h(\bla u0,\ell, v)$ be an $M$-definable function, let $n\in N$ be a without loss of generality $M$-independent tuple (up to changing $h$), and suppose that $h(b^0,\ldots, b^\ell,n)\models p$. In particular,  $h(b^0,\ldots, b^\ell,n)<R$, and we need to show that $b^{\ell+1}>h(b^0,\ldots, b^\ell,n)$.

    Let $Y$ be the set of realisations of $\tp(b^0/M b^1\ldots  b^\ell n)$ in a larger monster. 
    By the Monotonicity Theorem the function  $h(u_0,b^1,\ldots, b^\ell,n)$ is  strictly increasing, strictly decreasing, or constant in $u_0$ on $Y$. In the last case, $h(b^0,\ldots, b^\ell,n)\in \dcl(N b^1\ldots  b^\ell)_{<R}$, so $b^{\ell+1}>h(b^0,\ldots, b^\ell,n)$ holds by inductive hypothesis.

    Suppose now that $h(u_0,b^1,\ldots, b^\ell,n)$ is strictly decreasing in $u_0$ on $Y$. Let $b^{-1}\in N$ be such that $b^{-1}\models p\restr Mn$.  By associativity of $\otimes$ and  inductive hypothesis $(b^1,\dots, b^\ell)\models \pow p{\ell}\invext \dcl(Nb^0)$, hence  $(b^{-1}, b^1,\dots, b^\ell)\equiv_{Mn} (b^0,\ldots, b^\ell)$ because, using the inductive hypothesis again, both tuples have type $\pow p{\ell+1}\restr Mn$. This implies that $h(b^{-1}, b^1,\ldots, b^\ell,n)<R$, and that $b^{-1}\in Y$.  Since $p\proves \text{``}x>(p\restr Mn)(\monster)\text{''}$, we have  $b^0> b^{-1}$, and we get
\[
      h(b^0,\ldots, b^\ell,n)< h(b^{-1}, b^1,\ldots, b^\ell,n)\in \dcl(N b^1\ldots  b^\ell)_{<R}
\]
and it follows that $b^{\ell+1}>h(b^0, 
\ldots, b^\ell,n)$.

If instead $h(u_0,b^1,\ldots, b^\ell,n)$ is strictly increasing in $u_0$ on $Y$, let $\tilde d\in N$ be such that $\tilde d\equiv_{Mn}d$. Let $b^\epsilon\coloneqq f(b^0, \tilde d)$. Since $p$ is $M$-invariant, from $p\proves \text{``$f(x,d)\models p\restr Mn$''}$ we obtain  $b^\epsilon\models p\restr Mn$, and as $b^\epsilon\in \dcl(N b^0)_{<R}$ we have $b^1 > b^\epsilon$. Since $p$ is invariant, from $p\proves f(x,d)>x$ we obtain $p\proves f(x,\tilde d)>x$, hence we have $b^0<b^\epsilon < b^1$. In particular both $b^0,b^\epsilon$ satisfy  $p$. It follows that $(f(t,\tilde d))_*p=p$, and by Lemma~\ref{lemma:increasingonp} $f(t,\tilde d)$ is strictly increasing on $p$; let $g(t,\tilde d)$ be its inverse. 
As $g(t,\tilde d)$ must also be strictly increasing, we have that $b^{1-\epsilon}\coloneqq g(b^1, \tilde d)>g(b^\epsilon,\tilde d)=b^0$. Since $p$ is  $M$-invariant and proves that $g(x,\tilde d)$ is the inverse of $f(x,\tilde d)$,  it also proves that $g(x,d)$ is the inverse of $f(x,d)$.   Using invariance of $p$ one more time we obtain $(g(b^1,\tilde d), b^1)\equiv_{Mn} (g(b^1, d), b^1)$, or in other words $(b^{1-\epsilon}, b^1)\equiv_{Mn} (b^0, b^1)$. Moreover,   by inductive hypothesis $(b^2,\ldots, b^\ell)\models \pow p{\ell-1}\invext Nb^0 b^1$, and  since $b^{1-\epsilon}\in \dcl(N b^1)_{<R}$ and $\otimes$ is associative,   $(b^{1-\epsilon}, b^1,\dots, b^\ell)\models \pow p{\ell+1}\restr Mn$. Again by inductive hypothesis, $(b^0,b^1,\ldots, b^\ell)\models \pow p{\ell+1}\restr Mn$ as well, therefore $(b^{1-\epsilon}, b^1,\dots, b^\ell)\equiv_{Mn} (b^0,b^1,\ldots, b^\ell)$. This implies that $h(b^{1-\epsilon}, b^1,\ldots, b^\ell,n)<R$, and that $b^{1-\epsilon}\in Y$.  Since $b^0<b^{1-\epsilon}$ we have 
\[
  h(b^0,\ldots, b^\ell,n)< h(b^{1-\epsilon}, b^1,\ldots, b^\ell,n)\in \dcl(N b^1\ldots  b^\ell)_{<R}
\]
So $b^{\ell+1}>h(b^0,\ldots, b^\ell,n)$, and we are done.
\end{claimproof}
Let $\ell\coloneqq \abs d$. By definition, we have $b^1,\ldots, b^\ell\in \dcl(M d b^0)$. Moreover, since by the Claim  $(b^0,b^1,\ldots, b^\ell)\models \pow p{\ell+1}\restr M$, the $b^i$ form an $M$-independent tuple, hence by exchange $d\in \dcl(M b^0\ldots  b^\ell)^{\abs d}$. But then we have  $b^{\ell+1}=f(b^\ell,d)\in \dcl(M b^0\ldots  b^\ell)$, in contradiction with $b^{\ell+1}\models p\invext \dcl(N b^0\ldots  b^{\ell})$. This completes the proof that $p(\dcl( N b^0))$ is cofinal in $p(\dcl(\monster b^0))$.

Finally,  if for some other $M$-definable  $f(t,w)$ the point $b^1\coloneqq f(b^0, d)\models p$ witnesses that $p(Nb^0)$ is not coinitial in $p(\dcl(\monster b^0))$, i.e.~$b^1< p(N b^0)$, then by Corollary~\ref{co:nosandwich} $p(\dcl(N b^1))<p(\dcl (Nb^0))$. Again by Lemma~\ref{lemma:increasingonp},  $f(t,d)$ is strictly increasing on $p$, hence it has an inverse, but then   $b^0=f(t, d)\inverse(b^1)> p(\dcl(Nb^1))$,   contradicting cofinality of $p(\dcl(Nb^1))$ in $p(\dcl(\monster b^1))$.
\end{proof}
\begin{co}\label{co:idempotent}
  In every o-minimal theory, every $1$-type is idempotent modulo domination-equivalence, and even modulo equidominance.
\end{co}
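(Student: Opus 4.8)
The plan is to derive the corollary from the Idempotency Lemma. It suffices to show that $p\equidom\pow p2$ for every nonrealised $p\in\invtypes_1(\monster,M)$: the realised case is immediate since realised types form a single $\equidom$-class, and from $p\equidom\pow p2$ one obtains $\pow pn\equidom p$ for all $n\geq 1$ by an easy induction, using associativity of $\otimes$ and the $\equidom$-part of Fact~\ref{fact:atleastontheleft}. Idempotency modulo $\domeq$ then follows because an equidominance witness witnesses domination in both directions.

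To prove $p\equidom\pow p2$, by symmetry assume that $p$ has small cofinality on the right, fix a set $R\subseteq M$ coinitial in $R_p$, a small model $N$ with $M\smallprec N\smallprec\monster$, and realisations $b^0\models p$ and $b^1\models p\invext\monster b^0$, so that the pair $(b^0,b^1)$ realises $\pow p2$, with $b^0$ realising the factor $\pow p1$. As a simultaneous witness I would take the small complete type $r\coloneqq\tp(b^0,b^0,b^1/N)$, a type in the variables $x,y^0,y^1$; it contains the formula $x=y^0$, it extends $(p\restr N)(x)\cup(\pow p2\restr N)(y^0,y^1)$, and, because of $x=y^0$, it immediately witnesses $\pow p2\doms p$. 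So the whole content is the reverse implication $p(x)\cup r(x,y^0,y^1)\proves\pow p2(y^0,y^1)$; since $\pow p2$ is determined by its two coordinate cuts and $r$ forces $y^0=x$, this reduces to showing that $p(x)\cup r$ forces $y^1$ to realise $p\invext\monster x$.

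The key input is the description of $p\invext\monster x$ as a cut over $\dcl(\monster x)$. Using the defining scheme of $p$ over $M$ together with the fact that $\monster_{<R}=L_p$ for nonrealised $p$ (which holds because $R$ is coinitial in $R_p$), one checks that $p\invext\monster x$ is generated over $\dcl(\monster x)$ by the conditions $y^1<d$ for $d\in R$, which already lie in $r$, together with $y^1>c$ for each $c\in\dcl(\monster x)$ with $c<R$. Given such a $c$, either $c\models p$, or $c$ lies strictly below the cut of $p$ and so $c<x$; in either case $c\leq c'$ for some realisation $c'$ of $p$ in $\dcl(\monster x)$, and here the Idempotency Lemma does the real work: its cofinality half yields an $N$-definable function $h$ with $c'\leq h(x)$ and $h(x)\models p$, hence $h(x)<R$, so that the formula $y^1>h(x)$ lies in $r$ and $y^1>c$ is forced. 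I expect this invocation of Lemma~\ref{lemma:idempotent} to be the crux: a \emph{small} type $r$ can pin down $p\invext\monster x$ precisely because the $N$-definable points of the relevant cut in $\dcl(\monster x)$ are cofinal among all of them. The main technical nuisance I foresee is the bookkeeping around cut endpoints and invariance, namely verifying that the displayed conditions do generate $p\invext\monster x$, that $p\invext\monster b^0$ places its realisations above $b^0$ (so that $y^1>y^0$ lies in $r$; this is where the hypothesis on the cofinality of $p$ is used), and the dichotomy used to bound $c$ by a realisation of $p$ in $\dcl(\monster x)$, all of which follow from the o-minimal description of invariant $1$-types recalled before Remark~\ref{rem:omintypes} but should be written out carefully.
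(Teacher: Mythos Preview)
Your proof is correct and follows essentially the same approach as the paper: both take a small type $r$ over some $N$ with $M\smallprec N\smallprec\monster$ containing $x=y^0$ together with the condition that $y^1$ lies above all $N$-definable points of the relevant cut, and both invoke the Idempotency Lemma to show these $N$-definable points are cofinal in $p(\dcl(\monster y^0))$, hence pin down $p\invext\monster y^0$. Your version is simply more explicit, in particular in realising $r$ as $\tp(b^0,b^0,b^1/N)$ and in spelling out the dichotomy for $c\in\dcl(\monster a)_{<R}$; the paper compresses all of this into a single sentence.
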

\begin{proof}
  Consider $p(y^1)\otimes p(y^0)$, where $p$ is $M$-invariant and without loss of generality nonrealised, say with small cofinality on the right, so $\pow p2\proves y^1>y^0$. Let  $R\subseteq M$ be  coinitial in $R_p$ and fix $N$ such that $M\smallprec N\smallprec \monster$. By the Idempotency Lemma~\ref{lemma:idempotent}, any $r(x,y)\in S_{p,\pow p2}(N)$ extending $\set{x=y_0}\cup \text{``$\dcl_{<R}(N y^0)<y^1< R$''}$ witnesses equidominance.
\end{proof}

\subsection{The reduction}
We can now move to the final steps in proving the main theorem of this section, Theorem~\ref{thm:omincharmodgen}, which  characterises $\invtilde$ assuming the statement below.
\begin{property}\label{property:1tpgen}
Every invariant type is equidominant to a product of invariant $1$-types.
\end{property}
\begin{rem}\label{rem:couldhaveuseddomeq}
  Property~\ref{property:1tpgen} has a weaker variant, replacing $\equidom$ with $\domeq$. Even if, for the sake of readability, we mostly work with $\equidom$, the proofs that follow also show that the weaker assumption is  enough to prove weaker versions of the results, where all mentions of $\equidom$ are replaced by $\domeq$. 
\end{rem}

\begin{rem}
By Lemma~\ref{lemma:omin1typeequidom} and the fact that realised types are weakly orthogonal to every type, for a sequence $\seq{q_i\mid i\in I}$ of nonrealised invariant $1$-types the following are equivalent.
\begin{enumerate}
    \item The sequence $\seq{q_i\mid i\in I}$ is a a maximal sequence of pairwise weakly orthogonal invariant $1$-types.
\item The sequence $\seq{q_i\mid i\in I}$ is a sequence of representatives for the  $\equidom$-classes of nonrealised invariant $1$-types.
\item The sequence $\seq{q_i\mid i\in I}$ is a sequence of representatives for the  $\domeq$-classes of nonrealised invariant $1$-types.
\end{enumerate}
\end{rem}
\begin{defin}
  Fix a maximal sequence $\seq{q_i\mid i\in I}$  of pairwise weakly orthogonal invariant $1$-types.  For $p\in \invtypes(\monster)$, define
  \[
    I_p\coloneqq\set{i\in I\mid p\doms q_i}
  \]
\end{defin}

\begin{pr}\label{pr:omindomchar}
Let $T$ be o-minimal satisfying Property~\ref{property:1tpgen}, and let $p\in \invtypes(\monster)$. Then $I_p$ is finite and, if $p'\in \invtypes(\monster)$, the following hold.
  \begin{enumerate}
  \item The following are equivalent.
    \begin{enumerate*}
    \item \label{point:peqpp}$p\equidom p'$.
    \item  $p\domeq p'$. 
    \item $p$ and $p'$ dominate the  same $1$-types.
    \item\label{point:ippp}  $I_{p}=I_{p'}$.
\end{enumerate*}
  \item The following are equivalent.
    \begin{enumerate*}
    \item \label{point:pdpp} $p\doms p'$.
    \item For every $q\in \invtypes_1(\monster)$, if $p'\doms q$ then $p\doms q$.
    \item \label{point:ipdipp}$I_p\supseteq I_{p'}$.
  \end{enumerate*}
\end{enumerate}
\end{pr}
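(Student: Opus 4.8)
The plan is to首先 reduce everything to the case where both $p$ and $p'$ are products of invariant $1$-types, using Property~\ref{property:1tpgen} together with the fact (from the domination monoid machinery) that equidominance preserves domination of $1$-types and that $\doms$ is a preorder. So write $p\equidom \pow{q_{i_0}}{}\otimes\cdots$, i.e.\ $p\equidom q_{j_0}\otimes\cdots\otimes q_{j_{k-1}}$ with each $q_{j_m}$ from the fixed maximal sequence $\seq{q_i\mid i\in I}$ (replacing each $1$-type factor by its $\equidom$-representative from the sequence via Lemma~\ref{lemma:omin1typeequidom}), and similarly for $p'$. By Corollary~\ref{co:idempotent} every $1$-type is idempotent modulo $\equidom$, so modulo $\equidom$ we may assume the multiset $\set{j_0,\ldots,j_{k-1}}$ is actually a \emph{set}, call it $J$; likewise $p'\equidom\bigotimes_{j\in J'}q_j$ for some finite $J'\subseteq I$. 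The crux will then be to show $I_p=J$ (and $I_{p'}=J'$), i.e.\ that $\bigotimes_{j\in J}q_j$ dominates exactly the $1$-types $q_i$ with $i\in J$.

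For the inclusion $J\subseteq I_p$: for each $j\in J$, the factor $q_j$ is dominated by the whole product, by Fact~\ref{fact:atleastontheleft} (domination ``on the left'', combined with idempotency to absorb the remaining factors), so $p\doms q_j$, giving $j\in I_p$. For the reverse inclusion $I_p\subseteq J$: suppose $i\notin J$; then by maximality of the sequence $q_i\wort q_j$ for every $j\in J$ (distinct elements of the sequence are pairwise weakly orthogonal, and $i\neq j$ since $i\notin J$), hence by Lemma~\ref{lemma:distalperpotimes} (iterating, using o-minimal $\Rightarrow$ distal, Fact~\ref{fact:omindist}) we get $q_i\wort\bigotimes_{j\in J}q_j$, i.e.\ $q_i\wort p$ up to $\equidom$. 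If we also had $p\doms q_i$, then Fact~\ref{fact:wortpreserved} would force $q_i$ to be realised, contradicting that the $q_i$ are nonrealised. Hence $i\notin I_p$. This shows $I_p=J$ is finite, and symmetrically $I_{p'}=J'$.

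Now both parts follow quickly. For part~(1): $\ref{point:peqpp}\Rightarrow$(b) is trivial; (b)$\Rightarrow$(c) because $\domeq$ respects domination of any fixed type (transitivity of $\doms$); (c)$\Rightarrow\ref{point:ippp}$ is immediate from the definition of $I_p$; and for $\ref{point:ippp}\Rightarrow\ref{point:peqpp}$, if $I_p=I_{p'}$ then $J=J'$ by the computation above, so $p\equidom\bigotimes_{j\in J}q_j=\bigotimes_{j\in J'}q_j\equidom p'$. For part~(2): \ref{point:pdpp}$\Rightarrow$(b) is transitivity of $\doms$; (b)$\Rightarrow\ref{point:ipdipp}$ is the definition of $I_{p'}$; and for $\ref{point:ipdipp}\Rightarrow\ref{point:pdpp}$, if $I_{p'}\subseteq I_p$, i.e.\ $J'\subseteq J$, then $p\equidom\bigotimes_{j\in J}q_j=\bigl(\bigotimes_{j\in J\setminus J'}q_j\bigr)\otimes\bigl(\bigotimes_{j\in J'}q_j\bigr)\doms\bigotimes_{j\in J'}q_j\equidom p'$, where the middle domination is again Fact~\ref{fact:atleastontheleft} (drop the extra factors on the left), and the outer $\equidom$'s upgrade to $\doms$ freely.

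The main obstacle I anticipate is bookkeeping the passage between a product of $1$-types and its $\equidom$-normal form: one must be careful that Fact~\ref{fact:atleastontheleft} only gives domination ``on the left'', so to conclude $\bigotimes_{j\in J}q_j\doms q_i$ for an \emph{interior} factor one first commutes/rearranges (legitimate here only up to $\equidom$, and the relevant $1$-types do commute by weak orthogonality, Remark~\ref{rem:wortobviousfacts}) or, more cleanly, invokes idempotency to write the product as $q_i\otimes(\text{rest})$ and then applies the ``on the left'' statement directly. Ensuring these rearrangements are valid modulo $\equidom$ — and that Property~\ref{property:1tpgen} is being used with $\equidom$ rather than merely $\domeq$ (cf.\ Remark~\ref{rem:couldhaveuseddomeq}) — is the only delicate point; everything else is formal manipulation of the preorder.
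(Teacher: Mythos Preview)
Your proposal is correct and follows essentially the same route as the paper's proof: replace each $1$-type factor by its representative $q_i$ via Lemma~\ref{lemma:omin1typeequidom} (using Fact~\ref{fact:pushforward} and Fact~\ref{fact:atleastontheleft} to substitute inside products), commute distinct factors via weak orthogonality and Lemma~\ref{lemma:distalperpotimes}, collapse repeats with Corollary~\ref{co:idempotent}, and then identify $I_p$ with the resulting index set using Fact~\ref{fact:wortpreserved}. The only organisational difference is that the paper packages this as an explicit induction on the number $m$ of factors (peeling off the $\equidom$-class of $p_m$ at each step), whereas you do the replacement globally first and then collapse; the ingredients and the logical dependencies are identical, and your final paragraph already flags the one genuine subtlety (that Fact~\ref{fact:atleastontheleft} acts only on the left, so middle substitutions go through associativity plus Fact~\ref{fact:pushforward}).
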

\begin{proof}
  In what follows, we will freely use  Lemma~\ref{lemma:distalperpotimes} and that two types commute if and only if they are weakly orthogonal. 
Let $p\in\invtypes(\monster)$ be nonrealised. By Property~\ref{property:1tpgen}, we can write $p\equidom p_0\otimes\ldots \otimes p_m$, where  the $p_j$ are nonrealised invariant $1$-types.  By Lemma~\ref{lemma:omin1typeequidom}, Lemma~\ref{lemma:distalperpotimes}, and Fact~\ref{fact:wortpreserved},  $p$ is orthogonal to every $\equidom$-class which is not one of the $\class{p_i}$; therefore the set $I_p$ must be finite.  Moreover, since different $q_i$ are orthogonal, they commute, hence products of the form $\bla q{i_0}\otimes{i_n}$ with pairwise distinct indices do not depend on the indexing. Suppose that $I_p=\set{i_0,\ldots, i_n}$ has size $n+1$;  we prove that $p\equidom \bla q{i_0}\otimes{i_n}$ by induction on $m$.  From this, it follows easily that $\mathrm{\eqref{point:ippp}}\allora\mathrm{\eqref{point:peqpp}}$ and that $\mathrm{\eqref{point:ipdipp}}\allora\mathrm{\eqref{point:pdpp}}$. Since, in both points of the conclusion, each property trivially implies the one on its right, this suffices.

  If $m=0$, then $p_0\equidom q_{i_0}$, because otherwise by Lemma~\ref{lemma:omin1typeequidom} $p=p_0\wort q_{i_0}$ and $p\doms q_{i_0}$, so  $q_{i_0}$ is realised by Fact~\ref{fact:wortpreserved}, which is absurd.

  If $m>0$, let us focus on $p_m$. By distality and Lemma~\ref{lemma:omin1typeequidom}, $1$-types that do not commute with $p_m$ commute with every type that commutes with $p_m$. Therefore, 
by  swapping some types in $\bla p0\otimes{m-1}$, we may assume  that, for some $k<m$, no pair of types from $p_{k+1}\ldots, p_m$  commutes, but that each of $p_{k+1}\ldots, p_m$ commutes with each $p_j$ for $j\le k$,   and by inductive hypothesis $\bla p0\otimes k\equidom q_{j_0}\otimes\ldots\otimes q_{j_\ell}$ for some suitable $\bla j0,\ell\in I$. By Fact~\ref{fact:atleastontheleft},
  \[
    p=\bla p0\otimes k\otimes\bla p{k+1}\otimes m\equidom  q_{j_0}\otimes\ldots\otimes q_{j_\ell}\otimes \bla p{k+1}\otimes m
  \]
  Note that $\bla p0\otimes k\wort\bla p{k+1}\otimes m$, hence $q_{j_0}\otimes\ldots\otimes q_{j_\ell}\wort \bla p{k+1}\otimes m$ by Fact~\ref{fact:wortpreserved}.  By  maximality of $\seq{q_i\mid i\in I}$ there is $\bar \imath\in I$ such that $q_{\bar\imath}\equidom p_m$.  Since for all $j\ge k+1$ we have $p_m\nwort p_{j}$, by  Lemma~\ref{lemma:omin1typeequidom} and Fact~\ref{fact:pushforward} we obtain $\bla p{k+1}\otimes m\equidom \pow {q_{\bar\imath}}{m-k}$, and by  Corollary~\ref{co:idempotent} $\pow{q_{\bar\imath}}{m-k}\equidom q_{\bar\imath}$. Moreover $\bla p{k+1}\otimes m$ is weakly orthogonal to, hence  commutes with, $q_{j_0}\otimes\ldots\otimes q_{j_\ell}$. Again by Fact~\ref{fact:atleastontheleft},
\begin{multline*}
    p\equidom q_{j_0}\otimes\ldots\otimes q_{j_\ell}\otimes \bla p{k+1}\otimes m
\\=\bla p{k+1}\otimes m\otimes q_{j_0}\otimes\ldots\otimes q_{j_\ell}\equidom q_{\bar\imath}\otimes q_{j_0}\otimes\ldots\otimes q_{j_\ell}
  \end{multline*}
  To conclude, we need to show that the inclusion $I_p\supseteq \set{\bar\imath,\bla j0,\ell}$ (a corollary of what we just proved) cannot be strict. If it is, as witnessed by $j$, then $p\doms q_j$ but $q_j \wort q_{\bar\imath}$ and $q_j\wort q_{j_\alpha}$ for $\alpha\le \ell$. By Lemma~\ref{lemma:distalperpotimes} then \[
    p\doms q_j\wort q_{\bar\imath}\otimes q_{j_0}\otimes\ldots\otimes q_{j_\ell}\equidom p
  \]
  hence $q_j$ is realised by Fact~\ref{fact:wortpreserved}, which is absurd.
\end{proof}

\begin{co}\label{co:ominotimesasexpected}
Let $T$ be o-minimal satisfying Property~\ref{property:1tpgen}. For all $p_0,p_1\in \invtypes(\monster)$ we have  $I_{p_0\otimes p_1}=I_{p_0}\cup I_{p_1}$.  
\end{co}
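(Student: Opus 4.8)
The statement asserts $I_{p_0\otimes p_1} = I_{p_0}\cup I_{p_1}$. The plan is to prove the two inclusions separately, using the characterisations of domination established in Proposition~\ref{pr:omindomchar} together with the basic monotonicity property Fact~\ref{fact:atleastontheleft} and the behaviour of weak orthogonality under $\otimes$ from Lemma~\ref{lemma:distalperpotimes} and Fact~\ref{fact:wortpreserved}.

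\emph{The inclusion $I_{p_0}\cup I_{p_1}\subseteq I_{p_0\otimes p_1}$.} Let $i\in I_{p_0}$, so $p_0\doms q_i$. By Fact~\ref{fact:atleastontheleft}, $p_0\otimes p_1\doms q_i\otimes p_1$ (tensoring $p_1$ on the right). Now $q_i\otimes p_1\doms q_i$ via the pushforward along the projection to the $q_i$-coordinate, using Example~\ref{eg:deqpushf}; alternatively one observes directly that $q_i\otimes p_1\doms q_i$ since $q_i$ occurs as a factor. Composing, $p_0\otimes p_1\doms q_i$, i.e. $i\in I_{p_0\otimes p_1}$. The argument for $i\in I_{p_1}$ is symmetric, tensoring on the left and projecting to the $p_1$-factor. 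Hence $I_{p_0}\cup I_{p_1}\subseteq I_{p_0\otimes p_1}$. Note this direction does not even require Property~\ref{property:1tpgen}.

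\emph{The inclusion $I_{p_0\otimes p_1}\subseteq I_{p_0}\cup I_{p_1}$.} Here one uses Property~\ref{property:1tpgen} and Proposition~\ref{pr:omindomchar}. Write, using Proposition~\ref{pr:omindomchar}\,(1), $p_0\equidom \bigotimes_{i\in I_{p_0}} q_i$ and $p_1\equidom\bigotimes_{i\in I_{p_1}} q_i$ (products over the relevant finite index sets, well-defined since distinct $q_i$ are pairwise weakly orthogonal hence commute). By Fact~\ref{fact:atleastontheleft}, $p_0\otimes p_1\equidom \bigl(\bigotimes_{i\in I_{p_0}} q_i\bigr)\otimes\bigl(\bigotimes_{i\in I_{p_1}} q_i\bigr)$. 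Collapsing repeated factors using Corollary~\ref{co:idempotent} ($q_i$ is idempotent modulo equidominance), and reordering commuting factors, this product is equidominant to $\bigotimes_{i\in I_{p_0}\cup I_{p_1}} q_i$. Now apply Proposition~\ref{pr:omindomchar}\,(1): since $p_0\otimes p_1\equidom \bigotimes_{i\in I_{p_0}\cup I_{p_1}} q_i$, the two types dominate exactly the same $1$-types, so $I_{p_0\otimes p_1}$ equals $I_{\bigotimes_{i\in I_{p_0}\cup I_{p_1}} q_i}$. Finally, that last index set is contained in $I_{p_0}\cup I_{p_1}$: if $p_0\otimes p_1\doms q_j$ but $j\notin I_{p_0}\cup I_{p_1}$, then $q_j\wort q_i$ for every $i\in I_{p_0}\cup I_{p_1}$, so by Lemma~\ref{lemma:distalperpotimes} $q_j\wort \bigotimes_{i\in I_{p_0}\cup I_{p_1}} q_i\equidom p_0\otimes p_1$, whence $p_0\otimes p_1\doms q_j\wort p_0\otimes p_1$ forces $q_j$ realised by Fact~\ref{fact:wortpreserved} — a contradiction. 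This completes the reverse inclusion, and in fact shows $I_{p_0\otimes p_1}=I_{p_0}\cup I_{p_1}$ with the displayed equidominance $p_0\otimes p_1\equidom\bigotimes_{i\in I_{p_0}\cup I_{p_1}} q_i$ as a bonus.

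\emph{Expected main obstacle.} None of the steps is deep; the bookkeeping with commuting/idempotent factors is the only thing requiring care, and it is fully supplied by Corollary~\ref{co:idempotent}, Lemma~\ref{lemma:distalperpotimes}, and the commutativity of pairwise weakly orthogonal types. The mildly delicate point is making sure the "$q_i\otimes p_1\doms q_i$" step is justified cleanly — it is, via the pushforward construction of Example~\ref{eg:deqpushf} applied to the coordinate projection — but this is routine. In short, the corollary is essentially a direct unwinding of Proposition~\ref{pr:omindomchar} and should be presented as such.
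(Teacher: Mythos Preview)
Your argument has a gap in the second inclusion. You write that by Fact~\ref{fact:atleastontheleft}, $p_0\otimes p_1\equidom Q_0\otimes Q_1$ where $Q_\ell=\bigotimes_{i\in I_{p_\ell}} q_i$. But Fact~\ref{fact:atleastontheleft} only allows substitution on the \emph{left} factor: from $p_0\equidom Q_0$ you get $p_0\otimes p_1\equidom Q_0\otimes p_1$, and that is all. To continue you would need $Q_0\otimes p_1\equidom Q_0\otimes Q_1$, i.e.\ substitution on the right, which is precisely the content of Corollary~\ref{co:ominwd}---a result proved \emph{after} the present corollary, and in fact relying on it. So the step is circular as written. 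The later reduction $Q_0\otimes Q_1\equidom\bigotimes_{i\in I_{p_0}\cup I_{p_1}} q_i$ is fine (distinct $q_i$ literally commute, so repeated factors can be brought to the left and collapsed via Corollary~\ref{co:idempotent} and Fact~\ref{fact:atleastontheleft}), but you never legitimately reach $Q_0\otimes Q_1$.

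The paper avoids this detour entirely. Its proof observes that if $j\notin I_{p_0}\cup I_{p_1}$ then $p_\ell\wort q_j$ for $\ell<2$: indeed $p_\ell\equidom Q_\ell$ by Proposition~\ref{pr:omindomchar}, and $q_j\wort q_i$ for every $i\in I_{p_\ell}$, so $q_j\wort Q_\ell$ by Lemma~\ref{lemma:distalperpotimes}, hence $q_j\wort p_\ell$ by Fact~\ref{fact:wortpreserved}. Then Lemma~\ref{lemma:distalperpotimes} gives $q_j\wort p_0\otimes p_1$ directly, and Fact~\ref{fact:wortpreserved} yields the contradiction. Your final paragraph is this very argument, but you apply it to $\bigotimes_{i\in I_{p_0}\cup I_{p_1}} q_i$ rather than to $p_0\otimes p_1$ itself; simply redirect it and drop the unjustified intermediate equidominance.
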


\begin{proof}
  Clearly, $I_{p_0\otimes p_1}\supseteq I_{p_0}\cup I_{p_1}$.  If the inclusion is strict, there is a nonrealised $q\in \invtypes_1(\monster)$ such that $p_0\otimes p_1\doms q$, but for $i<2$ we have $p_i\wort q$. By Lemma~\ref{lemma:distalperpotimes}, $p_0\otimes p_1 \wort q$, so $q$ is realised by Fact~\ref{fact:wortpreserved}, a contradiction.
\end{proof}
With similar arguments, one shows the corollary below.
\begin{co}\label{co:wortworks}
Let $T$ be o-minimal satisfying Property~\ref{property:1tpgen}. If $p,q\in\invtypes(\monster)$, then  $p\wort q$ if and only if $p$ and $q$ dominate no common nonrealised $1$-type. Moreover, if $q\in \invtypes_1(\monster)$, then $p\doms q\iff p\nwort q$.
\end{co}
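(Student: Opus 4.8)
The plan is to derive everything quickly from Proposition~\ref{pr:omindomchar}, Lemma~\ref{lemma:distalperpotimes}, and Fact~\ref{fact:wortpreserved}, with Property~\ref{property:1tpgen} entering only in one of the two implications of the main equivalence. First I would dispose of the trivial cases: if either of $p,q$ is realised it is weakly orthogonal to every type and dominates only realised types, so both halves of the ``iff'' hold; for the ``moreover'' the pertinent case is $q$ nonrealised. So assume that $p,q\in\invtypes(\monster)$ are nonrealised.

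For the implication from $p\wort q$ to the statement that $p$ and $q$ dominate no common nonrealised $1$-type, no appeal to Property~\ref{property:1tpgen} is needed: if a nonrealised $r\in\invtypes_1(\monster)$ satisfied $p\doms r$ and $q\doms r$, then Fact~\ref{fact:wortpreserved} applied to $p\doms r$ and $p\wort q$ would give $r\wort q$, and then $q\doms r$ together with $q\wort r$ would force $r$ realised, a contradiction. For the converse, suppose $p$ and $q$ dominate no common nonrealised $1$-type. By (the proof of) Proposition~\ref{pr:omindomchar} we may write $p\equidom\bigotimes_{i\in I_p}q_i$ and $q\equidom\bigotimes_{j\in I_q}q_j$, and since every $q_i$ is nonrealised the hypothesis says precisely that $I_p\cap I_q=\emptyset$. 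As the $q_i$ are pairwise weakly orthogonal, $q_i\wort q_j$ for all $i\in I_p$ and $j\in I_q$, so iterating Lemma~\ref{lemma:distalperpotimes} gives $\bigotimes_{i\in I_p}q_i\wort\bigotimes_{j\in I_q}q_j$. It then remains to transfer this back across equidominance; since Fact~\ref{fact:wortpreserved} is one-directional, I would argue in two steps using the symmetry of $\wort$: from $\bigotimes_{i\in I_p}q_i\doms p$ one gets $p\wort\bigotimes_{j\in I_q}q_j$, and then from $\bigotimes_{j\in I_q}q_j\doms q$ one gets $p\wort q$.

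For the ``moreover'', let $q\in\invtypes_1(\monster)$ be nonrealised. If $p\doms q$, then $p$ and $q$ both dominate the nonrealised $1$-type $q$, so $p\nwort q$ by the equivalence just established. Conversely, if $p\nwort q$, that equivalence produces a nonrealised $r\in\invtypes_1(\monster)$ with $p\doms r$ and $q\doms r$; since $q$ and $r$ are nonrealised $1$-types, Fact~\ref{fact:wortpreserved} yields $q\nwort r$ and Lemma~\ref{lemma:omin1typeequidom} then yields $q\domeq r$, whence $p\doms r\domeq q$ and so $p\doms q$. The only place calling for attention is the two-stage transfer of weak orthogonality across $\equidom$ in the middle paragraph; everything else is routine bookkeeping with the finite sets $I_p$, and I expect no real difficulty.
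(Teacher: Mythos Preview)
Your proof is correct and follows exactly the route the paper intends: the paper does not give an explicit argument for this corollary, saying only ``With similar arguments, one shows the corollary below,'' and the tools you invoke (Proposition~\ref{pr:omindomchar}, Lemma~\ref{lemma:distalperpotimes}, Fact~\ref{fact:wortpreserved}, Lemma~\ref{lemma:omin1typeequidom}) are precisely those used in the proofs of Proposition~\ref{pr:omindomchar} and Corollary~\ref{co:ominotimesasexpected} that the paper is alluding to. Your observation that the ``moreover'' clause should be read with $q$ nonrealised is also correct: as literally stated it fails for realised $q$, and you handle this in the natural way.
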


\begin{co}\label{co:ominwd}
Let $T$ be o-minimal satisfying Property~\ref{property:1tpgen}. Then $\otimes$ respects $\doms$ and $\equidom$.
\end{co}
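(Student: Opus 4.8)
The plan is to reduce to the criterion already isolated in Corollary~\ref{co:commthenwd}, which says that if every pair of global invariant types commutes with respect to $\domeq$ (resp.~$\equidom$), then $\otimes$ respects $\doms$ (resp.~$\equidom$). So it suffices to show that for all $p,q\in\invtypes(\monster)$ we have $p\otimes q\equidom q\otimes p$.

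First I would use Property~\ref{property:1tpgen} to write $p\equidom p_0\otimes\ldots\otimes p_m$ and $q\equidom q_0\otimes\ldots\otimes q_k$, with all factors nonrealised invariant $1$-types (the realised case is trivial since realised types are weakly orthogonal to everything, so they commute with everything by Remark~\ref{rem:wortobviousfacts}). By Fact~\ref{fact:atleastontheleft} it is then enough to prove that a product of invariant $1$-types commutes, modulo $\equidom$, with the same product taken in the opposite order; and by an easy induction, combining Fact~\ref{fact:atleastontheleft} with the transitivity of $\equidom$, it suffices to handle the case of two $1$-types, i.e.~to show $r\otimes s\equidom s\otimes r$ for $r,s\in\invtypes_1(\monster)$ nonrealised. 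Here there are two cases. If $r\wort s$, then $r$ and $s$ commute by Remark~\ref{rem:wortobviousfacts}, so $r\otimes s=s\otimes r$ and a fortiori they are equidominant. If $r\nwort s$, then by Lemma~\ref{lemma:omin1typeequidom} we have $r\equidom s$ and there is a $\monster$-definable bijection $f$ with $s=f_*r$; by Fact~\ref{fact:pushforward} (the bijection clause) together with Fact~\ref{fact:atleastontheleft} we get $r\otimes s\equidom r\otimes r$ and $s\otimes r\equidom r\otimes r$, so $r\otimes s\equidom s\otimes r$.

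Putting these together: for general $p,q$, writing them as products of $1$-types and repeatedly applying Fact~\ref{fact:atleastontheleft} to move factors past one another using the two-factor case just established, one obtains $p\otimes q\equidom q\otimes p$ (and likewise $p\otimes q\domeq q\otimes p$, using Remark~\ref{rem:couldhaveuseddomeq} if one only assumes the $\domeq$-variant of Property~\ref{property:1tpgen}). Then Corollary~\ref{co:commthenwd} yields that $\otimes$ respects both $\doms$ and $\equidom$.

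Alternatively, and perhaps more cleanly, I would bypass the explicit commutation argument and invoke Proposition~\ref{pr:omindomchar} directly: by Corollary~\ref{co:ominotimesasexpected}, $I_{p_0\otimes p_1}=I_{p_0}\cup I_{p_1}$ depends only on $I_{p_0}$ and $I_{p_1}$, and by Proposition~\ref{pr:omindomchar}(1) the map $p\mapsto I_p$ is a complete invariant for $\equidom$ (hence also for $\domeq$), while by part~(2) the inclusion of $I_p$'s computes $\doms$. So if $p_0\equidom p_0'$ and $p_1\equidom p_1'$ then $I_{p_0}=I_{p_0'}$ and $I_{p_1}=I_{p_1'}$, whence $I_{p_0\otimes p_1}=I_{p_0'\otimes p_1'}$ and therefore $p_0\otimes p_1\equidom p_0'\otimes p_1'$; the same computation with $\doms$ in place of $\equidom$ (using part~(2) and monotonicity of $\cup$) shows $\otimes$ respects $\doms$. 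The main point to be careful about is simply that Corollary~\ref{co:ominotimesasexpected} and Proposition~\ref{pr:omindomchar} between them already encode all the needed information, so no new idea is required; the only mild subtlety is remembering that $\domeq$ and $\equidom$ coincide on $\invtypes(\monster)$ under Property~\ref{property:1tpgen} (Proposition~\ref{pr:omindomchar}(1)), so the two assertions of the corollary are really one.
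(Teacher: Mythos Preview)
Your second approach is correct and is precisely what the paper does: use Proposition~\ref{pr:omindomchar} to identify $\doms$ and $\equidom$ with containment and equality of the sets $I_p$, then use Corollary~\ref{co:ominotimesasexpected} to compute $I_{p\otimes q}=I_p\cup I_q$, from which compatibility with both relations is immediate. (The paper's write-up also invokes Fact~\ref{fact:atleastontheleft} to reduce to the right-factor case before applying the $I$-calculus, but this is cosmetic.)

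Your first approach, however, has a genuine gap in the reduction step. You write that ``by Fact~\ref{fact:atleastontheleft} it is then enough to prove that a product of invariant $1$-types commutes, modulo $\equidom$, with the same product taken in the opposite order''. But Fact~\ref{fact:atleastontheleft} only allows substitution on the \emph{left} of $\otimes$: from $p\equidom P$ you get $p\otimes q\equidom P\otimes q$, but you cannot then replace $q$ by $Q$ on the right. The equidominance $q\equidom Q$ supplied by Property~\ref{property:1tpgen} is abstract (witnessed by some small type $r$), and there is no general tool letting you lift it through a left factor $P$. So even after establishing $P\otimes Q\equidom Q\otimes P$ for products of $1$-types, you have not shown $p\otimes q\equidom q\otimes p$ for arbitrary $p,q$, and Corollary~\ref{co:commthenwd} does not apply. (A smaller version of the same issue appears in your ``easy induction'': swapping $r_i$ and $r_{i+1}$ inside $D\otimes r_i\otimes r_{i+1}\otimes C$ when $r_i\nwort r_{i+1}$ again needs right-substitution; it can be done via a coordinatewise definable bijection and Fact~\ref{fact:pushforward}, but not via Fact~\ref{fact:atleastontheleft} alone as you claim.) The clean route around all of this is exactly your second approach, which avoids ever needing to substitute on the right.
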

\begin{proof}
  Suppose $p_0\doms p_1$.  By Proposition~\ref{pr:omindomchar}, this means that $I_{p_0}\supseteq I_{p_1}$. By Fact~\ref{fact:atleastontheleft} it is enough to show that, for all invariant $q$, we have  $q\otimes p_0\doms q\otimes p_1$, i.e.~$I_{q\otimes p_0}\supseteq I_{q\otimes p_1}$. Similarly, if we start with $p_0\equidom p_1$ then  $I_{p_0}=I_{p_1}$, and we want to show that $I_{q\otimes p_0}= I_{q\otimes p_1}$. Both follow at once from Corollary~\ref{co:ominotimesasexpected}.
\end{proof}

After recalling  Remark~\ref{rem:couldhaveuseddomeq}, we can state the main result of this section. 
\begin{thm}\label{thm:omincharmodgen}
  Let $T$ be an o-minimal theory and assume that every invariant type is domination-equivalent to a product of $1$-types. Then $\invtilde$ is well-defined, and  $(\invtilde, \otimes, \doms, \wort)\cong (\pfin(X), \cup, \supseteq, D)$, where $X$ is any maximal set of pairwise weakly orthogonal invariant $1$-types and $D(x,y)$ holds iff $x\cap y=\emptyset$. Moreover,  if every invariant type is equidominant to a product of $1$-types, then $\equidom$ is the same as $\domeq$, hence $\invbar=\invtilde$.
\end{thm}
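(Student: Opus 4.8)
The plan is to derive Theorem~\ref{thm:omincharmodgen} essentially as a package of the preceding corollaries, with the map $p\mapsto I_p$ doing all the work. First I would invoke Corollary~\ref{co:ominwd} to conclude that $\otimes$ respects both $\doms$ and $\equidom$, so that $\invtilde$ and $\invbar$ are well-defined; in particular the poset operation $\otimes$ on $\invtilde$ makes sense. Then I would check that $p\mapsto I_p$ descends to a map $\invtilde\to\pfin(X)$, where we identify $X=\set{q_i\mid i\in I}$ with the fixed maximal pairwise weakly orthogonal family. Well-definedness of this descent is exactly the equivalence $\ref{point:peqpp}\sse\ref{point:ippp}$ from Proposition~\ref{pr:omindomchar}, and injectivity is the same equivalence read the other way.

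Next I would verify that the map is a morphism of all the structure in sight. Surjectivity onto $\pfin(X)$: given a finite $\set{i_0,\ldots,i_n}\subseteq I$, the product $q_{i_0}\otimes\cdots\otimes q_{i_n}$ is a legitimate element of $\invtypes(\monster)$ (the $q_i$ pairwise commute, being pairwise weakly orthogonal, so the product is well-defined independently of the ordering), and by Corollary~\ref{co:ominotimesasexpected} applied inductively its $I$-set is exactly $\set{i_0,\ldots,i_n}$; the empty set is hit by $\class 0$. That $\otimes$ goes to $\cup$ is precisely Corollary~\ref{co:ominotimesasexpected}. That $\doms$ corresponds to $\supseteq$ is the equivalence $\ref{point:pdpp}\sse\ref{point:ipdipp}$ of Proposition~\ref{pr:omindomchar}. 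That $\wort$ corresponds to $D$ — i.e. $p\wort q$ iff $I_p\cap I_q=\emptyset$ — is the content of the first sentence of Corollary~\ref{co:wortworks}, once one observes that $I_p$ consists exactly of the indices of nonrealised $1$-types dominated by $p$ and that being weakly orthogonal to $q$ is equivalent to being weakly orthogonal to every $1$-type it dominates (using Lemma~\ref{lemma:distalperpotimes} and Fact~\ref{fact:wortpreserved}, as in the proof of Corollary~\ref{co:wortworks}). Assembling these, $p\mapsto I_p$ is an isomorphism $(\invtilde,\otimes,\doms,\wort)\cong(\pfin(X),\cup,\supseteq,D)$.

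For the final ``moreover'': under the equidominance form of Property~\ref{property:1tpgen}, Proposition~\ref{pr:omindomchar}\eqref{point:peqpp} already states that $p\equidom p'$ is equivalent to $p\domeq p'$ for all invariant $p,p'$; since realised types form a single class under either relation, this says $\equidom$ and $\domeq$ have literally the same classes on $\invtypes(\monster)$, hence $\invbar=\invtilde$ as sets, and the monoid structures agree because $\otimes$ is the induced operation in both cases.

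I do not expect a serious obstacle here: the theorem is a repackaging, and all the real work has been front-loaded into Proposition~\ref{pr:omindomchar} and Corollaries~\ref{co:idempotent}, \ref{co:ominotimesasexpected}, \ref{co:wortworks}, \ref{co:ominwd}. The one point needing a line of care is the passage from the hypothesis stated with $\domeq$ (as in the theorem) versus $\equidom$ (as in Property~\ref{property:1tpgen}): here one appeals to Remark~\ref{rem:couldhaveuseddomeq}, which guarantees that the weaker $\domeq$-hypothesis suffices to run the $\domeq$-versions of Proposition~\ref{pr:omindomchar} and its corollaries, and that is all that is needed for the structural isomorphism; the strengthening to $\invbar=\invtilde$ is then precisely what the extra $\equidom$-hypothesis buys.
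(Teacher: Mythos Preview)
Your proposal is correct and follows the same approach as the paper: the proof there is a single line stating that $\class{p}\mapsto\set{q_i\mid i\in I_p}$ is the required isomorphism, citing ``the previous results'', and you have simply unpacked which previous results do which job. The one mild overstatement is at the start, where you say Corollary~\ref{co:ominwd} gives that $\otimes$ respects both $\doms$ and $\equidom$; under the theorem's $\domeq$-hypothesis you only get $\doms$ (via Remark~\ref{rem:couldhaveuseddomeq}), but you correctly note this distinction later, and only $\doms$ is needed for the main isomorphism.
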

\begin{proof}
  By the previous results,  $\class{p}\mapsto \set{q_i\mid i\in I_p}$ is the required isomorphism.
\end{proof}

\subsection{Reducing further}
In the pages to come we will be concerned with the study of some specific o-minimal theories.  Given $T$, because of Theorem~\ref{thm:omincharmodgen}, we are interested in showing that in $T$ every invariant type is domination-equivalent to a product of invariant $1$-types, and in giving a nice description of a maximal family of pairwise weakly orthogonal invariant $1$-types. Before undertaking this task, we prove some results that help to show that a given $T$ satisfies Property~\ref{property:1tpgen}.
\begin{defin}\label{defin:funfam}
  Let $p\in S_x(\monster)$ and $A\smallsubset \monster$. Denote by $\mathcal F^{p,1}_A$ the set of functions   $A$-definable in $T$ with domain a definable  subset of $\monster^{\abs x}$ on which $p$ concentrates and codomain $\monster^1$.\footnote{I.e.~they are single definable functions, not tuples thereof: they output a single element.}
\end{defin}
\begin{property}\label{property:theominimalthing}
Suppose that $c$ is a  $\monster$-independent tuple and let $p=\tp_x(c/\monster)$.   Then $\pi(x)\proves p(x)$, where
  \[
    \pi(x)\coloneqq\bigcup_{f\in \mathcal F^{p,1}_\emptyset} \tp_{w_f}(f(c)/\monster)\cup \set*{w_f=f(x)\Bigm| f \in \mathcal F^{p,1}_\emptyset}
  \]
\end{property}
Note that if this assumption is satisfied, and   $c$ is not $\monster$-independent, a similar statement still holds, by working with a basis $c'$ of $c$ over $\monster$ and then  adding to $\pi(x)$ the formulas isolating $\tp(c/\monster c')$. 
\begin{lemma}\label{lemma:onebannerman}
  Let $p\in \invtypes_1(\monster, M)$,  let $M\smallprec N\smallprec \monster$, and let $b\models \pow p {n+1}$. If $p$ has small cofinality on the right [resp.~left] then $p(\dcl(N b_{n}))$ is cofinal [resp.~coinitial] in $p(\monster b)$.
\end{lemma}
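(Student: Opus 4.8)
The plan is to reduce Lemma~\ref{lemma:onebannerman} to the Idempotency Lemma~\ref{lemma:idempotent} by peeling off the coordinates of $b$ one at a time, from the left. Assume without loss of generality that $p$ is nonrealised and has small cofinality on the right (the left case being symmetric), and let $R\subseteq M$ be coinitial in $R_p$. Write $b=(b_0,\ldots,b_n)$, so that each tail $(b_k,\ldots,b_n)$ realises $\pow p{n+1-k}\invext\dcl(\monster b_0\ldots b_{k-1})$, and in particular $(b_k,\ldots,b_n)\models\pow p{n+1-k}\restr M$. We want to show $p(\dcl(N b_n))$ is cofinal in $p(\dcl(\monster b))$.

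The key observation is the following. Suppose $h(u_0,\ldots,u_n,v)$ is an $M$-definable function, $m\in\monster$, and $h(b_0,\ldots,b_n,m)\models p$; we must find an element of $p(\dcl(N b_n))$ above it. Apply the Monotonicity Theorem to $u_0\mapsto h(u_0,b_1,\ldots,b_n,m)$ on the set $Y$ of realisations of $\tp(b_0/M b_1\ldots b_n m)$. If it is constant on $Y$, then $h(b_0,\ldots,b_n,m)\in\dcl(M b_1\ldots b_n m)$ and we have dropped a coordinate; recurse on the shorter tuple $(b_1,\ldots,b_n)$, which still realises a power of $p$ over a model containing $m$. If it is increasing in $u_0$ on $Y$, replace $b_0$ by $b_0'\coloneqq g(b_1,m)$ for some $g$ with $b_0'\models p\restr M b_1\ldots b_n m$ chosen so that $b_0'\in\dcl(N b_1\ldots b_n)_{<R}$ — such a $b_0'$ exists by applying the inductive hypothesis of Lemma~\ref{lemma:idempotent}-type reasoning, or more directly by Lemma~\ref{lemma:nomoreMinv1tps} since $\tp(b_1\ldots b_n/\monster)$ is $M$-invariant; then $b_0'<b_0$ (as $b_0>p(\monster b_1\ldots b_n)$) forces $h(b_0,\ldots,b_n,m)<h(b_0',b_1,\ldots,b_n,m)$, and the latter is in $\dcl(N b_1\ldots b_n)_{<R}$, reducing to the shorter tuple. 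The decreasing case is handled the same way after replacing $b_0$ by a suitable element above it; this mirrors the case analysis inside the Claim in the proof of Lemma~\ref{lemma:idempotent}. Once all coordinates except $b_n$ are removed, the base case $n=0$ is exactly the Idempotency Lemma.

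Concretely I would formalise this as an induction on $n$. For $n=0$ the statement is Lemma~\ref{lemma:idempotent}. For the inductive step, given $b\models\pow p{n+1}$, note $(b_1,\ldots,b_n)\models\pow pn\invext\dcl(\monster b_0)$; by the Idempotency Lemma applied with base model $N$ (or rather by an iterated application together with Corollary~\ref{co:nosandwich}), $p(\dcl(N b_1\ldots b_n))$ is cofinal in $p(\dcl(\monster b_1\ldots b_n))$. The remaining work is to absorb the extra coordinate $b_0$, i.e. to show that adding $b_0$ to the parameter set does not produce realisations of $p$ strictly above $p(\dcl(N b_1\ldots b_n))$; this is precisely the monotonicity trichotomy described above, and it is the step I expect to be the main obstacle. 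For the coinitiality (``left'') half one argues symmetrically, using that $p$ has small cofinality on the left and arguing with $L_p$ in place of $R_p$, exactly as in the final paragraph of the proof of Lemma~\ref{lemma:idempotent} where Corollary~\ref{co:nosandwich} is invoked to convert cofinality into coinitiality.

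The main obstacle is bookkeeping: keeping track of which model ($M$, $N$, or $\monster$) the various tuples are independent over and invariant over as coordinates are peeled off, and ensuring that the auxiliary points $b_0'$ can be found inside $\dcl(N b_1\ldots b_n)$ rather than merely inside $\dcl(\monster b_1\ldots b_n)$ — this is where Lemma~\ref{lemma:nomoreMinv1tps} and Lemma~\ref{lemma:Minvindep} do the real work, exactly as in the proof of the Idempotency Lemma. Everything else is a routine reprise of the case analysis already carried out there.
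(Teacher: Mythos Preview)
Your plan is far more elaborate than necessary, and the sketch contains a real gap. The paper's proof is a three-line reduction that exploits the one asymmetry you did not use: $b_n$ is the coordinate realised \emph{last}, so $b_n\models p\invext\monster b_0\ldots b_{n-1}$, and in particular $b_n\models p\invext Nd\hat b$ for any $\hat b\in\monster$. Hence if some $M$-definable $f$ and $d\in\monster$ give $p(\dcl(Nb_n))<f(b_0,\ldots,b_n,d)<R$, choose $\hat b\in\monster$ with $\hat b\models\pow pn\restr Nd$; then $\hat b b_n\equiv_{Nd} b$, so $p(\dcl(Nb_n))<f(\hat b,b_n,d)<R$ as well. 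But $f(\hat b,b_n,d)\in\dcl(\monster b_n)$, and this contradicts the Idempotency Lemma directly. No induction on $n$, no monotonicity trichotomy.

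The gap in your approach is the increasing case. You need a point $b_0'\in Y\cap\dcl(\monster b_1\ldots b_n)$ with $b_0'>b_0$, and you produce none. In the proof of the Idempotency Lemma the analogous point $b^{1-\epsilon}$ was built as $g(b^1,\tilde d)$, using the inverse of the very function $f(-,d)$ that links $b^i$ to $b^{i+1}$ and whose existence is the contradiction hypothesis there. Here $(b_0,\ldots,b_n)$ is a generic realisation of $\pow p{n+1}$, with no such linking function between $b_0$ and $b_1$, so that construction is unavailable. Your appeal to Lemma~\ref{lemma:nomoreMinv1tps} does not fill this: that lemma moves realisations of $M$-invariant \emph{global} types from $\dcl(\monster b)$ down to $\dcl(Nb)$, but $b_0$ is not in $\dcl(\monster b_1\ldots b_n)$ to begin with, and $\tp(b_0/\monster b_1\ldots b_n)$ is not a global $M$-invariant type. (Any $b_0'\in\monster$ with $b_0'\equiv_{Mb_1\ldots b_nm}b_0$ lies in $L_p$ and hence satisfies $b_0'<b_0$, which handles the decreasing case but not the increasing one.)

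There is also a sign error: your claim that $b_0>p(\dcl(\monster b_1\ldots b_n))$ is backwards. Since $b_1\models p\invext\monster b_0$, Corollary~\ref{co:nosandwich} gives $p(\dcl(\monster b_0))<p(\dcl(\monster b_1))$, so $b_0$ lies \emph{below} $p(\dcl(\monster b_1\ldots b_n))$. With that corrected, ``$b_0'<b_0$ and $h$ increasing'' would yield $h(b_0',\ldots)<h(b_0,\ldots)$, the wrong direction for an upper bound.
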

\begin{proof}
The case where $p$ is realised is trivial, so assume $p$  is not. Assume furthermore that $p$  has small cofinality on the right (the other case is symmetrical) and let $R\subseteq M$ be coinitial in $R_p$.  Let $f(\bla t0,n,w)$ be an $M$-definable function such that   $p(\dcl(N b_{n}))<f(\bla b0,n, d)<R$. Let $\hat b\in \monster$ be such that $\hat b\models \pow pn\restr{ Nd}$. Since  $b_n\models p\invext \monster \bla b0,{n-1}$ we have $\hat b b_n\equiv_{Nd} b$, hence $p(\dcl(N b_{n}))< f(\hat b, b_n, d)< R$. This violates the Idempotency Lemma~\ref{lemma:idempotent}.
\end{proof}

\begin{co}\label{co:addafactor}
  Let $p\in \invtypes_1(\monster, M)$ and  $b\models \pow pn$. Suppose that $c\models p$. If $c>p(\dcl(\monster b))$ or $c<p(\dcl(\monster b))$ then $(b,c)\models \pow p{n+1}$ or $(c,b)\models \pow p{n+1}$.
\end{co}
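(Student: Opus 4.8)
The plan is to extract one fact about invariant extensions of $1$-types and combine it with Lemma~\ref{lemma:onebannerman} via a short induction on $n$. The case where $p$ is realised is vacuous: then $c$ is the unique realisation of $p$, so $c\in p(\dcl(\monster b))$ and neither inequality in the hypothesis can hold. So assume $p$ nonrealised and, by symmetry (reverse all inequalities throughout), assume $p$ has small cofinality on the right. Fix $R\subseteq M$ coinitial in $R_p$ and $M\smallprec N\smallprec\monster$.

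The fact I would isolate is: if $u$ is a tuple with $\tp(u/\monster)$ $M$-invariant and $a\models p$, then $a\models p\invext\monster u$ if and only if $a>p(\dcl(\monster u))$. For the forward direction, take $e=f(u,d)\in p(\dcl(\monster u))$ with $f$ an $M$-definable function and $d\in\monster$; since $e\models p$ we have $f(u,d)<R$, and as each inequality $f(u,d)<r$ (for $r\in R$) is an $L(M)$-condition on $(u,d)$, it persists after replacing $(u,d)$ by any realisation $(u',d')\in\monster$ of $\tp(ud/M)$; then $f(u',d')\in\monster$ lies strictly below all of $R$, hence in $L_p$, so $p\proves x>f(u',d')$, and the defining scheme of $p$ over $M$ gives $(x>f(u,d))\in p\invext\monster u$, i.e.\ $a>e$. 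For the converse, the forward direction already shows that any realisation of $p\invext\monster u$ lies above $p(\dcl(\monster u))$, so by Remark~\ref{rem:omintypes} it suffices to check that any two realisations of $p$ lying above $p(\dcl(\monster u))$ determine the same cut in $\dcl(\monster u)$: this is clear for $e\in\dcl(\monster u)$ realising $p$, and for $e$ not realising $p$ the side on which a realisation of $p$ sits relative to $e$ is already fixed by $\tp(e/\monster)$ and the cut $(L_p,R_p)$.

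Granting this, the case $c>p(\dcl(\monster b))$ is immediate: $\tp(b/\monster)=\pow pn$ is $M$-invariant, so $c\models p\invext\monster b$, which is exactly $(b,c)\models\pow p{n+1}$. For the case $c<p(\dcl(\monster b))$ I would show $(c,b)\models\pow p{n+1}$ by induction on $n$. When $n=1$, Corollary~\ref{co:nosandwich} applied to the realisations $c,b^0$ of $p$ over $\monster$, together with $c<p(\dcl(\monster b^0))$, forces $p(\dcl(\monster c))$ to lie entirely to the left of $p(\dcl(\monster b^0))$; hence $b^0>p(\dcl(\monster c))$, and the fact gives $b^0\models p\invext\monster c$, i.e.\ $(c,b^0)\models\pow p2$. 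When $n\ge 2$, the initial segment $(b^0,\ldots,b^{n-2})$ realises $\pow p{n-1}$, and $c<p(\dcl(\monster b^0\ldots b^{n-2}))$ because that set is contained in $p(\dcl(\monster b))$; by induction $(c,b^0,\ldots,b^{n-2})\models\pow pn$, so it remains only to check $b^{n-1}\models p\invext\monster c\,b^0\ldots b^{n-2}$, equivalently, by the fact, $b^{n-1}>p(\dcl(\monster c\,b^0\ldots b^{n-2}))$.

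This last inequality is where the work lies, and where Lemma~\ref{lemma:onebannerman} enters: applied to $(c,b^0,\ldots,b^{n-2})\models\pow pn$, it says that $p(\dcl(N b^{n-2}))$ is cofinal in $p(\dcl(\monster c\,b^0\ldots b^{n-2}))$, while $p(\dcl(N b^{n-2}))\subseteq p(\dcl(\monster b^0\ldots b^{n-2}))$ and the latter is $<b^{n-1}$ by the fact applied to $b\models\pow pn$; hence $p(\dcl(\monster c\,b^0\ldots b^{n-2}))<b^{n-1}$, completing the induction. I expect the main obstacle to be exactly this case $c<p(\dcl(\monster b))$: prepending $c$ enlarges the parameter set over which $b^{n-1}$ must remain ``generic'', and a priori this could fail — the cofinality conclusion of Lemma~\ref{lemma:onebannerman} is precisely what prevents it, with the isolated fact providing the bridge between the two formulations (cofinal position versus realising the invariant extension).
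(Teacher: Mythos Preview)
Your proof is correct and follows essentially the same route as the paper's. The paper handles the case $c>p(\dcl(\monster b))$ ``by definition'' and then, for $c<p(\dcl(\monster b))$, uses Corollary~\ref{co:nosandwich} to get $b_0\models p\invext\monster c$, invokes Lemma~\ref{lemma:onebannerman} on $(c,b_0)$ to obtain $b_1\models p\invext\monster cb_0$, and concludes by induction; this is exactly your argument, with your induction on $n$ corresponding to the paper's iterated step from $b_k$ to $b_{k+1}$. The one genuine difference is that you explicitly isolate and prove the equivalence ``$a\models p\invext\monster u \iff a>p(\dcl(\monster u))$'' (for $a\models p$, small cofinality on the right), whereas the paper uses both directions of this implicitly and without comment---so your write-up is more self-contained, but not a different approach.
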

\begin{proof}
As usual, assume that $p$ has small cofinality on the right. If $c>p(\dcl(\monster b))$, then $(c,b)\models p\otimes \pow p{n}$  by definition. If $c<p(\dcl(\monster b))$, in particular $c<p(\dcl(\monster b_{0}))$. By Corollary~\ref{co:nosandwich}, we have $p(\dcl(\monster c))< b_{0}$, hence $b_{0}\models p\invext \monster c$. Since $b_1> p(\dcl(\monster b_0))\supseteq p(\dcl(N b_0))$, it follows from Lemma~\ref{lemma:onebannerman} that $b_1>p(\dcl(\monster c b_0))$, hence $b_1\models p\invext \monster c b_0$. We conclude by induction.
\end{proof}

\begin{pr}\label{pr:omin1typesgenerate}
  Let $T$ be o-minimal.  Let $p(x)\in \invtypes(\monster, M_0)$, let $c\models p$ and assume that $c$ is $\monster$-independent. 
  The following facts hold. 
\begin{enumerate}
\item\label{point:Emaximal} There is a tuple $b\in \dcl(\monster c)$  of maximal length among those satisfying a product of nonrealised invariant $1$-types.
  \item\label{point:nicerep} Let $b$ be as above, and let $q\coloneqq\tp(b/\monster)=\bla q0\otimes n$, where the $q_i$ are invariant $1$-types. Up to replacing each $q_i$ with another type $\tilde q_i$ in definable bijection with it, we may assume that for $i,j\le n$ either $q_i\wort q_j$ or $q_i=q_j$. Moreover $\bla {\tilde q}0\otimes n\equidom \bla q0\otimes n$.
\end{enumerate}
Let $b,q$ be as above, and let $M, N, N_1$ be such that each $q_i$ is $M$-invariant and $M_0\preceq M\smallprec N\smallprec N_1\smallprec \monster$.
\begin{enumerate}[resume]
 
\item \label{point:bisthere} Up to replacing $b$ with another $\tilde b\models q$, we may assume $b\in \dcl(N c)$.
  
\item\label{point:coreass} Let $b,q$ be as in the previous  points and set $r\coloneqq\tp_{xy}(cb/N_1)$.
  Then  $p(x)\cup r(x,y)\proves q(y)$ and      $q(y)\cup r(x,y)\proves \pi_M(x)$, where
  \[
 \pi_M(x)\coloneqq\!\!\!\!\bigcup_{f\in \mathcal F^{p,1}_M}\!\!\!\! \tp_{w_f}(f(c)/\monster)\cup \set*{w_f=f(x)\Bigm| f \in \mathcal F^{p,1}_M}
  \]
  \end{enumerate}
\end{pr}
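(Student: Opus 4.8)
The plan proceeds part by part, with part~(4)(b) the only serious one.

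\textbf{Parts (1)--(3).} Part~(1) is dimension theory: if $b'\in\dcl(\monster c)$ realises a product of nonrealised invariant $1$-types then $b'$ is $\monster$-independent (each invariant extension of a nonrealised $1$-type to $\monster b'_{<i}$ is again nonrealised), so exchange gives $\abs{b'}\le\dim(c/\monster)=\abs c$; among these lengths there is a maximum. For part~(2), Lemma~\ref{lemma:omin1typeequidom} shows that on nonrealised invariant $1$-types equidominance coincides with ``related by an $N$-definable bijection'', an equivalence relation; let $\tilde q_i$ be the chosen representative of the class of $q_i$, so $\tilde q_i=\tilde q_j$ when $q_i\equidom q_j$ and $\tilde q_i\wort\tilde q_j$ otherwise. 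Fixing $N$-definable bijections $f_i$ on $q_i$ with $(f_i)_*q_i=\tilde q_i$, the coordinatewise map $F=(f_0,\dots,f_n)$ is a bijection on $q$ with $F_*q=\bla{\tilde q}0\otimes n$ (push-forward of a product by coordinatewise functions over the invariance base is the product of push-forwards), so $\bla{\tilde q}0\otimes n=F_*q\equidom q$ by Example~\ref{eg:deqpushf}; replacing $b$ by $F(b)$ keeps it in $\dcl(\monster c)$ of maximal length. Part~(3) is immediate from Lemma~\ref{lemma:nomoreMinv1tps} applied to $q$ (which is $M$-invariant, being a product of $M$-invariant types, and realised in $\dcl(\monster c)$) and $c$ (whose type over $\monster$ is $M$-invariant, as $M_0\preceq M$): $q$ is then realised in $\dcl(Nc)$.

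\textbf{Part (4)(a) and the reduction of (b).} Since $b\in\dcl(Nc)$, fix an $N$-definable (hence $N_1$-definable) $g$ with $g(c)=b$; then $y=g(x)\in r$, and as $g_*p=\tp(g(c)/\monster)=\tp(b/\monster)=q$ we get $p(x)\cup r(x,y)\proves y=g(x)\proves q(y)$. For~(b) it suffices to fix $f\in\mathcal F^{p,1}_M$ and prove $q(y)\cup r(x,y)\proves f(x)\models s$, where $s:=\tp(f(c)/\monster)=f_*p$, which is $M$-invariant because $p$ is. If $s$ is realised then $p\proves f(x)=f(c)$, and $M_0$-invariance of $p$ forces $f(c)\in\dcl(M_0)\subseteq N_1$, so $f(x)=f(c)$ already lies in $r$. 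So assume $s$ nonrealised, without loss of generality with small cofinality on the right, witnessed by $R\subseteq M$ coinitial in $R_s$ (the left case is symmetric). Since $r\supseteq\tp(c/M)$ we already have $q(y)\cup r(x,y)\proves f(x)<r$ for each $r\in R$, so it remains to show $q(y)\cup r(x,y)\proves f(x)>d$ for every $d\in L_s$.

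\textbf{Positioning $f(c)$ over $\dcl(\monster b)$.} Here maximality of $b$ is used twice. First, $s$ cannot be weakly orthogonal to all of $q_0,\dots,q_n$: otherwise $s\wort q$ by Lemma~\ref{lemma:distalperpotimes}, so by Remark~\ref{rem:wortobviousfacts} the completion $\tp(f(c)b/\monster)$ of $s(w)\cup q(y)$ equals $s(w)\otimes q(y)$, exhibiting $f(c)\frown b$ as a product of $n{+}2$ nonrealised invariant $1$-types inside $\dcl(\monster c)$, against maximality. Hence by Lemma~\ref{lemma:omin1typeequidom} $s\equidom q_j$ for some $j$, so $s$ is realised in $\dcl(\monster b)$. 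Second, I claim that either $f(c)\in\dcl(\monster b)$, or some realisation of $s$ in $\dcl(\monster b)$ lies strictly below $f(c)$. Indeed, if $f(c)\notin\dcl(\monster b)$ and no realisation of $s$ in $\dcl(\monster b)$ is below $f(c)$, then $f(c)<s(\dcl(\monster b))$ (the alternative in Corollary~\ref{co:nosandwich} would otherwise supply such a realisation at or below $f(c)$), and a factor-by-factor verification along $b=(b_0,\dots,b_n)$ — treating factors weakly orthogonal to $s$ by uniqueness of completions and factors equidominant to $s$ via Corollaries~\ref{co:nosandwich} and~\ref{co:addafactor} — shows $b\models q\invext\monster f(c)$, so again $f(c)\frown b\models s\otimes q$ is a product of $n{+}2$ nonrealised invariant $1$-types in $\dcl(\monster c)$, contradicting maximality. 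This positioning step is the main obstacle.

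\textbf{Conclusion of (b).} In either remaining case $\tp(f(c)b/\monster)$ is $N$-invariant, being a push-forward of the $N$-invariant type $p$ by an $N$-definable function. If $f(c)\in\dcl(\monster b)$, Lemma~\ref{lemma:Minvindep} gives $f(c)=k'(b)$ for an $N$-definable $k'$, so $f(x)=k'(y)\in r$ and $q(y)\cup r(x,y)\proves f(x)=k'(y)\models(k')_*q=s$. If instead $e\in\dcl(\monster b)$ realises $s$ with $e<f(c)$, then likewise $e=k_1(b)$ for an $N$-definable $k_1$ with $(k_1)_*q=s$, the formula $k_1(y)<f(x)$ lies in $r$, and $q(y)\cup r(x,y)$ forces $f(x)>k_1(y)$ with $y\models q$; since $k_1(b')\models s$ for $b'\models q$, this yields $f(x)>d$ for every $d\in L_s$, completing the proof that $q(y)\cup r(x,y)\proves f(x)\models s$, hence $q(y)\cup r(x,y)\proves\pi_M(x)$.
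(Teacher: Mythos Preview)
Parts~(1)--(3) and~(4)(a) are fine and essentially match the paper. Part~(4)(b) has a genuine gap in two places, both stemming from your choice to work over $\monster$ rather than over the small model $N_1$.

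First, your ``factor-by-factor verification'' that $b\models q\invext\monster f(c)$ when $f(c)<s(\dcl(\monster b))$ is not a proof. Knowing $f(c)$ lies below $s(\dcl(\monster b))$ does not obviously control how each coordinate $b_i$ sits over $\dcl(\monster f(c)b_{>i})$, especially for coordinates with $q_i\equidom s$ but $q_i\ne s$; Corollaries~\ref{co:nosandwich} and~\ref{co:addafactor} compare realisations of the \emph{same} $1$-type, and here $s$ and the $q_i$ need not coincide. (Incidentally, ``$b\models q\invext\monster f(c)$'' yields $bf(c)\models q\otimes s$, not ``$f(c)\frown b\models s\otimes q$'' as you wrote.)

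Second, and more seriously, even granting your positioning claim, the step ``likewise $e=k_1(b)$ for an $N$-definable $k_1$'' is unjustified. You found $e\in s(\dcl(\monster b))$ with $e<f(c)$, but Lemma~\ref{lemma:Minvindep} only applies when the relevant joint type is invariant over a small base; here $\tp(eb/\monster)$ need not be $N$-invariant, since $e$ was produced using $\monster$-parameters. Lemma~\ref{lemma:nomoreMinv1tps} gives \emph{some} realisation of $s$ in $\dcl(Nb)$, but not one below $f(c)$.

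The paper avoids both issues by first passing through an $N$-definable bijection $g$ with $g_*s=q_i$ for the unique $i$ with $s\nwort q_i$, and then applying Corollary~\ref{co:addafactor} \emph{over $N_1$} to the subtuple $b'\subseteq b$ of $q_i$-coordinates: if $g(f(c))$ were not sandwiched between two elements of $q_i(\dcl(N_1 b'))$, then one of $g(f(c))b'$ or $b'g(f(c))$ would realise $\pow{q_i}{\abs{b'}+1}\restr N_1$; since this tuple is the image of $c$ under an $N$-definable map, its global type is $N$-invariant, hence by uniqueness of invariant extensions it realises $\pow{q_i}{\abs{b'}+1}$ over $\monster$, and Fact~\ref{fact:wortnipinv} then gives the longer product contradicting maximality. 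This yields $a_0<g(f(c))<a_1$ with $a_j\in\dcl(N_1 b')$ automatically, so the sandwiching formulas are in $r$ for free. Your one-sided approach (upper bound from $R\subseteq M$) would work if the lower bound were secured, but securing it seems to force you through essentially this same manoeuvre.
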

\begin{proof}
  \begin{enumerate}[wide, labelwidth=!, labelindent=0pt]
  \item The element $c_0$ satisfies a product of length $1$, hence a tuple $b\in \dcl(\monster c)$ satisfying a product of nonrealised invariant $1$-types exists. Since $\abs b$ is bounded above by $\dim(c/\monster)$, there is a maximal such $b$. 
  \item If, say, $q_0\ne q_i$ but $q_0\nwort q_i$, then by Lemma~\ref{lemma:omin1typeequidom} there is a definable bijection $f_i$ such that $(f_i)_*q_i=q_0$. By Fact~\ref{fact:atleastontheleft}  and Fact~\ref{fact:pushforward}, we may replace every such $q_i$ with $\tilde q_i\coloneqq(f_i)_*q_i$ inside $q=\bla q0\otimes n$ and obtain an equidominant product of $1$-types.  
Now repeat this process on each $\equidom$-class in $\set{\class{q_0},\ldots,\class{q_n}}$.
\item   By Fact~\ref{fact:productbasics} the type $q$ is $M$-invariant. Use Lemma~\ref{lemma:nomoreMinv1tps} to obtain $\tilde b\in \dcl(Nc)$ realising $q$. 
  
\item   That $p\cup r\proves q$ is trivial, so let $f\in \mathcal F^{p,1}_M$ and consider $f(c)$. Note that  $p\doms \tp(f(c)/\monster)$ and,  since $f$ is $M$-definable,   $\tp(f(c)/\monster)$ is $M$-invariant by Fact~\ref{fact:invariancepreserved}. 
Let $p_0\coloneqq\tp(f(c)/\monster)\in S_1(\monster, M)$. 
If $p_0\wort q_i$ for every $i\le n$, then by Lemma~\ref{lemma:distalperpotimes} $p_0\wort q$, hence $bf(c)$ is a tuple in $\dcl(\monster c)$ longer than $b$  satisfying a product of $1$-types, against maximality of $\abs{b}$. Therefore there is $i\le n$ such that $p_0\nwort q_i$. Since $p_0$ and all the $q_i$ are $M$-invariant, by  Lemma~\ref{lemma:omin1typeequidom} 
there is an $N$-definable bijection $g$ such that $g_*p_0=q_i$. 
Let $b'\subseteq b$ be the subtuple of $b$ consisting of points satisfying $q_i$.
\begin{claim}
There are $a_0, a_1\in q_i(\dcl(N_1 b'))$ such that $a_0<g(f(c))<a_1$.
\end{claim}
\begin{claimproof}
Otherwise, by Corollary~\ref{co:addafactor} applied to $N_1$ instead of $\monster$, one between $g(f(c))b'$ and $b'g(f(c))$ satisfies $\pow {q_i}{{\abs {b'}}+1}\restr N_1$. Call $\hat b$ the one that does. Since $g\circ f$ is $N$-definable, and $b'\in \dcl(N c)$ by point~\ref{point:bisthere}, the tuple $\hat b$ is the image under an $N$-definable function of $c$,  hence it has $N$-invariant type; by uniqueness of invariant extensions,  $\hat b\models \pow{q_i}{{\abs {b'}}+1}$. By point~\ref{point:nicerep}, Lemma~\ref{lemma:distalperpotimes},   and Fact~\ref{fact:wortnipinv},  $g(f(c))b$  or $b g(f(c))$ satisfies a product of nonrealised invariant $1$-types, against maximality of $\abs{b}$.
\end{claimproof}
   Write $a_j=h_j(b)$, where each $h_j(y)$ is $N_1$-definable. Then, depending on whether $g$ is strictly increasing or strictly decreasing, either the formula $g\inverse(h_0(y))<f(x)<g\inverse(h_1(y))$ or its analogue with both  inequalities reversed is in $r$. Since $q(y)$ shows that both $g\inverse(h_j(y))$  realise $p_0$, we have  $q(y)\cup r(x,y)\proves \tp(f(x)/\monster)$, and  we are done. \qedhere
 \end{enumerate}
\end{proof}

\begin{co}\label{co:assgac}
  Property~\ref{property:theominimalthing} implies  Property~\ref{property:1tpgen}. 
\end{co}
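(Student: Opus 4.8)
The goal is to show that Property~\ref{property:theominimalthing} implies Property~\ref{property:1tpgen}. The natural strategy is to take an arbitrary $p\in\invtypes(\monster)$ and produce, via Proposition~\ref{pr:omin1typesgenerate}, a product of invariant $1$-types equidominant to it, using Property~\ref{property:theominimalthing} to verify that the relevant small type $r$ witnesses domination in \emph{both} directions simultaneously (i.e.\ equidominance), not just domination. First I would reduce to the case that $p=\tp_x(c/\monster)$ with $c$ a $\monster$-independent tuple: by the remark following Property~\ref{property:theominimalthing}, and using Example~\ref{eg:deqpushf}/Fact~\ref{fact:pushforward} together with the fact that passing to a $\dcl$-basis over $\monster$ changes $p$ only by a definable bijection (on the relevant coordinates) plus an isolated piece, the general case follows from the independent case. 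So assume $c$ is $\monster$-independent and $p=\tp_x(c/\monster)$.

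Now apply Proposition~\ref{pr:omin1typesgenerate}: choose $b\in\dcl(\monster c)$ of maximal length satisfying a product $q=\bla q0\otimes n$ of nonrealised invariant $1$-types, arrange (point~\ref{point:nicerep}) that distinct $q_i$ are weakly orthogonal, pick $M\smallprec N\smallprec N_1\smallprec\monster$ as in the statement with all $q_i$ being $M$-invariant, arrange $b\in\dcl(Nc)$ (point~\ref{point:bisthere}), and set $r\coloneqq\tp_{xy}(cb/N_1)$. Point~\ref{point:coreass} gives $p(x)\cup r(x,y)\proves q(y)$, so $p\doms q$ with witness $r$; and it gives $q(y)\cup r(x,y)\proves\pi_M(x)$. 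The final move is: since $M_0\preceq M$ and $p$ is $M_0$-invariant, every function in $\mathcal F^{p,1}_\emptyset$ lies in $\mathcal F^{p,1}_M$, so $\pi_M(x)\supseteq\pi(x)$ in the sense of Property~\ref{property:theominimalthing} (more precisely $\pi_M$ contains, for each $f\in\mathcal F^{p,1}_\emptyset$, both $\tp_{w_f}(f(c)/\monster)$ and $w_f=f(x)$). Hence by Property~\ref{property:theominimalthing}, $q(y)\cup r(x,y)\proves\pi_M(x)\proves\pi(x)\proves p(x)$, i.e.\ $r$ also witnesses $q\doms p$. Since the \emph{same} $r\in S_{pq}(N_1)$ witnesses domination both ways, $p\equidom q$, and $q$ is a product of invariant $1$-types.

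**Main obstacle.** The genuinely delicate point is not the $1$-type generation itself — Proposition~\ref{pr:omin1typesgenerate} does the heavy lifting — but the reduction to the $\monster$-independent case while preserving $\equidom$ (rather than merely $\domeq$), since Property~\ref{property:1tpgen} as stated asks for equidominance. One must check that if $c'$ is a $\dcl$-basis of $c$ over $\monster$, then $\tp(c/\monster)\equidom\tp(c'/\monster)$: the map $c\mapsto c'$ is a definable projection (Example~\ref{eg:deqpushf} gives $\doms$), and the reverse direction needs the realisation of $\tp(c/\monster c')$ to be added, after which the single small type obtained by combining these data witnesses equidominance in both directions. This bookkeeping is routine but is where the argument could go wrong if one is careless about whether a \emph{single} $r$ serves both directions. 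I also note (cf.\ Remark~\ref{rem:couldhaveuseddomeq}) that if one only wants the $\domeq$ version of Property~\ref{property:1tpgen}, the $\monster$-independence reduction is immediate and only the $\doms$ halves of point~\ref{point:coreass} and Example~\ref{eg:deqpushf} are needed.

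\begin{proof}
  We first reduce to the case where the tuple realising the type is $\monster$-independent. Let $p\in\invtypes_x(\monster)$ and fix $c\models p$; let $c'\subseteq c$ be a $\dcl$-basis of $c$ over $\monster$, so $c$ is the image of $c'$ under a $\monster$-definable function and $c'$ is $\monster$-independent. Then $p\equidom\tp(c'/\monster)$: indeed, $c\in\dcl(\monster c')$ gives $\tp(c'/\monster)\doms p$ by Example~\ref{eg:deqpushf}, and conversely any small type over a model containing both $c$, $c'$ and a realisation of the (isolated, hence small-type-implied) type $\tp(c/\monster c')$ witnesses $p\doms\tp(c'/\monster)$; amalgamating, a single small $r$ witnesses both, so $p\equidom\tp(c'/\monster)$. (For the $\domeq$-variant of Property~\ref{property:1tpgen} only the direction $\tp(c'/\monster)\doms p$ is needed, and this reduction is immediate.) So we may assume $c$ is $\monster$-independent and $p=\tp_x(c/\monster)$, say with base $M_0$.

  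Apply Proposition~\ref{pr:omin1typesgenerate}. By points~\ref{point:Emaximal} and~\ref{point:nicerep} there is $b\in\dcl(\monster c)$ of maximal length satisfying a product of nonrealised invariant $1$-types $q\coloneqq\tp(b/\monster)=\bla q0\otimes n$, with the $q_i$ arranged so that $q_i\wort q_j$ or $q_i=q_j$ for all $i,j\le n$. Choose $M,N,N_1$ with $M_0\preceq M\smallprec N\smallprec N_1\smallprec\monster$ and each $q_i$ $M$-invariant. By point~\ref{point:bisthere} we may take $b\in\dcl(Nc)$. Put $r\coloneqq\tp_{xy}(cb/N_1)$, so $r\in S_{pq}(N_1)$. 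By point~\ref{point:coreass},
  \[
    p(x)\cup r(x,y)\proves q(y),\qquad q(y)\cup r(x,y)\proves\pi_M(x).
  \]
  The first implication says $r$ witnesses $p\doms q$.

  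For the reverse direction, observe that since $p$ is $M_0$-invariant and $M_0\preceq M$, we have $\mathcal F^{p,1}_\emptyset\subseteq\mathcal F^{p,1}_M$; hence $\pi_M(x)$ contains, for every $f\in\mathcal F^{p,1}_\emptyset$, both the formulas in $\tp_{w_f}(f(c)/\monster)$ and the formula $w_f=f(x)$. Thus $\pi_M(x)\proves\pi(x)$, where $\pi$ is as in Property~\ref{property:theominimalthing}, and by that property $\pi(x)\proves p(x)$. Therefore $q(y)\cup r(x,y)\proves\pi_M(x)\proves p(x)$, so the \emph{same} small type $r$ witnesses $q\doms p$. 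Consequently $p\equidom q$, and $q$ is a product of nonrealised invariant $1$-types, proving Property~\ref{property:1tpgen}. (Using only the $\doms$-halves of Example~\ref{eg:deqpushf}, point~\ref{point:coreass}, and the reduction above, the identical argument yields the $\domeq$-variant under the $\domeq$-variant of Property~\ref{property:theominimalthing}.)
\end{proof}
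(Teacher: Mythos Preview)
Your proof is correct and takes essentially the same approach as the paper's: reduce to a $\monster$-independent realisation, apply Proposition~\ref{pr:omin1typesgenerate}, and use Property~\ref{property:theominimalthing} to pass from $\pi_M$ back to $p$ via $\pi$. The only cosmetic difference is that the paper folds the independence reduction directly into the construction of $r$ (citing Lemma~\ref{lemma:Minvindep} so that the formulas isolating $\tp(c/\monster c')$ live over the invariance base), whereas you first establish $p\equidom\tp(c'/\monster)$ and then invoke transitivity of $\equidom$.
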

\begin{proof}
 Let $p(x)=\tp(c/\monster)$ be $M_0$-invariant.  By working on a basis $c'$ of $c$ and then adding to $r$ the formulas isolating $\tp(c/\monster c')$ (see Lemma~\ref{lemma:Minvindep}),  we may assume that $c$ is $\monster$-independent. Apply Proposition~\ref{pr:omin1typesgenerate}, and obtain a product $q(y)$ of invariant $1$-types and a small  $r\in S_{pq}(N_1)$ such that  $p(x)\cup r(x,y)\proves q(y)$ and  $q(y)\cup r(x,y)\proves \pi_M(x)$. Trivially,  $\pi_M(x)\proves \pi(x)$, and by Property~\ref{property:theominimalthing}  $\pi(x)\proves p(x)=\tp(c/\monster)$.
\end{proof}

\section{Theories with no nonsimple types}
\label{sec:dloeco}

In this section we deal with o-minimal theories with ``few'' definable functions, such as $\mathsf{DLO}$. The main definition comes from~\cite{mayer}.
\begin{defin}
A $1$-type $p\in S_1(A)$ is \emph{simple} iff whenever there are an $A$-definable function $f(\bla x0,n)$ and points  $\bla a0,n$, all realising  $p$,   such that  $f(\bla a0,n)\models p$, then there is $j\le n$ with $\bigcup_{i\le n} p(x_i)\proves f(\bla x0,n)=x_j$.
\end{defin}
\begin{rem}
A $1$-type $p(x)$ is simple if and only if for all $k\in\omega$ the type $\set{\bla x0<{k}}\cup\bigcup_{i\le k} p(x_i)$ is complete.
\end{rem}
\begin{proof}
  Left to right is immediate. Right to left, suppose that $\bla a0,n$, and $f(\bla a0,n)$ all realise $p$, but for all $i\le n$ we have $f(\bla a0,n)\ne a_i$. Suppose for example that $f(\bla a0,n)> a_n$, the other cases being analogous. Then $\set{\bla x0<{n+1}}\cup\bigcup_{i\le {n+1}} p(x_i)$ is consistent with both $f(\bla x0,n)=x_{n+1}$ and $f(\bla x0,n)\ne x_{n+1}$.
\end{proof}

Note that, if $p$ is invariant, modulo reversing the order of the variables the type above must be $\pow p{k+1}$.

\begin{fact}[\!\!{\cite[Corollary~2.6]{rastsahota}}]
Let $T$ be o-minimal.  There is a nonsimple $1$-type over $\emptyset$ if and only if there is one over some $A$, if and only if there is one over every $A$.
\end{fact}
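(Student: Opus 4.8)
The plan is to prove the two nontrivial implications in the cyclic equivalence; the remaining ones are immediate, since $\emptyset$ is itself a parameter set, so a nonsimple $1$-type over $\emptyset$ is one over some $A$, and a nonsimple $1$-type over every $A$ is in particular one over $\emptyset$. Thus it suffices to show (a) that a nonsimple $1$-type over some $A$ yields one over $\emptyset$, and (b) that a nonsimple $1$-type over $\emptyset$ yields one over every $A$. In (a) one may take $A$ finite: nonsimplicity of $p\in S_1(A)$ is witnessed by an $A$-definable $f(x_0,\dots,x_n)$, realisations $a_0,\dots,a_n\models p$ with $f(\bar a)\models p$, and the failure of $\bigcup_{i\le n}p(x_i)\proves f(\bar x)=x_j$ for every $j$; only finitely many parameters $A_0\subseteq A$ appear in $f$, and the same data witnesses nonsimplicity of $p\restr A_0$, because the $a_i$ and $f(\bar a)$ still realise $p\restr A_0$ while an entailment $\bigcup_{i\le n}(p\restr A_0)(x_i)\proves f(\bar x)=x_j$ would pass up to the stronger $\bigcup_{i\le n}p(x_i)$. (A slicker route, probably the one of~\cite{rastsahota}, is to first characterise the existence of a nonsimple $1$-type by an intrinsic property of $T$, such as the local failure of linearity, from which parameter-independence is then automatic; the sketch below is the hands-on alternative.)

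For (a) I would induct on $\abs A$, deleting one element $e$ at a time, the case $e\in\dcl(A)$ being vacuous. Given a nonsimple $p\in S_1(Ae)$ with $e\notin\dcl(A)$ and a witness $f(\bar x,e)$, $a_0<\dots<a_n\models p$, $f(\bar a,e)\models p$, there are only finitely many order-configurations recording the positions of $e$ and of $f(\bar a,e)$ relative to $\dcl(A\bar a)$; using the Monotonicity Theorem, exchange, and cell decomposition, in each configuration one should be able to re-express the defect through an $A$-definable function applied either to a longer tuple of $p$-realisations or to realisations of $\tp(e/A)$, and so obtain a nonsimple $1$-type over $A$ — possibly $\tp(e/A)$ itself, should $e$ turn out to carry the defect. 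Iterating down to $\emptyset$ proves (a).

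For (b), starting from a nonsimple $p\in S_1(\emptyset)$, I would fix a defect of minimal length $k$, so that $\set{x_0<\dots<x_{k-1}}\cup\bigcup_{i<k}p(x_i)$ is complete but $\set{x_0<\dots<x_k}\cup\bigcup_{i\le k}p(x_i)$ is not, and use the remark characterising simplicity via completeness of ordered configurations to extract an $\emptyset$-definable function $g$ and a tuple $a_0<\dots<a_{k-1}\models p$ such that the cut of $\dcl(a_0\dots a_{k-1})$ determined by $g(\bar a)$ is filled by realisations of $p$ both above and below. Given an arbitrary $A$: since $p$ is not algebraic over $\emptyset$, its $\dcl(\emptyset)$-gap is not collapsed to a point by $A$ unless $p$ is realised in $\dcl(A)$, in which case I replace it by the type of an element immediately above (or below) that realisation; either way there is a nonalgebraic $q\in S_1(A)$ extending $p$ for which $\dcl(A)$ does not subdivide the relevant stretch of the gap. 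Then realisations of $q$ realise $p$, the function $g$ is still $\emptyset$-definable, and a $q$-configuration of length $k$ witnesses nonsimplicity of $q$; the only point requiring care is that passing from $p$ to $q$ must not accidentally complete the configuration, which is controlled by Corollary~\ref{co:nosandwich} and Lemma~\ref{lemma:nomoreMinv1tps}, bounding how $\dcl(A\bar a)$ can grow along the cut.

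I expect step (a) to be the main obstacle. Weakening a type preserves nonsimplicity for formal reasons, but \emph{removing} a parameter can in principle turn an incomplete configuration complete, so one genuinely has to control how the defect migrates to an $A$-definable witness — which is exactly where the o-minimal structure theory (Monotonicity, exchange, cell decomposition) is indispensable, and is the substance of~\cite{rastsahota}.
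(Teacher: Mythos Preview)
The paper does not prove this statement at all: it is quoted as a \emph{fact} with a citation to~\cite[Corollary~2.6]{rastsahota}, and no proof is given. So there is nothing in the paper to compare your attempt against.

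As for your sketch itself: the reduction to finite $A$ in step~(a) is fine, and your instinct that the ``slicker route'' --- characterising the existence of a nonsimple type by a property of $T$ alone --- is what~\cite{rastsahota} actually does is correct (they relate it to the trichotomy-style classification of o-minimal theories). Your hands-on induction for~(a), however, is not a proof: you list the tools (Monotonicity, exchange, cell decomposition) but give no mechanism for how deleting the parameter $e$ transfers the defect to an $A$-definable function on realisations of an $A$-type. This is exactly the substantive content, and ``in each configuration one should be able to'' is a hope, not an argument. Your step~(b) has a similar gap: you invoke Corollary~\ref{co:nosandwich} and Lemma~\ref{lemma:nomoreMinv1tps}, but those concern global invariant types over a monster model, not types over an arbitrary small $A$, so they do not apply as stated. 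More concretely, even granting a suitable extension $q\in S_1(A)$ of $p$, there is no reason the image $g(\bar a)$ of a $q$-tuple should land in $q$ rather than merely in $p$; you have not addressed this.

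In short: the paper defers to~\cite{rastsahota}, and your proposal is an outline with the key steps still to be supplied.
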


\begin{pr}
  Suppose that every $p\in \invtypes_1(\monster)$ is simple. Then every invariant type is equidominant to a product of $1$-types.
\end{pr}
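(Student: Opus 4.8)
The plan is to show that Property~\ref{property:1tpgen} holds (indeed in its equidominant form), via Corollary~\ref{co:assgac}, by verifying Property~\ref{property:theominimalthing}: if $c$ is a $\monster$-independent tuple and $p = \tp_x(c/\monster)$, then
\[
  \pi(x) = \bigcup_{f \in \mathcal F^{p,1}_\emptyset} \tp_{w_f}(f(c)/\monster) \cup \set*{w_f = f(x) \mid f \in \mathcal F^{p,1}_\emptyset}
\]
proves $p(x)$. Since $\pi(x)$ records, for every $\emptyset$-definable scalar function $f$, the global type of $f(c)$ together with the information $w_f = f(x)$, what needs to be seen is that an arbitrary formula $\psi(x, e) \in p$, with $e \in \monster$, is already decided by $\pi$. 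By o-minimality and cell decomposition (Monotonicity Theorem plus the usual uniform finiteness), $\psi(x,e)$ is, on the locus of $p$, a Boolean combination of conditions of the form $x_i \lessgtr g_j(x_{<i}, e)$ and $x_i = g_j(x_{<i}, e)$, for finitely many $\emptyset$-definable functions $g_j$ applied to parameters from $\monster$; equivalently one can think coordinate by coordinate, using that $c$ is $\monster$-independent so no $c_i$ lands in $\dcl$ of the rest together with $\monster$.

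The key step where simplicity enters: fix a coordinate $i$ and consider the scalar $\emptyset$-definable function $h(x_{<i}, y)$ obtained from $\psi$ (one of the $g_j$ above, or a finite list of them). I want to show that the cut of $c_i$ over $\monster$ in $\dcl(\monster c_{<i})$ — which is what remains to be pinned down once $\tp(c_{<i}/\monster)$ is known — is determined by the data in $\pi$. The point is that $h(c_{<i}, e)$ is an element of $\dcl(\monster c_{<i})$, and $\dcl(\monster c_{<i})$ is built from $\monster$ and the coordinates of $c_{<i}$ by $\emptyset$-definable functions; an element like $h(c_{<i}, e)$ has the form $F(c_{<i})$ for a single $\emptyset$-definable function $F$ with parameters $e$ absorbed, but $e \in \monster$ — so to stay inside $\mathcal F^{p,1}_\emptyset$ (functions over $\emptyset$, evaluated only at $c$), I instead argue that the position of $c_i$ relative to $h(c_{<i}, e)$ is controlled by comparing $c_i$ to elements of the form $f(c)$ with $f$ over $\emptyset$. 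Here simplicity of the $1$-type $\tp(c_i / \monster c_{<i})$'s ``trace'' is used exactly as in the Remark and Fact preceding the statement: since every invariant $1$-type is simple, $\pow{p_i}{k+1}$ is complete for the relevant $1$-type $p_i$, so finitely many realisations satisfying $p_i$ interact in only the trivial way, forcing any $\emptyset$-definable function that outputs a realisation of $p_i$ from realisations of $p_i$ to be a projection. This rigidity is what lets me conclude that the cut of $c_i$ over $\monster c_{<i}$ is already the one dictated by the cuts of the $f(c)$ over $\monster$ recorded in $\pi$, hence $\pi \proves \psi(x,e)$.

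Concretely I would proceed as follows. First reduce, by induction on the length of $c$ and by the standard ``add the formulas isolating $\tp(c/\monster c')$ over a basis'' manoeuvre (Lemma~\ref{lemma:Minvindep}, already invoked in Corollary~\ref{co:assgac}), to deciding a single formula $\psi(x_0, \ldots, x_{m-1}, e)$ modulo $\pi$. Second, using the Monotonicity/Cell Decomposition theorems, write $\psi$ on the locus of $p$ as a quantifier-free combination of graph- and order-conditions $x_i \sim g(x_{<i}, e)$. Third, for each such condition, observe that $t := g(c_{<i}, e) \in \dcl(\monster c_{<i})$, consider the $1$-type $\tp(t/\monster)$ — invariant by Fact~\ref{fact:invariancepreserved} — and use simplicity (via the Remark that $\pow{(\cdot)}{k+1}$ is complete, i.e.\ all relevant products of $1$-types are determined) together with Corollary~\ref{co:nosandwich} to show that whether $c_i < t$, $c_i = t$, or $c_i > t$ is forced by: (a) $\tp(c_{<i}/\monster)$, which is handled inductively, and (b) comparisons between $c_i$ and finitely many $f(c)$, $f \in \mathcal F^{p,1}_\emptyset$, which are exactly the data in $\pi$. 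The main obstacle I anticipate is step three: making rigorous the claim that, once $\tp(c_{<i}/\monster)$ is known, the only way $c_i$ can be ``seen'' by a $\monster$-definable condition is through its comparison with realisations of $1$-types coming from $\mathcal F^{p,1}_\emptyset$ — this is precisely where simplicity of all invariant $1$-types must be used to exclude any exotic definable interaction, and getting the bookkeeping of the parameters $e \in \monster$ right (pushing them from function-parameters into the $\tp(\cdot/\monster)$ data rather than into the function symbol) is the delicate part.
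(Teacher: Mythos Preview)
Your route through Property~\ref{property:theominimalthing} is a substantial detour, and the step you yourself flag as the ``main obstacle'' is left as a genuine gap. You want the comparison of $c_i$ with $t=g(c_{<i},e)$ to be forced by~$\pi$, and you propose to extract this from simplicity together with Corollary~\ref{co:nosandwich}. But simplicity of an invariant $1$-type $q$ only constrains $\monster$-definable functions of tuples of $q$-realisations whose output again realises~$q$; here $t$ need not realise $\tp(c_i/\monster)$, the $c_j$ for $j<i$ need not either, and $g(\cdot,e)$ is $\monster$-definable rather than $\emptyset$-definable. Corollary~\ref{co:nosandwich} and the completeness of $\pow q{k+1}$ control how realisations of a \emph{fixed} $1$-type sit relative to one another, not how $c_i$ sits relative to an arbitrary point of $\dcl(\monster c_{<i})$. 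Filling this gap would amount to a structural theorem about definable closure in theories with no nonsimple types --- plausibly true, but not supplied in your sketch, and far heavier than what the statement requires.

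The paper's argument is direct and short, bypassing Property~\ref{property:theominimalthing} entirely. Given $\tp(a/\monster)$ invariant, a definable change of coordinates (Lemma~\ref{lemma:omin1typeequidom}, Fact~\ref{fact:wortpreserved}) arranges that any two $\tp(a_i/\monster)$ are equal or weakly orthogonal; by Lemma~\ref{lemma:distalperpotimes} and Fact~\ref{fact:wortnipinv} the weakly orthogonal groups assemble freely, reducing to the case where all $a_i$ realise a single simple $p\in\invtypes_1(\monster)$. Now the characterisation of simplicity you cite --- that $\set{x_0<\ldots<x_k}\cup\bigcup_i p(x_i)$ is already complete --- does all the work in one stroke: one witnesses $\tp_x(a/\monster)\equidom p(y)$ by a small $r$ containing $y=x_{i_0}$, with $x_{i_0}$ the extremal coordinate (minimum if $p$ has small cofinality on the right, maximum otherwise). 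From $y\models p$, the order recorded in $r$, and $p\restr M$ in $r$, every $x_j$ is squeezed into the cut of~$p$, hence $x_j\models p$; simplicity then pins down the full global type.
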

\begin{proof}
Let $\tp(a/\monster)$ be invariant. By Lemma~\ref{lemma:omin1typeequidom} and Fact~\ref{fact:wortpreserved} we may assume that for all $i,j<\abs a$ either $\tp(a_i/\monster)\wort \tp(a_j/\monster)$ or $\tp(a_i/\monster)= \tp(a_j/\monster)$. Furthermore, by Lemma~\ref{lemma:distalperpotimes} and Fact~\ref{fact:wortnipinv}, it is enough to show that if the $a_i$ all have the same type $p$, then $\tp(a/\monster)\equidom \pow p{\abs a}$; equivalently,  by the Idempotency Lemma, that $\tp_x(a/\monster)\equidom p(y)$. This is immediate from the definition of simplicity, by taking as $r(x,y)$ a small type containing $y=x_i$, where $x_i=\min \set{\bla x0,{\abs x-1}}$ if $p$ has small cofinality on the right, and $x_i=\max \set{\bla x0,{\abs x-1}}$ otherwise.
\end{proof}
  \begin{defin}
    We call a cut in   $\monster$ \emph{invariant}  iff exactly one between the cofinality of $L$ and the coinitiality of $R$ are small. Let $\operatorname{IC}(\monster)$ be the set of all such.
  \end{defin}

\begin{eg}\label{eg:dloplusmax} 
Let $T$ be $\mathsf{DLO}$, or another completion of the theory of dense linear orders, for instance that where the universe has a maximum but no minimum.  It is easy to see using quantifier elimination that every $1$-type in $T$ is simple and all pairs of distinct nonrealised invariant $1$-types are weakly orthogonal.  Therefore Theorem~\ref{thm:omincharmodgen} applies, with $X$ the set of all nonrealised invariant $1$-types, which may be identified with  $\operatorname{IC}(\monster)$.
\end{eg}

\section{Divisible ordered abelian groups}\label{sec:doag}
Let $\mathsf{DOAG}$ be the theory of divisible ordered abelian groups, in the language $L=\set{+,0,-,<}$. It is well-known (see~\cite{ttaos}) that this theory is complete, eliminates quantifiers, and is o-minimal. 

As we saw in the introduction, $\invbar$ was computed for $\mathsf{DOAG}$ in~\cite[Corollary~13.20]{hhm}, and in fact the general strategy behind Section~\ref{sec:reduction} is inspired by this result. Unfortunately, the proof in~\cite{hhm} has a gap, explained at the end of this section.  
In what follows, we still use ideas and results from~\cite{hhm}, but we  avoid altogether the part of the~\cite{hhm} proof containing the gap. There are other minor differences between the present approach and that of~\cite{hhm}. For example,  Proposition~\ref{pr:icomplsubgrp} is a consequence of~\cite[Corollary~13.11]{hhm}, but the proof given here is easier to generalise, as we do in Proposition~\ref{pr:mcomplsubring}. 

\begin{pr}
The theory $\mathsf{DOAG}$ satisfies  Property~\ref{property:theominimalthing}.
\end{pr}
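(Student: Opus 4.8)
Recall that Property~\ref{property:theominimalthing} asks: for a $\monster$-independent tuple $c$ with $p = \tp_x(c/\monster)$, the type $\pi(x)$ built from the values $f(c)$ for $f \in \mathcal F^{p,1}_\emptyset$ (together with the equations $w_f = f(x)$) already implies $p(x)$. Since $\mathsf{DOAG}$ has quantifier elimination in the language $\{+,0,-,<\}$, a complete type $p(x)$ over $\monster$ is determined by the quantifier-free formulas it contains; and every quantifier-free $L(\monster)$-formula in the variables $x = (x_0,\dots,x_{k-1})$ is (after clearing denominators, using divisibility) a Boolean combination of conditions of the form $\sum_i m_i x_i \,\square\, d$ with $m_i \in \mathbb{Z}$, $d \in \monster$, and $\square \in \{<,=,>\}$. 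The plan is to observe that each linear form $\ell(x) = \sum_i m_i x_i$ with integer coefficients is (the restriction to the domain of $p$ of) a $\emptyset$-definable function $\monster^k \to \monster$, i.e.\ $\ell \in \mathcal F^{p,1}_\emptyset$ — here one divides by $\gcd$ or works up to scaling, since $\mathsf{DOAG}$ is divisible and the order relation is preserved under positive scaling.

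**Main argument.** First I would let $\ell_1,\dots$ enumerate (representatives of) the $\mathbb{Z}$-linear forms $\ell(x) = \sum_{i<k} m_i x_i$; each such $\ell$ is a $\emptyset$-definable function, hence lies in $\mathcal F^{p,1}_\emptyset$. Now fix any quantifier-free $L(\monster)$-formula $\phi(x)$ with $p \proves \phi(x)$; writing $\phi$ in disjunctive normal form and then distributing, membership of $\phi$ in $p$ is controlled by, for each linear form $\ell$ appearing, knowing $\tp(\ell(c)/\monster)$ — i.e.\ knowing the cut of $\ell(c)$ in $\monster$, equivalently for each $d \in \monster$ whether $\ell(c) < d$, $\ell(c) = d$, or $\ell(c) > d$. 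But $\pi(x)$ contains $w_\ell = \ell(x)$ and the full type $\tp_{w_\ell}(\ell(c)/\monster)$, so $\pi(x)$ decides each atomic formula $\ell(x) \,\square\, d$ exactly as $p$ does. Hence $\pi(x) \proves \phi(x)$ whenever $p \proves \phi(x)$, and since $p$ is quantifier-free axiomatised, $\pi(x) \proves p(x)$, which is Property~\ref{property:theominimalthing}.

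**The one subtlety.** The point deserving care is that $\mathcal F^{p,1}_\emptyset$ consists of $T$-definable functions whose \emph{domain is a definable set on which $p$ concentrates}, and whose codomain is $\monster^1$ — so I must make sure the linear forms genuinely qualify. This is fine: a $\mathbb{Z}$-linear combination $x \mapsto \sum_i m_i x_i$ is $\emptyset$-definable with domain all of $\monster^k$ (a fortiori a set on which $p$ concentrates) and values in $\monster$. A second, truly routine point: in $\mathsf{DOAG}$ one may need rational coefficients to express a general definable function, but clearing denominators turns $\sum_i (p_i/q_i) x_i \,\square\, d$ into $\sum_i (p_i \prod_{j\ne i} q_j) x_i \,\square\, (\prod_j q_j) d$ with integer coefficients (the multiplier being a fixed positive integer, so the order is preserved), so the integer linear forms suffice and there is no loss. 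I do not expect a genuine obstacle here; the content is entirely that quantifier elimination reduces $p$ to its restrictions along the $1$-dimensional $\emptyset$-definable functions $x \mapsto \sum_i m_i x_i$, which is precisely what $\pi(x)$ records.
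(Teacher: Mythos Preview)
Your proof is correct and follows essentially the same approach as the paper: both use quantifier elimination to reduce $p(x)$ to atomic formulas of the form $\ell(x)\,\square\,d$ with $\ell$ a $\mathbb{Q}$-linear (equivalently, after clearing denominators, $\mathbb{Z}$-linear) form in $x$, then observe that each such $\ell$ is a $\emptyset$-definable function whose global $1$-type at $c$ is recorded by $\pi(x)$. The paper's version is slightly terser (it works directly with $\mathbb{Q}$-coefficients and phrases the atomic formulas as $f(x,d)\ge 0$), but the content is identical.
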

\begin{proof}
  By quantifier elimination,   a type $p(x)\in S(\monster)$ is determined once all formulas of the form $f(x,d)\ge 0$  are decided, where $f(x,d)$ is a $\mathbb Q$-linear combination $f(x,d)=\sum_{i<k} \lambda_i\cdot x_i+\sum_{j<\ell} \mu_j\cdot d_j$. Rearrange $f(x,d)\ge 0$ as
  \begin{equation}
    \sum_{i<k} \lambda_i\cdot x_i\ge -\sum_{j<\ell} \mu_j\cdot d_j\label{eq:rearrangedoag}    
  \end{equation}
  Since $\sum_{i<k} \lambda_i\cdot x_i$ is an $\emptyset$-definable function, whether~\eqref{eq:rearrangedoag} or its negation holds is decided by the partial type $\pi(x)$ in Property~\ref{property:theominimalthing}, thereby proving that the latter holds.
\end{proof}

By Corollary~\ref{co:assgac} we  can therefore apply Theorem~\ref{thm:omincharmodgen} to $\mathsf{DOAG}$. We are now left with the task of identifying a nice maximal set of pairwise weakly orthogonal invariant $1$-types.

\begin{defin}\label{defin:qh}
  Let $H$ be a convex subgroup of $M\models \mathsf{DOAG}$. Let $q_H(x)$ denote the  element of $S_1(M)$ defined by $\set{x>d\mid d\in H}\cup\set{x<d\mid d\in M, d>H}$.
\end{defin}
\begin{defin}\label{defin:invconsgrp}
  If $H$ is a convex subgroup of $\monster$, we say that $H$ is \emph{invariant} iff there is a small $A$ such that $H$ is fixed setwise by $\aut(\monster/A)$.
\end{defin}
\begin{rem}\label{rem:qhinv}
If $H\le \monster$ is convex, it is easy to show, using convexity  and Remark~\ref{rem:invariantisinvariant}, that $q_H\in S_1(\monster)$ is invariant if and only if $H$ is.
\end{rem}

\begin{pr}
Whenever $H_0\subsetneq H_1$ are distinct convex subgroups of $M\models \mathsf{DOAG}$, we have  $q_{H_0}\wort q_{H_1}$.
\end{pr}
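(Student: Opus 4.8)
The plan is to show directly that the partial type $q_{H_0}(x)\cup q_{H_1}(y)$ is complete over $M$. Fix $a\models q_{H_0}$ and $b\models q_{H_1}$ in a common elementary extension of $M$. Since $\mathsf{DOAG}$ eliminates quantifiers and is divisible, $\tp(ab/M)$ is pinned down by deciding, for all $\lambda,\mu\in\mathbb{Q}$ and $d\in M$, the formula $\lambda x+\mu y\ge d$; and such a formula is decided as soon as one knows the cut of $\lambda a+\mu b$ in $M$, i.e.\ the type $\tp\!\left((\lambda a+\mu b)/M\right)\in S_1(M)$. So the proposition reduces to the claim that this latter type depends only on $q_{H_0}$, $q_{H_1}$, $\lambda$ and $\mu$, not on the choice of $a$ and $b$.

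To prove that claim I would isolate the following key estimate, valid in any ordered abelian group: if $K\subsetneq H$ are convex subgroups of $M$, if $a\models q_K$ and $b\models q_H$, and if $\rho\in\mathbb{Q}_{>0}$ and $\sigma\in\mathbb{Q}$, then $\sigma a+\rho b\models q_{\rho H}$, where $\rho H\coloneqq\set{\rho h\mid h\in H}$ is again a convex subgroup. Granting this, the reduction is finished: when $\mu>0$ the estimate gives $\lambda a+\mu b\models q_{\mu H_1}$; when $\mu<0$ we write $\lambda a+\mu b=-\bigl((-\lambda)a+(-\mu)b\bigr)$ and obtain the reversed cut of $q_{(-\mu)H_1}$; and when $\mu=0$ the type of $\lambda a$ over $M$ is simply the pushforward of $q_{H_0}$ along $x\mapsto\lambda x$ (realised when $\lambda=0$). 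In each case the type of $\lambda a+\mu b$ over $M$ is determined by the data, as wanted.

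For the key estimate, the crucial observation — and the only place where the hypothesis $H_0\subsetneq H_1$ enters — is that convexity of $K$ together with $K\subsetneq H$ produces $h_0\in H\setminus K$ with $h_0>0$, hence $h_0>K$; this is exactly what makes $\sigma a$ negligible at the $H$-scale. Concretely, from $a<h_0$ one gets $\abs{(\sigma/\rho)a}<h^*$ where $h^*\coloneqq\abs{\sigma/\rho}\,h_0\in H$ (it lies between $0$ and an integer multiple of $h_0\in H$, so convexity of $H$ puts it in $H$). One then checks the two defining conditions of $q_{\rho H}$ for $\sigma a+\rho b$: for $e=\rho h\in\rho H$, rewrite $\sigma a+\rho b>e$ as $b>h-(\sigma/\rho)a$, which holds since $b>h+h^*\ge h-(\sigma/\rho)a$, using $b\models q_H$ and $h+h^*\in H$; for $e\in M$ with $e>\rho H$ one has $e/\rho>H$, and rewriting $\sigma a+\rho b<e$ as $b<e/\rho-(\sigma/\rho)a$, this holds since $b<e/\rho-h^*\le e/\rho-(\sigma/\rho)a$, using that $e/\rho-h^*>H$ (again by convexity of $H$) and $b\models q_H$. (When $\sigma=0$ one may take $h^*=0$, or note directly that $\rho b\models q_{\rho H}$.)

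I expect the genuinely delicate part to be precisely this extraction of a ``generic'' gap element $h_0\in H_1\setminus H_0$ lying above $H_0$, together with the bookkeeping that keeps $\sigma a$ below the relevant $H_1$-scale; the rest is a routine order computation. An alternative route, via Lemma~\ref{lemma:nwortpushf}, would instead show that no $M$-definable (piecewise $\mathbb{Q}$-affine) function $f$ can satisfy $f_*q_{H_0}=q_{H_1}$, the obstruction once more being that $\lambda H_0+c$ cannot be cofinal in $H_1$ because of that gap element; but the quantifier-elimination argument above looks cleaner and more self-contained.
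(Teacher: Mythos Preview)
Your proof is correct and follows essentially the same approach as the paper's: both reduce via quantifier elimination to determining $\tp((\sigma a+\rho b)/M)$ for $\rho>0$, and both show this type is $q_{H_1}$ by exploiting that $a$ lies below some positive element of $H_1\setminus H_0$. The only difference is cosmetic---the paper sandwiches $\gamma b\pm a$ between $\gamma b/2$ and $2\gamma b$ (all of type $q_{H_1}$), whereas you compare directly against the $M$-elements $h+h^*$ and $e/\rho-h^*$; note also that $\rho H=H$ in $\mathsf{DOAG}$, so your key estimate simplifies to $\sigma a+\rho b\models q_H$.
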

\begin{proof}
   By quantifier elimination, we need to show that knowing $a\models q_{H_0}$ and $b\models q_{H_1}$ is enough to decide, for $d_i\in \dcl(M a)$ and $e_i\in \dcl(M b)$, all the inequalities of the form  $d_0+e_0\le d_1+e_1$.  Since the $d_i$ and $e_i$ are $\mathbb Q$-linear combinations of elements in $Ma$ and $M b$ respectively, after some  algebraic manipulation, we  find $c\in M$ and $\gamma, \delta\in \mathbb Q$  such that $d_0+e_0\le d_1+e_1\iff \gamma b+\delta a\ge c$. If $\gamma=0$ then this information is in $\tp(a/M)=q_{H_0}$, and if $\delta=0$ it is in $\tp(b/M)=q_{H_1}$, hence we may assume that $\gamma>0$ and $\delta\in\set{1,-1}$. Note that, since $H_1$ is a convex subgroup, all positive rational multiples of $b$ have the same type over $M$. If $\gamma>0$ then, since  $H_0\subsetneq H_1$, 
   \[
     2\gamma b= \gamma b+\gamma b\ge \gamma b + a\ge \gamma b\ge \gamma b-a\ge \gamma b-\gamma b/2=\gamma b/2
   \]
Since  $2\gamma b$,  $\gamma b$, and $\gamma b/2$ have the same type over $M$, knowing  $a\models q_{H_0}$ and $b\models q_{H_1}$ is enough to deduce $\gamma b\pm a\equiv_M b$, hence to decide whether $\gamma b\pm a\ge c$ holds or not.  If  $\gamma<0$, argue similarly by showing that $\gamma b\pm a\equiv_M -b$.
\end{proof}

Recall the following notions from~\cite[Chapter~13]{hhm}.
\begin{defin}  Let $M\le N$ be ordered abelian groups.
\begin{enumerate}
\item If $b\in N$, let $\ct_M(b)\coloneqq\set{a\in M\mid 0\le a<b}$.
\item We call $N$  an \emph{i-extension} of $M$ iff there is no $b\in N$ such that $b>0$ and $\ct_M(b)$ is closed under addition.
\item An ordered abelian group  is \emph{i-complete} iff it has no proper i-extensions.
\end{enumerate}
\end{defin}

As pointed out in~\cite[Section~13.2]{hhm}, i-extensions of $M$ are those that ``do not add convex subgroups''. In other words, they are the extensions that do not change the \emph{rank} of the \emph{Archimedean valuation}. A valuation on an abelian group is defined similarly to a valuation on a field, by dropping the requirements on the interaction with the product; it takes values in a linear order, called its \emph{rank}. The (ordered group-theoretic) \emph{Archimedean valuation} $\mathfrak v$ on an ordered abelian group $M$ is defined by saying that $\mathfrak  v(x)>\mathfrak   v(y)$ iff for all $n\in \omega$  we have $n\cdot \abs x < \abs y$.  The rank $\mathfrak  v(M)$ of this valuation is the set of those convex subgroups of $M$ which are generated (as convex subgroups) by a single element, ordered by reverse inclusion. Note that the valuation reverses the order.

\begin{fact}[\!\!{\cite[Lemma~13.9]{hhm}}]\label{fact:icompliext}
  Every ordered abelian group $M$ has an i-complete i-extension, of size at most $\beth_2(\abs M)$.
\end{fact}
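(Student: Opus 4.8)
I would obtain the i-complete i-extension as a \emph{maximal} i-extension of $M$, constructed by transfinite recursion, resting on three points: a reformulation of ``i-extension'', transitivity and closure under unions of chains of the relation ``being an i-extension of $M$'', and a cardinality bound on i-extensions of $M$. The reformulation is this: unwinding the definition, $N\supseteq M$ is an i-extension of $M$ precisely when for every positive $b\in N$ there is $a\in M$ with $0<a<b\le 2a$. Indeed, $\ct_M(b)$ is closed under addition exactly when $\set{a\in M:\abs{a}<b}$ is a convex subgroup of $M$, which fails exactly when some $a_1,a_2\in M\cap[0,b)$ have $a_1+a_2\ge b$; in that case $a\coloneqq\max\set{a_1,a_2}$ satisfies $0<a<b\le 2a$, and conversely any such $a$ gives $a+a\ge b$ with $a\in\ct_M(b)$ and $2a\notin\ct_M(b)$. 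Two consequences are then immediate: an i-extension leaves the rank unchanged, because $a<b\le 2a$ forces $\mathfrak v(b)=\mathfrak v(a)$ and hence $\mathfrak v(N)=\mathfrak v(M)$; and being an i-extension of $M$ is preserved under unions of chains, since any $b$ violating it already occurs at some stage.

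Next, transitivity: if $M\subseteq M'\subseteq M''$ with $M'$ an i-extension of $M$ and $M''$ one of $M'$, then for positive $b\in M''$ we have either $b\in M'$, or some $a'\in M'$ with $a'<b\le 2a'$; in the latter case either $a'\in M$, or some $a\in M$ with $a<a'\le 2a$, so that $a<b\le 4a$, and then among $a,2a\in M$ one, call it $c$, satisfies $c<b\le 2c$, i.e.\ $\ct_M(b)$ is not closed under addition. Granting also the cardinality bound below, the construction now runs by transfinite recursion: set $N_0\coloneqq M$; at limit stages take unions (still i-extensions of $M$); at successor stages pass to a proper i-extension of $N_\alpha$ --- again an i-extension of $M$ by transitivity --- when one exists, and stop otherwise. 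Realising the $N_\alpha$ as increasing subsets of a fixed set of size $\beth_2(\abs{M})$ (legitimate by the bound, which keeps every stage of size $\le\beth_2(\abs{M})$), the chain cannot continue through all the ordinals, so it terminates at some $N$; by construction $N$ is an i-extension of $M$ with no proper i-extension, i.e.\ i-complete, and $\abs{N}\le\beth_2(\abs{M})$.

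The substantive point is the cardinality bound: every i-extension $N$ of $M$ has $\abs{N}\le\beth_2(\abs{M})$. For this I would use that $\mathfrak v(N)=\mathfrak v(M)$ has size $\le\abs{M}$, and that every Archimedean component of $N$ is an Archimedean ordered abelian group, hence embeds by H\"older's theorem in $(\mathbb R,+)$ and has size $\le 2^{\aleph_0}$; a Hahn embedding of $N$ into the Hahn product over $\mathfrak v(M)$ of its Archimedean components then bounds $\abs{N}$ by $(2^{\aleph_0}\cdot\abs{M})^{\abs{M}}\le\beth_2(\abs{M})$. This structure-theoretic estimate is, I expect, the only genuinely nontrivial ingredient --- the rest is an unwinding of definitions together with routine cardinal bookkeeping. (One can also bypass the recursion and present the i-completion outright as the Hahn product over $\mathfrak v(M)$ of the completions, i.e.\ the closed subgroups of $\mathbb R$, of the Archimedean components of $M$; but checking that this group is i-complete again uses the reformulation above, now together with spherical completeness of Hahn products, so it is no shorter.)
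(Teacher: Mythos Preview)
The paper does not supply a proof of this Fact; it is cited from~\cite[Lemma~13.9]{hhm}. Your argument is sound and follows a natural strategy: the reformulation of ``i-extension'' via the existence of $a\in M$ with $0<a<b\le 2a$ is correct, transitivity and closure under chains are verified properly, and the cardinality bound via Hahn's embedding theorem (every i-extension of $M$ has the same Archimedean rank as $M$, hence embeds in the Hahn group over $\mathfrak v(M)$ with real Archimedean components) is the right structural input. The transfinite recursion then terminates by cardinality; it could equally well be replaced by a direct appeal to Zorn's lemma on the poset of i-extensions of $M$ realised inside a fixed set of size $\beth_2(\abs M)$.

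One small caveat, not about your proof but about the statement: with the definition exactly as written in the paper (the quantifier ranges over all $b\in N$, not $b\in N\setminus M$), a group such as $\mathbb Z$ is not an i-extension of itself, since $b=1$ gives $\ct_M(1)=\{0\}$, closed under addition. So strictly speaking $\mathbb Z$ has no i-extension at all and the Fact is false as stated. Your recursion does not stumble on this only because for divisible $M$ (the case of interest, since the paper applies this solely to models of $\mathsf{DOAG}$) every positive $b\in M$ admits $a=b/2$, so $M$ is an i-extension of itself and the construction gets off the ground. Either the definition or the hypothesis in the paper is slightly loose; your argument is correct under either obvious fix (restrict to densely ordered $M$, or quantify over $b\in N\setminus M$).
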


\begin{pr}\label{pr:icomplsubgrp}
   Let $p\in S_1(M)$, with $M\models\mathsf{DOAG}$  i-complete. There are an $M$-definable function $f$ and a convex subgroup $H$ of $M$ such that $f_*p=q_H$.
\end{pr}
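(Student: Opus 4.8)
The plan is to realise $p$, adjoin the realisation to $M$, exploit i-completeness of $M$ to find a convex subgroup of $M$ ``visible from'' that realisation, and then correct by an affine map so that the pushforward becomes exactly $q_H$. We may assume $p$ is nonrealised: this is the only case needed, and indeed the statement fails for realised $p$, since pushforwards of realised types are realised whereas no $q_H$ is. So fix $a\models p$ and let $N\coloneqq\dcl(Ma)$, which in $\mathsf{DOAG}$ is the divisible ordered abelian group generated by $M$ and $a$, hence an ordered abelian group properly extending $M$. Because $M$ is i-complete, $N$ cannot be an i-extension of $M$, so there is some $b\in N$ with $b>0$ such that $\ct_M(b)$ is closed under addition. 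Extracting this $b$ is the crux of the proof; the rest is bookkeeping.

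Given such a $b$, I would first build the convex subgroup: since $\ct_M(b)$ is a downward-closed, $0$-containing, addition-closed subset of $M_{\ge 0}$, the set $H\coloneqq\ct_M(b)\cup(-\ct_M(b))$ is a convex subgroup of $M$ (symmetry and $0\in H$ are clear; closure under $+$ and convexity are routine sign-case checks). Next, $b\notin M$, for otherwise $b/2\in\ct_M(b)$ while $b/2+b/2=b\notin\ct_M(b)$. Since $N$ is a $\mathbb Q$-vector space spanned by $M$ and $a$ with $a\notin M$, we may write $b=\lambda a+m$ with $\lambda\in\mathbb Q\setminus\{0\}$ and $m\in M$; set $f(x)\coloneqq\lambda x+m$, an $M$-definable affine bijection. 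Then $b=f(a)$, so $\tp(b/M)=f_*p$, as in Example~\ref{eg:deqpushf} (the pushforward computation works verbatim over $M$ rather than $\monster$).

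Finally I would check $f_*p=q_H$, that is, that $b$ realises the cut from Definition~\ref{defin:qh}. For $h\in H$: if $h\le 0$ then $b>0\ge h$, and if $h>0$ then $h\in\ct_M(b)$, so $h<b$; hence $b>h$ for every $h\in H$. Conversely, if $d\in M$ with $d>H$, then $d>0$ and $d\notin\ct_M(b)$, so $d\ge b$, and as $b\notin M$ this forces $b<d$. Thus $b$ satisfies every formula in the generating set of $q_H$, and since $q_H\in S_1(M)$ is the unique complete type over $M$ realising that cut (by o-minimality, Remark~\ref{rem:omintypes}), we conclude $f_*p=\tp(b/M)=q_H$. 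The one genuinely delicate point, besides the opening translation of i-completeness into the failure of $N$ to be an i-extension, is this last verification: one must confirm that the affine correction lands the pushforward precisely on $q_H$, not merely on a type whose cut has the same convex hull in $M$.
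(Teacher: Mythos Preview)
Your proof is correct and follows essentially the same approach as the paper's. The paper's argument is a terse contrapositive: assume $p$ is a counterexample, deduce that no $b>0$ in $\dcl(Ma)$ has $\ct_M(b)$ closed under addition, hence $\dcl(Ma)$ is an i-extension of $M$, contradicting i-completeness. You unfold the same idea directly and fill in the steps the paper leaves implicit, namely the construction of $H$ from $b$, the verification that $b\notin M$, the extraction of the affine $f$, and the check that $\tp(b/M)=q_H$. You also correctly flag that the statement, as written, does not literally hold for realised $p$; the paper's proof silently assumes $p$ nonrealised (otherwise $\dcl(Ma)=M$ is not a \emph{proper} i-extension and no contradiction is reached), and only the nonrealised case is ever used downstream.
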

\begin{proof}
  Let $p$ be a counterexample and let $a\models p$. By assumption, there is no $b>0$ in $\dcl(M a)$ such that $\ct_M(b)$ is closed under addition. But then $\dcl( M a)$ is an i-extension of $M$.
\end{proof}

\begin{co}\label{co:subgroups}
In $\mathsf{DOAG}$,  for every invariant $1$-type $p$ there are a definable bijection $f$ and an invariant convex subgroup $H$ of $\monster$ such that $f_*p=q_H$. 
\end{co}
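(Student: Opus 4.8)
The plan is to reduce to Proposition~\ref{pr:icomplsubgrp} by passing to a small i-complete model, and then to globalise the resulting $q_H$ using uniqueness of invariant extensions. Fix $p\in\invtypes_1(\monster)$; we may assume $p$ is not realised, and fix a small model $M\models\mathsf{DOAG}$ with $M\preceq\monster$ over which $p$ is invariant.

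First I would produce a small i-complete $N\models\mathsf{DOAG}$ with $M\preceq N\preceq\monster$. Existence of a small i-complete i-extension of $M$ is Fact~\ref{fact:icompliext}; one checks it may be taken to be again a (divisible) model of $\mathsf{DOAG}$ — for instance by embedding $M$ into a small divisible Hahn group, which is automatically i-complete — and then, since $N$ is small, $M\preceq N$, and $\mathsf{DOAG}$ eliminates quantifiers, strong homogeneity of $\monster$ lets us arrange $N\preceq\monster$. Now $p$ is $N$-invariant, so $p\restr N\in S_1(N)$, and Proposition~\ref{pr:icomplsubgrp} yields an $N$-definable function $g$ and a convex subgroup $H_0$ of $N$ with $g_*(p\restr N)=q_{H_0}$ (a type over $N$).

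Next I would globalise both sides. Let $H\coloneqq\set{x\in\monster\mid \abs x\le d\text{ for some }d\in H_0}$ be the convex hull of $H_0$ in $\monster$. Since $H_0$ is a convex subgroup of $N$, this is a convex subgroup of $\monster$; it is fixed setwise by $\aut(\monster/N)$, hence invariant, and $H\cap N=H_0$ by convexity of $H_0$ in $N$. Consequently $q_H\restr N=q_{H_0}$, and by Remark~\ref{rem:qhinv} we have $q_H\in\invtypes_1(\monster)$. On the other hand $g$ is $N$-definable, so $g_*p$ is an $N$-invariant global type (Fact~\ref{fact:invariancepreserved}) with $(g_*p)\restr N=g_*(p\restr N)=q_{H_0}=q_H\restr N$. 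Thus $g_*p$ and $q_H$ are two $N$-invariant global extensions of the same type over $N$, hence equal by uniqueness of invariant extensions. Finally, since $p$ and $q_H$ are nonrealised $N$-invariant $1$-types and $q_H=g_*p$ with $g$ definable, the implication $\ref{point:omindefbij}\allora\ref{point:ominNdefbij}$ of Lemma~\ref{lemma:omin1typeequidom} upgrades $g$ to a definable bijection $f$ on $p$ with $f_*p=q_H$, as required.

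The main obstacle is the first step: one must ensure that the i-complete i-extension of Fact~\ref{fact:icompliext} can be taken divisible and small, which needs a small amount of care since the divisible hull of an i-complete ordered abelian group need not be i-complete. Everything after that is bookkeeping — checking that $H$ is a convex subgroup with $H\cap N=H_0$, and that the pushforward of an $N$-invariant type along an $N$-definable function is again $N$-invariant with the expected restriction to $N$, so that uniqueness of invariant extensions applies. Note that, since this argument produces the honest equality $f_*p=q_H$ rather than a mere domination-equivalence, the distinction between $\equidom$ and $\domeq$ is irrelevant here.
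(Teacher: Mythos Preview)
There is a genuine gap at the step where you invoke ``uniqueness of invariant extensions.'' The uniqueness fact in the paper concerns extending a global $A$-invariant type $p\in\invtypes(\monster,A)$ to some $B\supseteq\monster$; it does \emph{not} say that a type over a small $N\prec\monster$ has a unique $N$-invariant global extension. In an o-minimal theory every nonrealised $1$-type over a small $N$ has exactly \emph{two} $N$-invariant global extensions, one with small cofinality on each side. Concretely, $q_{H_0}\in S_1(N)$ extends both to $q_H$ with $H$ the convex hull of $H_0$ in $\monster$ (small cofinality on the left, since $H_0$ is cofinal in $L_{q_H}$), and to $q_{H'}$ with $H'\coloneqq\set{d\in\monster\mid \abs d<(N\setminus H_0)_{>0}}$ (small cofinality on the right, since $(N\setminus H_0)_{>0}$ is coinitial in $R_{q_{H'}}$). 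By saturation $H\subsetneq H'$, so these are distinct, and $g_*p$ could be either one.

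The fix is exactly what the paper does: having observed that $g_*p$ is $N$-invariant and extends $q_{H_0}$, check on which side it has small cofinality and take the corresponding invariant convex subgroup of $\monster$ (the convex hull of $H_0$, or $H'$ as above). Both choices give an invariant convex subgroup, so the conclusion follows either way. Apart from this missing two-way case split your argument is the paper's; your side concern about arranging the i-complete extension to be divisible is legitimate but, as you note, resolvable via a Hahn group.
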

\begin{proof}
Suppose that $p\in \invtypes(\monster, M)$. Up to enlarging $M$ not beyond size $\beth_2(\abs M)$,  for some $M$-definable bijection $f$ we have $f_*(p\restr M)=q_{H_0}$, where  $H_0$  is some convex subgroup of $M$. Since by Fact~\ref{fact:invariancepreserved} $f_*p$ is $M$-invariant, and extends $q_{H_0}$, it can only be one of two types; depending on whether it has small cofinality on the left or on the right, it will be the unique $M$-invariant type $q_0$ with $H_0$ cofinal in $L_{q_0}$, or the unique $M$-invariant $q_1$ with $(M\setminus H_0)_{>0}$ coinitial in $R_{q_1}$. Both of these are clearly  of the form $q_H$, where $H$ is an invariant  convex subgroup of $\monster$: the convex hull of $H_0$ in the first case, and  $\set{d\in \monster\mid \abs d<(M\setminus H_0)_{>0}}$ in the second.
\end{proof}

We sum everything up as follows. Again, most of the theorem below was essentially proven in~\cite[Corollary~13.20]{hhm}.
\begin{thm}\label{thm:doag}
In $\mathsf{DOAG}$, the domination monoid $\invtilde$ is well-defined, coincides with $\invbar$, and $(\invtilde, \otimes, \doms, \wort)\cong (\pfin(X), \cup, \supseteq, D)$, where $X$ is the set of invariant convex subgroups of $\monster$.
\end{thm}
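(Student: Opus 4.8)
The plan is to read off Theorem~\ref{thm:doag} from the general machinery of Section~\ref{sec:reduction} together with the concrete analysis just carried out. We have shown that $\mathsf{DOAG}$ satisfies Property~\ref{property:theominimalthing}, so by Corollary~\ref{co:assgac} it satisfies Property~\ref{property:1tpgen}: every invariant type is equidominant to a product of invariant $1$-types. Hence Theorem~\ref{thm:omincharmodgen} applies in full, yielding at once that $\invtilde$ is well-defined, that $\equidom$ coincides with $\domeq$ and therefore $\invbar = \invtilde$, and that $(\invtilde, \otimes, \doms, \wort) \cong (\pfin(X), \cup, \supseteq, D)$ for $X$ any maximal set of pairwise weakly orthogonal invariant $1$-types. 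All that remains is to check that $X$ can be identified with the set of invariant convex subgroups of $\monster$.

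To that end I would consider the assignment $H \mapsto q_H$ on invariant convex subgroups of $\monster$. By Remark~\ref{rem:qhinv} each $q_H$ is a nonrealised invariant $1$-type. Given distinct $H_0, H_1$, these are comparable since the convex subgroups of an ordered abelian group form a chain, so the weak-orthogonality proposition proved above gives $q_{H_0} \wort q_{H_1}$; thus the assignment has pairwise weakly orthogonal image, and it is injective because by Fact~\ref{fact:wortpreserved} two weakly orthogonal nonrealised types cannot be domination-equivalent. It then suffices to see that the family $\set{q_H \mid H \text{ an invariant convex subgroup of } \monster}$ meets every $\equidom$-class of nonrealised invariant $1$-types: given such a $p$, Corollary~\ref{co:subgroups} produces a definable bijection $f$ with $f_*p = q_H$ for an invariant convex subgroup $H$, whence $p \equidom q_H$ by Lemma~\ref{lemma:omin1typeequidom}, and in particular $p \nwort q_H$. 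Consequently this family is pairwise weakly orthogonal and such that every nonrealised invariant $1$-type is non-orthogonal to one of its members, hence (again by Lemma~\ref{lemma:omin1typeequidom}) cannot be enlarged by a further pairwise-weakly-orthogonal nonrealised invariant $1$-type; so it is a maximal set as required by Theorem~\ref{thm:omincharmodgen}. Feeding it in and relabelling $q_H$ as $H$ gives exactly the stated isomorphism.

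I do not anticipate a genuine obstacle: the substantive work — verifying Property~\ref{property:theominimalthing}, Corollary~\ref{co:subgroups}, and the weak-orthogonality proposition — has already been done, and what is left is an assembly. The only place demanding a moment's attention is that the weak-orthogonality proposition is formulated for nested convex subgroups, so one must invoke the chain property of convex subgroups of an ordered abelian group before applying it to an arbitrary pair of distinct invariant convex subgroups.
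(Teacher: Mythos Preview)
Your proposal is correct and follows exactly the approach the paper intends: the paper states the theorem with the preamble ``We sum everything up as follows'' and gives no separate proof, precisely because it is the assembly you describe---Property~\ref{property:theominimalthing} for $\mathsf{DOAG}$, Corollary~\ref{co:assgac}, Theorem~\ref{thm:omincharmodgen}, the weak-orthogonality proposition, and Corollary~\ref{co:subgroups}. Your write-up simply makes this explicit, including the small observation about convex subgroups forming a chain.
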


\begin{rem}\label{rem:rank}
By saturation, the rank $\mathfrak v(\monster)$ of $\monster$ under the Archimedean valuation $\mathfrak v$ is a dense linear order with a maximum (the subgroup $\set 0$) but with no minimum. Since $\mathfrak v(\monster)$ inherits saturation and homogeneity from $\monster$, we may compute its domination monoid, which we did in Example~\ref{eg:dloplusmax}. Moreover, one easily shows that $\mathfrak v$ induces a bijection between invariant convex subgroups of $\monster$ and invariant cuts in $\mathfrak v(\monster)$, hence an isomorphism  $\invtilde\cong\invtildeof{\mathfrak v(\monster)}$. 
\end{rem}

In stable theories, $p\doms q$ can be equivalently defined as follows: there are $a\models p$ and $b\models q$ such that whenever $d$ is forking-independent from $a$ over $\monster$, then $d$ is forking-independent from $b$ over $\monster$ (see e.g.~\cite[Lemma~1.4.3.4(iii)]{gstheory}). It is natural to ask what happens if, in an o-minimal theory, we consider the domination $\cldoms^\mathrm{dcl}$ induced by dcl-independence in a similar fashion, which  is easily characterised as follows: $p\cldoms^\mathrm{dcl} q$ if and only if there are $a\models p$ and $b\models q$ such that $b\in \dcl(\monster a)$. In one respect, this notion is more well-behaved, since $p\cldoms^\mathrm{dcl} q$ clearly implies that the dimension over $\monster$ of any  realisation of $p$ is not lower than that of any realisation of $q$, while by the Idempotency Lemma this is not in general true if $p\doms q$. On the other hand, with the induced equivalence relation $\cldomeq^\mathrm{dcl}$,  it is not true that every invariant type is equivalent to a product of invariant $1$-types.
\begin{cntrex}
  In $\mathsf{DOAG}$, let $a\models p(x)\proves x>\monster$, let $\gamma\in \mathbb R\setminus \mathbb Q$,  and let $b$ be such that $b\models r(a,y)\coloneqq \set{y> \beta\cdot a\mid \beta\in \mathbb Q, \beta\le \gamma}\cup \set{y< \beta\cdot a\mid \beta\in \mathbb Q,\beta\ge \gamma}$. Note that, by this very description, $q\coloneqq\tp(ab/\monster)\equidom p$ in our usual sense. Since $\dim(ab/\monster)=2$ and $\cldomeq^\mathrm{dcl}$ respects the dimension, if $q$ is equivalent to a product of nonrealised invariant $1$-types, we may assume this product has only two factors, say $q\cldomeq^\mathrm{dcl} q_0\otimes q_1$. By using $\wort$, we can easily show that both $q_i$ must actually be interdefinable with $p$. But $\pow p2$ is not realised in $\dcl(\monster ab)$.
\end{cntrex}

I have already said that the characterisation of $\invbar$ in $\mathsf{DOAG}$ is not new. While we will not see all the details of how it is done in~\cite{hhm}, nor define all the notions involved, I would like to point out what is the gap that has been addressed above. The problem in the original proof resides, I believe, in an implicit use of symmetry of i-freeness in an unproven statement, or at least I have not been able to prove the latter without using symmetry. 

In detail, the proof of~\cite[Lemma~13.16]{hhm} uses~\cite[Lemma~13.14]{hhm}, in the proof of which it is assumed that $A'$ is i-free from $B$ over $C$. The only way I can see to show that  $a'$  exists is to use that $B$ is i-free from $A'$ over $C$. Unfortunately, i-freeness is not symmetric.

\section{Real closed fields}\label{sec:rcf}

Let $L=\set{+,0,-,\cdot,1,<}$ and denote, as usual, the (complete, o-minimal) $L$-theory of real closed fields by $\mathsf{RCF}$. To complete the study of $\invtilde$ in this theory, we need to show that classes of $1$-types generate it, and to identify a nice representative for each such class. We do this by using a dash of valuation theory. The appearance of valuations is no coincidence: it will follow from the results in this section that, if $\Gamma(\monster)$ is the value group of $\monster\models \mathsf{RCF}$ with respect to the (ordered field-theoretic) Archimedean valuation, then $\invtilde\cong\invtildeof{\Gamma(\monster)}$. 

We point out once and for all that in this section valuations are only used ``externally'', i.e.~are \emph{not} part of the language of any  structure under consideration. Beside  basic  valuation theory, we  only use some properties of \emph{Hahn fields} and of \emph{maximally complete} valued fields, for which the reader is referred to~\cite{vddvf}.

While we will not use it, the reader should be aware that  in $\mathsf{RCF}$ and its o-minimal expansions invariant types have a particularly nice characterisation:  by~\cite[Proposition~2.1]{onnipand}, a global type is $M$-invariant if and only if it only implies formulas that do not fork over $M$, and these have been characterised as those that contain a set ``halfway-definable over $M$''. See~\cite{dolich} for a proof of this fact, along with the precise definition of ``halfway-definable over $M$''.

Observe immediately that if $H$ is a subring of the ordered field $M$, then the convex hull of $H$ is a subring of $M$, and that convex subrings of an ordered field are valuation rings. These facts will be used tacitly throughout.

In this section, we will keep using Definition~\ref{defin:qh}, and Definition~\ref{defin:invconsgrp}, but only for those $H$ which are convex subrings.  Remark~\ref{rem:qhinv} still applies.

\begin{lemma}\label{lemma:notimmediate}
  Let $H$ be a convex subring of $M\models \mathsf{RCF}$, and let $c\models q_H$. Let $v$ be the valuation on $\dcl(M c)$ with valuation ring the convex hull of $H$. Then $v(c)\notin v(M)$.
\end{lemma}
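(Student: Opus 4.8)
The plan is to argue by contradiction: suppose $v(c) \in v(M)$, so there is some $m \in M^{\times}$ with $v(c) = v(m)$. After replacing $c$ by $c/m$ (which is legitimate, since $c/m$ also realises a type of the form $q_{H'}$ over $M$ — multiplication by a fixed element of $M$ is an $M$-definable bijection, and it sends $c \models q_H$ to a realisation of an $M$-invariant $1$-type concentrating on a convex subring cut), we may assume $v(c) = 0$, i.e. $c$ is a unit in the convex hull $\mathcal{O}$ of $H$; equivalently $c$ lies strictly between two elements of $H \setminus\{0\}$ and bounded away from $0$ by an element of $H$, contradicting that $q_H$ puts $c$ above all of $H$. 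Wait — I need to be careful about which cut $q_H$ describes. By Definition~\ref{defin:qh}, $q_H$ says $c > d$ for all $d \in H$ and $c < d$ for all $d \in M$ with $d > H$. So $c$ is a positive infinite element relative to $H$: it exceeds every element of $H$, hence $v(c) < v(h)$ for every $h \in H\setminus\{0\}$ in the valuation-reverses-order convention, i.e. $c$ is not in $\mathcal{O}$. The point is that $v(c)$ must be a new, strictly smaller value than anything coming from $M$.

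The cleaner route: suppose $v(c) = v(m)$ for some $m \in M$; replacing $c$ by $c/m$ as above reduces to the case $v(c) = 0$, meaning $c \in \mathcal{O} \setminus \mathfrak{m}$, where $\mathfrak{m}$ is the maximal ideal of $\mathcal{O}$. But then there is $h \in H$ with $|c| < h$ (since $c \in \mathcal{O}$, which is the convex hull of $H$, so $c$ is bounded above by some element of $H$) — and this directly contradicts $q_H(x) \proves x > d$ for all $d \in H$ once we arrange (via the sign of $m$) that the replaced $c$ is positive. So the substance of the proof is: (i) reduce to $v(c)=0$ by a definable rescaling, using that $q_H$-type is preserved under such rescaling up to switching which convex-subring cut we land in — more precisely, if $c \models q_H$ and $m \in M^{>0}$, then $c/m$ still realises some $M$-invariant $1$-type whose associated cut in $M$ is "the same side of every element of $M$" relative to the ring $H$, because $H$ being a convex subring forces $H \cdot m^{-1}$-type behaviour to collapse back to $H$; and (ii) observe $v(c)=0 \Rightarrow c \in \mathcal{O}$, contradicting $c > H \ni 1$ being unbounded.

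The main obstacle I expect is step (i): making precise that dividing $c$ by an element of $M$ does not genuinely change the situation. One must check that for $m \in M$ with $v(m) = v(c)$, the element $c/m$ realises over $M$ the type $q_{H}$ again (or $q_{-H}$, i.e. the type on the other side, which is equally good for deriving a contradiction), using the convexity and the fact that a convex subring is a valuation ring whose fraction-closure within $M$ is still controlled by $H$. Concretely: $c/m \in \dcl(Mc)$ is positive and, since $v(c/m)=0$, it is a unit of $\mathcal{O}$; so $1/k \le c/m \le k$ for some $k \in H$, $k > 1$; but $c/m \models$ an $M$-invariant type, and I'd like to say this type is $q_H$ — this requires knowing $c/m \models q_H$, which follows because the cut of $c/m$ over $M$ is determined by: for $d \in M$, $c/m \lessgtr d \iff c \lessgtr dm$, and $dm \in H \iff d \in H$ (as $m$ is a unit of $\mathcal{O}$, hence $v(dm) = v(d)$, and $H$ is the set of elements of value $\ge$ something... ) — one must verify $dm \in H \iff d \in H$ for $m$ a unit of $\mathcal{O} = $ convex hull of $H$. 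This last equivalence is where convexity of $H$ as a subring does the real work, and it is the step I would write out most carefully; everything else is then immediate from the definitions.
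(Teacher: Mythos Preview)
Your approach has a genuine gap at the very step you flagged as the main obstacle. You assert that $m$ is a unit of $\mathcal{O}$, but this is false: since $c \models q_H$ we have $c > H$, so $c$ lies outside the convex hull $\mathcal{O}$ of $H$ and $v(c) < 0$; hence $v(m) = v(c) < 0$ and $m \notin \mathcal{O}$, let alone $\mathcal{O}^\times$. The intended consequence, that $c/m$ again realises $q_H$ (or some $q_{H'}$), is also false. Taking $m > 0$, from $m \in M \setminus H$ we get $m > H$, so $q_H(x) \proves x < m$, whence $0 < c/m < 1$. But every convex subring $H'$ contains $1$, so $q_{H'}(y) \proves y > 1$; therefore $c/m$ realises no type of the form $q_{H'}$, and the reduction ``replace $c$ by $c/m$ and keep the same setup'' cannot go through.

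The idea is easily salvaged if you do not insist on replacing $c$. From $v(c/m) = 0$ one gets $c/m \notin \mathfrak{m}$, so $c/m \ge 1/h_0$ for some $h_0 \in H_{>0}$, i.e.\ $c \ge m/h_0$. But $m/h_0 \in M$ has $v(m/h_0) = v(m) - v(h_0) < 0$, so $m/h_0 > H$, and then $q_H(x) \proves x < m/h_0$ yields the contradiction directly. The paper's proof takes a different but equally short route: it observes that the set of realisations of $q_H$ in $\dcl(Mc)$ is convex and closed under positive rational powers (because $H$ is a subring), and then squeezes the supposed $d \in M$ with $v(d) = v(c)$ between $c^{1/2}$ and $c^2$, forcing $d \models q_H$, absurd since $d \in M$.
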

\begin{proof}
  Suppose that $d\in M$ is such that $v(c)=v(d)$ and, without loss of generality, that $c$ and  $d$ are positive. By convexity of valuation balls, the fact that $v(c)<0$ by assumption, and the fact that $v(c^k)=k\cdot v(c)$, we have that if $c<d$ then $c<d<c^2$, and if $d<c$ then $c^{\frac 12}<d<c$. The set $q_H(\dcl(M c))$ is convex in $\dcl(M c)$ and closed under positive powers, as can be easily shown by using that $H$ is a subring. 
  This implies $d\models q_H$, which is absurd because $d\in M$.
\end{proof}

\begin{pr}\label{pr:convexringsswort}
Whenever $H_0\subsetneq H_1$ are distinct convex subrings of $M\models \mathsf{RCF}$, we have  $q_{H_0}\wort q_{H_1}$.
\end{pr}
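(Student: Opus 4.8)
The plan is to argue with the external valuation whose valuation ring is the convex hull of $H_1$, after a standard reduction to a statement about one realisation of $q_{H_0}$. Recall that for types $p,q\in S_1(M)$, the partial type $p(x)\cup q(y)$ is complete over $M$ if and only if, for one (equivalently every) $a\models p$, all realisations of $q$ have the same type over $\dcl(Ma)$; and by Remark~\ref{rem:omintypes} such a type is determined by the cut it defines in $\dcl(Ma)$. So it suffices to fix $a\models q_{H_0}$ and show that for every $e\in\dcl(Ma)$ the sign of $b-e$ is the same for all $b\models q_{H_1}$. Note $q_{H_0}$ is nonrealised (a convex subring has no cofinal element), so $a$ is transcendental over $M$ and $\dcl(Ma)=M(a)^{\mathrm{rc}}$.

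Work in $\dcl(Ma)$ and let $v_1$ be the valuation with valuation ring the convex hull of $H_1$ there, with $\Gamma_1:=v_1(M)$ a divisible ordered abelian group. First I would locate $v_1(b)$ for $b\models q_{H_1}$: since $b>H_1\ni 1$ we get $v_1(b)<0$; since $b$ is below every positive $m\in M$ with $v_1(m)<0$ (these are exactly the positive $m\in M$ with $m>H_1$), we get $v_1(b)\ge\gamma$ for all $\gamma\in\Gamma_1$ with $\gamma<0$; and $v_1(b)\notin\Gamma_1$ by Lemma~\ref{lemma:notimmediate}. Hence $v_1(b)$ fills the cut immediately below $0$ in $\Gamma_1$. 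The crucial auxiliary fact is that $\dcl(Ma)$ adds no new $v_1$-value, i.e.\ $v_1(\dcl(Ma))=\Gamma_1$. Granting it, fix $e\in\dcl(Ma)$; we may assume $e>0$, else $e<b$ trivially. If $v_1(e)\ge 0$ then $e$ lies in the convex hull of $H_1$, so $e<h$ for some $h\in H_1$, whence $e<h<b$; if $v_1(e)<0$ then $v_1(e)$ is a negative element of $\Gamma_1$, so $v_1(e)<v_1(b)$, $e/b$ has negative $v_1$-value, and $e>b$. Thus the sign of $b-e$ depends only on the sign of $e$ and on whether $\abs e>h$ for every positive $h\in H_1$, and both are decided by $\tp(a/M)=q_{H_0}$ (each is an $M$-definable condition on $a$; for the second one may also use that $\tp(e/M)$ is determined by $q_{H_0}$). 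Hence $\tp(b/\dcl(Ma))$ is determined, which is what we wanted.

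It remains to prove $v_1(\dcl(Ma))=\Gamma_1$, and this is where the real work lies. I would deduce it from the finer valuation $v_0$, with valuation ring $H_0$, of which $v_1$ is a coarsening: $\Gamma_1=\Gamma_0/\Delta$ with $\Gamma_0:=v_0(M)$ and $\Delta$ a convex subgroup which is nonzero precisely because $H_0\subsetneq H_1$. Applying Lemma~\ref{lemma:notimmediate} to $q_{H_0}$ and arguing exactly as for $b$ above, $v_0(a)\notin\Gamma_0$ and $v_0(a)$ fills the cut immediately below $0$ in $\Gamma_0$; since $\Gamma_0$ is divisible, every nonzero rational multiple of $v_0(a)$ fills one of the two cuts adjacent to $0$, so $v_0(a)$ is $\mathbb Q$-linearly independent from $\Gamma_0$. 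Thus $a$ is value-transcendental over $(M,v_0)$, and standard valuation theory (see~\cite{vddvf}) gives that $v_0(\dcl(Ma))$ is the divisible hull $\Gamma_0\oplus\mathbb Q\,v_0(a)$ with residue field unchanged. Finally, on $\dcl(Ma)$ the valuation $v_1$ is the coarsening of $v_0$ by the convex hull of $\Delta$ in $\Gamma_0\oplus\mathbb Q\,v_0(a)$; since $v_0(a)$ is infinitesimal relative to the nonzero group $\Delta$, this convex hull is $\Delta\oplus\mathbb Q\,v_0(a)$, and the natural map $\Gamma_0/\Delta\to(\Gamma_0\oplus\mathbb Q\,v_0(a))/(\Delta\oplus\mathbb Q\,v_0(a))$ is an isomorphism, so $v_1(\dcl(Ma))=\Gamma_0/\Delta=\Gamma_1$.

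The main obstacle is exactly this last computation of the value group of $\dcl(Ma)$ under the coarsened valuation; once it is in place, the reduction, the location of $v_1(b)$ via Lemma~\ref{lemma:notimmediate}, and reading off the comparison with an arbitrary $e\in\dcl(Ma)$ are all routine. A minor point deserving care is the use of "value-transcendental extension" theory, which requires knowing that $a$ is genuinely transcendental over $M$ (guaranteed here since $q_{H_0}$ is nonrealised) so that $\dcl(Ma)=M(a)^{\mathrm{rc}}$ and the coefficients of any $e\in\dcl(Ma)$ behave as expected.
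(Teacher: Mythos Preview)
Your proof is correct and takes essentially the same route as the paper: both hinge on showing $v_1(\dcl(Ma))=v_1(M)$ via the finer valuation $v_0$, exploiting that $v_0(a)\notin v_0(M)$ (Lemma~\ref{lemma:notimmediate}). The differences are only in packaging: the paper argues by contradiction through Lemma~\ref{lemma:nwortpushf} (a realisation of $q_{H_1}$ would land in $\dcl(Ma)$, contradicting the value-group equality) rather than directly determining the cut of $b$, and it establishes $v_1(\dcl(Ma))=v_1(M)$ by an explicit induction on polynomial degree showing $v_0\bigl(\sum_i d_i a^i\bigr)=\min_i v_0(d_i a^i)$, instead of invoking the general theory of value-transcendental extensions and coarsenings as you do.
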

\begin{proof}
  Otherwise, by Lemma~\ref{lemma:nwortpushf}, there is an  $M$-definable function $h$ such that $h_*q_{H_0}=q_{H_1}$. Let $a\models q_{H_0}$ and $b=h(a)$. On $\dcl(M  a)$, consider the valuation $v_1$ with valuation ring the convex hull  of $H_1$. By the previous lemma $v_1(b)\notin v_1(M )$ so, in order to reach a contradiction, it is enough to show $v_1(\dcl (M  a))=v_1(M )$.

  Suppose we show that, when $f(x)\in M [x]$ is a polynomial, $v_1(f(a))\in v_1(M )$. Then this holds too for  $f$ a  rational function hence, if $M (a)$ is the field generated by $M  a$, we have $v_1(M (a))=v_1(M )$. It is well-known (see~\cite[Corollary~3.20]{vddvf}) that each embedding of  valued fields $(K,v_1\restr K)\into (L,v_1)$ such that $L$ is a real closure of $K$ induces an embedding of the value group of $K$ in a divisible hull. Since $v_1(M (a))=v_1(M )$ is already divisible,  $v_1(\dcl( M  a))=v_1(M )$. 

  So, suppose that  $f(x)=\sum_{i\le n} d_ix^i$, with $n=\deg f$ and $d_i\in M $. Consider the valuation $v_0$ on $\dcl(M  a)$ with valuation ring the convex hull of $H_0$.  We show by induction on $n$ that there is $i\le n$ such that $v_0(f(a))=v_0(d_ia^i)$. For $n=0$ there is nothing to prove. Write $f(a)=d_{n+1}a^{n+1}+g(a)$, with $\deg g(a)\le n$. If $v_0(d_{n+1}a^{n+1})$ is different from $v_0(g(a))$, then $v_0(f(a))$ is the minimum of the two and we are done by inductive hypothesis. Otherwise, again by inductive hypothesis, there is $j\le n$ such that $v_0(d_{n+1}a^{n+1})=v_0(g(a))=v_0(d_j a^j)$. This implies $(n+1-j)\cdot v_0(a)=v_0(d_{j})-v_0(d_{n+1})\in v_0(M )$. Since $v_0(a)\notin v_0(M )$ by the previous lemma, we have $j=n+1$, a contradiction.

Therefore, $v_0(f(a))=v_0(d_{i}a^i)$ for some $i\le n$. Since $H_0\subsetneq H_1$, we have that $v_0(z)=v_0(w)$ implies $v_1(z)=v_1(w)$, and since $a>1$ is in the convex hull of $H_1$ we have $v_1(a)=0$, hence $v_1(f(a))=v_1(d_i a^i)=v_1(d_i)+i\cdot v_1(a)=v_1(d_i)\in v_1(M )$.
\end{proof}

\begin{defin}
Let  $M$ be an ordered field. The  valuation ring of the (ordered field-theoretic) \emph{Archimedean valuation} $v$ on $M$ is the convex hull of $\mathbb Z$.
\end{defin}
Note that this is the finest convex valuation on $M$.

\begin{fact}\label{fact:hahn}
Every real closed field $M$ admits an elementary embedding in a Hahn field $\mathbb R((t^\Gamma))\models \mathsf{RCF}$, with $\Gamma\models \mathsf{DOAG}$  the value group of $(M,v)$, where $v$ is the Archimedean valuation. Moreover,  Hahn fields $\mathbb R((t^\Gamma))$ are always maximally complete and have size at most  $2^{\abs \Gamma}$.
\end{fact}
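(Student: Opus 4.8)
The plan is to obtain the embedding by valuation-theoretic means and then upgrade it to an elementary one for free. Write $v$ for the Archimedean valuation on $M$, let $k$ be its residue field and $\Gamma=v(M^\times)$ its value group. Since $M$ is real closed and $v$ is convex, $k$ is again real closed and $\Gamma$ is divisible, i.e.\ $\Gamma\models\mathsf{DOAG}$; moreover $k$, being the residue field of the \emph{Archimedean} valuation, is Archimedean, so it admits an order-preserving field embedding $k\into\mathbb{R}$, which we fix. On the target side, $\mathbb{R}((t^\Gamma))$ carries its natural Hahn valuation (valuation ring: the series supported on $\Gamma_{\ge 0}$), which is convex for the Hahn ordering and has Archimedean residue field $\mathbb{R}$, hence \emph{is} the Archimedean valuation of $\mathbb{R}((t^\Gamma))$; as this is a henselian valued field with real closed residue field and divisible value group, $\mathbb{R}((t^\Gamma))\models\mathsf{RCF}$.

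Next I would produce a valued-field embedding $\iota\colon M\into\mathbb{R}((t^\Gamma))$. Hahn fields are maximally complete (a classical theorem of Krull and Kaplansky; see~\cite{vddvf}), and $\mathbb{R}((t^\Gamma))$ has equicharacteristic $0$, residue field $\mathbb{R}\supseteq k$, and value group $\Gamma$; the standard embedding theorem for maximally complete valued fields (again~\cite{vddvf}) then yields such an $\iota$, extending the fixed $k\into\mathbb{R}$ and the identity on $\Gamma$. Concretely, one may factor $\iota$ through the maximal immediate extension $M^{\mathrm{mi}}$ of $(M,v)$, which is maximally complete of equicharacteristic $0$ with residue field $k$ and value group $\Gamma$, hence by Kaplansky's uniqueness theorem is isomorphic as a valued field to the Hahn subfield $k((t^\Gamma))\subseteq\mathbb{R}((t^\Gamma))$. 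Now $\iota$ is a ring embedding between two models of $\mathsf{RCF}$; since the ordering of a real closed field is $\emptyset$-definable (a nonzero element is positive iff it is a square), $\iota$ is an embedding of $L$-structures, and since $\mathsf{RCF}$ is model complete, $\iota$ is elementary.

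Finally, the size bound: an element of $\mathbb{R}((t^\Gamma))$ is given by a well-ordered support $S\subseteq\Gamma$ together with a function $S\to\mathbb{R}\setminus\{0\}$, so there are at most $2^{|\Gamma|}\cdot|\mathbb{R}|^{|\Gamma|}=2^{|\Gamma|+\aleph_0}$ of them, which is $2^{|\Gamma|}$ whenever $\Gamma$ is infinite — the only interesting case, since $\Gamma=\{0\}$ means $M$ is Archimedean and $\mathbb{R}((t^\Gamma))=\mathbb{R}$. I expect the only genuine work to lie in the valued-field embedding of the second paragraph, whose one real subtlety is that the residue field $k$ of $(M,v)$ need not be all of $\mathbb{R}$; this is handled by routing the embedding through $k((t^\Gamma))\subseteq\mathbb{R}((t^\Gamma))$ rather than aiming at $\mathbb{R}((t^\Gamma))$ directly. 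Maximal completeness of the Hahn field, order-preservation and elementarity of $\iota$, and the cardinality estimate are then routine or classical.
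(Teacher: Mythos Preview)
Your proposal is correct and follows essentially the same route as the paper's proof sketch: both obtain the embedding via the Hahn/Kaplansky circle of ideas, observe that $\mathbb{R}((t^\Gamma))\models\mathsf{RCF}$ because $\Gamma$ is divisible, and deduce elementarity from model completeness/quantifier elimination. Your version is simply more explicit---you unpack ``Hahn's Embedding Theorem'' by routing through the maximal immediate extension $M^{\mathrm{mi}}\cong k((t^\Gamma))\subseteq\mathbb{R}((t^\Gamma))$, and you handle the degenerate case $\Gamma=\{0\}$ in the cardinality bound that the paper's $(2^{\aleph_0})^{|\Gamma|}=2^{|\Gamma|}$ leaves implicit.
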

\begin{proof}[Proof sketch]
An embedding exists by the field version of Hahn's Embedding Theorem\footnote{This result has a somehow folkloric status. See~\cite[p.187]{ehrlich} for an explanation of why it is difficult to attribute it.}. Since $M\models \mathsf{RCF}$, the group $\Gamma$ is divisible, and it is well-known that if $\Gamma\models \mathsf{DOAG}$ then $\mathbb R((t^\Gamma))\models \mathsf{RCF}$. Elementarity of the embedding follows from quantifier elimination.   See~\cite[Corollary~4.13]{vddvf} for maximal completeness, and for the size bound note that $\abs{\mathbb R((t^\Gamma))}\le (2^{\aleph_0})^{\abs \Gamma}=2^{\abs{\Gamma}}$.
\end{proof}
\begin{pr}\label{pr:mcomplsubring}
  Let $\Gamma\models \mathsf{DOAG}$ be i-complete, and let $M\coloneqq \mathbb R((t^\Gamma))$.  For every $p\in S_1(M)$ there are an $M$-definable function $f$ and a convex subring $H$ of $M$ such that $f_*p=q_H$.
\end{pr}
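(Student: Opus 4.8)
We may assume $p$ is not realised. Fix $a\models p$, without loss of generality $a>0$, and put $K\coloneqq\dcl(Ma)$, a real closure of $M(a)$, equipped with the Archimedean valuation $v$; on $M$ this restricts to the natural ($t$-adic) valuation of the Hahn field, with value group $\Gamma$ and residue field $\mathbb R$, while $v(K)\models\mathsf{DOAG}$. By Fact~\ref{fact:hahn} the field $M=\mathbb R((t^\Gamma))$ is maximally complete, hence has no proper immediate extension; since $a\notin M$ we have $K\supsetneq M$, so either $v(K)\supsetneq\Gamma$ or the residue field of $K$ strictly contains $\mathbb R$. In either case the plan is to exhibit some $\eta\in\dcl(Ma)$ — equivalently an $M$-definable function $f$ with $\eta=f(a)$ — such that $\tp(\eta/M)=q_H$ for an appropriate convex subring $H$ of $M$; as this will hold for every $a\models p$, it gives $f_*p=q_H$.

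Suppose first that $K$ has residue field larger than $\mathbb R$. Lift a witness to obtain $\xi\in\dcl(Ma)$ with $v(\xi)=0$ and residue $\bar\xi\notin\mathbb R$. The residue field of $K$ is a real closed ordered field extending $\mathbb R$, so $\tp(\bar\xi/\mathbb R)$ is a nonrealised $1$-type over $\mathbb R$ in $\mathsf{RCF}$; since $\mathbb R$ is Dedekind complete, this forces $\bar\xi>\mathbb R$ or $\bar\xi<\mathbb R$, and replacing $\xi$ by $-\xi$ if needed we assume $\bar\xi>\mathbb R$. A direct computation with $v(\xi)=0$ and $\bar\xi>\mathbb R$ then shows that $\xi$ is larger than every finite element of $M$ and smaller than every positive infinite one, i.e.~$\tp(\xi/M)=q_{H_0}$, where $H_0$ is the convex hull of $\mathbb Z$ in $M$ (the Archimedean valuation ring). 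Take $\eta\coloneqq\xi$ and $H\coloneqq H_0$.

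Suppose instead $v(K)\supsetneq\Gamma$, and pick $\xi\in\dcl(Ma)$, without loss of generality $\xi>0$, with $\beta\coloneqq v(\xi)\notin\Gamma$. Then $\tp(\beta/\Gamma)$ is a nonrealised element of $S_1(\Gamma)$ in $\mathsf{DOAG}$ (as $\beta\in v(K)\models\mathsf{DOAG}$), so Proposition~\ref{pr:icomplsubgrp}, applied to the i-complete group $\Gamma$, yields a $\Gamma$-definable (hence $\mathbb Q$-affine) map $g(\zeta)=\lambda\zeta+\gamma^*$, with $\lambda\in\mathbb Q^\times$ and $\gamma^*\in\Gamma$, and a convex subgroup $\Delta\le\Gamma$ with $g_*\tp(\beta/\Gamma)=q_\Delta$. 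Choosing $d^*\in M_{>0}$ with $v(d^*)=\gamma^*$ and setting $\eta\coloneqq\xi^\lambda d^*\in\dcl(Ma)$ (meaningful since $\xi>0$), we get $v(\eta)=g(\beta)$. As $v(\eta)\notin\Gamma$, the cut of $v(\eta)$ in $\Gamma$ determines $\tp(\eta/M)$, and since $v$ reverses the order, after possibly replacing $\eta$ by $1/\eta$ one unwinds the definitions to see that $\tp(\eta/M)=q_H$, where $H$ is the coarsening of $v$ by $\Delta$ — a convex subring of $M$. (The degenerate case $\Delta=\Gamma$ gives $H=M$, which is still a convex subring of itself.)

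The ``direct computations'' in both cases are routine valuation-theoretic bookkeeping: matching the location of $\xi$ (resp.\ $\eta$) in $M$ with that of its residue (resp.\ value) over $\mathbb R$ (resp.\ $\Gamma$), and — the fussier point — checking that the convex subgroup $\Delta$ supplied by Proposition~\ref{pr:icomplsubgrp} corresponds to the right convex subring $H$ of $M$ with the correct orientation of the cut. The conceptual crux, however, is the appeal to maximal completeness of $M$ to produce the value-group/residue-field dichotomy; this is exactly where the Hahn field hypothesis is used (in the companion Proposition~\ref{pr:icomplsubgrp} for $\mathsf{DOAG}$ the corresponding role was played by i-completeness of the group), and it is the step I expect to carry the weight of the argument.
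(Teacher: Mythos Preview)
Your argument is essentially the paper's, but with one unnecessary branch. Your residue-field case never occurs: the Archimedean valuation ring on any ordered field is, by definition, the convex hull of $\mathbb Z$, so its residue field is an Archimedean ordered field; since $K=\dcl(Ma)$ is real closed and contains $\mathbb R$, this residue field is exactly $\mathbb R$. The paper records this in one line (``they both have residue field $\mathbb R$'') and passes straight to the value-group case. Incidentally, your handling of the vacuous branch is internally inconsistent: $v(\xi)=0$ already forces $|\xi|<n$ for some $n\in\omega$, so $\xi$ cannot exceed every finite element of $M$ --- but as the case is empty this does no damage.

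In the value-group case your argument and the paper's coincide. The paper phrases the appeal to Proposition~\ref{pr:icomplsubgrp} slightly differently --- rather than pushing a chosen $\beta$ forward along a $\Gamma$-definable map, it uses i-completeness of $\Gamma$ to locate directly some $\gamma\in\Gamma(N)$ with $\tp(\gamma/\Gamma)=q_{\tilde H}$, then lifts to $b\in N$ with $v(b)=-\gamma$ --- but this amounts to the same thing. Your ``possibly replacing $\eta$ by $1/\eta$'' is in fact always needed (since $v(\eta)\models q_\Delta$ forces $v(\eta)>0$), and corresponds precisely to the paper's choice of sign $v(b)=-\gamma$.
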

\begin{proof}
  Let $p$ be a counterexample, let $a\models p$, and let $N\coloneqq\dcl(M a)$. Since every point of $N$ is of the form $f(a)$, for $f$ some $M$-definable function, it is enough to find $b\in N$ such that $\tp(b/M)$ is of the form $q_H$, and take $f$ to be an $M$-definable function such that $b=f(a)$.
  
 When we look at both $M$ and $N$ equipped with the Archimedean valuation, which we call $v$ in both cases, they both have residue field $\mathbb R$. Since $M$ is maximally complete by Fact~\ref{fact:hahn}, the value group $\Gamma(N)$ of $(N,v)$ must be larger than $\Gamma$. 
Since $\Gamma$ is i-complete, by Proposition~\ref{pr:icomplsubgrp} there must be $\gamma\in \Gamma(N)$ such that $\tp(\gamma/\Gamma)=q_{\tilde H}$, where $\tilde H$ is a convex subgroup of $\Gamma$. Let $b\in N$ be such that $v(b)=-\gamma$. Since $\gamma>0$, we have $\abs b>1$, and in fact $\abs b> \mathbb R$, hence by possibly replacing $b$ with $-b$  we may assume that $b>2$. Let $H\coloneqq\set{m\in M\mid \abs m<b}$. Since $\tilde H$ is a convex subgroup of $\Gamma$ and $v(b)=-\gamma\notin v(M)$, we have that $\ct_M(b)$ is closed under products. Since $b>2$, it easily follows that $\ct_M(b)$ is also closed under sums, hence $H$ is a convex subring of $M$, and clearly $\tp(b/M)=q_H$.
\end{proof}
\begin{co}\label{co:subrings}
In $\mathsf{RCF}$,  for every invariant $1$-type $p$ there are a definable bijection $f$ and an invariant convex subring $H$ of $\monster$ such that $f_*p=q_H$. 
\end{co}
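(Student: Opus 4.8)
The plan is to imitate, essentially line by line, the proof of Corollary~\ref{co:subgroups} for $\mathsf{DOAG}$, with ``convex subring'' in place of ``convex subgroup'' and Proposition~\ref{pr:mcomplsubring} in place of Proposition~\ref{pr:icomplsubgrp}. The one genuinely new ingredient is that the invariance base may be assumed, at no cost, to be a Hahn field over an i-complete value group; this is supplied by combining Fact~\ref{fact:hahn} with Fact~\ref{fact:icompliext}.

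First I would fix $p\in\invtypes_1(\monster, M)$ and assume $p$ nonrealised, there being nothing to prove otherwise. Let $\Gamma_0$ be the value group of $(M,v)$, where $v$ is the Archimedean valuation, and let $\Gamma\supseteq\Gamma_0$ be an i-complete i-extension of $\Gamma_0$ (Fact~\ref{fact:icompliext}); note $\Gamma\models\mathsf{DOAG}$, since an i-complete ordered abelian group is divisible. By Fact~\ref{fact:hahn} the Hahn field $M':=\mathbb R((t^\Gamma))$ is a model of $\mathsf{RCF}$ of size at most $2^{\abs\Gamma}\le\beth_3(\abs M)$, hence small, and $M$ embeds into it elementarily via the composite $M\hookrightarrow\mathbb R((t^{\Gamma_0}))\hookrightarrow M'$, the first map being the Hahn embedding of Fact~\ref{fact:hahn} and the second the inclusion of Hahn fields, which is elementary by model completeness of $\mathsf{RCF}$. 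Using saturation and strong homogeneity of $\monster$ to transport a copy of $M'$ inside $\monster$ over $M$, I may then assume $M\preceq M'\smallprec\monster$, and since $p$ remains $M'$-invariant and $M'$ is still small, I may replace $M$ by $M'$ and thus assume outright that $M$ is isomorphic to $\mathbb R((t^\Gamma))$ for an i-complete $\Gamma\models\mathsf{DOAG}$.

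Next I would apply Proposition~\ref{pr:mcomplsubring} to $p\restr M\in S_1(M)$, obtaining an $M$-definable function $f$ and a convex subring $H_0$ of $M$ with $f_*(p\restr M)=q_{H_0}$. Since $q_{H_0}$ is nonrealised, $f$ is not constant on the locus of $p$, hence strictly monotone there by the Monotonicity Theorem, so $f$ is a bijection on $p$ and $f_*p$ is nonrealised. By Fact~\ref{fact:invariancepreserved}, applied to $p\doms f_*p$, the type $f_*p$ is $M$-invariant, and clearly $(f_*p)\restr M=q_{H_0}$. From here the argument is verbatim the end of the proof of Corollary~\ref{co:subgroups}: by the cut characterisation of invariance, an $M$-invariant global extension of $q_{H_0}$ is either the unique type $q_0$ with $H_0$ cofinal in $L_{q_0}$ or the unique type $q_1$ with $(M\setminus H_0)_{>0}$ coinitial in $R_{q_1}$; and $q_0=q_H$ for $H$ the convex hull of $H_0$ in $\monster$, while $q_1=q_H$ for $H:=\set{d\in\monster\mid\abs d<(M\setminus H_0)_{>0}}$. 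Both sets are invariant convex subrings of $\monster$: each is fixed setwise by $\aut(\monster/M)$; the convex hull of a subring of an ordered field is a subring (recalled at the start of the section); and the second set is closed under sums and products because, $H_0$ being a ring, $m\in(M\setminus H_0)_{>0}$ forces $m/2\notin H_0$ and $\sqrt m\notin H_0$, and it contains $1$ since $1\in H_0$.

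I do not foresee a serious obstacle, as Proposition~\ref{pr:mcomplsubring} and Fact~\ref{fact:hahn} have already absorbed the real work. The two points meriting a moment's care are the reduction to the case where the invariance base is a Hahn field over an i-complete group — mostly bookkeeping with elementary embeddings, model completeness and saturation, plus the remark that i-completeness survives (or forces) divisibility — and the verification that the two candidate sets $H$ really are convex subrings rather than merely subgroups, which is the short computation above exploiting that $H_0$ is a ring.
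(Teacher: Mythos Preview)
Your proposal is correct and follows essentially the same route as the paper's proof: enlarge the invariance base to a Hahn field $\mathbb R((t^\Gamma))$ with $\Gamma$ i-complete via Fact~\ref{fact:hahn} and Fact~\ref{fact:icompliext}, apply Proposition~\ref{pr:mcomplsubring}, then finish exactly as in Corollary~\ref{co:subgroups}. The paper's proof is more terse and simply says ``argue as in Corollary~\ref{co:subgroups}'' at the end, whereas you helpfully spell out the divisibility of $\Gamma$, the bijectivity of $f$, and the verification that the two candidate sets $H$ are convex \emph{subrings} rather than merely subgroups.
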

\begin{proof}
  Suppose that $p\in \invtypes(\monster, M)$, and equip $M$ with the Archimedean valuation. Note that an embedding of ordered groups  $\Gamma(M)\into \Gamma$ induces an embedding of ordered fields $\mathbb R((t^{\Gamma(M)}))\into \mathbb R((t^\Gamma))$. Using this, Fact~\ref{fact:hahn}, Fact~\ref{fact:icompliext} and the fact that $\abs{\Gamma(M)}\le \abs M$, up to enlarging $M$ not beyond size $\beth_3(\abs M)$ we may assume   that it is of the form $\mathbb R((t^\Gamma))$, with $\Gamma\models \mathsf{DOAG}$ i-complete. 

 By  Proposition~\ref{pr:mcomplsubring} there are a convex subring $H_0$ of $M$ and an   $M$-definable bijection $f$ such that  $f_*(p\restr M)=q_{H_0}$. To conclude, argue as in Corollary~\ref{co:subgroups}.
\end{proof}
 
We are left  to show that every invariant type is domination-equivalent to a product of $1$-types. The strategy of proof is to show that Property~\ref{property:theominimalthing} holds at least in all cases of interest; in order to do  this, we need some further help from valuation theory.

\begin{defin}
  Let $M<N$ be an extension of nontrivially valued fields. A basis $\bla e0,n$ of a  finite-dimensional $M$-vector subspace of $N$ is \emph{separated} iff for every $\bla d0,n\in M$ we have
  \[
    v\Bigl(\sum_{i\le n} d_ie_i\Bigr)=\min_{i\le n} v(d_ie_i)
    \]
  \end{defin}

\begin{fact}[{see~\cite[Lemma~3]{baur} or~\cite[Proposition~12.1(i)]{hhm}}]\label{fact:sepbas}   Let $M<N$ be an extension of nontrivially valued fields. If $M$ is maximally complete, then every finite-dimensional $M$-vector subspace of $N$ has a separated basis.
\end{fact}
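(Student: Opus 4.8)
The plan is to prove this classical fact by induction on the dimension $d\coloneqq\dim_M V$ of the subspace under consideration, isolating the use of maximal completeness in a single step. The cases $d\le 1$ are trivial, since any single nonzero vector is a separated basis. For the inductive step, with $d=n+1$, fix a codimension-one $M$-subspace $W\le V$ and, by the inductive hypothesis, a separated basis $e_1,\ldots,e_n$ of $W$. Choose any $f\in V\setminus W$; the aim is to find $w_0\in W$ maximising $v(f-w)$ over $w\in W$. Granting this, set $e_0\coloneqq f-w_0$: then $\{e_0,e_1,\ldots,e_n\}$ is a separated basis of $V$, which one checks by a short case analysis on an arbitrary nontrivial combination $\sum_{i\le n}d_ie_i$ with $d_i\in M$ --- if $d_0=0$, use separatedness of the $e_i$ for $i\ge 1$; if $d_0\ne 0$, rewrite $\sum_i d_ie_i=d_0\bigl(f-w'\bigr)$ with $w'\coloneqq w_0-\sum_{i\ge 1}(d_i/d_0)e_i\in W$, so that maximality of $v(f-w_0)$ gives $v(\sum_i d_ie_i)\le v(d_0e_0)$, and combine this with the ultrametric lower bound $v(\sum_i d_ie_i)\ge\min(v(d_0e_0),\min_{i\ge 1}v(d_ie_i))$ (the latter using separatedness of $e_1,\ldots,e_n$), distinguishing the cases $v(d_0e_0)\le\min_{i\ge 1}v(d_ie_i)$ and $v(d_0e_0)>\min_{i\ge 1}v(d_ie_i)$.

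The heart of the matter is the existence of the best approximation $w_0$, and this is where maximal completeness is used, via its standard reformulation as spherical completeness: every decreasing chain of closed balls in $M$ has nonempty intersection. The key auxiliary observation is that a finite-dimensional valued $M$-vector space admitting a separated basis --- such as our $W$ --- inherits spherical completeness from $M$. Indeed, fixing the separated basis $e_1,\ldots,e_n$ and identifying $W$ with $M^n$, the very definition of ``separated'' says that a closed ball $\{w\in W\mid v(w-w')\ge\gamma\}$ equals the product of closed balls $\prod_{i\le n}\{c\in M\mid v(c-c'_i)\ge\gamma-v(e_i)\}$; hence a decreasing chain of closed balls in $W$ projects to $n$ decreasing chains of closed balls in $M$, each with nonempty intersection, and the product of these intersections lies in the intersection of the original chain. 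With this in hand, $w_0$ is produced by applying spherical completeness of $W$ to the chain $B_\gamma\coloneqq\{w\in W\mid v(f-w)\ge\gamma\}$, indexed by $\gamma\in\{v(f-w)\mid w\in W\}$: each $B_\gamma$ is a closed ball (if $w_1,w_2\in B_\gamma$ then $v(w_1-w_2)\ge\gamma$, and $B_\gamma$ is closed under $\gamma$-perturbation), the chain is decreasing with nonempty terms, so some $w_0$ lies in every $B_\gamma$; then $v(f-w_0)$ dominates every value $v(f-w)$ and is itself of this form, so it is the maximum (it cannot be $\infty$, since $f\notin W$).

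I expect the main obstacle to be conceptual rather than computational: spotting that maximal completeness should be invoked through spherical completeness, and that a separated basis is precisely the device that transfers spherical completeness from $M$ to a finite-dimensional subspace (closed balls becoming coordinate boxes). Everything else is routine bookkeeping --- the inductive organisation, verifying that the sets $B_\gamma$ are balls, and the ultrametric case check that the enlarged basis remains separated. It is worth noting that the same argument in fact shows that $V$ itself is spherically complete, although only the existence of a separated basis is needed in what follows.
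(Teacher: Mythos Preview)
The paper does not prove this statement: it is recorded as a \emph{Fact} with references to Baur and to Haskell--Hrushovski--Macpherson, and no argument is supplied. There is therefore no in-paper proof to compare against.

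Your proof is correct and is essentially the standard one (it is, in outline, the argument behind the cited references). The induction on $\dim_M V$, the reduction of the inductive step to the existence of a best approximation $w_0\in W$ to some $f\in V\setminus W$, and the verification that $e_0\coloneqq f-w_0$ together with the old basis is separated are all handled cleanly; the case analysis on whether $v(d_0e_0)$ is or is not the overall minimum is exactly the right way to close the argument. The key idea---that a separated basis turns closed balls in $W$ into coordinate boxes, so spherical completeness passes from $M$ to $W$, and hence the nested family $B_\gamma=\{w\in W\mid v(f-w)\ge\gamma\}$ has nonempty intersection---is identified and justified correctly; in particular your check that each nonempty $B_\gamma$ is a genuine closed ball in $W$ (despite its ``centre'' $f$ lying outside $W$) is the point that needs care, and you handle it properly via the ultrametric inequality.

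One small presentational remark: when you say ``a decreasing chain of closed balls in $W$ projects to $n$ decreasing chains of closed balls in $M$'', it is worth noting explicitly that nestedness of the balls in $W$ forces nestedness of the coordinate balls (two balls with a common point are comparable), since the centres may vary along the chain. This is implicit in what you wrote, but making it explicit removes any doubt.
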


The following statement is well-known, but I could not find a reference.
\begin{fact}[Folklore?]\label{fact:folk}
For every $M_0\models \mathsf{RCF}$ there is an $\abs{M_0}^+$-saturated $M\succ M_0$ (in the language of ordered rings) which is maximally complete with respect to the Archimedean valuation and of size at most $\beth_2(\abs{M_0}+2^{\aleph_0})$.
\end{fact}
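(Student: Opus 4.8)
The plan is to realize $M$ directly as a Hahn field over a sufficiently saturated value group; since the residue field $\mathbb R$ of the Archimedean valuation is already Dedekind‑complete, all the real work will be on the value group.

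Write $\lambda\coloneqq\abs{M_0}$, let $v$ be the Archimedean valuation on $M_0$ and $\Gamma_0\coloneqq v(M_0)$, so $\abs{\Gamma_0}\le\lambda$. First I would fix a $\lambda^+$‑saturated elementary extension $\Gamma\succeq\Gamma_0$ which is a model of $\mathsf{DOAG}$ and has size at most $2^\lambda$; such a $\Gamma$ exists by standard facts on saturated models, since $(2^\lambda)^{<\lambda^+}=2^\lambda$. Set $M\coloneqq\mathbb R((t^\Gamma))$. By Fact~\ref{fact:hahn}, $M\models\mathsf{RCF}$, $M$ is maximally complete with respect to the Archimedean valuation, and $\abs M\le 2^{\abs\Gamma}\le 2^{2^\lambda}\le\beth_2(\lambda+2^{\aleph_0})$; moreover $M_0\prec\mathbb R((t^{\Gamma_0}))$ by Fact~\ref{fact:hahn} again, while $\mathbb R((t^{\Gamma_0}))\prec M$ by extension of supports (both embeddings being elementary by quantifier elimination for $\mathsf{RCF}$), so $M_0\prec M$.

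It remains to check that $M$ is $\lambda^+$‑saturated, which I would deduce from the transfer statement: if $\Gamma\models\mathsf{DOAG}$ is $\kappa$‑saturated, then $\mathbb R((t^\Gamma))\models\mathsf{RCF}$ is $\kappa$‑saturated. Realizing an $n$‑type over a set of size $<\kappa$ reduces, coordinate by coordinate, to realizing $1$‑types over sets of size $<\kappa$, so fix $p\in S_1(A)$ with $\abs A<\kappa$. By o‑minimality and quantifier elimination, $p$ is the cut determined by its realizations in $K_0\coloneqq\dcl(A)$, a real closed subfield of $M$ of size $<\kappa$, and it suffices to fill that cut in $M$. I would do so by transfinite recursion along a $\subseteq$‑decreasing chain of balls of $M$, each lying strictly inside the cut: at a successor step the cut prescribes either the value of the distance from the current centre — a place which $\kappa$‑saturation of $\Gamma$ supplies if it is new — or a residue, which Dedekind‑completeness of $\mathbb R$ always supplies; at a limit step maximal completeness of $M$ keeps the intersection nonempty. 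Since all the data defining $p$ originate from the $(<\kappa)$‑sized set $K_0$, the chain exhausts $p$ after $<\kappa$ steps, and any point of the last ball realizes $p$. The clean way to organize this bookkeeping is an Ax--Kochen--Er\v sov‑type argument using separated bases (Fact~\ref{fact:sepbas}) over the maximally complete subfields of $M$ to reduce the type of an element to its value‑group and residue‑field data.

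The main obstacle is exactly this transfer lemma: pinning down the valuation‑theoretic decomposition of a cut over $K_0$, verifying that each prescription is met inside $M$, and bounding the length of the ball chain so that it genuinely exhausts $p$. The conceptual reason it succeeds despite $\mathbb R$ being far from a saturated ordered field is that in $\mathsf{RCF}$ types are cuts and $\mathbb R$ is Dedekind‑complete, so every residue‑level cut over a subfield is already filled by a real; the only genuine saturation needed is that of the value group, which we built in. Should this transfer already be recorded in~\cite{vddvf} or the valued‑field literature, I would cite it and skip the argument.
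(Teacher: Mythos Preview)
Your proposal is correct and follows essentially the same route as the paper: build $M$ as a Hahn field $\mathbb R((t^\Gamma))$ over a $\lambda^+$-saturated $\Gamma\models\mathsf{DOAG}$, and prove the saturation transfer by a transfinite approximation along the cut. The paper phrases this approximation concretely as building a Hahn series $\sum r_it^{\gamma_i}$ term by term rather than as a decreasing chain of balls, but these are the same argument; the mention of separated bases is unnecessary here, though harmless.
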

\begin{proof}[Proof sketch]
 By Fact~\ref{fact:hahn}, it is enough to show that if $\Gamma\models \mathsf{DOAG}$ is $\kappa$-saturated, then so is $\mathbb R((t^\Gamma))$. The cardinality bound then follows by first enlarging $M_0$ so that it contains $\mathbb R$,  then using that $\abs{\mathbb R((t^\Gamma))}\le (2^{\aleph_0})^{\abs \Gamma}=\beth_1(\abs{\Gamma})$, and that there is an $\abs{M_0}^+$-saturated  $\Gamma\succ \Gamma(M_0)$ with $\abs \Gamma\le\beth_1(\abs{M_0})$ by~\cite[Lemma~5.1.4]{changkeisler}.

 Take a partial $1$-type $\Phi(x)=\set{x>a\mid a \in A}\cup \set{x<b\mid b\in B}$, with $\abs{AB}<\kappa$. We may assume that $A$, $B$ only consist of positive elements. Moreover, since $\Gamma$ is $\kappa$-saturated, no  set of size less than $\kappa$ is coinitial in $\mathbb R((t^{\Gamma}))_{>0}$ hence, up to adding a point to $A$,  we may assume that $A\ne \emptyset$. Let $v$ be the Archimedean valuation. If $v(A)>v(B)$, by divisibility and  $\kappa$-saturation of $\Gamma$ there is $\gamma_0\in \Gamma$ with $v(A)>\gamma_0>v(B)$, and then $t^{\gamma_0}\models \Phi(x)$. Otherwise, large enough points of $A$  are all of the form $r_at^{\gamma_0}+\epsilon_a$,  and small enough points of $B$ are all of the form $r_bt^{\gamma_0}+\epsilon_b$,  where $r_-\in \mathbb R\setminus\set0$, $v(\epsilon_-)>\gamma_0$, and $\gamma_0$ is fixed. If there is $r_0$ such that  $\sup_{a\in A} r_a<r_0<\inf_{b\in B} r_b$, then $r_0t^{\gamma_0}\models \Phi(x)$ and we are done. Otherwise, if $\sup_{a\in A} r_a=r_0=\inf_{b\in B} r_b$, take $r_0t^{\gamma_0}$ as our first approximant for a realisation of $\Phi$. Replace $A$ and $B$ with $A_1\coloneqq A-r_0t^{\gamma_0}$ and $B_1=B-r_0t^{\gamma_0}$ and repeat the argument getting a second approximant $r_0t^{\gamma_0}+r_1t^{\gamma_1}$, with $v(\gamma_1)> v(\gamma_0)$. Iterate in the transfinite, where at limit stages we take infinite sums, which is possible since we are in a Hahn field and we are summing over a set with well-ordered support. After fewer than $\kappa$ many steps, we have to realise $\Phi(x)$, because $\abs{AB}<\kappa$.
\end{proof}
\begin{pr}\label{pr:rcfgen}
  In the theory $\mathsf{RCF}$, every invariant type is equidominant to a product of invariant $1$-types.
\end{pr}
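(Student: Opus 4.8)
The plan is to verify a form of Property~\ref{property:theominimalthing} for $\mathsf{RCF}$ and then invoke (the proof of) Corollary~\ref{co:assgac}. Concretely, I will prove the following, which is all that that proof requires: \emph{for every $\monster$-independent tuple $c$, writing $p=\tp_x(c/\monster)$, and for every small $M\prec\monster$ that contains a base of invariance for $p$ and is maximally complete with respect to the Archimedean valuation $v$, one has $\pi_M(x)\proves p(x)$}, where $\pi_M$ is as in Proposition~\ref{pr:omin1typesgenerate}. Such an $M$ — which may in addition be taken to contain any prescribed small set of parameters, in particular bases for the $1$-types produced in Proposition~\ref{pr:omin1typesgenerate} — exists by Fact~\ref{fact:folk}, and this is the point where valuation theory enters the argument for Corollary~\ref{co:assgac}.

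So fix $c$, $p$, $M$ as above and a realisation $c'\models\pi_M(x)$; the goal is $c'\equiv_\monster c$. Since $\mathcal F^{p,1}_M$ contains every polynomial map $M^{\abs x}\to M$, the type $\pi_M$ determines $\tp(c/M)$, whence $c'\equiv_M c$; by quantifier elimination it therefore suffices to show, for each $g(x)\in\monster[x]$, that $g(c)$ and $g(c')$ have the same sign. Write $g=\sum_i d_i m_i$ with $d_i\in\monster$ and the $m_i$ monomials, and let $V\le\dcl(Mc)$ be the finite-dimensional $M$-subspace spanned by $1$ together with the $m_i(c)$. As $M$ is maximally complete, Fact~\ref{fact:sepbas} provides a separated basis of $V$ over $M$; and since the residue field of the Archimedean-valued real closed field $M$ (equivalently, of $\dcl(Mc)$) is $\mathbb R$ — so that $M$-normalisations of two separated-basis vectors lying in the same coset of $v(M)$ would have $\mathbb R$-linearly dependent residues — this basis may be arranged to have the form $(h_1(c),\dots,h_k(c))$ with $h_l\in M[x]$ and with $v(h_1(c)),\dots,v(h_k(c))$ in pairwise distinct cosets of $v(M)$. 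Because $c$ is $\monster$-independent, the $M$-linear relations expressing $1$ and the $m_i(c)$ through this basis are \emph{polynomial identities}; hence $g(x)=\sum_l a_l h_l(x)$ identically for suitable $a_l\in\monster$, and the same holds for every $g(x)-s$ with $s\in\monster$. It thus remains to compute, purely from $\pi_M$, the sign of $\sum_l a_l h_l(c)$ for an arbitrary vector $(a_l)\in\monster$.

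The sign of $\xi:=\sum_l a_l h_l(c)$ is given by a recipe that refers only to the types $\tp(h_l(c)/\monster)$ and the types over $\monster$ of the ratios $h_l(c)/h_{l'}(c)$ — all of which lie in $\pi_M$ — together with arithmetic in $\monster$; since $c'\equiv_M c$ forces $(h_1(c'),\dots,h_k(c'))$ to be again a separated basis over $M$, the same recipe applies to $c'$ and returns the same sign. One locates the least value $\gamma$ among the $v(a_l h_l(c))=v(a_l)+v(h_l(c))$: deciding whether $v(a_l h_l(c))\le v(a_{l'} h_{l'}(c))$ amounts to deciding whether $(a_{l'}/a_l)\cdot(h_l(c)/h_{l'}(c))$ is finite, which is recorded by the type of $h_l(c)/h_{l'}(c)$ over $\monster$ (as $a_{l'}/a_l\in\monster$); likewise the residues $\operatorname{res}\bigl((a_l/a_{l_0})(h_l(c)/h_{l_0}(c))\bigr)\in\mathbb R$ of the minimal-value terms are so recorded, since $\operatorname{res}(y)=r$ asserts that $y-r$ is infinitesimal. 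If the sum of these residues is nonzero, then $v(\xi)=\gamma$ and $\operatorname{sign}(\xi)$ equals $\operatorname{sign}(a_{l_0}h_{l_0}(c))$ times the sign of that sum, all determined by $\pi_M$. If the sum vanishes, the minimal-value terms cancel and one rewrites $\sum_{v(a_l h_l(c))=\gamma} a_l h_l(c)=\sum_{l\ne l_0} a_l\bigl(h_l(c)-b_l h_{l_0}(c)\bigr)$ with $b_l\in\monster$ read off from those residues, each summand now of value strictly larger than $\gamma$; this yields a representation of $\xi$ with strictly fewer terms, and one concludes by induction on $k$. Granting this, Corollary~\ref{co:assgac} delivers the statement.

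The step I expect to be the main obstacle is making the above induction precise: one must keep the finer, higher-value part of $\xi$ under control using only the data that $\pi_M$ actually records — namely the type over $\monster$ of a \emph{single} $M$-definable function of $c$ at a time, not the joint type over $\monster$ of several of them. What makes this feasible is exactly the rigid shape of a separated basis over a maximally complete field with residue field $\mathbb R$, at most one basis vector per coset of the value group, which forces every cancellation encountered along the way to reduce, in the end, to comparing a quantity $M$-definable in $c$ (whose cut in $\monster$ is recorded in $\pi_M$) against a scalar from $\monster$ — a comparison that $\pi_M$ does settle.
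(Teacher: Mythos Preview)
Your overall strategy is the paper's: reduce to $\pi_M(x)\proves p(x)$ via Proposition~\ref{pr:omin1typesgenerate}, take $M$ maximally complete, pick a separated basis (Fact~\ref{fact:sepbas}) of the $M$-span of the relevant monomials, rewrite $g(c)$ as $\sum_l a_l h_l(c)$ with $a_l\in\monster$ and $h_l\in M[x]$, and read off the sign. The reductions up to this point are correct, including the observation that a separated basis over $M\supseteq\mathbb R$ automatically has values in pairwise distinct $v(M)$-cosets (though you need $\mathbb R\subseteq M$ for this, which follows from Fact~\ref{fact:folk} but is not stated).

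The gap is exactly where you place it. After one step of your induction the new vectors $h_l(c)-b_lh_{l_0}(c)$ carry parameters $b_l\in\monster$, so they are no longer values at $c$ of $M$-definable functions, and $\pi_M$ does not directly record their cuts in $\monster$. To iterate you would need joint information about several such $\monster$-definable quantities, and your heuristic that everything reduces to comparing a single $M$-definable quantity against a scalar is not substantiated. The paper sidesteps this by showing the cancellation case is vacuous: using a feature of Fact~\ref{fact:folk} you cite but do not exploit, namely that $M$ may be taken $\abs{M_0}^+$-saturated over a base $M_0$ of invariance for $p$, it proves that the separated basis over $M$ remains separated over $\monster$. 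Given $d\in\monster$, one finds $\tilde d\in M$ with $\tilde d\equiv_{M_0H}d$ (where $H$ collects the parameters of the $h_l$), applies separatedness over $M$ to $\sum_l\tilde d_lh_l(c)$, and transfers the resulting value equality back to $d$ via $M_0$-invariance of $p$. Hence $v\bigl(\sum_l a_lh_l(c)\bigr)=\min_l v(a_lh_l(c))$, the leading real coefficient is nonzero, and the sign of $g(c)$ is read off directly from $\pi_M$ and $\opsc{ed}(\monster)$ with no induction needed.
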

\begin{proof}
  Let $p(x)=\tp(c/\monster)$ be $M_0$-invariant and, by enlarging $M_0$, assume that $M_0\succ \mathbb R$.   As usual, we may suppose that $c$ is $\monster$-independent, by working with a basis of $c$ and recovering the rest with a single formula.  Let $b$ be given by point~\ref{point:Emaximal} of  Proposition~\ref{pr:omin1typesgenerate}, satisfying its point~\ref{point:nicerep}. Enlarge $M_0$ further so as to ensure that $\tp(b/\monster)$ is $M_0$-invariant, then use Fact~\ref{fact:folk} to obtain a small $\abs{M_0}^+$-saturated  $M\succ M_0$ which is  maximally complete with respect to the Archimedean valuation $v$. 
  \begin{claim}
Inside the ordered field $M(c)$ generated by $M c$, let $V$ be a finite-dimensional $M$-vector subspace generated by a finite set of monomials  $c^\ell$, for  $\ell\in \omega^{\abs c}$ a multi-index.\footnote{E.g.~if $\abs c=2$ we could have $\ell= (2,7)$ and $c^\ell=c_0^2c_1^7$.}    If $e$ is a separated basis of the $M$-vector space $V$, then it is also a separated basis of the $\monster$-vector space generated by $e$ inside $\monster(c)$, where $M,M(c), \monster, \monster(c)$ are  equipped with the Archimedean valuation.
  \end{claim}
  \begin{claimproof}
    Take a linear combination $\sum_{i\le n} d_ie_i$, with $d_i\in \monster$. Since $e_i\in \dcl(M c)$, we may write $e_i=h_i(c)$, for a suitable $M$-definable function $h_i(x)$. Let $H$ be the (finite) set of parameters outside $M_0$ appearing in the functions $h_i(x)$. Since $M$ is $\abs{M_0}^+$-saturated,  there is $\tilde d\in M$ with  $\tilde d\equiv_{M_0 H} d$. Since $e$ is a separated basis, up to reindexing we have $v(\sum_{i\le n}\tilde d_ih_i(c))=v(\tilde d_nh_n(c))$. Therefore there is a real number $s\in \mathbb R\setminus \set 0$ such that
    \[
     \forall m\in \omega\setminus \set 0\; p(x)\proves \abs*{s-\frac{\sum_{i\le n} \tilde d_ih_i(x)}{\tilde d_nh_n(x)}}<\frac 1m
    \]
    By $M_0$-invariance of $p(x)=\tp(c/\monster)$ we have
        \[
     \forall m\in \omega\setminus \set 0\; p(x)\proves \abs*{s-\frac{\sum_{i\le n}  d_ih_i(x)}{ d_nh_n(x)}}<\frac 1m\qedhere
   \]
 \end{claimproof}
 Apply the rest of Proposition~\ref{pr:omin1typesgenerate}, and work in its notation. So $p(x)=\tp(c/\monster)$, $q(y)=\tp(b/\monster)$,  $r(x,y)=\tp(cb/N_1)$,   $p(x)\cup r(x,y)\proves q(y)$, and 
\[
  q(y)\cup r(x,y)\proves \pi_M(x)=\bigcup_{f\in \mathcal F^{p,1}_M} \tp_{w_f}(f(c)/\monster)\cup \set*{w_f=f(x)\Bigm| f \in \mathcal F^{p,1}_M}
\]
We want to show that $q(y)\cup r(x,y)\proves p(x)$.

By quantifier elimination it is enough to show that $q\cup r$ decides the sign of all polynomials $f(x,d')\in \monster[x]$, where $d'$ is the tuple of coefficients.  Note that, since $c$ is $\monster$-independent, it is $\set{d'}$-independent, hence $p(x)\proves f(x,d')\ne 0$, unless $f(x,d')$ is identically null (in which case there is nothing to do). By Fact~\ref{fact:sepbas}, there is  a separated basis $\bla e0,n$  of the $M$-vector space generated by all the  $c^\ell$ appearing in $f(c,d')$.  We can write $e_i=h_i(c)$, where $h_i(x)$ is an $M$-definable function, and  $c^\ell=\sum_{j\le n} \beta_{j,\ell} e_{j}$, for suitable $\beta_{j,\ell}\in M$. Note that $r(x,y)\proves x^\ell=\sum_{j\le  n}\beta_{j,\ell}h_j(x)$. After replacing, in $f(x,d')$,  each $x^\ell$ with $\sum_{j\le  n}\beta_{j,\ell}h_j(x)$, and collecting the monomials in each $h_j(x)$, we have 
\[
  \models\forall x\; \biggl(\Bigl(\bigwedge_{\ell}x^\ell=\sum_{j\le  n}\beta_{j,\ell}h_j(x)\Bigr)\implica \bigl(f(x,d')=g(d,h(x))\bigr)\biggr)
\]
where $h(x)=(h_0(x),\ldots, h_n(x))$ and $g(d,z)=\sum_{j\le n} d_jz_j$, with $d_j=\sum_\ell d_\ell'\beta_{j,\ell}$. It follows that
\begin{equation}
  r(x,y)\proves f(x,d')=g(d,h(x))\label{eq:lcov}
\end{equation}
Now, since by the Claim $e$ is also a separated basis of the $\monster$-vector space it generates,  $v(f(c,d'))=v(g(d,e))=\min_j v(d_j e_j)$. Suppose, by rearranging $e$, that this equals $v(d_ne_n)$. Define
\[
a\coloneqq1+\sum_{j< n} \frac{d_je_j}{d_ne_n}=  \frac{f(c,d')}{d_n e_n}
\]
Since $v(f(c,d'))=v(e_nd_n)$, we can write $a=s_a+\epsilon_a$, where $s_a\in \mathbb R\setminus \set 0$ and $\epsilon_a$ is \emph{$\mathbb R$-infinitesimal}, i.e.~$\forall m \in \omega\setminus \set 0\;\abs{\epsilon_a}<1/m$. Similarly, for all $j<n$, since $v(d_je_j/d_ne_n)\ge 0$ there are $s_{j}\in \mathbb R$ (now possibly null) and $\epsilon_{j}$ such that for all $m\in \omega\setminus \set 0$  we have  $\abs{\epsilon_{j}}<1/m$ and $(d_je_j)/(d_ne_n)=s_j+\epsilon_j$. Therefore
\[
  \forall m\in \omega\setminus \set 0\;\left( c\models \abs*{\frac{d_j}{d_n}\frac{h_j(x)}{h_n(x)}-s_j}<\frac 1m\right)
\]
This information is, by assumption, in $\pi_M(x)\proves \tp_w((h_j(c)/h_n(c))/\monster)\cup\set{w=h_j(x)/h_n(x)}$, the $h_i(x)$ being $M$-definable. It follows that
\begin{equation}\label{eq:stpart1}
  \forall m\in \omega\setminus \set 0\;\left( q(y)\cup r(x,y)\proves \abs*{\frac{d_j}{d_n}\frac{h_j(x)}{h_n(x)}-s_j}<\frac 1m\right)
\end{equation}
We can write
\begin{equation*}
  s_a+\epsilon_a=  \frac{f(c,d')}{d_ne_n }=1+\sum_{j< n}\frac{d_je_j}{d_ne_n}= 1+\sum_{j<n} (s_j+\epsilon_j)
  =\epsilon'+1+\sum_{j<n}s_{j}
\end{equation*}
where $\epsilon'$ is $\mathbb R$-infinitesimal. Hence $1+\Bigl(\sum_{j<n}s_{j}\Bigr)-s_a$ is $\mathbb R$-infinitesimal and belongs to $\mathbb R$, so it is $0$, yielding   $s_a=1+\sum_{j<n}s_{j}$ and $\epsilon'=\epsilon_a$. Since $\epsilon_a$ is $\mathbb R$-infinitesimal, and $s_a\ne 0$, in particular $\phi(\bla \epsilon0,{n-1})$ holds, where
\begin{equation*}
\phi(\bla u0,{n-1})\coloneqq\abs[\Bigg]{\Bigl(1+\sum_{j< n}(s_j+u_j)\Bigr)-s_a}<\frac {\abs{s_a}}2
\end{equation*}
Note that $\phi(\bla \epsilon0,{n-1})$ holds for \emph{all} $\mathbb 
R$-infinitesimals $\bla \epsilon0,{n-1}$. Hence, if $\Phi(t)\coloneqq\set{\abs t<1/m\mid m\in \omega\setminus\set 0}$, we have $\bigcup_{j<n}\Phi(u_j)\proves \phi(\bla u0,{n-1})$. Therefore, by compactness, for all sufficiently large $m$ we have 
\begin{equation}\label{eq:estimate}
  \models \forall w\; \left[\left(\bigwedge_{j<n}\abs*{w_j-s_j}<\frac 1m\right)\implica \left(\abs[\Bigg]{\Bigl(1+\sum_{j< n}w_j\Bigr)-s_a}<\frac {\abs{s_a}}2\right)\right]
\end{equation}

By~\eqref{eq:lcov}, $r\proves f(x,d')/(d_nh_n(x))=1+\sum_{j<n}{(d_jh_j(x))}/{(d_nh_n(x))}$. This, together with~\eqref{eq:stpart1} and~\eqref{eq:estimate}, yields
\[
q\cup r\proves \abs*{\frac{f(x,d')}{d_nh_n(x)}-s_a}<\frac {\abs{s_a}}2
\]
which in turn implies $q\cup r\proves \abs*{f(x,d')-s_ad_nh_n(x)}< \abs{s_ad_nh_n(x)}/2$, and in particular $q\cup r$ proves that $f(x,d')$ and $s_ad_nh_n(x)$ have same sign. But $\opsc{ed}(\monster)$ decides the sign of both $s_a,d_n\in \monster$ and $\pi_M(x)$ decides the cut, hence the sign, of $h_n(x)$. Therefore, $q\cup r$ decides the sign of  $f(x,d')$.
\end{proof}

By the previous proposition we may apply Theorem~\ref{thm:omincharmodgen} to $\mathsf{RCF}$, and obtain the following result by Proposition~\ref{pr:convexringsswort} and Corollary~\ref{co:subrings}.

\begin{thm}\label{thm:rcf}
In $\mathsf{RCF}$, the domination monoid $\invtilde$ is well-defined, coincides with $\invbar$, and $(\invtilde, \otimes, \doms, \wort)\cong (\pfin(X), \cup, \supseteq, D)$, where $X$ is the set of invariant convex subrings of $\monster$.
\end{thm}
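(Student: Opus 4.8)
The plan is to derive everything from Proposition~\ref{pr:rcfgen} together with Theorem~\ref{thm:omincharmodgen}, the only remaining work being to make the abstract index set concrete. By Proposition~\ref{pr:rcfgen}, in $\mathsf{RCF}$ every invariant type is equidominant, hence a fortiori domination-equivalent, to a product of invariant $1$-types. So both hypotheses of Theorem~\ref{thm:omincharmodgen} are met, and we get at once that $\invtilde$ is well-defined, that $\equidom$ coincides with $\domeq$ so that $\invbar=\invtilde$, and that $(\invtilde,\otimes,\doms,\wort)\cong(\pfin(X_0),\cup,\supseteq,D)$ for $X_0$ any maximal set of pairwise weakly orthogonal invariant $1$-types; by Lemma~\ref{lemma:omin1typeequidom} together with Fact~\ref{fact:wortpreserved}, such an $X_0$ is the same thing as a set of representatives for the $\equidom$-classes of nonrealised invariant $1$-types. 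It therefore suffices to show that $H\mapsto q_H$ is a bijection from the set of invariant convex subrings of $\monster$ onto such a set of representatives.

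To that end I would check three things. \emph{First}, each $q_H$ is a nonrealised invariant $1$-type: invariance is Remark~\ref{rem:qhinv}, and nonrealisedness follows because a convex subring contains $1$ and is closed under multiplication, which forces $H$ to have no greatest element and $\monster\setminus H$ to have no least positive element (if $m=\min(\monster\setminus H)_{>0}$ then $\tfrac{m+1}{2}\in H$ while $\bigl(\tfrac{m+1}{2}\bigr)^2\ge m\notin H$), so the cut filled by $q_H$ is not realised in $\monster$. \emph{Second}, the map is injective with pairwise non-equidominant values: convex subrings of an ordered field are linearly ordered by inclusion (for $0<a$ in the field, any convex subring contains $a$ or $a^{-1}$, from which comparability is immediate), so two distinct invariant convex subrings $H_0,H_1$ are comparable, say $H_0\subsetneq H_1$, whence $q_{H_0}\wort q_{H_1}$ by Proposition~\ref{pr:convexringsswort} applied with $M=\monster$; in particular $q_{H_0}\neq q_{H_1}$, and $q_{H_0}\nequidom q_{H_1}$, since otherwise $q_{H_0}\doms q_{H_1}$ and $q_{H_0}\wort q_{H_1}$ would make $q_{H_1}$ realised by Fact~\ref{fact:wortpreserved}. \emph{Third}, the map is onto: given any nonrealised invariant $1$-type $p$, Corollary~\ref{co:subrings} supplies a definable bijection $f$ and an invariant convex subring $H$ with $f_*p=q_H$, so $p\equidom q_H$ by Example~\ref{eg:deqpushf}; thus every $\equidom$-class of nonrealised invariant $1$-types is $\class{q_H}$ for some (by the previous point, unique) invariant convex subring $H$.

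Transporting the isomorphism of Theorem~\ref{thm:omincharmodgen} along the bijection $H\mapsto q_H$ then yields $(\invtilde,\otimes,\doms,\wort)\cong(\pfin(X),\cup,\supseteq,D)$ with $X$ the set of invariant convex subrings of $\monster$, which is the statement. I do not expect a genuine obstacle here: Propositions~\ref{pr:rcfgen} and~\ref{pr:convexringsswort} and Corollary~\ref{co:subrings} do all the heavy lifting, and what remains is bookkeeping — chiefly keeping straight that the maximal weakly orthogonal family appearing in Theorem~\ref{thm:omincharmodgen} is to be taken among nonrealised types, plus the elementary verifications that $q_H$ is nonrealised and that the convex subrings of an ordered field form a chain.
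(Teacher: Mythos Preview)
Your proposal is correct and follows essentially the same approach as the paper, which simply says the result follows from Proposition~\ref{pr:rcfgen} via Theorem~\ref{thm:omincharmodgen}, together with Proposition~\ref{pr:convexringsswort} and Corollary~\ref{co:subrings}. You have merely spelled out the bookkeeping the paper leaves implicit: that each $q_H$ is nonrealised, that distinct invariant convex subrings give weakly orthogonal (hence non-equidominant) types, and that every nonrealised invariant $1$-type lands in some $\class{q_H}$.
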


\begin{rem}\label{rem:rank_follow_up}
The Archimedean valuation induces a bijection between invariant convex subrings of $\monster$ and invariant convex subgroups of its value group $\Gamma(\monster)$, a monster model of $\mathsf{DOAG}$, hence an isomorphism between the respective domination monoids. As we saw in Remark~\ref{rem:rank}, these subgroups correspond to invariant cuts in the rank of $\Gamma(\monster)$ under the (ordered group-theoretic) Archimedean valuation $\mathfrak v$.
In conclusion, we have isomorphisms
\[
  \invtilde\cong\invtildeof{\Gamma(\monster)}\cong \invtildeof{\mathfrak v(\Gamma(\monster))}
\]
and the domination monoid of a monster model $\monster$ of $\mathsf{RCF}$ is the upper semilattice of finite subsets of the set of invariant cuts in the Archimedean rank of the Archimedean value group of $\monster$.
\end{rem}

\section{Real closed valued fields}\label{sec:arcvf}
\begin{assdrp}
  From now on, we drop the blanket assumption that $T$ is o-minimal.
\end{assdrp}

The reason $\invbar$ was first introduced in the unstable context was to prove that, in (each completion of) the theory $\mathsf{ACVF}$ of algebraically closed valued fields, it decomposes as a direct product of the equidominance monoids of the residue field and value group. The key ingredient to this result was shown in~\cite{ehm} to also hold in the weakly o-minimal theory $\mathsf{RCVF}$ of real closed valued fields with proper convex valuation ring. Since the residue field of any model of $\mathsf{RCVF}$ is equipped with the structure of a pure real closed field, the work carried out in Section~\ref{sec:rcf} allows us to complete the computation of the domination monoid in $\mathsf{RCVF}$, provided we show that $\otimes$ respects $\equidom$. In this section we spell out the details, and show that $\equidom$ and $\domeq$ coincide in $\mathsf{RCVF}$. Essentially the same arguments, \emph{mutatis mutandis}, show that $\domeq$ coincides with $\equidom$ and respects $\otimes$ in $\mathsf{ACVF}$ as well; see~\cite[Theorem~5.2.22]{mythesis}.

There is a variety of languages in which $\mathsf{RCVF}$ can be formulated. One common choice is to work with three sorts $K,k,\Gamma$, where $K$ is the actual valued field, $k$ is the residue field, and $\Gamma$ the value group.   The first two sorts are ordered fields, while $\Gamma$ comes with the structure of an ordered abelian group.  Strictly speaking, $\Gamma$ also includes a constant symbol  for the valuation of $0$, which is not part of the ordered group structure. It is customary to abuse the notation, talking of $\Gamma$ as if it were an ordered group. The sorts are connected by the valuation $v\from K\to \Gamma$ and the modified residue map $\operatorname{Res}\from K^2\to k$ sending $(x,y)$ to the residue class of $x/y$ if the latter is in the valuation ring, and to $0$ otherwise. 

It is well-known that such a language is not enough to eliminate imaginaries, and extra sorts are needed for this purpose.   In what follows, we will work in a language with elimination of quantifiers and imaginaries including the sorts $K$, $k$, and $\Gamma$. The particular language is not important: by~\cite[Remark~2.3.33]{mythesis}, we may as well work in $T^\eq$. We refer the reader to~\cite{mellor} for a description of an appropriate language where elimination of imaginaries holds.

In the case of $\mathsf{RCVF}$, an important part in the analysis of the domination monoid is played by the \emph{full embeddedness} of the sorts $k$ and $\Gamma$: the subsets of every cartesian power of $k(\monster)$ which are definable (with parameters) in $\monster$ are already definable in $k(\monster)$ equipped with the ordered field structure, and the analogous statement holds for $\Gamma$.  It is easy to show that the domination monoid of a fully embedded sort embeds in that of the full structure; the reader may find a proof of the following fact  in~\cite[Proposition~2.3.31]{mythesis}.  
\begin{fact}\label{fact:stabemb}
Suppose that $Y(\monster)$ is  a fully embedded sort of $\monster$. The natural map $\iota\from S(Y(\monster))\to S(\monster)$, sending a type over $Y(\monster)$ to the unique type over $\monster$ it entails, sends invariant types to invariant types and, when restricted to such types, is an injective $\otimes$-homomorphism. Moreover, $\iota$ induces an embedding of posets $\invtildeof{Y(\monster)} \into \invtilde$ and an embedding of sets  $\invbarof{Y(\monster)} \into \invbar$. Each of these embeddings is a $\wort$-homomorphism, a $\nwort$-homomorphism,  and,  if $\otimes$ respects $\doms$ [resp.~$\equidom$],  an embedding of monoids.
\end{fact}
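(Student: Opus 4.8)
The plan is to run everything off the single basic consequence of full (equivalently here stable) embeddedness: if $x$ is a tuple of variables of sort $Y$, then the restriction map $S_x(\monster)\to S_x(Y(\monster))$ is a bijection whose inverse is $\iota$. First I would record this together with a slightly stronger form. For each formula $\phi(x,\bar m)$, $\bar m\in\monster$, there are a $Y$-formula $\psi_\phi$ and parameters $\bar d_\phi\in Y(\monster)$ with $\forall x\,(\phi(x,\bar m)\coimplica\psi_\phi(x,\bar d_\phi))$ true in $\monster$ (here $x$ ranges over $Y^{\abs x}$); being an $L(\monster)$-sentence, it is then true in every $\monster_1\succ\monster$. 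Hence if $a$ is a tuple of sort $Y$ living in a larger monster $\monster_1$, then $\tp(a/\monster)$ is \emph{determined} by $\tp(a/Y(\monster))$; in particular, a realisation of $q\in S_x(Y(\monster))$ inside $Y(\monster_1)$ automatically realises $\iota(q)$, and existence and uniqueness of $\iota(q)$ are immediate. Throughout, $Y(\monster)$ is treated as a monster model of $\Th(Y(\monster))$, which it is since it inherits saturation and strong homogeneity from $\monster$ (cf.\ Remark~\ref{rem:rank}). These two observations are used repeatedly below.

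Next I would check that $\iota$ preserves invariance and is injective, and is an $\otimes$-homomorphism on invariant types. For invariance: if $q\in S_x(Y(\monster))$ is invariant over a small $A\subseteq Y(\monster)$, then any $f\in\aut(\monster/A)$ restricts to an automorphism of $Y(\monster)$ fixing $A$ pointwise (it preserves the sort $Y$ and all $\emptyset$-definable relations), hence fixes $q$ by Remark~\ref{rem:invariantisinvariant}; since $(f\cdot\iota(q))\restr Y(\monster)=(f\restr Y(\monster))\cdot q=q=\iota(q)\restr Y(\monster)$, uniqueness forces $f\cdot\iota(q)=\iota(q)$, so $\iota(q)$ is $A$-invariant. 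Injectivity is immediate, as $\iota$ is a section of the restriction map. For the $\otimes$-homomorphism property, fix invariant $q_1(x),q_2(y)$ over $Y(\monster)$, put $p_i\coloneqq\iota(q_i)$, take a base $A\subseteq Y(\monster)$ for $p_1$, and pick $b\models q_2$ in $Y(\monster_1)$, so also $b\models p_2$ by the above. The restriction $(p_1\invext\monster b)\restr Y(\monster)b$ is an $A$-invariant extension of $q_1$ to $Y(\monster)b$, hence equals $q_1\invext Y(\monster)b$ by uniqueness of invariant extensions; chasing the definitions on formulas $\phi(x,y,\bar d)$ with $\bar d\in Y(\monster)$ gives $(p_1\otimes p_2)\restr Y(\monster)=q_1\otimes q_2$, and uniqueness of extensions then yields $\iota(q_1\otimes q_2)=\iota(q_1)\otimes\iota(q_2)$ (note $q_1\otimes q_2$ is invariant by Fact~\ref{fact:productbasics} applied inside $Y(\monster)$, so $\iota$ is defined on it).

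I would then show $\iota$ reflects (and preserves) $\wort$, $\doms$, $\equidom$ and $\domeq$. For $\wort$ both directions are one line: if $q_1\wort q_2$, a realisation $ab$ of $p_1\cup p_2$ realises $q_1\cup q_2$, so $\tp(ab/Y(\monster))$ — hence $\tp(ab/\monster)$, $ab$ being of sort $Y$ — is determined, so $p_1\cup p_2$ is complete; conversely two realisations of $q_1\cup q_2$ in $Y(\monster_1)$ realise $p_1\cup p_2$, hence agree over $\monster$ and a fortiori over $Y(\monster)$. For domination, the easy direction keeps a witness $r\in S_{q_1q_2}(A_0)$ with $A_0\subseteq Y(\monster)$: if $ab\models p_1\cup r$ then $a\models q_1$, $(a,b)\models r$, so $b\models q_2$, so $b\models p_2$, and $r\in S_{p_1p_2}(A_0)$ since $A_0\subseteq Y(\monster)$. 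The delicate direction descends a witness $r\in S_{p_1p_2}(A)$ with $A\smallsubset\monster$ arbitrary: fix $ab\models p_1\cup r$ (so $r=\tp(ab/A)$); for each $A$-formula $\phi(x,y)$ in the $Y$-sorted variables choose $\bar d_\phi\in Y(\monster)$ and a $Y$-formula $\psi_\phi$ with $\forall x\,y\;(\phi(x,y)\coimplica\psi_\phi(x,y,\bar d_\phi))$ true in $\monster$ (full embeddedness); let $A_0\subseteq Y(\monster)$ be the small set of all the $\bar d_\phi$, and set $r_0\coloneqq\tp(ab/A_0)\in S_{q_1q_2}(A_0)$. Since the defining equivalences transfer to $\monster_1$, any $(a',b')\models q_1\cup r_0$ in $Y(\monster_1)$ satisfies $(a',b')\equiv_A(a,b)$, hence $a'b'\models p_1\cup r$, hence $b'\models p_2$ and so $b'\models q_2$; thus $r_0$ witnesses $q_1\doms q_2$. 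The same bookkeeping with one $r$ handles $\equidom$, and $\domeq$ follows.

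Finally I would assemble: $\class q\mapsto\class{\iota(q)}$ is a well-defined injection $\invtildeof{Y(\monster)}\into\invtilde$ which reflects $\doms$ (hence a poset embedding) and, by the $\wort$ analysis, is a $\wort$- and $\nwort$-homomorphism sending the class of realised types to itself (as $\iota$ sends types realised in $Y(\monster)$ to types realised in $\monster$); likewise $\invbarof{Y(\monster)}\into\invbar$. If $\otimes$ respects $\doms$ [resp.\ $\equidom$] over $T$, then it does over $\Th(Y(\monster))$ too: a would-be violation $q_1\domeq q_1'$ with $q_1\otimes q_2\ndomeq q_1'\otimes q_2$ pushes forward under $\iota$, using the $\otimes$-homomorphism property and that $\iota$ reflects $\domeq$ and $\ndomeq$, to $\iota(q_1)\domeq\iota(q_1')$ with $\iota(q_1)\otimes\iota(q_2)\ndomeq\iota(q_1')\otimes\iota(q_2)$ — a contradiction; so both quotients are genuine monoids and the embedding respects $\otimes$ and the neutral element. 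I expect the main obstacle to be exactly the descent of the domination witness in the previous paragraph: one must use full embeddedness both to rewrite $A$-formulas on $Y$ as $Y(\monster)$-formulas and, via elementarity, to carry those identities into the larger monster $\monster_1$ in which the relevant realisations live.
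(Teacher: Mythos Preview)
The paper does not prove this fact in-text, deferring instead to \cite[Proposition~2.3.31]{mythesis}, so there is no in-paper argument to compare against. Your proof is correct and follows the natural route. One small point deserves tightening: in the ``easy direction'' of domination, you assert that the witness $r\in S_{q_1q_2}(A_0)$ automatically lies in $S_{p_1p_2}(A_0)$ because $A_0\subseteq Y(\monster)$. This is not quite right: $r$ is a complete type over $A_0$ in the language of $Y(\monster)$, but full embeddedness only says that $\monster$-definable subsets of powers of $Y$ are $Y(\monster)$-definable, not that $L(A_0)$-definable sets are already $A_0$-definable in the $Y$-language; so $r$ need not be a complete $L$-type over $A_0$. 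The fix is immediate: pick $(a,b)\models q_1\cup r$, observe that $(a,b)\models p_1\cup p_2$ by your basic observation, and replace $r$ by $\tp^L(ab/A_0)$, which contains $r$ and hence still forces $b\models q_2$, whence $b\models p_2$. Everything else---the reflection of $\doms$ via the small set $A_0$ of rewriting parameters, the $\otimes$-homomorphism argument, the $\wort$/$\nwort$ analysis, and the final assembly including the descent of well-definedness of $\otimes$ to $\Th(Y(\monster))$---is correct.
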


\begin{fact}[{see~\cite{vddvf} and~\cite[Lemma~3.13]{mellor}}]\label{fact:xcvfstart}
  In $\mathsf{RCVF}$ the following hold.
  \begin{enumerate}
  \item We have $\Gamma\models \mathsf{DOAG}$ and $k\models \mathsf{RCF}$.
  \item\label{point:xcvfwort} If $p\in S_{k^n}(\monster)$ and  $q\in S_{\Gamma^m}(\monster)$, then $p\wort q$.
  \item    The structures $k(\monster)$ and $\Gamma(\monster)$ are fully embedded.
  \end{enumerate}
\end{fact}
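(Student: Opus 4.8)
Part~1 is classical valuation theory, so I would only sketch it. Divisibility of $\Gamma$: given $\gamma\in\Gamma$, pick a positive $x\in K$ with $v(x)=\gamma$; since $K$ is real closed it has a positive $n$-th root $y$, and $\gamma=n\cdot v(y)$, so $\Gamma$ is a nontrivial divisible ordered abelian group, i.e.\ $\Gamma\models\mathsf{DOAG}$. For the residue field, the order on $K$ descends to $k=\mathcal O/\mathfrak m$ because $\mathcal O$ is convex; a positive unit of $\mathcal O$ has valuation $0$, hence so does its positive square root in $K$, which therefore lies in $\mathcal O$ and reduces to a square root in $k$; and a monic odd-degree polynomial over $\mathcal O$ has a root in $K$ that is integral over $\mathcal O$, hence lies in $\mathcal O$ (valuation rings are integrally closed) and reduces to a root in $k$. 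Thus $k\models\mathsf{RCF}$.

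The substance of parts~2 and~3 is the \emph{orthogonality of the residue field and the value group}, the exact analogue for $\mathsf{RCVF}$ of the corresponding phenomenon in $\mathsf{ACVF}$. The plan is to fix a language with quantifier elimination and elimination of imaginaries (the one of~\cite{mellor}, or simply $T^\eq$) and prove a box-decomposition statement: every $\monster$-definable subset $D\subseteq k(\monster)^n\times\Gamma(\monster)^m$ is a finite union of rectangles $X\times Y$, with $X\subseteq k(\monster)^n$ definable over $k(\monster)$ in the pure ordered-field structure and $Y\subseteq\Gamma(\monster)^m$ definable over $\Gamma(\monster)$ in the pure ordered-abelian-group structure. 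This is obtained by inspecting, after quantifier elimination, the atomic formulas of the chosen language: each atom is either confined to the $k$-coordinates, or to the $\Gamma$-coordinates, or of a mixed shape involving $v$ or $\operatorname{Res}$ applied to terms, and one checks that on the pure sorts the mixed atoms degenerate into Boolean combinations of the first two kinds. Granting the decomposition, the rest is formal: setting $m=0$ (resp.\ $n=0$) yields full embeddedness of $k(\monster)$ (resp.\ of $\Gamma(\monster)$); and for weak orthogonality, given $a\models p\in S_{k^n}(\monster)$ and $b\models q\in S_{\Gamma^m}(\monster)$, any formula $\theta(x,y)$ over $\monster$ is, by the decomposition and full embeddedness, equivalent to $\bigvee_i(\chi_i(x)\land\psi_i(y))$ with $\chi_i$ over $k(\monster)$ and $\psi_i$ over $\Gamma(\monster)$; since $p$ decides each $\chi_i$ and $q$ decides each $\psi_i$, the partial type $p(x)\cup q(y)$ decides $\theta$, so it is complete, i.e.\ $p\wort q$.

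The main obstacle is the box-decomposition step itself: it requires a genuine quantifier-elimination analysis of $\mathsf{RCVF}$, controlling the valuation and the modified residue map on arbitrary terms, and this is precisely the non-routine part for which I would appeal to~\cite{mellor} (and the general theory in~\cite{vddvf}) rather than redo it from scratch. Everything else---part~1 and the two deductions from the decomposition---is straightforward.
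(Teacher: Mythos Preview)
Your proposal is correct, and in fact goes further than the paper does: the paper states this as a \emph{Fact} with citations to~\cite{vddvf} and~\cite[Lemma~3.13]{mellor} and gives no proof of its own. Your sketch of part~1 is the standard argument, and your box-decomposition strategy for parts~2 and~3 is exactly the content of the cited lemma in~\cite{mellor}; you are right that the genuine work lies in the quantifier-elimination analysis, and right to defer to~\cite{mellor} for it rather than reproduce it.
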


\begin{notation}
We denote by $k(B)$ [resp.~$\Gamma(B)$] the set of points of $\dcl(B)$ which belong to the sort $k$ [resp.~$\Gamma$].
\end{notation}

 \begin{fact}\label{fact:dclsorts}
Suppose that all coordinates of $p,q\in \invtypes(\monster,M)$ are in the valued field sort $K$ and $M$ is maximally complete. If  $(a,b)\models p\otimes q$,  then
  \begin{equation*}
    k(M ab)=k\bigl(k(M a), k(M b)\bigr)\qquad 
    \Gamma(M ab)=\Gamma\bigl(\Gamma(M a), \Gamma(M b)\bigr)
  \end{equation*}
\end{fact}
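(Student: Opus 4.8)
The plan is to reduce the statement about the valued-field extension to the corresponding statements about the fully embedded sorts $k$ and $\Gamma$, using what we know about $\mathsf{RCF}$ and $\mathsf{DOAG}$, and then glue them together with the orthogonality of $k$ and $\Gamma$ (Fact~\ref{fact:xcvfstart}\eqref{point:xcvfwort}). The inclusion $\supseteq$ is trivial in both displayed equalities — $k(k(Ma),k(Mb))\subseteq k(Mab)$ and likewise for $\Gamma$ — so the content is the inclusion $\subseteq$. I will treat the two equalities in parallel, since the arguments are formally identical once one has the relevant ``no new sort points'' input for the building blocks.

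\textbf{Key steps.} First I would observe that it suffices to prove the statement when $p$ and $q$ are each single-variable, i.e.\ one coordinate from $K$: indeed, $p\otimes q$ is built by iterating $\otimes$ over coordinates, and an induction on $\abs x+\abs y$ reduces the general case to the one-variable-on-each-side case, provided one is slightly careful to keep the base $M$ maximally complete (which it is, by hypothesis). Next, fix $(a,b)\models p\otimes q$ with $a,b\in K$. The key model-theoretic fact is that, by maximal completeness of $M$, the extension $M\langle a\rangle$ (the real closure, or $\dcl$, of $Ma$) is an \emph{immediate-type-controlled} extension in the sense that its residue field is generated over $k(M)$ by $k(Ma)$ and its value group over $\Gamma(M)$ by $\Gamma(Ma)$ — this is exactly the kind of statement proved for $\mathsf{RCF}$ in Section~\ref{sec:rcf} (via separated bases, Fact~\ref{fact:sepbas}), transported to $\mathsf{RCVF}$ through full embeddedness. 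Then I would run the same separated-basis argument \emph{one level up}: since $b\models q\invext\monster$, and in particular $b$ realises the nonforking/invariant extension of $q$ over $M\langle a\rangle$, the field $M\langle a\rangle$ plays the role of the ``ground'' maximally complete field for $b$ — here I need that $M\langle a\rangle$ is again maximally complete, or at least that the separated-basis lemma still applies, which is where the definition of $\otimes$ (realisations in a bigger monster, $b$ independent from $a$ over $\monster$) does the work. Concretely: any element of $k(Mab)$ is $\operatorname{Res}$ of a ratio of polynomials in $a,b$ over $M$; expand in a separated basis of the relevant finite-dimensional $M\langle a\rangle$-subspace of $M\langle a\rangle(b)$, use that $b$'s residue and value data over $M\langle a\rangle$ come from $k(M\langle a\rangle b)$ and $\Gamma(M\langle a\rangle b)$ respectively, then descend $a$'s contribution into $k(Ma)$, $\Gamma(Ma)$ using the first step. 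Combined with the orthogonality $k\wort\Gamma$ (so that no residue-field point can be built using genuinely mixed value-group information and vice versa), this yields $k(Mab)=k(k(Ma),k(Mb))$, and symmetrically for $\Gamma$.

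\textbf{Main obstacle.} The delicate point is the ``one level up'' step: I need $M\langle a\rangle$ (or a suitable maximally complete hull of it inside the big monster that does not disturb $q$'s invariance) to serve as the maximally complete base over which $b$ realises a type all of whose residue- and value-group content is already visible in $k(M\langle a\rangle b)$, $\Gamma(M\langle a\rangle b)$. This is precisely the tension between ``$M$ maximally complete'' (given) and ``$M\langle a\rangle$ maximally complete'' (generally false). The way around it is to invoke that $p$ is invariant and $b$ is chosen with $\tp(b/\monster)$ the unique invariant extension, so the relevant separated-basis computation can be carried out over $M$ itself with parameters from $\dcl(Ma)$ absorbed as extra variables — exactly the maneuver used in the Claim inside the proof of Proposition~\ref{pr:rcfgen}, where saturation of the maximally complete $M$ lets one replace a tuple $d\in\monster$ by a conjugate $\tilde d\in M$ without changing the type of $c$. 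So the real work is to set up that replacement carefully in the two-sided setting, and then the rest is bookkeeping with separated bases and the orthogonality of $k$ and $\Gamma$.
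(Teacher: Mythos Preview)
The paper's proof is very short and takes an entirely different route: it observes that invariance of $p$ and $q$ over $M$ forces $k(Ma)$ and $k(Mb)$ to be linearly disjoint over $k(M)$ and $\Gamma(Ma)\cap\Gamma(Mb)=\Gamma(M)$, then applies a purely valuation-theoretic result, \cite[Proposition~12.11(ii)]{hhm}, to $\dcl(Ma)$ and $\dcl(Mb)$. That proposition (which needs maximal completeness of $M$) directly gives $k(Mab)=F(k(Ma),k(Mb))$ and $\Gamma(Mab)=G(\Gamma(Ma),\Gamma(Mb))$, where $F$, $G$ denote the generated field and group respectively; a trivial sandwich then upgrades these to $\dcl$-closures in the sorts. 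No separated-basis computation, no induction on $\abs x+\abs y$, no orthogonality of $k$ and $\Gamma$, and nothing from Section~\ref{sec:rcf} is used.

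Your approach has a genuine gap at exactly the point you flag as the main obstacle. The replacement maneuver from the Claim in Proposition~\ref{pr:rcfgen} relies on $M$ being $\abs{M_0}^+$-saturated (arranged there via Fact~\ref{fact:folk}), not merely maximally complete. Here the only hypothesis on $M$ is maximal completeness, so you cannot in general find $\tilde d\in M$ with $\tilde d\equiv_{M_0H} d$. You also conflate two different valuations: the separated-basis machinery of Section~\ref{sec:rcf} is set up for the \emph{Archimedean} valuation on a pure real closed field, whereas here the relevant valuation is the given one in $\mathsf{RCVF}$. Finally, the crucial model-theoretic content you do not mention is that invariance yields the linear disjointness and trivial value-group intersection needed as \emph{input} to \cite[Proposition~12.11(ii)]{hhm}; once you have that, the external lemma does all the work, and there is no need to re-run any separated-basis argument inside this proof.
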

\begin{proof}[Proof sketch]
  For $X\subseteq k$ and $Y\subseteq \Gamma$,  denote by $F(X)$ the field generated by $X$, and by $G(Y)$ the group generated by $Y$.   By invariance, $k(M a)$ and $k(M b)$ are linearly disjoint over $k(M)$, and $\Gamma(M a)\cap\Gamma(M b)=\Gamma(M)$. Therefore, we may apply~\cite[Proposition~12.11(ii)]{hhm} to $\dcl(M a)$ and $\dcl(M b)$, and obtain
    \begin{equation*}
    k(M ab)=F\bigl(k(M a), k(M b)\bigr)\qquad 
    \Gamma(M ab)=G\bigl(\Gamma(M a), \Gamma(M b)\bigr)
  \end{equation*}
  To conclude, observe that 
  \begin{gather*}
    k\bigl(k(M a), k(M b)\bigr)\subseteq  k(M ab)=  F\bigl(k(M a), k(M b)\bigr)\subseteq k\bigl(k(M a), k(M b)\bigr)\phantom{\qedsymbol}
    \\
    \Gamma\bigl(\Gamma(M a), \Gamma(M b)\bigr)\subseteq  \Gamma(M ab)=  G\bigl(\Gamma(M a), \Gamma(M b)\bigr)\subseteq \Gamma\bigl(\Gamma(M a), \Gamma(M b)\bigr)\qedhere
\end{gather*}
\end{proof}

We now proceed to compute $\invtilde$ in $\mathsf{RCVF}$.  The statement and proof of Theorem~\ref{thm:rcvf} below are essentially~\cite[Corollary~12.14]{hhm}, except the latter  worked in $\mathsf{ACVF}$, considered $\invbar$ only,  took its well-definedness and commutativity for granted, and used \cite[Corollary~12.12]{hhm}, which in the case of $\mathsf{RCVF}$ is replaced by the fact below.

\begin{fact}[\!\!{\cite[Corollary~2.8]{ehm}}]\label{fact:rcvfgen}
 Let $M,B, \monster$ be contained in a monster model  of $\mathsf{RCVF}$.  Let $M$ be maximally complete, $M\subseteq B=\dcl(B)$, and $M\subseteq \monster$, with $k(B), k(\monster)$ linearly disjoint over $k(M)$, and $\Gamma(B)\cap \Gamma(\monster)=\Gamma(M)$. Then $\tp(\monster/M, k(B),\Gamma(B))\proves \tp(\monster/B)$. 
 \end{fact}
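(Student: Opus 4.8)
The plan is to run the ``separated basis plus quantifier elimination'' argument of the proof of Proposition~\ref{pr:rcfgen}, now carrying along the extra valuative data. By quantifier elimination for $\mathsf{RCVF}$, and since every element of $\monster$ is interdefinable over the field sort (the residue and valuation maps being surjective), it suffices to fix a finite tuple $a$ of field-sort elements of $\monster$ and to show that $\tp(a/M,k(B),\Gamma(B))\proves\tp(a/B)$; concretely, that this type decides, for all polynomials $P,Q$ over $B$ in the variables of $a$, the sign of $P(a)$, the comparison $v(P(a))\le v(Q(a))$, and the residue comparison between $P(a)$ and $Q(a)$. Observe that $\tp(a/M,k(B),\Gamma(B))$ already pins down the signs of $\phi(a)$ for $M$-definable $\phi$, the values $v(\phi(a))$ relative to $\Gamma(B)$, and the residues of ratios $\phi(a)/\psi(a)$ relative to $k(B)$, for all $M$-definable $\phi,\psi$; the whole task is to reduce the general atomic conditions to data of this shape.

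Fix $a$ and finitely many such $P,Q$, let $V$ be the finite-dimensional $M$-subspace of $\monster$ spanned by the monomials in $a$ occurring in them, and — since $M$ is maximally complete — use Fact~\ref{fact:sepbas} to pick a separated basis $e_0,\dots,e_n$ of $V$; write $e_j=h_j(a)$ with $h_j$ $M$-definable and $a^\alpha=\sum_j\lambda_{\alpha,j}(a)e_j$ with $\lambda_{\alpha,j}(a)\in M$, so that each $P(a)$ rewrites as $\sum_j d_j e_j$ with $d_j\in B$. Applying Fact~\ref{fact:sepbas} once more to a separated basis over $M$ of the finite-dimensional $M$-subspace of $B$ spanned by the (fixed) coefficients of the $P$'s, one further expresses $d_j=\sum_t\rho_{t,j}(a)\,g_t$ with $g_t\in B$ fixed and $\rho_{t,j}$ $M$-definable, so that the valuations, residues and signs of the $d_j$ are controlled by $M$-data together with the fixed values $v(g_t)\in\Gamma(B)$, $\operatorname{res}(g_t/g_s)\in k(B)$, and $\operatorname{sign}(g_t)$. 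The \emph{key lemma}, which is exactly where the hypotheses are used, is that $e_0,\dots,e_n$ remains a separated basis of the $B$-vector space it spans: linear disjointness of $k(B)$ and $k(\monster)$ over $k(M)$ forbids new $k(B)$-linear dependences among the residues of the $e_j$ within a value class, and $\Gamma(B)\cap\Gamma(\monster)=\Gamma(M)$ forbids new coincidences of value classes after translating by $\Gamma(B)$; this is the valued-field statement \cite[Proposition~12.11(i)]{hhm}.

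Granting the key lemma, $v(P(a))=\min_j\bigl(v(d_j)+v(e_j)\bigr)$, in which the $v(d_j)$ are controlled as above (hence decided by $\tp(a/M,\Gamma(B))$) and the $v(e_j)\in\Gamma(\monster)$ lie in $\tp(a/M)$, so the valuation comparisons are decided; normalising $P(a)$ and $Q(a)$ by a dominant term $d_{j_0}e_{j_0}$, the residue comparison between them becomes a comparison of rational expressions in the residues $\operatorname{res}(d_j/d_{j_0})$ (built from $k(B)$ and from $\operatorname{res}$ of $M$-definable functions of $a$) and $\operatorname{res}(e_j/e_{j_0})\in k(\monster)$, hence is likewise decided. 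For the sign of $P(a)$ one writes $P(a)=d_{j_0}e_{j_0}\bigl(1+\sum_{j\ne j_0}(d_je_j)/(d_{j_0}e_{j_0})\bigr)$: the sign is the product of $\operatorname{sign}(d_{j_0})$ (controlled as above), $\operatorname{sign}(e_{j_0})=\operatorname{sign}(h_{j_0}(a))$ (in $\tp(a/M)$), and the sign of the residue-field element $1+\sum_{j\ne j_0}\operatorname{res}\bigl((d_je_j)/(d_{j_0}e_{j_0})\bigr)$, and an $\mathbb R$-infinitesimal estimate followed by a compactness argument of the exact shape of~\eqref{eq:estimate} turns the latter into a formula over $M,k(B),\Gamma(B)$ forcing the desired sign. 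Running this over all atomic subformulas and applying compactness yields $\tp(\monster/M,k(B),\Gamma(B))\proves\tp(\monster/B)$.

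The main obstacle is the key lemma: fusing the two independence hypotheses — linear disjointness of the residue fields over $k(M)$ and triviality of the $\Gamma$-intersection — into the single assertion that a maximally-complete-separated basis survives base change to $B$ is the one genuinely valuation-theoretic step, lying outside the o-minimal machinery of Section~\ref{sec:rcf}. A secondary point requiring care, and the reason the argument is a little heavier than that of Proposition~\ref{pr:rcfgen}, is that $M$ is not assumed saturated here, so the interaction of $a$ with the field-sort parameters of $B$ must be handled by an auxiliary separated basis of an $M$-subspace of $B$, rather than by pushing those parameters into $M$ via saturation.
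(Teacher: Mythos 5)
First, a point of order: the paper does not prove this statement at all --- it is imported verbatim as Fact~\ref{fact:rcvfgen} from \cite[Corollary~2.8]{ehm} --- so there is no internal proof to compare yours against; what you have written is an attempted reconstruction of the Ealy--Haskell--Ma\v{r}\'ikov\'a proof. At the level of strategy your reconstruction is the right one, and it is theirs: quantifier elimination, a separated basis over the maximally complete $M$ (Fact~\ref{fact:sepbas}), persistence of separatedness over $B$ extracted from the residue-field linear disjointness and the condition $\Gamma(B)\cap\Gamma(\monster)=\Gamma(M)$, and then a term-by-term determination of valuations, residues and signs; this in turn adapts \cite[Chapter~12]{hhm} from $\mathsf{ACVF}$ to the ordered setting.

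As a proof, however, the sketch has genuine gaps, concentrated exactly where the real work lies. (1) Your ``key lemma'' is delegated to \cite[Proposition~12.11]{hhm}, a statement about $\mathsf{ACVF}$; the purely valuation-theoretic separatedness transfer is indeed harmless, but passing from separatedness to entailment of the full \emph{ordered} type is precisely the content of the cited result of \cite{ehm}, so at the crucial point you are assuming what needs to be proved. (2) Your sign step is broken as written: the residue field of $\monster$ here is a large model of $\mathsf{RCF}$, not $\mathbb R$, so no ``$\mathbb R$-infinitesimal estimate'' in the shape of~\eqref{eq:estimate} is available. The correct mechanism is convexity of the valuation ring: since $v(P(a))=\min_j v(d_jh_j(a))$, the normalised sum is a unit with nonzero residue, and a unit with positive residue is positive. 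Moreover, the entailment has to be witnessed by formulas whose parameters lie in $M\cup k(B)\cup\Gamma(B)$, whereas your residue and sign conditions are written using the field-sort coefficients $d_j\in B$; since $\operatorname{res}\bigl((d_je_j)/(d_{j_0}e_{j_0})\bigr)$ is \emph{not} determined by the residues of the two factors (neither factor need be a unit), rewriting these conditions over the allowed parameter set is a nontrivial step that you do not address, and your auxiliary separated basis inside $B$ does not do this job. (3) Finally, $B=\dcl(B)$ is taken in a language with imaginary sorts, and the paper applies the Fact with $B=\dcl(Mb)$ for a possibly imaginary tuple $b$ (see the proof of Theorem~\ref{thm:rcvf}); deciding atomic formulas whose parameters come from the sorts $K$, $k$, $\Gamma$ does not by itself decide $\tp(\monster/B)$, so imaginary parameters must be handled, e.g.\ via the resolution results of \cite[Section~4]{ehm} or \cite[Chapter~11]{hhm}.
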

Recall that we know how to characterise $\invtildeof{k(\monster)}$ and $\invtildeof{\Gamma(\monster)}$: see   Theorem~\ref{thm:rcf}  and Theorem~\ref{thm:doag} respectively.
\begin{thm}\label{thm:rcvf}
  In $\mathsf{RCVF}$ the domination monoid is well-defined, and we have \[
    \invbar=    \invtilde\cong\invtildeof{k(\monster)}\oplus \invtildeof{\Gamma(\monster)}
  \]
\end{thm}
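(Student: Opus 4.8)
The plan is to reduce everything to the two fully embedded sorts $k$ and $\Gamma$ — where the domination monoid is already understood (Theorem~\ref{thm:rcf} and Theorem~\ref{thm:doag}) — and then glue the two pictures together along weak orthogonality. First I would set the stage. Working in a language with elimination of quantifiers and imaginaries and sorts $K,k,\Gamma$, Fact~\ref{fact:xcvfstart} gives $k\models\mathsf{RCF}$ and $\Gamma\models\mathsf{DOAG}$, both fully embedded; write $X_k$ for the set of invariant convex subrings of $k(\monster)$ and $X_\Gamma$ for the set of invariant convex subgroups of $\Gamma(\monster)$, so that in these two structures $\invtilde=\invbar$ is the free commutative idempotent monoid on $X_k$, resp.\ $X_\Gamma$, with $X_k$ (resp.\ $X_\Gamma$) a maximal set of pairwise weakly orthogonal nonrealised invariant $1$-types. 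By Fact~\ref{fact:stabemb} the full embeddedness of $k(\monster)$ and $\Gamma(\monster)$ yields $\otimes$-embeddings $\iota_k\colon\invtypes(k(\monster))\to\invtypes(\monster)$ and $\iota_\Gamma\colon\invtypes(\Gamma(\monster))\to\invtypes(\monster)$, injective on invariant types, preserving and reflecting $\doms$, $\equidom$, $\wort$ and $\nwort$, and such that (by point~\ref{point:xcvfwort} of Fact~\ref{fact:xcvfstart}) $\iota_k(\bar p)\wort\iota_\Gamma(\bar q)$ — hence, by Remark~\ref{rem:wortobviousfacts}, these commute — for all $\bar p,\bar q$. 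Identifying $X_k,X_\Gamma$ with their images and setting $X:=X_k\sqcup X_\Gamma$, this is a set of pairwise weakly orthogonal nonrealised invariant $1$-types of $\monster$, the union being genuinely disjoint since a nonrealised type cannot be both weakly orthogonal and domination-equivalent to another (Fact~\ref{fact:wortpreserved}).

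The technical core is to show that every $p\in\invtypes(\monster)$ is equidominant to $\iota_k(\bar p_k)\otimes\iota_\Gamma(\bar p_\Gamma)$ for suitable $\bar p_k\in\invtypes(k(\monster))$, $\bar p_\Gamma\in\invtypes(\Gamma(\monster))$; this is the $\mathsf{RCVF}$ analogue of~\cite[Theorem~5.2.22]{mythesis}. Write $p=\tp(c/\monster)$ and enlarge its base to a small maximally complete $M\smallprec\monster$ (which exists: complete the base model at the Archimedean valuation, e.g.\ inside a Hahn field over its residue field, and re-embed into $\monster$). Let $\sigma,\gamma$ be finite tuples generating $k(\dcl(Mc))$ over $k(M)$ and $\Gamma(\dcl(Mc))$ over $\Gamma(M)$ — finite, by the inequalities bounding the residue degree and rational rank of a finite-dimensional extension. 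Both have $M$-invariant type over $\monster$ by Fact~\ref{fact:invariancepreserved}; by full embeddedness $\tp(\sigma/\monster)=\iota_k(\bar p_k)$ and $\tp(\gamma/\monster)=\iota_\Gamma(\bar p_\Gamma)$ for some $\bar p_k,\bar p_\Gamma$, and since these two global types are weakly orthogonal, $\tp(\sigma\gamma/\monster)=\iota_k(\bar p_k)\otimes\iota_\Gamma(\bar p_\Gamma)=:q$ (Remark~\ref{rem:wortobviousfacts}). As $\sigma,\gamma$ are images of $c$ under $M$-definable functions, Example~\ref{eg:deqpushf} together with weak orthogonality gives $p\doms q$. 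For the reverse direction I apply Fact~\ref{fact:rcvfgen} with $B:=\dcl(Mc)$: linear disjointness of $k(B)$ from $k(\monster)$ over $k(M)$ is automatic, since $k(B)/k(M)$ is a regular field extension ($k(M)$ being real closed is relatively algebraically closed in any real closed extension), and $\Gamma(B)\cap\Gamma(\monster)=\Gamma(M)$ follows from $M$-invariance of $p$ (a realised $M$-invariant type in the o-minimal sort $\Gamma$ is realised over $M$). Thus $\tp(\monster/M\sigma\gamma)\proves\tp(\monster/Mc)$, and the standard translation of such an entailment into a domination statement for global invariant types shows that the small type $\tp(\sigma\gamma c/M)$ witnesses $q\doms p$; the same type also witnesses $p\doms q$, so $p\equidom q$.

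From this decomposition, together with Proposition~\ref{pr:rcfgen}/Theorem~\ref{thm:rcf} and Theorem~\ref{thm:doag} (which write $\bar p_k$, resp.\ $\bar p_\Gamma$, as a product of members of $X_k$, resp.\ $X_\Gamma$) and the fact that $\iota_k,\iota_\Gamma$ are $\otimes$-homomorphisms, every $p\in\invtypes(\monster)$ satisfies $p\equidom\bigotimes_{z\in S_p}\hat z$ for a finite $S_p\subseteq X$, where $\hat z$ denotes the image of $z$ in $\invtypes(\monster)$. Then I would run the same bookkeeping as in Section~\ref{sec:reduction}: $\mathsf{RCVF}$ is $\mathsf{NIP}$, indeed distal, so Lemma~\ref{lemma:distalperpotimes} applies; combining it with Fact~\ref{fact:wortpreserved}, Fact~\ref{fact:atleastontheleft}, and the idempotency of $1$-types (which transfers from $\mathsf{RCF}$ and $\mathsf{DOAG}$ through $\iota_k,\iota_\Gamma$, using Corollary~\ref{co:idempotent}), one shows $S_p=\set{z\in X\mid p\doms\hat z}$ — so $S_p$ depends only on the $\domeq$-class of $p$, whence $\domeq$ and $\equidom$ coincide on $\invtypes(\monster)$ — and $S_{p\otimes q}=S_p\cup S_q$ for all $p,q$ — so $\otimes$ respects this equivalence and $\invbar=\invtilde$ is well-defined. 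Hence $\class p\mapsto S_p$ is an isomorphism $(\invtilde,\otimes,\doms,\wort)\cong(\pfin(X),\cup,\supseteq,D)$, and since $\pfin(X)=\pfin(X_k)\times\pfin(X_\Gamma)$ corresponds, via $\iota_k$ and $\iota_\Gamma$ (Fact~\ref{fact:stabemb}), to $\invtildeof{k(\monster)}\oplus\invtildeof{\Gamma(\monster)}$, the theorem follows.

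I expect the main obstacle to be the decomposition step, and within it two points: first, verifying that the hypotheses of Fact~\ref{fact:rcvfgen} really do hold (the residue-field side is cheap, but the value-group condition genuinely needs $M$-invariance); second, and more delicate, the passage from the entailment $\tp(\monster/M\sigma\gamma)\proves\tp(\monster/Mc)$ to the statement $q\doms p$ about \emph{global} invariant types — i.e.\ checking that $\tp(\sigma\gamma c/M)$ actually witnesses domination, which is the subtlety resolved in the $\mathsf{ACVF}$ case in~\cite[Theorem~5.2.22]{mythesis} and which transfers \emph{mutatis mutandis}. Everything in the last paragraph is then a formal consequence of the direct-sum-of-free-commutative-idempotent-monoids structure.
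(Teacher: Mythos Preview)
Your approach and the paper's agree on the decomposition step $p\equidom p_k\otimes p_\Gamma$ via Fact~\ref{fact:rcvfgen}, but diverge on how to establish that $\otimes$ respects $\equidom$. You push the decomposition down to products of $1$-types from $X=X_k\sqcup X_\Gamma$ and then invoke distality of $\mathsf{RCVF}$ to run the Section~\ref{sec:reduction} bookkeeping, in particular using Lemma~\ref{lemma:distalperpotimes} to obtain $S_{p\otimes q}=S_p\cup S_q$. The paper instead proves directly that $p\otimes q\equidom q\otimes p$ for all invariant $p,q$: Fact~\ref{fact:dclsorts} lets one take $(p\otimes q)_k=p_k\otimes q_k$ and $(p\otimes q)_\Gamma=p_\Gamma\otimes q_\Gamma$, so commutativity modulo $\equidom$ in $\mathsf{RCF}$ and $\mathsf{DOAG}$, together with $k$--$\Gamma$ orthogonality, yields commutativity in $\mathsf{RCVF}$, and Corollary~\ref{co:commthenwd} finishes. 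The paper's route is self-contained; yours is more parallel to the o-minimal treatment but leans on an external ingredient.

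That ingredient is where the gap lies. You assert that $\mathsf{RCVF}$ is distal but offer no argument, and the paper does not establish this. The claim happens to be true, but it is not a triviality, and your argument genuinely depends on it: the inclusion $S_{p\otimes q}\subseteq S_p\cup S_q$ needs that $p\wort\hat z$ and $q\wort\hat z$ imply $p\otimes q\wort\hat z$ for \emph{arbitrary} invariant $p,q$, which is precisely the content of Lemma~\ref{lemma:distalperpotimes}; restricting attention to types living in $k$ or $\Gamma$ does not suffice at this point. A smaller issue: your finiteness argument for $\sigma,\gamma$ (``residue degree and rational rank of a finite-dimensional extension'') presupposes that $c$ lies in the field sort $K$. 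For tuples with imaginary coordinates one must first reduce to $K$, as the paper does via~\cite[Corollary~11.9 and Corollary~11.16]{hhm} in the decomposition step and via~\cite[Theorem~4.5]{ehm} inside the commutativity claim.
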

\begin{proof}
  Let $p(x)\in \invtypes(\monster, M)$; by~\cite[Proposition~3.6 and Corollary~4.14]{vddvf}, up to enlarging $M$ not beyond size $\beth_1(\abs M)$, we may assume it is maximally complete.  In some $\monster_1\satext \monster$, let $b\models p$ and $B=\dcl(M b)$. By $M$-invariance of $p$, the fields $k(B), k(\monster)$ are linearly disjoint over $k(M)$, and  $\Gamma(B)\cap \Gamma(\monster)=\Gamma(M)$. Apply Fact~\ref{fact:rcvfgen},  recalling that its conclusion is  to be understood modulo the elementary diagram $\opsc{ed}(\monster_1)$.  By making explicit which parts of it we are using, we find that,  working just modulo $T$,
  \[
    \tp(\monster, M, k(B),\Gamma(B)/\emptyset)\cup \tp(k(B), \Gamma(B), B,M/\emptyset)\proves \tp(\monster,B/\emptyset)
  \]
  When we work modulo the elementary diagram $\opsc{ed}(\monster)$, this becomes
  \begin{equation}
    \tp(k(B),\Gamma(B)/\monster)\cup\tp(k(B),\Gamma(B),B/M)\proves \tp(B/\monster)\label{eq:decreph}
  \end{equation}
Recall that $B=\dcl(M b)$;  since $b$ is finite, by (the proofs of)~\cite[Corollary~11.9 and Corollary~11.16]{hhm}  there is a finite tuple $\tilde b$ from $K$ such that  $K(M b)=K(M \tilde b)$. By~\cite[Proposition~8.1(i)]{mellor},  inside $K$, $\dcl$ coincides with $\dcl$ in the sense of the restriction to the ordered field language.   It follows that there are finite tuples $b_k$ and $b_\Gamma$, in a cartesian power of $k(B)$ and $\Gamma(B)$ respectively, such that $k(B)=k(k(M) b_k)$ and $\Gamma(B)=\Gamma(\Gamma(M)b_\Gamma)$. Note that there are $M$-definable functions sending $b$ to  $b_k$ and $b_\Gamma$. Let $p_k\coloneqq\tp(b_k/\monster)$ and $p_\Gamma\coloneqq\tp(b_\Gamma/\monster)$; since $p_k\wort p_\Gamma$, we have $p_k\otimes p_\Gamma=p_k\cup p_\Gamma=p_\Gamma\otimes p_k$. Define  $r\coloneqq\tp(b_k,b_\Gamma,b/M)$.  By~\eqref{eq:decreph} we have  $p_k\cup p_\Gamma\cup r\proves p$.   Since $p\cup r\proves p_k\cup p_\Gamma$, as $r$ says the latter is a pushforward of $p$, we obtain $p\equidom p_k\otimes p_\Gamma$. 
   \begin{claim}
   For all $p,q\in\invtypes(\monster)$, we have $p\otimes q\equidom q\otimes p$.   
 \end{claim}
 \begin{claimproof}
Let $p,q\in \invtypes(\monster, M)$. Again by~\cite[Corollary~4.14]{vddvf}, we may assume that $M$ is maximally complete.    Assume first that all coordinates of both $p$ and $q$  are in $K$, and choose suitable $p_k,q_k,p_\Gamma, q_\Gamma$ as above; these are not unique, but since we need to use them in Fact~\ref{fact:rcvfgen}, we only care about their realisations up to definable closure. The same applies to e.g.~$(p\otimes q)_k$, and  Fact~\ref{fact:dclsorts}, 
   ensures that we may take $(p\otimes q)_k\coloneqq p_k \otimes q_k$ and $(p\otimes q)_\Gamma\coloneqq p_\Gamma \otimes q_\Gamma$, and  have $p\otimes q\equidom p_k\otimes q_k\otimes p_\Gamma\otimes q_\Gamma$. By applying the same argument to $q\otimes p$, \emph{and using the same $p_k, q_k, p_\Gamma, q_\Gamma$}, we obtain 
\begin{equation}\label{eq:fermino}
p\otimes q\equidom p_k\otimes q_k\otimes p_\Gamma\otimes q_\Gamma \qquad  q\otimes p\equidom q_k\otimes p_k\otimes q_\Gamma\otimes p_\Gamma
\end{equation}
Before concluding that these two types are equidominant, we show that a similar situation may be arranged in the case where some coordinates of $p,q$ are in an imaginary sort. By~\cite[Theorem~4.5]{ehm}, if $a$ is an imaginary tuple then there is a tuple $\tilde a$ from the sort $K$ such that $a\in\dcl(M \tilde a)$ and
\begin{equation}
k(M a)=k(M \tilde a) \qquad \Gamma(M a)=\Gamma(M \tilde a)\label{eq:resolve}
\end{equation}
Let $a\models p$, let  $f$ be an $M$-definable function such that $f(\tilde a)=a$,  denote $\tilde p\coloneqq\tp(\tilde a/\monster)$, and observe that $f_*\tilde p=p$.  Given $b\models q$, define $\tilde q$ and $g$ analogously.

Let $(a', b')\models \tilde p\otimes \tilde q$, and let $a\coloneqq f(a')$ and $b\coloneqq g(b')$. Since $a'\models \tilde p\invext \monster b'$ and $a=f(a')$,  by~\cite[Lemma~1.13]{invbartheory} we have  $a\models p\invext \monster b'$. In particular $a\models p\invext \monster b$, and therefore  $(a,b)\models p\otimes q$. Since the fact that~\eqref{eq:resolve} holds is a property of $\tilde p$, and similarly for $\tilde q$, we may take  $\tilde a\coloneqq a'$ and $\tilde b\coloneqq b'$. By Fact~\ref{fact:dclsorts} and~\eqref{eq:resolve},
\[
  k(k(M a)k(M b))\subseteq k(M ab)\subseteq k(M\tilde a \tilde b)=k(k(M \tilde a)k(M \tilde b))=k(k(M a)k(M b))
\]
and similarly for $\Gamma$.  Therefore we may take $\tilde a\tilde b$ as  $\widetilde{ab}$, that is,
\begin{equation}
    k(M ab)=k(M \tilde a\tilde b)   \qquad
    \Gamma(M ab)=\Gamma(M \tilde a\tilde b)\label{eq:resolved}
\end{equation}
Let $B\coloneqq \dcl(M \tilde{a}\tilde{b})$, and let $\tilde p_k,\tilde q_k, \tilde p_\Gamma, \tilde q_\Gamma$ be defined as above. By~\eqref{eq:fermino}, $\tilde p\otimes \tilde q$ is equidominant to $\tilde p_k\otimes\tilde q_k\otimes \tilde p_\Gamma\otimes \tilde q_\Gamma$ and, using again~\eqref{eq:decreph}, so is $p\otimes q$, because by~\eqref{eq:resolved} we may take $\tilde p_k\otimes\tilde q_k$ as $(p\otimes q)_k$, and similarly for $\Gamma$.  Use \emph{the same} four types, and obtain similarly that $\tilde q\otimes \tilde p$ and $q\otimes p$ are equidominant to $\tilde q_k\otimes\tilde p_k\otimes \tilde q_\Gamma\otimes \tilde p_\Gamma$. Therefore, if we set $p_k\coloneqq \tilde p_k$, and similarly for the other three types, then~\eqref{eq:fermino} holds for imaginary tuples as well. 

By Theorem~\ref{thm:rcf} and Theorem~\ref{thm:doag} the product $\otimes$  is commutative modulo $\equidom$ in $\mathsf{RCF}$ and $\mathsf{DOAG}$.  Using this, Fact~\ref{fact:stabemb},  and Fact~\ref{fact:xcvfstart}, we obtain
  \begin{multline*}
    p\otimes q\equidom  p_k\otimes q_k\otimes p_\Gamma\otimes q_\Gamma\equidom q_k\otimes p_k\otimes q_\Gamma\otimes p_\Gamma\equidom q\otimes p\qedhere
  \end{multline*}
\end{claimproof}
By the Claim and Corollary~\ref{co:commthenwd},  $\otimes$ respects both $\doms$ and $\equidom$.  Because $k(\monster)$ and $\Gamma(\monster)$ are fully embedded in $\monster$, by Fact~\ref{fact:stabemb} we have embeddings $\invbarof{k(\monster)}\into \invbar$ and $\invbarof{\Gamma(\monster)}\into \invbar$.  By point~\ref{point:xcvfwort} of Fact~\ref{fact:xcvfstart} and Fact~\ref{fact:wortpreserved} we  have an embedding $\invbarof{k(\monster)}\oplus \invbarof{\Gamma(\monster)}\into \invbar$. By the decomposition  $p\equidom p_k\otimes p_\Gamma$, this embedding is surjective.

The only statement left to prove is that  $\domeq$ equals $\equidom$. Recall that this is the case in $\mathsf{RCF}$ and $\mathsf{DOAG}$. Suppose that $p_0\nequidom p_1$. By the  isomorphism $\invbar\cong\invbarof{k(\monster)}\oplus \invbarof{\Gamma(\monster)}$, together with  Theorem~\ref{thm:rcf}  and Theorem~\ref{thm:doag}, there are $i<2$ and a $1$-type $q$ in either  $\invtypes_\Gamma(\monster)$ or $\invtypes_k(\monster)$ such that $p_i\doms q$ but $p_{1-i}\ndoms q$. Therefore, $p_0\ndomeq p_1$, and we are done.
\end{proof}

\section{Open questions}\label{sec:questions}
\begin{question}
  Let $T$ be arbitrary, and suppose that $p_0\wort q$ and $p_1\wort q$. Is it true that $p_0\otimes p_1 \wort q$? What if we also assume $\mathsf{NIP}$?
\end{question}
 The answer is known to be positive under distality (Lemma~\ref{lemma:distalperpotimes}), as well as under stability, because for global types in stable theories weak orthogonality coincides with orthogonality.  The next question asks whether the Idempotency Lemma can be improved.
\begin{question}
In an o-minimal $T$, let  $p(x)\in \invtypes_1(\monster, M)$  have small cofinality on the right. If $b^0\models p$, is $p(\dcl(M b^0))$ cofinal in $p(\dcl(\monster b^0))$?
\end{question}

A large part  of the material in Section~\ref{sec:reduction} works under the sole assumption of o-minimality, and I do not know of any o-minimal theory not satisfying Property~\ref{property:1tpgen}. I have not seen a proof that this always holds either and, while this is not the only possible approach to the problem, the reduction to Property~\ref{property:theominimalthing} is a step towards such a proof.
\begin{question}
Does  Property~\ref{property:theominimalthing} hold in every o-minimal theory, or at least in o-minimal expansions of $\mathsf{DOAG}$? More generally, in the setting of Proposition~\ref{pr:omin1typesgenerate}, does $\pi_M(x)\proves p(x)$?
\end{question}
We may also ask if some control over bases of invariance is possible.
  \begin{question}
    Let $T$ be o-minimal, and let  $p$ be a global $M$-invariant type which is domination-equivalent to a product of invariant $1$-types. Is $p$ necessarily domination-equivalent to a product of $M$-invariant $1$-types?
  \end{question}

We saw that in o-minimal  groups and  fields with no extra structure, generators of the domination monoid correspond to invariant convex subgroups and subrings. Of course the particular description of a set of generators will depend on the particular theory at hand, so we state the following problem for a particular structure; we phrase it in a way that makes sense even if Property~\ref{property:1tpgen} turns out to fail.
\begin{prob}
Identify a nice maximal set of pairwise weakly orthogonal invariant $1$-types in monster models of the theory of $\mathbb R_\mathrm{exp}$.
\end{prob}

\small


\begin{thebibliography}{Men20b}

\bibitem[Bau82]{baur}
Walter Baur.
\newblock Die {Theorie} der {Paare} reell abgeschlossner {K{\"o}rper}.
\newblock In {\em Logic and algorithmic (in honour of E. Specker)}, volume~30
  of {\em Monographie de l'Enseignement Math{\'e}matique}, pages 25--34.
  Universit{\'e} de {Gen\`eve}, 1982.

\bibitem[CK90]{changkeisler}
Chen~C. Chang and Howard~J. Keisler.
\newblock {\em Model Theory}, volume~73 of {\em Studies in Logic and the
  Foundations of Mathematics}.
\newblock Elsevier, 1990.
\newblock 3rd edition.

\bibitem[Dol04]{dolich}
Alfred Dolich.
\newblock Forking and independence in o-minimal theories.
\newblock {\em The Journal of Symbolic Logic}, 69(1):215--240, 2004.

\bibitem[EHM19]{ehm}
Clifton Ealy, Deirdre Haskell, and Jana Maříková.
\newblock Residue field domination in real closed valued fields.
\newblock {\em Notre Dame Journal of Formal Logic}, 60(3):333--351, 2019.

\bibitem[Ehr95]{ehrlich}
Philip Ehrlich.
\newblock Hahn's {\"uber} die nichtarchimedischen {Gr\"ossensysteme} and the
  development of the modern theory of magnitudes and numbers to measure them.
\newblock In Dirk Van~Dalen, Donald Davidson, Theo~A.F. Kuipers, Patrick
  Suppes, and Jan Wole\'nski, editors, {\em From Dedekind to G{\"o}del: Essays
  on the Development of the Foundations of Mathematics}, pages 165--213.
  Springer, Dordrecht, 1995.

\bibitem[HHM08]{hhm}
Deirdre Haskell, Ehud Hrushovski, and Dugald Macpherson.
\newblock {\em Stable Domination and Independence in Algebraically Closed
  Valued Fields}, volume~30 of {\em Lecture Notes in Logic}.
\newblock Cambridge University Press, 2008.

\bibitem[HP11]{onnipand}
Ehud Hrushovski and Anand Pillay.
\newblock On {NIP} and invariant measures.
\newblock {\em Journal of the European Mathematical Society}, 13:1005--1061,
  2011.

\bibitem[May88]{mayer}
Laura~L. Mayer.
\newblock Vaught's conjecture for o-minimal theories.
\newblock {\em The Journal of Symbolic Logic}, 53(1):146--159, 1988.

\bibitem[Mel06]{mellor}
Tim Mellor.
\newblock Imaginaries in real closed valued fields.
\newblock {\em Annals of Pure and Applied Logic}, 139(1--3):230--279, 2006.

\bibitem[Men20a]{mythesis}
Rosario Mennuni.
\newblock {\em Invariant types in model theory}.
\newblock PhD thesis. University of Leeds, 2020.
\newblock \url{http://etheses.whiterose.ac.uk/26807/}.

\bibitem[Men20b]{invbartheory}
Rosario Mennuni.
\newblock Product of invariant types modulo domination--equivalence.
\newblock {\em Archive for Mathematical Logic}, 59:1--29, 2020.

\bibitem[Men20c]{treespaper}
Rosario Mennuni.
\newblock Weakly binary expansions of dense meet-trees.
\newblock \url{https://arxiv.org/abs/2006.13004}, preprint, 2020.

\bibitem[Pil96]{gstheory}
Anand Pillay.
\newblock {\em Geometric Stability Theory}, volume~32 of {\em Oxford Logic
  Guides}.
\newblock Oxford University Press, 1996.

\bibitem[PP07]{petpil}
{Ya'acov} Peterzil and Anand Pillay.
\newblock Generic sets in definably compact groups.
\newblock {\em Fundamenta Mathematicae}, 193:153--170, 2007.

\bibitem[PS86]{ominI}
Anand Pillay and Charles Steinhorn.
\newblock Definable sets in ordered structures. {I}.
\newblock {\em Transactions of the American Mathematical Society},
  295(2):565--592, 1986.

\bibitem[RS17]{rastsahota}
Richard Rast and Davender~Singh Sahota.
\newblock The {Borel} complexity of isomorphism for o-minimal theories.
\newblock {\em The Journal of Symbolic Logic}, 82(2):453--473, 2017.

\bibitem[Sim13]{simdnd}
Pierre Simon.
\newblock Distal and non-distal {NIP} theories.
\newblock {\em Annals of Pure and Applied Logic}, 164:298--318, 2013.

\bibitem[Sim14]{invnip}
Pierre Simon.
\newblock Invariant types in {NIP} theories.
\newblock {\em Journal of Mathematical Logic}, 15(02):1550006, 2014.

\bibitem[Sim15]{simon}
Pierre Simon.
\newblock {\em A guide to {NIP} theories}, volume~44 of {\em Lecture Notes in
  Logic}.
\newblock Cambridge University Press, 2015.

\bibitem[Sta08]{starnotdol}
Sergei Starchenko.
\newblock A note on {Dolich's} paper.
\newblock
  \url{http://citeseerx.ist.psu.edu/viewdoc/download?doi=10.1.1.163.4860&rep=rep1&type=pdf},
  preprint, 2008.

\bibitem[Tre04]{tresslvaluation}
Marcus Tressl.
\newblock Valuation theoretic content of the {Marker}--{Steinhorn} theorem.
\newblock {\em The Journal of Symbolic Logic}, 69(1):91--93, 2004.

\bibitem[vdD98]{ttaos}
Lou van~den Dries.
\newblock {\em Tame Topology and O-minimal Structures}, volume 248 of {\em
  London Mathematical Society Lecture Note Series}.
\newblock Cambridge University Press, 1998.

\bibitem[vdD14]{vddvf}
Lou van~den Dries.
\newblock Lectures on the model theory of valued fields.
\newblock In Dugald Macpherson and Carlo Toffalori, editors, {\em Model Theory
  in Algebra, Analysis and Arithmetic}, volume 2111 of {\em Lecture Notes in
  Mathematics}, pages 55--157. Springer, Berlin, Heidelberg, 2014.

\bibitem[Wal19]{distrk}
Roland Walker.
\newblock Distality rank.
\newblock \url{https://arxiv.org/abs/1908.11400v1}, preprint, 2019.

\end{thebibliography}
\end{document}